 \tikzstyle{int}=[circle, draw,fill=black,outer sep=0,minimum size=3pt, inner sep=0]
  \tikzstyle{ext}=[circle, draw=black,outer sep=0,inner sep=1pt]
  \font\tencyr=wncyr10
  \font\sevencyr=wncyr7
  \font\fivecyr=wncyr5
  \font\tencyi=wncyi10
  \font\sevencyi=wncyi7
  \font\fivecyi=wncyi5
\def\id{{\mbox{1 \hskip -7pt 1}}}
\newcommand{\sgn}{{\mathit s  \mathit g\mathit  n}}
 \newcommand{\lon}{\longrightarrow}
 \newcommand{\bu}{\bullet}
 \newcommand{\rar}{\rightarrow}
 \newcommand{\hook}{\hookrightarrow}
\newcommand{\p}{{\partial}}
\newcommand{\Id}{{\mathrm I\mathrm d}}
 \newcommand{\Z}{{\mathbb Z}}
 \newcommand{\bS}{{\mathbb S}}
 \newcommand{\N}{{\mathbb N}}
 \newcommand{\K}{{\mathbb K}}
\newcommand{\GC}{\mathsf{GC}}
 \newcommand{\ot}{\otimes}
\newcommand{\sC}{{\mathsf C}}
\newcommand{\sG}{{\mathsf G}}
\newcommand{\sP}{{\mathsf P}}
\newcommand{\Lie}{{\mathcal{L}\mathit{ie}}}
\newcommand{\Def}{{\mathsf{Def}}}
\newcommand{\LB}{\mathcal{L}\mathit{ieb}}
\newcommand{\LoB}{\mathcal{L}\mathit{ieb}^\diamond}
\newcommand{\LBcd}{\mathcal{L}\mathit{ieb}_{c,d}}
\newcommand{\wLBd}{\widehat{\mathcal{L}\mathit{ieb}}_{d}}
\newcommand{\wLBcd}{\widehat{\mathcal{L}\mathit{ieb}}_{c,d}}
\newcommand{\HoLBcd}{\mathcal{H}\mathit{olieb}_{c,d}}
\newcommand{\wHoLBd}{\widehat{\mathcal{H}\mathit{olieb}}_{d}}
\newcommand{\wHoLBcd}{\widehat{\mathcal{H}\mathit{olieb}}_{c,d}}
\newcommand{\wHoLBdd}{\widehat{\mathcal{H}\mathit{olieb}}_{d,d}}
\newcommand{\LoBd}{\mathcal{L}\mathit{ieb}_{d}^\diamond}
\newcommand{\LoBcd}{\mathcal{L}\mathit{ieb}_{c,d}^\diamond}
\newcommand{\wLoBd}{\widehat{\mathcal{L}\mathit{ieb}}_{d}^{\Ba{c}\vspace{-1mm}_{\hspace{-2mm}\diamond} \Ea}}
\newcommand{\wLoBcd}{\widehat{\mathcal{L}\mathit{ieb}}_{c,d}^{\Ba{c}\vspace{-1mm}_{\hspace{-2mm}\diamond} \Ea}}
\newcommand{\HoLoBd}{\mathcal{H}\mathit{olieb}_{d}^\diamond}
\newcommand{\HoLoBcd}{\mathcal{H}\mathit{olieb}_{c,d}^\diamond}
\newcommand{\HoLoB}{\mathcal{H}\mathit{olieb}^\diamond}
\newcommand{\HoLB}{\mathcal{H}\mathit{olieb}}
\newcommand{\wHoLoBd}{\widehat{\mathcal{H}\mathit{olieb}}_{d}^{\Ba{c}\vspace{-1mm}_{\hspace{-2mm}\diamond} \Ea}}
\newcommand{\wHoLoBcd}{\widehat{\mathcal{H}\mathit{olieb}}_{c,d}^{\Ba{c}\vspace{-1mm}_{\hspace{-2mm}\diamond} \Ea}}
\newcommand{\wHoLoBdd}{\widehat{\mathcal{H}\mathit{olieb}}_{d,d}^{\Ba{c}\vspace{-1mm}_{\hspace{-2mm}\diamond} \Ea}}
\newcommand{\HoLhBd}{{\mathcal{H}\mathit{olieb}_{d}^\hbar}}
\newcommand{\HoLhBcd}{{\mathcal{H}\mathit{olieb}_{c,d}^\hbar}}
\newcommand{\sPGCd}{{\mathsf{s}\cP \mathsf{GC}}_{d}}
\newcommand{\sPGCcd}{{\mathsf{s}\cP \mathsf{GC}}_{c,d}}
\newcommand{\PGCd}{\cP\mathsf{GC}_{d}}
\newcommand{\PGCcd}{\cP\mathsf{GC}_{c,d}}
\newcommand{\RGCd}{\mathsf{RGC}_d}
\newcommand{\sRGCd}{\mathsf{sRGC}_d}
\newcommand{\dco}{\delta_{\mathrm{coLie}}}
 \newcommand{\Beq}{\begin{equation}}
 \newcommand{\Eeq}{\end{equation}}
 \newcommand{\Beqr}{\begin{eqnarray}}
 \newcommand{\Eeqr}{\end{eqnarray}}
 \newcommand{\Beqrn}{\begin{eqnarray*}}
 \newcommand{\Eeqrn}{\end{eqnarray*}}
 \newcommand{\Ba}{\begin{array}}
 \newcommand{\Ea}{\end{array}}
 \newcommand{\Bi}{\begin{itemize}}
 \newcommand{\Ei}{\end{itemize}}
 \newcommand{\Bc}{\begin{center}}
 \newcommand{\Ec}{\end{center}}
 \newcommand{\fg}{{\mathfrak g}}
\newcommand{\ft}{{\mathfrak t}}
\newcommand{\fr}{{\mathfrak r}}
 \newcommand{\f}{{\mathcal O}}
 \newcommand{\cA}{{\mathcal A}}
 \newcommand{\cB}{{\mathcal B}}
 \newcommand{\cC}{{\mathcal C}}
 \newcommand{\caD}{{\mathcal D}}
 \newcommand{\cE}{{\mathcal E}}
 \newcommand{\cF}{{\mathcal F}}
 \newcommand{\cG}{{\mathcal G}}
 \newcommand{\caH}{{\mathcal H}}
 \newcommand{\caL}{{\mathcal L}}
 \newcommand{\cM}{{\mathcal M}}
 \newcommand{\cP}{{\mathcal P}}
 \newcommand{\cR}{{\mathcal R}}
 \newcommand{\cS}{{\mathcal S}}
 \newcommand{\cT}{{\mathcal T}}
 \newcommand{\cU}{{\mathcal U}}
 \newcommand{\cW}{{\mathcal W}}
 \newcommand{\al}{\alpha}
 \newcommand{\be}{\beta}
 \newcommand{\ga}{\gamma}
 \newcommand{\Ga}{\Gamma}
 \newcommand{\var}{\varepsilon}
 \newcommand{\la}{\lambda}
 \newcommand{\Img}{{\mathsf I\mathsf m}\, }
 \newcommand{\Hom}{{\mathrm H\mathrm o\mathrm m}}
 \newcommand{\sip}{\smallskip}
 \newcommand{\bip}{\bigskip}
 \newcommand{\mip}{\vspace{2.5mm}}
 \newcommand{\conn}{\mathit{conn}}
\theoremstyle{plain}
\newtheorem{theorem}{Theorem}[subsection]
\newtheorem{lemma}[theorem]{Lemma}
\newtheorem{proposition}[theorem]{Proposition}
\newtheorem{prop-def}[theorem]{Proposition-definition}
\newtheorem{f-theorem}{Formality Theorem}[section]
\newtheorem{main-theorem}{Main~Theorem}[section]
\newtheorem{section-theorem}{Theorem}[section]
\newtheorem{section-corollary}{Corollary}[section]
\theoremstyle{definition}
\newtheorem{fact-me}{Fact \cite{Me1}}[subsection]
\newcommand{\op}[1]{\mathcal{#1}}
\newcommand{\LieB}{{{\mathcal L}{\mathit i}{\mathit e}\hspace{-0.2mm} {\mathit b}}}
\newcommand{\invLieB}{\LieB^\diamond}
\newcommand{\hoLieB}{{\mathcal{H}\mathit{olieb}}}
\newcommand{\whoLieB}{\widehat{\hoLieB}}
\newcommand{\hoe}{\mathcal{H} \mathit{opois}}
\newcommand{\hoCom}{\mathcal{H} \mathit{ocom}}
\newcommand{\e}{{\cP \mathit{ois}}}
\newcommand{\hoLie}{\mathcal{H} \mathit{olie}}
\newcommand{\GCor}{\mathsf{GC}^{or}}
\newcommand{\RGC}{\mathsf{RGC}}
\newcommand{\RGCxx}[2]{{#1}\mathsf{GC}_{#2}}
\newcommand{\RGCx}[1]{\RGCxx{#1}{d}}
\newcommand{\RGCPn}[1]{\RGCxx{\op P}{#1}}
\newcommand{\SRGC}{\mathsf{sRGC}}
\newcommand{\SRGCxx}[2]{\mathsf{s}{#1}\mathsf{GC}_{#2}}
\newcommand{\SRGCx}[1]{\SRGCxx{#1}{d}}
\newcommand{\SRGCPn}[1]{\SRGCxx{\op P}{#1}}
\newcommand{\HRGCPn}[1]{\mathsf{H}\cP\mathsf{GC}_{#1}}
\newcommand{\RGra}{{\cR \cG ra}}
\newcommand{\RGraphsxx}[2]{{#1}\cG raphs_{#2}}
\newcommand{\RGraphsx}[1]{{#1}\cG raphs_d}
\newcommand{\RGraphsPn}[1]{{\op P}\cG raphs_{#1}}
\newcommand{\Graphs}{\cG raphs}
\newcommand{\fcGC}{\mathsf{fc}\GC}
\newcommand{\gr}{\mathit{gr}}
\newcommand{\mF}{\mathcal{F}}
\newcommand{\Tw}{\mathit{Tw}}
\newcommand{\marked}{{\mathit{marked}}}
\begin{document}

 \sloppy

 \newenvironment{proo}{\begin{trivlist} \item{\sc {Proof.}}}
  {\hfill $\square$ \end{trivlist}}

\long\def\symbolfootnote[#1]#2{\begingroup%
\def\thefootnote{\fnsymbol{footnote}}\footnote[#1]{#2}\endgroup}

 \title{Props of ribbon graphs, involutive Lie bialgebras\\ and moduli spaces of curves
 }

\author{Sergei~Merkulov}
\address{Sergei~Merkulov:  Mathematics Research Unit, University of Luxembourg,  Grand Duchy of Luxembourg}
\email{sergei.merkulov@uni.lu}

\author{Thomas~Willwacher}
\address{Thomas~Willwacher: Institute of Mathematics, University of Zurich, Zurich, Switzerland}
\email{thomas.willwacher@math.uzh.ch}

\thanks{T.W. has been partially supported by the Swiss National Science foundation, grant 200021\_150012, and the SwissMAP NCCR funded by the Swiss National Science foundation. S.M. has been partially supported by the Swedish Vetenskapr\aa det, grant 2012-5478.}

 \begin{abstract} We establish a new and surprisingly strong link between two previously unrelated theories:
  the theory of moduli spaces of curves  $\cM_{g,n}$ (which, according to Penner, is controlled by the ribbon graph complex)  and the homotopy theory of $E_d$ operads (controlled by ordinary graph complexes with no ribbon structure, introduced first by Kontsevich).
 The link between the two goes through a new intermediate {\em stable}\, ribbon graph complex which has roots
 in the deformation theory of quantum $A_\infty$ algebras and the theory of Kontsevich
 compactifications of moduli spaces of curves $\overline{\cM}_{g,n}^K$.

 \sip

Using a new prop of ribbon graphs and the fact that it contains the prop of involutive Lie bialgebras as a subprop we find new algebraic structures on the classical ribbon graph complex computing $H^\bu(\cM_{g,n})$. We use them to prove Comparison Theorems, and in particular to construct a non-trivial map from the ordinary to the ribbon graph cohomology.

\sip

On the technical side, we construct a functor $\f$ from the category of prop(erad)s
to the category of operads. If a properad $\cP$ is in addition equipped with a map from the properad governing Lie bialgebras (or graded versions thereof), then we define a notion of $\cP$-``graph'' complex, of stable $\cP$-graph complex and a certain operad, that is in good cases an $E_d$ operad.
In the ribbon case, this latter operad acts on the deformation complexes of any quantum $A_\infty$-algebra.

We also prove that there is a highly non-trivial, in general, action of the
Grothendieck-Teichm\"uller group $GRT_1$ on the space of so-called {\em non-commutative Poisson structures}\, on any vector space $W$ equipped with a degree $-1$ symplectic form (which interpolate between cyclic $A_\infty$ structures in $W$ and ordinary polynomial Poisson structures on $W$ as an affine space).



\end{abstract}
 \maketitle

{\small
{\small
\tableofcontents
}
}

%
%

{\Large
\section{\bf Introduction}
}

\subsection{On two previously unrelated theories}
The purpose of this paper is to build a bridge between two well developed, but somewhat separate areas of mathematics: (i) the study of the cohomology of the moduli spaces of curves $H(\cM_{g,n})$ and (ii) the study of the homotopy theory of $E_d$ operads, that is, the chain operads of the classical topological operads of $d$-dimensional cubes \cite{BV,M}.
The problem of studying $H(\cM_{g,n})$ is classical and few words need to be said.
Studying $E_d$ operads and their homotopy theory is a more modern endeavor, and less recognized in mainstream mathematics. Let us just mention that in recent decades it has been realized that the $E_d$ operads appear, and often control, many problems in diverse areas of mathematics. For example, the homotopy automorphisms of the $E_2$ operad may be identified with the Grothendieck-Teichm\"uller group, and this fact underlies the appearance of this group in many situations, e.g., in the study of quantum groups, deformation quantization, or the theory of motives through its connection with the motivic Galois group. The higher $E_d$ operads are very important in algebraic topology. In particular they can be used to recognize based $d$-loop spaces and appear prominently in the recently quite popular Goodwillie-Weiss embedding calculus (and the related concept of factorization homology), whose goal  is, simplifying slightly, to equate embedding spaces of manifolds (e.g., knot spaces) to mapping spaces of $E_d$ modules.

\sip

The second goal of this work is to establish some new highly non-trivial pieces of algebraic structure on the totality of the spaces $H(\cM_{g,n})$ (for varying $g,n$).

\sip

More concretely, the space $H(\cM_{g,n})$ may be computed as the cohomology of a combinatorial complex of ribbon graphs introduced by Penner \cite{Pe}.
Here a ribbon graph is a graph together with the data of cyclic orderings of the incident half-edges at each vertex separately.
We denote the ribbon graph complex with $r$ labelled ``punctures" by $\RGC^{labelled}(r)$, a more precise definition will be provided below. Its counterpart with unlabeled punctures we denote by
\[
\RGC = \prod_r \RGC^{labelled}(r)/ \bS_r.
\]
This complex is defined so that its cohomology is connected to the cohomology of the moduli spaces of curves, skew-symmetrized in the punctures.
\Beq\label{1: Cohomology of RBC and M_g,n}
H^k(\RGC) \cong \prod_{\substack{g,n \\ n>0,2-2g<n} }\left( H^{6g-6+3n-k}(\cM_{g,n}, \mathbb{Q}) \otimes \sgn_n \right)/ \bS_n
\oplus
\begin{cases}
\mathbb Q & \text{for $k=1,4,7,\dots$} \\
0 & \text{otherwise}
\end{cases}
\Eeq
(Here $\sgn_n$ is the sign representation of the symmetric group $\bS_n$.) Similarly, one may also define a ribbon graph complex $\RGC_{odd}$ which computes the cohomology of the moduli space of curves, but symmetrized in the punctures instead of anti-symmetrized. (The complex $\RGC_{odd}$ is the one considered by Kontsevich \cite{Ko2}, for a precise definition and the relation to the cohomology of moduli space see below.)

\sip

On the other hand, the deformation theory of the $E_d$ operads is controlled by similar complexes of ordinary (i.e., non-ribbon) graphs\footnote{The superscript $2$ in the symbol $\GC_d^2$ means that we consider graphs with at least bivalent vertices, while the symbol $\GC_d$ is reserved traditionally  for a complex of graphs with at least trivalent vertices.} $\GC_d^2$, that were first defined by Kontsevich \cite{Ko2}. Recently there was a progress in understanding of the cohomology of graph complexes $\GC_d^2$ for even $d$, in particular
it was proven in \cite{Wi} that
$$
H^i(\GC_2^2)=\left\{\Ba{ll} \fg\fr\ft & \mbox{for}\  i=0\\
			   \K     & \mbox{for}\ i=-1 \\
                             0      & \mbox{for}\ i<-1,
                             \Ea\right.
$$
where $\fg\fr\ft$ is the Lie algebra of the prounipotent Grothendieck-Teichm\"uller group $GRT_1$
introduced by Drinfeld in \cite{D2}.

\sip

Previously, little had been known about the relation between the ribbon graph complex $\RGC$ (which is
in fact the $d=0$ member of a family $\RGC_d$ of complexes parameterized by an integer $d\in \Z$, see \S 4) and the graph complexes $\GC_d^2$
 other then the existence of the forgetful map forgetting the cyclic orderings of the half-edges.
Furthermore, we introduce an "intermediate" (between $\RGC_d$ and $\GC_d^2$) complex $\SRGC_d$ of  stable ribbon graphs, i.e., graphs together with the data of cyclic orderings on disjoint subsets of the half-edges incident at each vertex; it originates (see \S 3 and 5) in the deformation theory of so called
{\em quantum $A_\infty$-algebra}\, structures on vector spaces  equipped with a (skew)symmetric scalar product of degree $1-d$.
The complex $\SRGC_d$ is similar (but not identical) to the complexes studied in \cite{Ba1, Ba2, Ha}  in the context of the (co)homology theory of the Kontsevich compactification $\overline{\cM}^K_{g,n}$  \cite{Ko6} of the moduli space of curves. The complexes  $\SRGC_d$ for various $d$ are isomorphic to each other (up to degree shift) so that at the cohomology level it is enough to study $\SRGC:=\SRGC_0$; the same is true
for the family of complexes $\RGC_d$ but in applications it is often useful to keep track of signs and degree shifts coming from different values of the integer parameter $d$ so that we shall work in this paper in the maximal generality keeping the value of $d$ arbitrary.

\sip

Our results about these complexes are two-fold: on the one hand we describe some pieces of natural algebraic structure on the ribbon graph complexes. On the other hand we show how this structure may be used to compare and constrain the cohomology of those complexes.

\subsection{From props of ribbon graphs and of involutive Lie bialgebras to new  algebraic structures on $\RGC$ 
}
We describe natural dg Lie algebra structures $(\RGC,\delta,[-,-])$ and $(\SRGC,\delta,[-,-])$ on the ribbon and, respectively, stable ribbon graph complexes. (For $\GC_d^2$ the existence of the analogous Lie algebra structure has been known.)
Furthermore, we show that there are two natural additional operators $\Delta_1$ an $\Delta_2$ on each of the ribbon and the stable ribbon graph complexes, that anti-commute with the differential and square to zero.

The key step in finding those algebraic structures is the interpretation of the (stable) ribbon graph complexes as deformation complexes of certain (pr)operads. More concretely, we define a properad $\RGra$ of ribbon graphs, that comes equipped with a natural injective map
\[
\invLieB \to \RGra
\]
from the properad governing involutive Lie bialgebras  (often called {\em diamond}\, algebras), and hence with a natural map
$$
\LieB \to \RGra
$$
from the properad of ordinary Lie bialgebras (which factor though the obvious map $\LieB\to \invLieB$ which is identical on all generators). One can consider also a different map $\LieB\xrightarrow{*} \invLieB$ which sends the cobracket generator to zero and the bracket generator to the bracket generator. By composition with the map to $\RGra$ above we obtain a map $\LieB \xrightarrow{*}\RGra$. Then there is an identification
\[
\RGC \cong \Def(\LieB \xrightarrow{*}\RGra)
\]
with the properadic deformation complex, whose definition will be recalled below. It immediately follows that there is a dg Lie algebra structure on $\RGC$ (since there is a canonical one on the right-hand side) and this dg Lie algebra structure can be  described very explicitly.
Now, considering the deformation complex $\Def(\LieB \to \RGra)$ instead, we find that this complex can be identified with the graded vector space $\RGC$, but with a modified differential $\delta +\Delta_1$. This yields the operation $\Delta_1$.\footnote{We learned from A. Caldararu that the operation $\Delta_1$ had essentially been considered earlier by Tom Bridgeland (unpublished).} Considering the deformation complex $\Def(\invLieB \to \RGra)$ similarly leads to the operation $\Delta_2$.
For the stable ribbon graph complex analogous arguments lead to the definition of the analogous algebraic structures.

\sip

 Similar arguments (up to certain degree shifts) yield a dg Lie algebra structure and a deformation, $\delta+\Delta_3$, of the standard differential $\delta$ in the "odd" counterpart $\RGC_{odd}$, the Kontsevich ribbon graph complex.

\sip

We note furthermore that our arguments provide an operad structure on the collection of labelled ribbon graph complexes  $\RGC^{labelled}(-)$. We emphasize that this is not the same as the cyclic operad structure on the compactified moduli spaces of curves. In particular, the composition of two elements with $r_1$ and $r_2$ punctures has $r_1+r_2-1$ punctures. Geometrically, this operation is best explained using Costello's dual interpretation of the ribbon graph complex \cite{Co}.

\subsection{From new algebraic structures to Comparison Theorems for cohomology}

Our first main result computes the homology of the stable ribbon graph complexes.

\begin{theorem}\label{thm:main_RGC}
There are isomorphisms
\[
H(\SRGC) = H(\RGC)\oplus H(\GC_0^2)
\]
and
\[
H(\SRGC_{odd}) = H(\RGC_{odd})\oplus H(\GC_1^2)
\]
\end{theorem}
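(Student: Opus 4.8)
The plan is to exhibit the two summands through explicit comparison maps and to reduce the isomorphism to a single vanishing statement, which is then extracted from a spectral sequence. First I would record the natural maps, using the deformation-complex descriptions recalled above (e.g.\ the identification $\RGC\cong\Def(\LieB\xrightarrow{*}\RGra)$): a ribbon graph is exactly a stable ribbon graph all of whose vertices carry a \emph{single} cyclic group, which gives an inclusion $\RGC\to\SRGC$, and forgetting all cyclic-order data gives a forgetful map $\SRGC\to\GC_0^2$ onto the ordinary graph complex. After checking that both are compatible with the differentials, the theorem becomes the assertion that these two maps induce on cohomology a splitting $H(\SRGC)\cong H(\RGC)\oplus H(\GC_0^2)$, the ribbon graphs accounting for the first summand and the ``fully split'' graphs (those whose vertices consist entirely of univalent cyclic groups, i.e.\ the copy of $\GC_0^2$) for the second.

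To produce the splitting I would run the spectral sequence of a suitable filtration, the natural candidate being the filtration by the defect $D=\sum_v (c_v-1)$, where $c_v$ is the number of cyclic groups at the vertex $v$; the pure ribbon graphs sit in $D=0$ and the fully split graphs at the opposite extreme. Organizing the differential with respect to this filtration separates a defect-preserving, ribbon-type vertex splitting (which on the $D=0$ stratum is precisely the classical $\RGC$ differential) from the genus-creating terms that change $D$. The heart of the proof is a vanishing lemma asserting that all graphs possessing a vertex that is neither a single cyclic group nor fully split contribute nothing to the cohomology, so that only the two extreme strata survive. Vertex by vertex this is governed by the combinatorial complex $\bigl(\bigoplus_{\sigma\in S_{n}}\K\,\sigma,\ \delta_{\mathrm{split}}\bigr)$ of cyclically ordered block partitions of the $n$ incident half-edges, graded by the number of blocks; but the cancellation is genuinely global, because the block-refinement differential is entangled with the graph differential, and disentangling this interplay is where the real work lies.

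The step I expect to be the main obstacle is exactly this vanishing lemma, since a purely local analysis does not suffice: the local block complex is not concentrated on the two extreme configurations, so the cancellation of the intermediate strata must be argued after assembling over all vertices and combining with the graph differential. Once the associated graded has been identified as $H(\RGC)\oplus H(\GC_0^2)$, I would settle the remaining extension problem using the explicit comparison maps of the first paragraph, which provide the sections and retractions forcing the outcome to be a genuine direct sum rather than a nontrivial extension. Finally, the odd statement $H(\SRGC_{odd})=H(\RGC_{odd})\oplus H(\GC_1^2)$ follows by repeating the argument verbatim with the sign and degree conventions of the Kontsevich (odd) ribbon graph complex; the only genuinely new point to verify is that the altered signs turn the surviving non-ribbon stratum into the \emph{odd} graph complex $\GC_1^2$ rather than the even one $\GC_0^2$.
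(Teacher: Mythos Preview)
Your approach is genuinely different from the paper's, and the central step you flag as ``the main obstacle'' is indeed the place where the proposal remains a gap rather than a proof.

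The paper does \emph{not} filter $\SRGC$ directly. Instead it proves a general result (Theorem~\ref{thm:main}) valid for any properad $\cP$ receiving a map from $\hoLieB_{c,d}$ and satisfying mild hypotheses: there is a long exact sequence
\[
\cdots \to H^p(\RGCPn{c,d}) \to H^p(\SRGCPn{c,d}) \to H^p(\GC_{c+d}^2) \to H^{p+1}(\RGCPn{c,d}) \to \cdots
\]
whose connecting homomorphism factors through $H(\GCor_{c+d+1})\cong H(\Def(\hoLieB_{c,d}\to\hoLieB_{c,d}))$. The proof of this long exact sequence is operadic: one shows $\RGraphsPn{c,d}\simeq \e_{c+d}$, deduces that the connected deformation complex $\Def(\hoe_{c+d}\to\RGraphsPn{c,d})_\conn$ has cohomology $\K\oplus H(\GC_{c+d}^2)$, identifies this with a ``hairy'' $\cP$-graph complex, and reads off the long exact sequence from an extension of the hairy complex by $\SRGCPn{c,d}$. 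Theorem~\ref{thm:main_RGC} then follows in one line: one applies this machinery with the map $s^*$ sending the cobracket generator to zero, and observes that the connecting homomorphism then vanishes, because every oriented graph in $\GCor_{c+d+1}$ has a vertex with at least two outputs, which gets assigned a (higher) cobracket and hence zero. The long exact sequence therefore degenerates into the claimed direct sum.

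Compared to this, your defect filtration attacks $\SRGC$ head-on but leaves the key cancellation unproved. A few concrete issues: first, your ``forgetful map $\SRGC\to\GC_0^2$'' is not the map the paper uses, and literally forgetting the cyclic partitions is not a chain map (the ribbon vertex-splitting only produces cyclically consecutive splittings, not all of them). The correct map arises from the properad projection $\RGra\to\cT_d$ and is a \emph{projection} onto the fully split stratum, killing every graph with a cyclic block of size $\geq 2$. Second, the fully split stratum is a quotient complex but not a subcomplex, so your proposed ``sections and retractions'' in the last step are not available in the naive form. Third, and most importantly, your vanishing lemma for the intermediate strata is the whole theorem: you correctly note that the local block complex at a single vertex does not concentrate on the extremes, and you offer no mechanism for the global cancellation. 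The paper sidesteps this entirely by locating the obstruction in a connecting homomorphism whose vanishing is immediate from the choice $s^*$ of Maurer--Cartan element.
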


As a corollary, one may obtain the following statement.
\begin{theorem}\label{thm:main_RGC2}
There are long exact sequences
\begin{equation}\label{equ:longexact}
\cdots \to H^p(\RGC, \delta+\Delta_1) \to H^p(\SRGC,\delta+\Delta_1) \to H^p(\GC_0^2,\delta)\to H^{p+1}(\RGC,\delta+\Delta_1)\to \cdots
\end{equation}
and
\begin{equation}\label{equ:longexact2}
\cdots \to H^p(\RGC_{odd}, \delta+\Delta_3) \to H^p(\SRGC_{odd},\delta+\Delta_3) \to H^p(\GC_1^2,\delta)\to H^{p+1}(\RGC_{odd},\delta+\Delta_3)\to \cdots .
\end{equation}
\end{theorem}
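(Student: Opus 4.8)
The plan is to exhibit each sequence as the long exact cohomology sequence of a short exact sequence of complexes in which all three terms carry the deformed differential, and whose connecting homomorphism encodes the operator $\Delta_1$ (respectively $\Delta_3$). I will treat \eqref{equ:longexact} in detail; the sequence \eqref{equ:longexact2} is obtained verbatim after replacing $\Delta_1$ by $\Delta_3$ and the pair $\RGC, \GC_0^2$ by $\RGC_{odd}, \GC_1^2$.

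First I would upgrade Theorem \ref{thm:main_RGC} from a cohomological identity to the chain-level structure underlying its proof: a presentation of $\SRGC$ in which $\RGC$ is a $\delta$-subcomplex with quotient complex $(\GC_0^2, \delta)$. Concretely, this is a short exact sequence
\[
0 \to (\RGC, \delta) \to (\SRGC, \delta) \to (\GC_0^2, \delta) \to 0
\]
of complexes, and Theorem \ref{thm:main_RGC} is then the assertion that its connecting map vanishes, so that the cohomology splits as a direct sum. This fixes, once and for all, the graded pieces entering the desired sequence.

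Keeping the very same underlying graded vector spaces, I would then switch on the perturbation $\Delta_1$. Since $\Delta_1$ squares to zero and anticommutes with $\delta$, the sum $\delta + \Delta_1$ is again a differential on $\SRGC$ and on $\RGC$. The two points to check are that $\Delta_1$ preserves $\RGC$ and that it induces the zero operator on the quotient $\GC_0^2$; both follow at once from the single structural claim that $\Delta_1$ maps all of $\SRGC$ into $\RGC$. Granting this, the displayed sequence is also short exact for $\delta + \Delta_1$, with the quotient carrying only $\delta$ precisely because $\Delta_1$ lands in the sub, and its associated long exact sequence is \eqref{equ:longexact}. A one-line computation then identifies the connecting map with $[c] \mapsto [\Delta_1 c]$, where $\Delta_1 c$ is automatically a $(\delta+\Delta_1)$-cocycle of $\RGC$.

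The main obstacle is the structural claim that $\Delta_1$ takes values in $\RGC$, which is exactly where the combinatorics of ribbon graphs must enter. For this I would invoke the explicit description of $\Delta_1$ as the operation produced by the cobracket generator of $\LieB$ under the map $\LieB \to \RGra$ — i.e. the term distinguishing the differential of $\Def(\LieB \to \RGra)$ from that of $\Def(\LieB \xrightarrow{*} \RGra)$ — and verify directly that grafting this cobracket always produces a graph lying in the ribbon part. Should a chain-level splitting prove awkward to extract, the same conclusion is reached through the spectral sequence of the filtration turning on $\Delta_1$: by Theorem \ref{thm:main_RGC} its first page is $H(\RGC, \delta) \oplus H(\GC_0^2, \delta)$, the differential $d_1$ is induced by $\Delta_1$ and has a single nonzero component $H(\GC_0^2,\delta) \to H(\RGC,\delta)$, and, the filtration being of length two, the sequence degenerates at $E_2$ and reassembles into \eqref{equ:longexact}.
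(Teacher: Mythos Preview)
Your overall strategy---derive the long exact sequence from a short exact sequence of complexes in which $\RGC$ sits inside $\SRGC$ with quotient $\GC_0^2$---is not the paper's route, and the particular structural claims you rely on are not available.

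First, the paper does not prove Theorem~\ref{thm:main_RGC2} by bootstrapping from Theorem~\ref{thm:main_RGC}. Both theorems are parallel specialisations of a single general result (Theorem~\ref{thm:main}), applied to the properad $\cR\cG ra$ with two \emph{different} maps from $\LieB$: for Theorem~\ref{thm:main_RGC} the cobracket is sent to zero (so the connecting homomorphism vanishes and the sequence splits), while for Theorem~\ref{thm:main_RGC2} the cobracket is sent to the one-vertex loop (so the connecting homomorphism is the interesting map $\rho$). The long exact sequence in Theorem~\ref{thm:main} itself does not come from a short exact sequence $0\to\RGC\to\SRGC\to\GC_0^2\to 0$; it comes from the sequence $0\to\HRGCPn{c,d}\to\HRGCPn{c,d}^{\ext}\to\SRGCPn{c,d}\to 0$ built out of the hairy $\cP$-graph complex, together with separate identifications $H(\HRGCPn{c,d})\cong H(\GC_{c+d}^2)\oplus\K$ and $H(\HRGCPn{c,d}^{\ext})\cong H(\RGCPn{c,d})\oplus\K$.

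Second, the chain-level short exact sequence you posit does not exist in the form you need. The paper remarks explicitly (just after~\eqref{equ:almostexact}) that the composition $\RGC\to\SRGC\to\fcGC_0$ is \emph{not} zero; its kernel is only of codimension two. So you cannot simply take $\GC_0^2$ as the cokernel of the inclusion $\RGC\hookrightarrow\SRGC$. More seriously, your key structural claim that ``$\Delta_1$ maps all of $\SRGC$ into $\RGC$'' is not correct: recall that $\RGC$ is identified inside $\SRGC$ with the subspace of graphs whose black vertices are all \emph{univalent} (Proposition~\ref{5: Prop on PGCd to sPGCd}). The operator $\Delta_1$ on $\SRGC$ is the derivation by the cobracket element of $\f_d(\cR\cG ra_d)(1)$; applied to a graph with a multivalent black vertex it produces graphs that still have that multivalent black vertex, hence do not lie in $\RGC$. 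Without this claim your short exact sequence with the deformed differential collapses, and the spectral-sequence variant you sketch fails for the same reason: you cannot conclude that $d_1$ has only the single component $H(\GC_0^2)\to H(\RGC)$.
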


The interesting statement here is that we find, in particular, highly nontrivial connecting homomorphisms
\begin{align}\label{equ:connectinghom}
\rho: H^\bullet(\GC_0^2,\delta)&\to H^{\bullet+1}(\RGC,\delta+\Delta_1)
\\
\rho_{odd}: H^\bullet(\GC_1^2,\delta)&\to H^{\bullet+1}(\RGC_{odd},\delta+\Delta_3)
.
\end{align}
These morphisms can be quite explicitly described and provide a direct map between the ordinary and the ribbon graph cohomology.
In fact, using the fact that the plain graph cohomology $H(\GC_0^2)$ may be (essentially) identified with the homotopy derivations of the properad $\LieB$, and similarly $H(\GC_1^2)$ with the homotopy derivations of the odd analog $\LieB_{odd}$ (see \cite{MW2}), the connecting homomorphism may be most directly understood as induced by the canonical maps
\begin{align*}
\Def(\LieB \xrightarrow{id}\LieB) &\to  \Def(\LieB \xrightarrow{}\RGra)
&\text{and}&&
\Def(\LieB_{odd} \xrightarrow{id}\LieB_{odd}) &\to  \Def(\LieB_{odd} \xrightarrow{}\RGra).
\end{align*}

We conjecture that the connecting homomorphisms are injections. However, for the moment being, we can prove only the following result (we only treat the ``even" case here).
\begin{theorem}\label{thm:connectinghom}
The connecting homomorphism $\rho$ of \eqref{equ:connectinghom} has the following properties:
\begin{itemize}
\item The image of $\rho$ is infinite dimensional.
\item The map $\rho$ sends classes of homogeneous loop order $g$ in $H^\bullet(\GC_0^2,\delta)$ to classes of genus $g$ in $H^{\bullet+1}(\RGC,\delta+\Delta_1)$. (The genus of a ribbon graph is defined to be the same as the genus of the associated punctured surface.)
\item The map $\rho$ sends (ordinary) graph cohomology classes represented by linear combinations of graphs with $k$ edges to ribbon graph cohomology classes with $k+1$ edges.
\end{itemize}
\end{theorem}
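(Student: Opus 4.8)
The plan is to produce an explicit cochain-level representative of $\rho$ and then read off the three assertions, isolating the non-vanishing statement as the only step that requires real work. The splitting of Theorem~\ref{thm:main_RGC} shows in particular that the connecting homomorphism of the \emph{undeformed} sequence vanishes, so the projection $H(\SRGC,\delta)\to H(\GC_0^2,\delta)$ is onto and every $\delta$-cohomology class on $\GC_0^2$ is represented by a $\delta$-cocycle $c$ admitting a $\delta$-closed lift $\tilde c\in\SRGC$ with image exactly $c$. Running the usual zig-zag for the connecting map of \eqref{equ:longexact} with such a lift, and using that $\Delta_1$ carries the quotient into the subcomplex $\RGC$, gives the clean formula
\[
\rho([c])=[\,\Delta_1\tilde c\,]\in H^{\bullet+1}(\RGC,\delta+\Delta_1),\qquad \delta\tilde c=0 .
\]
This coincides with the map induced by $\Def(\LieB\xrightarrow{id}\LieB)\to\Def(\LieB\to\RGra)$, and it is from this representative that I would extract everything.

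Granting this, the third bullet follows from the homogeneity of the construction: with respect to the grading of $\RGC$ and $\SRGC$ by the number of edges, the operator $\Delta_1$ — the insertion of the Lie cobracket generator, whose image in $\RGra$ contributes a single extra edge — is homogeneous of degree $+1$, so a class represented by graphs with $k$ edges is sent by $\rho=[\Delta_1(-)]$ to a class represented by graphs with $k+1$ edges. The genus statement is a bookkeeping computation of the same flavour: since the generators of $\LieB$ map into genus-zero (planar) ribbon graphs, the genus of every ribbon graph occurring in $\Delta_1\tilde c$ is produced entirely by the loops of the underlying graph of $c$, and an Euler-characteristic count of vertices, edges and boundary circles (the output being a sum of ribbon graphs of fixed genus but varying puncture number) shows that a class of homogeneous loop order $g$ lands among genus-$g$ ribbon graphs. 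Because $\RGC$ splits as a direct sum over the genus, this also shows that images of classes of distinct loop order cannot cancel one another.

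The substantial assertion is the infinite-dimensionality of the image, which amounts to producing an infinite family of classes in $H^\bullet(\GC_0^2,\delta)$ on which $\rho$ does not vanish; non-vanishing of a connecting map means exactly that a class fails to lift to a $(\delta+\Delta_1)$-cocycle in $\SRGC$, and so cannot be read off formally. I would take as the family the \emph{loop classes} $L_k$ (polygons with all vertices bivalent), which are known to represent nonzero classes in $H^\bullet(\GC_0^2,\delta)$ for $k$ in a fixed residue class modulo $4$ and which lie in pairwise distinct cohomological degrees. By the previous paragraph each $\rho(L_k)$ is a genus-one class of cohomological degree $\deg L_k+1$, so the images automatically sit in distinct degrees and are linearly independent the moment they are nonzero; it therefore suffices to prove $\rho(L_k)\neq 0$ for infinitely many $k$.

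This non-vanishing is the main obstacle and the one place a genuine argument is needed. My first approach would be a leading-term analysis: compute $\Delta_1\widetilde{L_k}$ explicitly, equip $\RGC$ with a filtration (for instance by the number of vertices, or by a defect statistic measuring the deviation of the stable ribbon structure from a chosen normal form) making $\delta+\Delta_1$ triangular, and exhibit a distinguished ribbon graph in $\Delta_1\widetilde{L_k}$ that cannot be a boundary because nothing maps onto it under the associated-graded differential. As an independent check and alternative detection mechanism I would transport $\rho(L_k)$ through the identification \eqref{1: Cohomology of RBC and M_g,n} into $H^\bullet(\cM_{1,n})$ and match it with a known nontrivial class on the genus-one moduli spaces. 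Everything other than this non-vanishing step follows formally from the representative $\rho([c])=[\Delta_1\tilde c]$ together with the splitting supplied by Theorem~\ref{thm:main_RGC}, so the crux of the proof is concentrated in showing that the explicit genus-one cocycles $\Delta_1\widetilde{L_k}$ are not coboundaries.
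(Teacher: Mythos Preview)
Your cochain formula $\rho([c])=[\Delta_1\tilde c]$ does not hold as written, and the rest of your argument leans on it. The maps \eqref{equ:almostexact} do \emph{not} form a short exact sequence of complexes --- the composition $\RGC\to\SRGC\to\fcGC_{0}$ is nonzero (its kernel has codimension~$2$), so there is no zig-zag to run. More concretely, the inclusion $i:\RGC\hookrightarrow\SRGC$ identifies $\RGC$ with the subspace of stable ribbon graphs all of whose black vertices are \emph{univalent} (Proposition~\ref{5: Prop on PGCd to sPGCd}); the naive lift $\tilde c$ of a graph $c\in\GC_0^2$ has black vertices of valency~$\geq 2$, and applying $\Delta_1$ (which attaches the tadpole hyperedge at a black vertex) does not cure this, so $\Delta_1\tilde c\notin i(\RGC)$. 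Your assertion that ``$\Delta_1$ carries the quotient into the subcomplex $\RGC$'' is therefore false. The long exact sequence \eqref{equ:longexact} is instead extracted in \S 6 from the auxiliary short exact sequence $0\to\HRGCP\to\HRGCP^{\mathrm{ext}}\to\SRGCP\to 0$ together with several quasi-isomorphisms, and its connecting map is identified there (Theorem~\ref{thm:main}) with the composite
\[
H^p(\GC_0^2)\;\cong\;H^p(\GCor_1)\;\cong\;H^{p+1}\bigl(\Def(\hoLieB_{0,0}\to\whoLieB_{0,0})\bigr)\;\longrightarrow\;H^{p+1}\bigl(\Def(\hoLieB_{0,0}\to\RGra_0)\bigr).
\]
Hence the paper works entirely with the oriented-graph representative $\gamma\in\GCor_1$ of $[c]$. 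For the genus statement one classifies the (at most trivalent) vertices of $\gamma$ into three types --- sources with two outputs, vertices with one input and two outputs, and vertices with two inputs --- and tracks their contribution to the Euler characteristic of the ribbon graph produced by the action \eqref{equ:def GC action 1}; this yields genus $=$ loop order directly. For the edge count the point is that the isomorphism $H(\GC_0^2)\cong H(\GCor_1)$ already changes the edge number (a class with $k$ edges and loop order $l$ is represented by an oriented graph with $k+l$ edges); the ribbon graphs in the image then carry one edge per vertex of $\gamma$, and an elementary count gives $|V(\gamma)|=k+1$. Your one-line ``$\Delta_1$ adds one edge'' shortcut is not available.

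For the infinite-dimensionality your outline is reasonable in spirit --- the loop classes $L_{4k+1}$ are indeed the right test family, and the images lie in pairwise distinct degrees --- but the actual work is not a filtration/leading-term argument. The paper first computes the image of $L_{4k+1}$ explicitly as the ``torus class'' $T_{2k+1}$ (a polygon on $2k+1$ vertices, each carrying a tadpole), and then proves $[T_{2k+1}]\neq 0$ by exhibiting a cycle in the \emph{predual} complex (where the differential contracts an edge or deletes a non-genus-changing edge) that pairs nontrivially with it. That cycle is a Catalan-weighted infinite series of ribbon graphs built from chains of edges and bunches of parallel loops, and checking that it is closed is a short combinatorial cancellation. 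Neither of your proposed detection mechanisms (an unspecified filtration, or matching with known $H^\bullet(\cM_{1,n})$) is carried out, and the first would in any case have to contend with the fact that the representative of $\rho(L_{4k+1})$ is produced only after the passage through $\GCor_1$.
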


We note furthermore that some conjectures about the complex $H(\RGC,\delta +\Delta_1)$ also appear in the forthcoming work \cite{Ca}.


%
%


\subsection{Full stable ribbon graph complex and the deformation theory of quantum $A_\infty$-algebras} We carry out constructions in somewhat larger generality, with the results about moduli spaces outlined above as the most prominent application. In particular, we provide a action of the dg Lie algebra
\[
\RGC^\diamond:=(\RGC[[\hbar]], \delta +\Delta_1+\hbar \Delta_2)
\]
on the space of quantum $A_\infty$ structures on any vector space, thus exhibiting many universal deformations of such objects.

\sip

Furthermore, our methods provide a second proof of the main result of \cite{Wi2} on the relation of the ordinary graph complexes $\GC_d$ to similar complexes of oriented graphs.

\mip

In this paper, we work almost exclusively in the algebraic setting, as one of our goals is to provide an algebraic setup within which one can study the moduli spaces of curves. That said, one should be able to describe the algebraic structures we find in completely geometric language, a task we leave to a future work.

\subsection{Some technical innovations}
We construct a new polydifferential functor $\f$ from the category of props
to the category of operads which has several nice (for our purposes in this paper) properties:

\sip

(a) For any representation $\cP \rar \cE nd_V$ of a dg prop $\cP$ in a dg vector space $V$ there is an
associated representation $\f(\cP) \rar \cE nd_{\odot^\bu V}$ in the graded symmetric algebra $\odot^{\bu}(V)$; elements of $\f(\cP)=:\f\cP$ act on ${\odot^\bu V}$ as polydifferential operators.

\sip

(b) For any dg prop $\cP$, there is a canonical injection of operads $\cC om \rar \f\cP$.  Moreover, any morphism of props $\caL ie_d\rar \cP$
extends to an associated morphism of operads $\cP ois_d\rar \f\cP$, where
$\caL ie_d$ is the prop associated with an operad of degree shifted Lie algebras, and
  $\cP ois_d$ is the operad of Poisson $d$-algebras (which, by definition, has multiplication generator in degree zero while Lie bracket generator in degree $1-d$ so that
$\cP ois_2$ is the same as the operad of Gerstenhaber algebras).
 \sip

 (c) If a dg prop $\cP$ comes equipped with a non-trivial morphism $\LBcd\rar \cP$ from the prop of Lie $(c,d)$-bialgebras
(in which the cobracket generator has degree $1-c$ and the bracket generator has degree $1-d$), there is an associated non-trivial morphism
of dg operads $\caL ie_{c+d} \rar \f_c\cP:=\f(\cP\{c\})$ which gives rise to
\Bi
\item a {\em stable}\, deformation complex
$$
\mathsf{s}\cP\mathsf{GC}_{c,d}:=\Def(\caL ie_{c+d} \rar \f_c\cP)
$$
containing the deformation complex $\cP\mathsf{GC}_{c,d}:=\Def(\LBcd\rar \cP)$ as subcomplex,

\item a {\em twisted}\, operad $\cP\cG raphs_{c,d}$  which has the following properties:
 (i) the dg Lie algebra $ \mathsf{s}\cP\mathsf{GC}_{c,d} $ (and hence $\cP\mathsf{GC}_{c,d}$) acts on $\cP\cG raphs_{c,d}$ by operadic derivations, (ii) there is a canonical morphism
    $$
    \cP ois_{c+d}\rar \cP\cG raphs_{c,d}
    $$
    of operads, and (iii) for any representation $\rho: \cP\rar \cE nd_V$ and any Maurer-Cartan element $m$ in the associated dg Lie algebra $\Def(\caL ie_{c+d} \rar \f_c\cP \rar \cE nd_{\odot^\bu(V[-c])})$
    there is a canonical representation of $\cP\cG raphs_{c,d}$ in the deformation complex of $m$.
\Ei

(d) If a dg prop $\cP$ comes equipped with a non-trivial morphism $s: \LoBcd\rar \cP$ from the prop of involutive Lie $(c,d)$-bialgebras, there is a ``diamond" version of the structures described  above, i.e. there is an associated non-trivial morphism
of dg operads $s^\diamond: \caL ie_{c+d} \rar \f_c\cP[[\hbar]]$ giving rise to (i) a {\em diamond}\, deformation complex
$$
\mathsf{s}\cP\mathsf{GC}_{c,d}^\diamond:=\Def\left(\caL ie_{c+d} \stackrel{s^\diamond}{\lon} \f_c(\cP)[[\hbar]]\right) \supset \cP\mathsf{GC}_{c,d}^\diamond:=\Def(\LoBcd\rar \cP)
$$
 and (ii) a twisted operad $\cP\cG raphs_{c,d}^\diamond=Tw(\f_c\cP[[\hbar]])$ having properties analogous to $\cP\cG raphs_{c,d}$ (but now in the category of continuous $\K[[\hbar]]$ modules).

\sip

This new general construction is powerful enough to reproduce many important operads and graph complexes from the literature (such as, e.g.,  Kontsevich's operad $\cG raphs$ \cite{Ko4}, Kontsevich's graph complex $\mathsf{GC}_d$ \cite{Wi}, or their extensions studied in \cite{MW, MW2, CMW}). In this paper we apply this machinery to a new prop of ribbon graphs $\cR \cG ra_d$
which comes equipped with a canonical morphism from $\LoB_{d,d}$, and which leads us immediately to the graph complexes $\RGC_d$ and $\SRGC_d$
discussed above, and also to two new dg operads $\cR \cG raphs_d$ and $\cR\cG raphs^\diamond_d$ of 2-coloured ribbon graphs; the latter operad is proven to act on the Hochschild complex of an arbitrary {\em quantum}\, $\cA ss_\infty$ algebra (and hence gives us a ribbon analogue
of the Kontsevich-Soibelman operad of braces \cite{KoSo} introduced in the context of ordinary $\cA ss_\infty$ algebras). We prove that the canonical map
$$
    \cP ois_{2d}\rar \cR\cG raphs_d
    $$ 
is a quasi-isomorphism. We also prove that there is a highly non-trivial, in general, action of the
Grothendieck-Teichm\"uller group $GRT_1$ on the space of so-called {\em non-commutative Poisson structures}\, on any vector space $W$ equipped with a degree $-1$ symplectic form (which interpolate between cyclic $A_\infty$ structures in $W$ and ordinary polynomial Poisson structures on $W$ as an affine space).

\subsection{Some notation}
 The set $\{1,2, \ldots, n\}$ is abbreviated to $[n]$;  its group of automorphisms is
denoted by $\bS_n$;
the trivial one-dimensional representation of
 $\bS_n$ is denoted by $\id_n$, while its one dimensional sign representation is
 denoted by $\sgn_n$; we often abbreviate $\sgn_n^d:= \sgn_n^{\ot |d|}$.
The cardinality of a finite set $A$ is denoted by $\# A$.

\sip

We work throughout in the category of $\Z$-graded vector spaces over a field $\K$
of characteristic zero.
If $V=\oplus_{i\in \Z} V^i$ is a graded vector space, then
$V[k]$ stands for the graded vector space with $V[k]^i:=V^{i+k}$ and
and $s^k$ for the associated isomorphism $V\rar V[k]$; for $v\in V^i$ we set $|v|:=i$.
For a pair of graded vector spaces $V_1$ and $V_2$, the symbol $\Hom_i(V_1,V_2)$ stands
for the space of homogeneous linear maps of degree $i$, and
$\Hom(V_1,V_2):=\bigoplus_{i\in \Z}\Hom_i(V_1,V_2)$; for example, $s^k\in \Hom_{-k}(V,V[k])$.

\sip

For a
prop(erad) $\cP$ we denote by $\cP\{k\}$ a prop(erad) which is uniquely defined by
 the following property:
for any graded vector space $V$ a representation
of $\cP\{k\}$ in $V$ is identical to a representation of  $\cP$ in $V[k]$; in particular, one has for an
endomorphism properad $\cE nd_V\{-k\}=\cE nd_{V[k]}$.
 The degree shifted operad of Lie algebras $\caL \mathit{ie}\{d\}$  is denoted by $\caL ie_{d+1}$ while its minimal resolution by $\caH \mathit{olie}_{d+1}$; representations of $\caL ie_{d+1}$ are vector spaces equipped with Lie brackets of degree $-d$.

\sip

For a right (resp., left) module $V$ over a group $G$ we denote by $V_G$ (resp.\
$_G\hspace{-0.5mm}V$)
 the $\K$-vector space of coinvariants:
$V/\{g(v) - v\ |\ v\in V, g\in G\}$ and by $V^G$ (resp.\ $^GV$) the subspace
of invariants: $\{\forall g\in  G\ :\  g(v)=v,\ v\in V\}$. If $G$ is finite, then these
spaces are canonically isomorphic as $char(\K)=0$.

\bip

{\Large
\section{\bf Involutive Lie bialgebras and Kontsevich graph complexes}
}

\mip

\subsection{Lie $n$-bialgebras} A  {\em Lie n-bialgebra}\, is a graded vector space $V$
equipped with linear maps,
$$
\vartriangle: V\rightarrow V\wedge V \ \ \ \mbox{and}\ \ \  [\ , \ ]: \wedge^2 (V[n])
\rightarrow V[n],
$$
such that
\Bi
\item the data $(V,\vartriangle)$ is a Lie coalgebra;
\item the data $(V[n], [\ ,\ ])$ is a Lie algebra;
\item the compatibility condition,
$$
\vartriangle [a, b] = \sum a_1\otimes [a_2, b] +  [a,
b_1]\otimes b_2 - (-1)^{(|a|+n)(|b|+n)}( [b, a_1]\otimes a_2
+ b_1\otimes [b_2, a]),
$$
holds for any $a,b\in V$. Here $\vartriangle a=:\sum a_1\otimes a_2$, $\vartriangle b=:\sum
b_1\otimes b_2$.
\Ei
The case  $n=0$  gives us the ordinary definition of Lie bialgebra \cite{D1}.
The case $n=1$  is if interest because minimal resolutions of Lie 1-bialgebras control
local Poisson geometry \cite{Me1,Me3}. For $n$ even
it makes sense to introduce an {\em involutive Lie
$n$-bialgebra}\, as a Lie $n$-bialgebra $(V, [\ ,\ ], \vartriangle)$ such that the
composition map
$$
\Ba{ccccc}
V & \stackrel{\vartriangle}{\lon} & \Lambda^2V & \stackrel{[\ ,\ ]}{\lon} & V[-n]\\
a & \lon &    \sum a_1\otimes a_2 &\lon & [a_1,a_2]
\Ea
$$
vanishes (for odd $n$ this condition is trivial for symmetry reasons).

\sip

Let $\caL ieb_{n}$
(resp.\ $\caL ieb^\diamond_{n}$) be the properad  of (resp.\ involutive) Lie $n$-bialgebras.
Let us also introduce the notation
\[
 \caL ieb_{c,d} = \caL ieb_{c+d-2}\{1-c\}, \ \ \ \  \caL ieb_{c,d}^\diamond = \caL ieb_{c+d-2}\{1-c\}
\]
for the degree shifted properads of Lie bialgebras and, respectively,  involutive Lie bialgebras, defined such that the cobracket has degree $1-c$, and the bracket degree $1-d$. It is worth emphasizing that
the symbol $\caL ieb_{c,d}^\diamond$ tacitly assumes that $c+d\in 2\Z$, i.e.\ that the numbers $c$ and $d$ have the same parity.

\sip

The properads $\LB_{1,1}$  and $\LoB_{1,1}$ are often denoted in the literature  by $\LB$ and $\LoB$ respectively, and we denote $\LB_{0,1}$ by $\LB_{odd}$,

\subsection{Properad of (involutive) Lie bialgebras.}
Let us furthermore describe explicitly the properads $\caL ieb_{c,d}$ and $\caL ieb_{c,d}^\diamond$ and their minimal resolutions.

\sip

By definition,  $\LBcd$ is a quadratic properad given as the quotient,
$$
\LB_{c,d}:=\cF ree\langle E\rangle/\langle\cR\rangle,
$$
of the free properad generated by an  $\bS$-bimodule $E=\{E(m,n)\}_{m,n\geq 1}$ with
 all $E(m,n)=0$ except
$$
E(2,1):=\id_1\ot \sgn_2^{c}[c-1]=\mbox{span}\left\langle
\Ba{c}\begin{xy}
 <0mm,-0.55mm>*{};<0mm,-2.5mm>*{}**@{-},
 <0.5mm,0.5mm>*{};<2.2mm,2.2mm>*{}**@{-},
 <-0.48mm,0.48mm>*{};<-2.2mm,2.2mm>*{}**@{-},
 <0mm,0mm>*{\circ};<0mm,0mm>*{}**@{},
 <0.5mm,0.5mm>*{};<2.7mm,2.8mm>*{^{_2}}**@{},
 <-0.48mm,0.48mm>*{};<-2.7mm,2.8mm>*{^{_1}}**@{},
 \end{xy}\Ea
=(-1)^{c}
\Ba{c}\begin{xy}
 <0mm,-0.55mm>*{};<0mm,-2.5mm>*{}**@{-},
 <0.5mm,0.5mm>*{};<2.2mm,2.2mm>*{}**@{-},
 <-0.48mm,0.48mm>*{};<-2.2mm,2.2mm>*{}**@{-},
 <0mm,0mm>*{\circ};<0mm,0mm>*{}**@{},
 <0.5mm,0.5mm>*{};<2.7mm,2.8mm>*{^{_1}}**@{},
 <-0.48mm,0.48mm>*{};<-2.7mm,2.8mm>*{^{_2}}**@{},
 \end{xy}\Ea
   \right\rangle
$$
$$
E(1,2):= \sgn_2^{d}\ot \id_1[d-1]=\mbox{span}\left\langle
\Ba{c}\begin{xy}
 <0mm,0.66mm>*{};<0mm,3mm>*{}**@{-},
 <0.39mm,-0.39mm>*{};<2.2mm,-2.2mm>*{}**@{-},
 <-0.35mm,-0.35mm>*{};<-2.2mm,-2.2mm>*{}**@{-},
 <0mm,0mm>*{\circ};<0mm,0mm>*{}**@{},
   <0.39mm,-0.39mm>*{};<2.9mm,-4mm>*{^{_2}}**@{},
   <-0.35mm,-0.35mm>*{};<-2.8mm,-4mm>*{^{_1}}**@{},
\end{xy}\Ea
=(-1)^{d}
\Ba{c}\begin{xy}
 <0mm,0.66mm>*{};<0mm,3mm>*{}**@{-},
 <0.39mm,-0.39mm>*{};<2.2mm,-2.2mm>*{}**@{-},
 <-0.35mm,-0.35mm>*{};<-2.2mm,-2.2mm>*{}**@{-},
 <0mm,0mm>*{\circ};<0mm,0mm>*{}**@{},
   <0.39mm,-0.39mm>*{};<2.9mm,-4mm>*{^{_1}}**@{},
   <-0.35mm,-0.35mm>*{};<-2.8mm,-4mm>*{^{_2}}**@{},
\end{xy}\Ea
\right\rangle
$$
by the ideal generated by the following elements
\Beq\label{R for LieB}
\cR:\left\{
\Ba{c}
\Ba{c}\resizebox{7mm}{!}{
\begin{xy}
 <0mm,0mm>*{\circ};<0mm,0mm>*{}**@{},
 <0mm,-0.49mm>*{};<0mm,-3.0mm>*{}**@{-},
 <0.49mm,0.49mm>*{};<1.9mm,1.9mm>*{}**@{-},
 <-0.5mm,0.5mm>*{};<-1.9mm,1.9mm>*{}**@{-},
 <-2.3mm,2.3mm>*{\circ};<-2.3mm,2.3mm>*{}**@{},
 <-1.8mm,2.8mm>*{};<0mm,4.9mm>*{}**@{-},
 <-2.8mm,2.9mm>*{};<-4.6mm,4.9mm>*{}**@{-},
   <0.49mm,0.49mm>*{};<2.7mm,2.3mm>*{^3}**@{},
   <-1.8mm,2.8mm>*{};<0.4mm,5.3mm>*{^2}**@{},
   <-2.8mm,2.9mm>*{};<-5.1mm,5.3mm>*{^1}**@{},
 \end{xy}}\Ea
 +
\Ba{c}\resizebox{7mm}{!}{\begin{xy}
 <0mm,0mm>*{\circ};<0mm,0mm>*{}**@{},
 <0mm,-0.49mm>*{};<0mm,-3.0mm>*{}**@{-},
 <0.49mm,0.49mm>*{};<1.9mm,1.9mm>*{}**@{-},
 <-0.5mm,0.5mm>*{};<-1.9mm,1.9mm>*{}**@{-},
 <-2.3mm,2.3mm>*{\circ};<-2.3mm,2.3mm>*{}**@{},
 <-1.8mm,2.8mm>*{};<0mm,4.9mm>*{}**@{-},
 <-2.8mm,2.9mm>*{};<-4.6mm,4.9mm>*{}**@{-},
   <0.49mm,0.49mm>*{};<2.7mm,2.3mm>*{^2}**@{},
   <-1.8mm,2.8mm>*{};<0.4mm,5.3mm>*{^1}**@{},
   <-2.8mm,2.9mm>*{};<-5.1mm,5.3mm>*{^3}**@{},
 \end{xy}}\Ea
 +
\Ba{c}\resizebox{7mm}{!}{\begin{xy}
 <0mm,0mm>*{\circ};<0mm,0mm>*{}**@{},
 <0mm,-0.49mm>*{};<0mm,-3.0mm>*{}**@{-},
 <0.49mm,0.49mm>*{};<1.9mm,1.9mm>*{}**@{-},
 <-0.5mm,0.5mm>*{};<-1.9mm,1.9mm>*{}**@{-},
 <-2.3mm,2.3mm>*{\circ};<-2.3mm,2.3mm>*{}**@{},
 <-1.8mm,2.8mm>*{};<0mm,4.9mm>*{}**@{-},
 <-2.8mm,2.9mm>*{};<-4.6mm,4.9mm>*{}**@{-},
   <0.49mm,0.49mm>*{};<2.7mm,2.3mm>*{^1}**@{},
   <-1.8mm,2.8mm>*{};<0.4mm,5.3mm>*{^3}**@{},
   <-2.8mm,2.9mm>*{};<-5.1mm,5.3mm>*{^2}**@{},
 \end{xy}}\Ea
 \ \ , \ \
\Ba{c}\resizebox{8.4mm}{!}{ \begin{xy}
 <0mm,0mm>*{\circ};<0mm,0mm>*{}**@{},
 <0mm,0.69mm>*{};<0mm,3.0mm>*{}**@{-},
 <0.39mm,-0.39mm>*{};<2.4mm,-2.4mm>*{}**@{-},
 <-0.35mm,-0.35mm>*{};<-1.9mm,-1.9mm>*{}**@{-},
 <-2.4mm,-2.4mm>*{\circ};<-2.4mm,-2.4mm>*{}**@{},
 <-2.0mm,-2.8mm>*{};<0mm,-4.9mm>*{}**@{-},
 <-2.8mm,-2.9mm>*{};<-4.7mm,-4.9mm>*{}**@{-},
    <0.39mm,-0.39mm>*{};<3.3mm,-4.0mm>*{^3}**@{},
    <-2.0mm,-2.8mm>*{};<0.5mm,-6.7mm>*{^2}**@{},
    <-2.8mm,-2.9mm>*{};<-5.2mm,-6.7mm>*{^1}**@{},
 \end{xy}}\Ea
 +
\Ba{c}\resizebox{8.4mm}{!}{ \begin{xy}
 <0mm,0mm>*{\circ};<0mm,0mm>*{}**@{},
 <0mm,0.69mm>*{};<0mm,3.0mm>*{}**@{-},
 <0.39mm,-0.39mm>*{};<2.4mm,-2.4mm>*{}**@{-},
 <-0.35mm,-0.35mm>*{};<-1.9mm,-1.9mm>*{}**@{-},
 <-2.4mm,-2.4mm>*{\circ};<-2.4mm,-2.4mm>*{}**@{},
 <-2.0mm,-2.8mm>*{};<0mm,-4.9mm>*{}**@{-},
 <-2.8mm,-2.9mm>*{};<-4.7mm,-4.9mm>*{}**@{-},
    <0.39mm,-0.39mm>*{};<3.3mm,-4.0mm>*{^2}**@{},
    <-2.0mm,-2.8mm>*{};<0.5mm,-6.7mm>*{^1}**@{},
    <-2.8mm,-2.9mm>*{};<-5.2mm,-6.7mm>*{^3}**@{},
 \end{xy}}\Ea
 +
\Ba{c}\resizebox{8.4mm}{!}{ \begin{xy}
 <0mm,0mm>*{\circ};<0mm,0mm>*{}**@{},
 <0mm,0.69mm>*{};<0mm,3.0mm>*{}**@{-},
 <0.39mm,-0.39mm>*{};<2.4mm,-2.4mm>*{}**@{-},
 <-0.35mm,-0.35mm>*{};<-1.9mm,-1.9mm>*{}**@{-},
 <-2.4mm,-2.4mm>*{\circ};<-2.4mm,-2.4mm>*{}**@{},
 <-2.0mm,-2.8mm>*{};<0mm,-4.9mm>*{}**@{-},
 <-2.8mm,-2.9mm>*{};<-4.7mm,-4.9mm>*{}**@{-},
    <0.39mm,-0.39mm>*{};<3.3mm,-4.0mm>*{^1}**@{},
    <-2.0mm,-2.8mm>*{};<0.5mm,-6.7mm>*{^3}**@{},
    <-2.8mm,-2.9mm>*{};<-5.2mm,-6.7mm>*{^2}**@{},
 \end{xy}}\Ea
 \\
 \Ba{c}\resizebox{5mm}{!}{\begin{xy}
 <0mm,2.47mm>*{};<0mm,0.12mm>*{}**@{-},
 <0.5mm,3.5mm>*{};<2.2mm,5.2mm>*{}**@{-},
 <-0.48mm,3.48mm>*{};<-2.2mm,5.2mm>*{}**@{-},
 <0mm,3mm>*{\circ};<0mm,3mm>*{}**@{},
  <0mm,-0.8mm>*{\circ};<0mm,-0.8mm>*{}**@{},
<-0.39mm,-1.2mm>*{};<-2.2mm,-3.5mm>*{}**@{-},
 <0.39mm,-1.2mm>*{};<2.2mm,-3.5mm>*{}**@{-},
     <0.5mm,3.5mm>*{};<2.8mm,5.7mm>*{^2}**@{},
     <-0.48mm,3.48mm>*{};<-2.8mm,5.7mm>*{^1}**@{},
   <0mm,-0.8mm>*{};<-2.7mm,-5.2mm>*{^1}**@{},
   <0mm,-0.8mm>*{};<2.7mm,-5.2mm>*{^2}**@{},
\end{xy}}\Ea
  -
\Ba{c}\resizebox{7mm}{!}{\begin{xy}
 <0mm,-1.3mm>*{};<0mm,-3.5mm>*{}**@{-},
 <0.38mm,-0.2mm>*{};<2.0mm,2.0mm>*{}**@{-},
 <-0.38mm,-0.2mm>*{};<-2.2mm,2.2mm>*{}**@{-},
<0mm,-0.8mm>*{\circ};<0mm,0.8mm>*{}**@{},
 <2.4mm,2.4mm>*{\circ};<2.4mm,2.4mm>*{}**@{},
 <2.77mm,2.0mm>*{};<4.4mm,-0.8mm>*{}**@{-},
 <2.4mm,3mm>*{};<2.4mm,5.2mm>*{}**@{-},
     <0mm,-1.3mm>*{};<0mm,-5.3mm>*{^1}**@{},
     <2.5mm,2.3mm>*{};<5.1mm,-2.6mm>*{^2}**@{},
    <2.4mm,2.5mm>*{};<2.4mm,5.7mm>*{^2}**@{},
    <-0.38mm,-0.2mm>*{};<-2.8mm,2.5mm>*{^1}**@{},
    \end{xy}}\Ea
  - (-1)^{d}
\Ba{c}\resizebox{7mm}{!}{\begin{xy}
 <0mm,-1.3mm>*{};<0mm,-3.5mm>*{}**@{-},
 <0.38mm,-0.2mm>*{};<2.0mm,2.0mm>*{}**@{-},
 <-0.38mm,-0.2mm>*{};<-2.2mm,2.2mm>*{}**@{-},
<0mm,-0.8mm>*{\circ};<0mm,0.8mm>*{}**@{},
 <2.4mm,2.4mm>*{\circ};<2.4mm,2.4mm>*{}**@{},
 <2.77mm,2.0mm>*{};<4.4mm,-0.8mm>*{}**@{-},
 <2.4mm,3mm>*{};<2.4mm,5.2mm>*{}**@{-},
     <0mm,-1.3mm>*{};<0mm,-5.3mm>*{^2}**@{},
     <2.5mm,2.3mm>*{};<5.1mm,-2.6mm>*{^1}**@{},
    <2.4mm,2.5mm>*{};<2.4mm,5.7mm>*{^2}**@{},
    <-0.38mm,-0.2mm>*{};<-2.8mm,2.5mm>*{^1}**@{},
    \end{xy}}\Ea
  - (-1)^{d+c}
\Ba{c}\resizebox{7mm}{!}{\begin{xy}
 <0mm,-1.3mm>*{};<0mm,-3.5mm>*{}**@{-},
 <0.38mm,-0.2mm>*{};<2.0mm,2.0mm>*{}**@{-},
 <-0.38mm,-0.2mm>*{};<-2.2mm,2.2mm>*{}**@{-},
<0mm,-0.8mm>*{\circ};<0mm,0.8mm>*{}**@{},
 <2.4mm,2.4mm>*{\circ};<2.4mm,2.4mm>*{}**@{},
 <2.77mm,2.0mm>*{};<4.4mm,-0.8mm>*{}**@{-},
 <2.4mm,3mm>*{};<2.4mm,5.2mm>*{}**@{-},
     <0mm,-1.3mm>*{};<0mm,-5.3mm>*{^2}**@{},
     <2.5mm,2.3mm>*{};<5.1mm,-2.6mm>*{^1}**@{},
    <2.4mm,2.5mm>*{};<2.4mm,5.7mm>*{^1}**@{},
    <-0.38mm,-0.2mm>*{};<-2.8mm,2.5mm>*{^2}**@{},
    \end{xy}}\Ea
 - (-1)^{c}
\Ba{c}\resizebox{7mm}{!}{\begin{xy}
 <0mm,-1.3mm>*{};<0mm,-3.5mm>*{}**@{-},
 <0.38mm,-0.2mm>*{};<2.0mm,2.0mm>*{}**@{-},
 <-0.38mm,-0.2mm>*{};<-2.2mm,2.2mm>*{}**@{-},
<0mm,-0.8mm>*{\circ};<0mm,0.8mm>*{}**@{},
 <2.4mm,2.4mm>*{\circ};<2.4mm,2.4mm>*{}**@{},
 <2.77mm,2.0mm>*{};<4.4mm,-0.8mm>*{}**@{-},
 <2.4mm,3mm>*{};<2.4mm,5.2mm>*{}**@{-},
     <0mm,-1.3mm>*{};<0mm,-5.3mm>*{^1}**@{},
     <2.5mm,2.3mm>*{};<5.1mm,-2.6mm>*{^2}**@{},
    <2.4mm,2.5mm>*{};<2.4mm,5.7mm>*{^1}**@{},
    <-0.38mm,-0.2mm>*{};<-2.8mm,2.5mm>*{^2}**@{},
    \end{xy}}\Ea
    \Ea
\right.
\Eeq
Note that for $c+d\in 2\Z$ the generators \begin{xy}
 <0mm,-0.55mm>*{};<0mm,-2.5mm>*{}**@{-},
 <0.5mm,0.5mm>*{};<2.2mm,2.2mm>*{}**@{-},
 <-0.48mm,0.48mm>*{};<-2.2mm,2.2mm>*{}**@{-},
 <0mm,0mm>*{\circ};<0mm,0mm>*{}**@{},
 \end{xy} and \begin{xy}
 <0mm,0.66mm>*{};<0mm,3mm>*{}**@{-},
 <0.39mm,-0.39mm>*{};<2.2mm,-2.2mm>*{}**@{-},
 <-0.35mm,-0.35mm>*{};<-2.2mm,-2.2mm>*{}**@{-},
 <0mm,0mm>*{\circ};<0mm,0mm>*{}**@{},
 \end{xy} have the same symmetry properties, while for $c+d\in 2\Z+1$ the opposite ones.

\sip

Similarly,  $\LoBcd$ (with $c+d\in 2\Z$ by default) is a quadratic properad
$
\cF ree\langle E\rangle/\langle\cR_\diamond\rangle
$
generated by the same $\bS$-bimodule $E$ modulo the relations
$$
\cR_\diamond:= \cR \ \bigsqcup
\Ba{c}\resizebox{4mm}{!}
{\xy
 (0,0)*{\circ}="a",
(0,6)*{\circ}="b",
(3,3)*{}="c",
(-3,3)*{}="d",
 (0,9)*{}="b'",
(0,-3)*{}="a'",
\ar@{-} "a";"c" <0pt>
\ar @{-} "a";"d" <0pt>
\ar @{-} "a";"a'" <0pt>
\ar @{-} "b";"c" <0pt>
\ar @{-} "b";"d" <0pt>
\ar @{-} "b";"b'" <0pt>
\endxy}
\Ea
$$

It is clear from the association
$
\vartriangle \leftrightarrow
 \begin{xy}
 <0mm,-0.55mm>*{};<0mm,-2.5mm>*{}**@{-},
 <0.5mm,0.5mm>*{};<2.2mm,2.2mm>*{}**@{-},
 <-0.48mm,0.48mm>*{};<-2.2mm,2.2mm>*{}**@{-},
 <0mm,0mm>*{\circ};<0mm,0mm>*{}**@{},
 \end{xy}$,
 $
[\ , \ ] \leftrightarrow
 \begin{xy}
 <0mm,0.66mm>*{};<0mm,3mm>*{}**@{-},
 <0.39mm,-0.39mm>*{};<2.2mm,-2.2mm>*{}**@{-},
 <-0.35mm,-0.35mm>*{};<-2.2mm,-2.2mm>*{}**@{-},
 <0mm,0mm>*{\circ};<0mm,0mm>*{}**@{},
 \end{xy}
$
that there is a one-to-one correspondence between representations of $\LBcd$ (resp.,
$\LoBcd$) in a finite dimensional space $V$ and (resp., involutive) Lie $(c+d-2)$-bialgebra
structures in $V[c-1]$.

\sip

It was proven in \cite{Ko3,MaVo,V} that the properad $\LBcd$ is Koszul for $d+c\in 2\Z$ and in
 \cite{Me3} for $d+c\in 2\Z+1$ so that its minimal resolution $\HoLBcd$ is
easy to construct. It
is generated by the following (skew)symmetric corollas of degree $1 +c(1-m)+d(1-n)$
$$
\Ba{c}\resizebox{17mm}{!}{\begin{xy}
 <0mm,0mm>*{\circ};<0mm,0mm>*{}**@{},
 <-0.6mm,0.44mm>*{};<-8mm,5mm>*{}**@{-},
 <-0.4mm,0.7mm>*{};<-4.5mm,5mm>*{}**@{-},
 <0mm,0mm>*{};<1mm,5mm>*{\ldots}**@{},
 <0.4mm,0.7mm>*{};<4.5mm,5mm>*{}**@{-},
 <0.6mm,0.44mm>*{};<8mm,5mm>*{}**@{-},
   <0mm,0mm>*{};<-10.5mm,5.9mm>*{^{\sigma(1)}}**@{},
   <0mm,0mm>*{};<-4mm,5.9mm>*{^{\sigma(2)}}**@{},
   <0mm,0mm>*{};<10.0mm,5.9mm>*{^{\sigma(m)}}**@{},
 <-0.6mm,-0.44mm>*{};<-8mm,-5mm>*{}**@{-},
 <-0.4mm,-0.7mm>*{};<-4.5mm,-5mm>*{}**@{-},
 <0mm,0mm>*{};<1mm,-5mm>*{\ldots}**@{},
 <0.4mm,-0.7mm>*{};<4.5mm,-5mm>*{}**@{-},
 <0.6mm,-0.44mm>*{};<8mm,-5mm>*{}**@{-},
   <0mm,0mm>*{};<-10.5mm,-6.9mm>*{^{\tau(1)}}**@{},
   <0mm,0mm>*{};<-4mm,-6.9mm>*{^{\tau(2)}}**@{},
   <0mm,0mm>*{};<10.0mm,-6.9mm>*{^{\tau(n)}}**@{},
 \end{xy}}\Ea
=(-1)^{c|\sigma|+d|\tau|}
\Ba{c}\resizebox{14mm}{!}{\begin{xy}
 <0mm,0mm>*{\circ};<0mm,0mm>*{}**@{},
 <-0.6mm,0.44mm>*{};<-8mm,5mm>*{}**@{-},
 <-0.4mm,0.7mm>*{};<-4.5mm,5mm>*{}**@{-},
 <0mm,0mm>*{};<-1mm,5mm>*{\ldots}**@{},
 <0.4mm,0.7mm>*{};<4.5mm,5mm>*{}**@{-},
 <0.6mm,0.44mm>*{};<8mm,5mm>*{}**@{-},
   <0mm,0mm>*{};<-8.5mm,5.5mm>*{^1}**@{},
   <0mm,0mm>*{};<-5mm,5.5mm>*{^2}**@{},
   <0mm,0mm>*{};<4.5mm,5.5mm>*{^{m\hspace{-0.5mm}-\hspace{-0.5mm}1}}**@{},
   <0mm,0mm>*{};<9.0mm,5.5mm>*{^m}**@{},
 <-0.6mm,-0.44mm>*{};<-8mm,-5mm>*{}**@{-},
 <-0.4mm,-0.7mm>*{};<-4.5mm,-5mm>*{}**@{-},
 <0mm,0mm>*{};<-1mm,-5mm>*{\ldots}**@{},
 <0.4mm,-0.7mm>*{};<4.5mm,-5mm>*{}**@{-},
 <0.6mm,-0.44mm>*{};<8mm,-5mm>*{}**@{-},
   <0mm,0mm>*{};<-8.5mm,-6.9mm>*{^1}**@{},
   <0mm,0mm>*{};<-5mm,-6.9mm>*{^2}**@{},
   <0mm,0mm>*{};<4.5mm,-6.9mm>*{^{n\hspace{-0.5mm}-\hspace{-0.5mm}1}}**@{},
   <0mm,0mm>*{};<9.0mm,-6.9mm>*{^n}**@{},
 \end{xy}}\Ea \ \ \forall \sigma\in \bS_m, \forall\tau\in \bS_n
$$
and has the differential
given on the generators by
\Beq\label{LBk_infty}
\delta
\Ba{c}\resizebox{14mm}{!}{\begin{xy}
 <0mm,0mm>*{\circ};<0mm,0mm>*{}**@{},
 <-0.6mm,0.44mm>*{};<-8mm,5mm>*{}**@{-},
 <-0.4mm,0.7mm>*{};<-4.5mm,5mm>*{}**@{-},
 <0mm,0mm>*{};<-1mm,5mm>*{\ldots}**@{},
 <0.4mm,0.7mm>*{};<4.5mm,5mm>*{}**@{-},
 <0.6mm,0.44mm>*{};<8mm,5mm>*{}**@{-},
   <0mm,0mm>*{};<-8.5mm,5.5mm>*{^1}**@{},
   <0mm,0mm>*{};<-5mm,5.5mm>*{^2}**@{},
   <0mm,0mm>*{};<4.5mm,5.5mm>*{^{m\hspace{-0.5mm}-\hspace{-0.5mm}1}}**@{},
   <0mm,0mm>*{};<9.0mm,5.5mm>*{^m}**@{},
 <-0.6mm,-0.44mm>*{};<-8mm,-5mm>*{}**@{-},
 <-0.4mm,-0.7mm>*{};<-4.5mm,-5mm>*{}**@{-},
 <0mm,0mm>*{};<-1mm,-5mm>*{\ldots}**@{},
 <0.4mm,-0.7mm>*{};<4.5mm,-5mm>*{}**@{-},
 <0.6mm,-0.44mm>*{};<8mm,-5mm>*{}**@{-},
   <0mm,0mm>*{};<-8.5mm,-6.9mm>*{^1}**@{},
   <0mm,0mm>*{};<-5mm,-6.9mm>*{^2}**@{},
   <0mm,0mm>*{};<4.5mm,-6.9mm>*{^{n\hspace{-0.5mm}-\hspace{-0.5mm}1}}**@{},
   <0mm,0mm>*{};<9.0mm,-6.9mm>*{^n}**@{},
 \end{xy}}\Ea
\ \ = \ \
 \sum_{[1,\ldots,m]=I_1\sqcup I_2\atop
 {|I_1|\geq 0, |I_2|\geq 1}}
 \sum_{[1,\ldots,n]=J_1\sqcup J_2\atop
 {|J_1|\geq 1, |J_2|\geq 1}
}\hspace{0mm}
\pm
\Ba{c}\resizebox{22mm}{!}{ \begin{xy}
 <0mm,0mm>*{\circ};<0mm,0mm>*{}**@{},
 <-0.6mm,0.44mm>*{};<-8mm,5mm>*{}**@{-},
 <-0.4mm,0.7mm>*{};<-4.5mm,5mm>*{}**@{-},
 <0mm,0mm>*{};<0mm,5mm>*{\ldots}**@{},
 <0.4mm,0.7mm>*{};<4.5mm,5mm>*{}**@{-},
 <0.6mm,0.44mm>*{};<12.4mm,4.8mm>*{}**@{-},
     <0mm,0mm>*{};<-2mm,7mm>*{\overbrace{\ \ \ \ \ \ \ \ \ \ \ \ }}**@{},
     <0mm,0mm>*{};<-2mm,9mm>*{^{I_1}}**@{},
 <-0.6mm,-0.44mm>*{};<-8mm,-5mm>*{}**@{-},
 <-0.4mm,-0.7mm>*{};<-4.5mm,-5mm>*{}**@{-},
 <0mm,0mm>*{};<-1mm,-5mm>*{\ldots}**@{},
 <0.4mm,-0.7mm>*{};<4.5mm,-5mm>*{}**@{-},
 <0.6mm,-0.44mm>*{};<8mm,-5mm>*{}**@{-},
      <0mm,0mm>*{};<0mm,-7mm>*{\underbrace{\ \ \ \ \ \ \ \ \ \ \ \ \ \ \
      }}**@{},
      <0mm,0mm>*{};<0mm,-10.6mm>*{_{J_1}}**@{},
 <13mm,5mm>*{};<13mm,5mm>*{\circ}**@{},
 <12.6mm,5.44mm>*{};<5mm,10mm>*{}**@{-},
 <12.6mm,5.7mm>*{};<8.5mm,10mm>*{}**@{-},
 <13mm,5mm>*{};<13mm,10mm>*{\ldots}**@{},
 <13.4mm,5.7mm>*{};<16.5mm,10mm>*{}**@{-},
 <13.6mm,5.44mm>*{};<20mm,10mm>*{}**@{-},
      <13mm,5mm>*{};<13mm,12mm>*{\overbrace{\ \ \ \ \ \ \ \ \ \ \ \ \ \ }}**@{},
      <13mm,5mm>*{};<13mm,14mm>*{^{I_2}}**@{},
 <12.4mm,4.3mm>*{};<8mm,0mm>*{}**@{-},
 <12.6mm,4.3mm>*{};<12mm,0mm>*{\ldots}**@{},
 <13.4mm,4.5mm>*{};<16.5mm,0mm>*{}**@{-},
 <13.6mm,4.8mm>*{};<20mm,0mm>*{}**@{-},
     <13mm,5mm>*{};<14.3mm,-2mm>*{\underbrace{\ \ \ \ \ \ \ \ \ \ \ }}**@{},
     <13mm,5mm>*{};<14.3mm,-4.5mm>*{_{J_2}}**@{},
 \end{xy}}\Ea
\Eeq
where the signs on the r.h.s\ are uniquely fixed for $c+d\in 2\Z$ by the fact that they all equal to $+1$ if $ c$ and $d$ are odd integers, and for $c+d\in 2\Z+1$ the signs are given explicitly in
\cite{Me1,Me3}.

\sip

It was proven in \cite{CMW} that the properad $\LoBcd$ is also Koszul and that its minimal resolution $\HoLoBcd$ is
 is a free properad generated
by the following (skew)symmetric corollas of degree $1+c(1-m-a)+d(1-n-a)$
$$
\Ba{c}\resizebox{16mm}{!}{\xy
(-9,-6)*{};
(0,0)*+{a}*\cir{}
**\dir{-};
(-5,-6)*{};
(0,0)*+{a}*\cir{}
**\dir{-};
(9,-6)*{};
(0,0)*+{a}*\cir{}
**\dir{-};
(5,-6)*{};
(0,0)*+{a}*\cir{}
**\dir{-};
(0,-6)*{\ldots};
(-10,-8)*{_1};
(-6,-8)*{_2};
(10,-8)*{_n};
(-9,6)*{};
(0,0)*+{a}*\cir{}
**\dir{-};
(-5,6)*{};
(0,0)*+{a}*\cir{}
**\dir{-};
(9,6)*{};
(0,0)*+{a}*\cir{}
**\dir{-};
(5,6)*{};
(0,0)*+{a}*\cir{}
**\dir{-};
(0,6)*{\ldots};
(-10,8)*{_1};
(-6,8)*{_2};
(10,8)*{_m};
\endxy}\Ea
=(-1)^{(d+1)(\sigma+\tau)}
\Ba{c}\resizebox{20mm}{!}{\xy
(-9,-6)*{};
(0,0)*+{a}*\cir{}
**\dir{-};
(-5,-6)*{};
(0,0)*+{a}*\cir{}
**\dir{-};
(9,-6)*{};
(0,0)*+{a}*\cir{}
**\dir{-};
(5,-6)*{};
(0,0)*+{a}*\cir{}
**\dir{-};
(0,-6)*{\ldots};
(-12,-8)*{_{\tau(1)}};
(-6,-8)*{_{\tau(2)}};
(12,-8)*{_{\tau(n)}};
(-9,6)*{};
(0,0)*+{a}*\cir{}
**\dir{-};
(-5,6)*{};
(0,0)*+{a}*\cir{}
**\dir{-};
(9,6)*{};
(0,0)*+{a}*\cir{}
**\dir{-};
(5,6)*{};
(0,0)*+{a}*\cir{}
**\dir{-};
(0,6)*{\ldots};
(-12,8)*{_{\sigma(1)}};
(-6,8)*{_{\sigma(2)}};
(12,8)*{_{\sigma(m)}};
\endxy}\Ea\ \ \ \forall \sigma\in \bS_m, \forall \tau\in \bS_n,
$$
where $m+n+ a\geq 3$, $m\geq 1$, $n\geq 1$, $a\geq 0$. The differential in
$\HoLoBd$ is given on the generators by
\Beq\label{2: d on Lie inv infty}
\delta
\Ba{c}\resizebox{16mm}{!}{\xy
(-9,-6)*{};
(0,0)*+{a}*\cir{}
**\dir{-};
(-5,-6)*{};
(0,0)*+{a}*\cir{}
**\dir{-};
(9,-6)*{};
(0,0)*+{a}*\cir{}
**\dir{-};
(5,-6)*{};
(0,0)*+{a}*\cir{}
**\dir{-};
(0,-6)*{\ldots};
(-10,-8)*{_1};
(-6,-8)*{_2};
(10,-8)*{_n};
(-9,6)*{};
(0,0)*+{a}*\cir{}
**\dir{-};
(-5,6)*{};
(0,0)*+{a}*\cir{}
**\dir{-};
(9,6)*{};
(0,0)*+{a}*\cir{}
**\dir{-};
(5,6)*{};
(0,0)*+{a}*\cir{}
**\dir{-};
(0,6)*{\ldots};
(-10,8)*{_1};
(-6,8)*{_2};
(10,8)*{_m};
\endxy}\Ea
=
\sum_{l\geq 1}\sum_{a=b+c+l-1}\sum_{[m]=I_1\sqcup I_2\atop
[n]=J_1\sqcup J_2} \pm
\Ba{c}
%
%
\Ba{c}\resizebox{21mm}{!}{\xy
(0,0)*+{b}*\cir{}="b",
(10,10)*+{c}*\cir{}="c",
%
(-9,6)*{}="1",
(-7,6)*{}="2",
(-2,6)*{}="3",
(-3.5,5)*{...},
(-4,-6)*{}="-1",
(-2,-6)*{}="-2",
(4,-6)*{}="-3",
(1,-5)*{...},
(0,-8)*{\underbrace{\ \ \ \ \ \ \ \ }},
(0,-11)*{_{J_1}},
(-6,8)*{\overbrace{ \ \ \ \ \ \ }},
(-6,11)*{_{I_1}},
(6,16)*{}="1'",
(8,16)*{}="2'",
(14,16)*{}="3'",
(11,15)*{...},
(11,6)*{}="-1'",
(16,6)*{}="-2'",
(18,6)*{}="-3'",
(13.5,6)*{...},
(15,4)*{\underbrace{\ \ \ \ \ \ \ }},
(15,1)*{_{J_2}},
(10,18)*{\overbrace{ \ \ \ \ \ \ \ \ }},
(10,21)*{_{I_2}},
%
(0,2)*-{};(8.0,10.0)*-{}
**\crv{(0,10)};
(0.5,1.8)*-{};(8.5,9.0)*-{}
**\crv{(0.4,7)};
(1.5,0.5)*-{};(9.1,8.5)*-{}
**\crv{(5,1)};
(1.7,0.0)*-{};(9.5,8.6)*-{}
**\crv{(6,-1)};
(5,5)*+{...};
\ar @{-} "b";"1" <0pt>
\ar @{-} "b";"2" <0pt>
\ar @{-} "b";"3" <0pt>
\ar @{-} "b";"-1" <0pt>
\ar @{-} "b";"-2" <0pt>
\ar @{-} "b";"-3" <0pt>
\ar @{-} "c";"1'" <0pt>
\ar @{-} "c";"2'" <0pt>
\ar @{-} "c";"3'" <0pt>
\ar @{-} "c";"-1'" <0pt>
\ar @{-} "c";"-2'" <0pt>
\ar @{-} "c";"-3'" <0pt>
\endxy}\Ea
\Ea
\Eeq
where the summation parameter $l$ counts the number of internal edges connecting the two vertices
on the r.h.s., and the signs are  fixed by the fact that they all equal to $+1$ for $c$ and $d$
odd intergers.

\sip

The case $c=d$ is of special interest because of the following basic example.

\subsection{An example of involutive Lie bialgebra}\label{2: subsection on cyclic words}
Let $W$ be a finite dimensional graded vector space equipped over a field $\K$ of
characteristic zero equipped with a degree $1-d$ (not necessarily non-degenerate) pairing,
$$
\Ba{rccc}
\Theta: & W\ot W & \lon & \K[1-d] \\
  &  w_1\ot w_2 & \lon & \Theta(w_1,w_2)
\Ea
$$
satisfying the (skew) symmetry condition,
\Beq\label{2: skewsymmetry on scalar product}
\Theta(w_1,w_2)=(-1)^{d+|w_1||w_2|}\Theta(w_2,w_1).
\Eeq
If $\Theta$ is non-degenerate, then for $d$ odd it makes $W$ into a symplectic vector
space, while for $d$
even it defines an odd symmetric metric on $W$. The associated
vector space of ``cyclic words in $W$",
$$
Cyc^\bu(W):=\sum_{n\geq 0} (W^{\ot n})^{\Z_n},
$$
admits a canonical representation of the operad $\LoB_{d,d}$ given by the following
well-known proposition (see, e.g., \cite{Ch} and references cited there).

\begin{proposition}\label{2: Proposition on inv LieBi in CycW}  The formulae
$$
[(w_1\ot...\ot w_n)_{\Z_n}, (v_1\ot ...\ot v_m)^{\Z_n}]:=\hspace{100mm}
$$
$$
\hspace{20mm} \sum_{i=1}^n\sum_{j=1}^m
 \pm
\Theta(w_i,w_j) (w_1\ot ...\ot  w_{i-1}\ot v_{j+1}\ot ... \ot v_m\ot v_1\ot ... \ot v_{j-1}\ot w_{i+1}\ot\ldots\ot w_n)^{\Z_{n+m-2}}
$$
$$
\vartriangle (w_1\ot\ldots\ot w_n)_{\Z_n}:=\sum_{i\neq j}
\pm\Theta(w_i,w_j)(w_{i+1}\ot ...\ot w_{j-1})^{\Z_{j-i-1}}\bigotimes
(w_{j+1}\ot ...\ot w_{i-1})^{\Z_{n-j+i-1}}    \hspace{15mm}
$$
where $\pm$ stands for standard Koszul sign,
make $Cyc^\bu(W)$ into a $\LoB_{d,d}$-algebra.
\end{proposition}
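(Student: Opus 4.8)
The plan is to read the desired structure as a morphism of properads $\LoB_{d,d}\to \cE nd_{Cyc^\bu(W)}$ and to verify it by checking the defining relations. Since $\LoB_{d,d}=\cF ree\langle E\rangle/\langle \cR_\diamond\rangle$ is quadratic, generated by a cobracket (the image of $E(2,1)$, of degree $1-d$) and a bracket (the image of $E(1,2)$, of degree $1-d$) modulo $\cR_\diamond=\cR\sqcup\{[\ ,\ ]\circ\vartriangle\}$, giving such a morphism amounts to declaring the images of the two generators — which are precisely the two displayed formulae — and checking that they kill the relations. Concretely I would verify four things: (a) the two operations descend to the cyclic (co)invariants and carry the prescribed degree $1-d$ and the $\bS_2$-symmetry built into $E(2,1)$ and $E(1,2)$; (b) the Jacobi identity for $[\ ,\ ]$ and the co-Jacobi identity for $\vartriangle$ (the first two relations of \eqref{R for LieB}); (c) the mixed Drinfeld compatibility (the five-term relation on the second line of \eqref{R for LieB}, with $c=d$); and (d) the involutivity $[\ ,\ ]\circ\vartriangle=0$.

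First I would fix a uniform graphical bookkeeping. A cyclic word is a circle carrying cyclically ordered marked points labelled by elements of $W$, and $\Theta$ is used to glue two marked points by a chord, contracting the corresponding tensor factors with a Koszul sign for transporting $\Theta$ past the intervening letters together with the extra $(-1)^{d}$ prescribed by \eqref{2: skewsymmetry on scalar product}. In this language the bracket glues one point of one necklace to one point of another and \emph{fuses} the two circles into one, while the cobracket glues two points of a single necklace and \emph{severs} it into two. That both operations land in $Cyc^\bu(W)$, are independent of the chosen cyclic representatives, and have degree $1-d$ is a direct consequence of the cyclic invariance and of the symmetry \eqref{2: skewsymmetry on scalar product} of $\Theta$; this settles (a).

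For (b) and (c) I would note that each relation is an identity among chord configurations carrying exactly two chords. The Jacobi and co-Jacobi identities are the classical necklace-Lie-(co)algebra statements: the three cyclically rotated summands correspond to the three ways of placing two fusing chords among three circles (respectively, of cutting one circle into three pieces), and they cancel once the signs are normalized. The five-term Drinfeld relation is the one genuinely mixed computation and I expect it to be \emph{the main obstacle}: expanding $\vartriangle[a,b]$ against $[a_1,b]\ot a_2+\cdots$ one must match, chord by chord, the configurations in which the fusing chord and the cutting chord are disjoint, while the boundary cases in which the two chords share a contracted letter produce exactly the four leaf-permuted terms carrying the fixed signs $(-1)^{d},(-1)^{c+d},(-1)^{c}$ (here $c=d$). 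The difficulty is purely the enumeration of the relative cyclic positions of the two contraction points and the reconciliation of the resulting Koszul signs with those prescribed signs.

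Finally, for (d) I would compute $[\ ,\ ]\circ\vartriangle$ directly: applying $\vartriangle$ contracts a pair $(w_i,w_j)$ and splits the necklace into two arcs, and the subsequent bracket re-contracts a pair $(w_k,w_l)$ with $w_k,w_l$ lying in the two different arcs, producing a single necklace weighted by $\Theta(w_i,w_j)\Theta(w_k,w_l)$ over four distinct indices $i,j,k,l$. The fixed-point-free involution $(i,j,k,l)\leftrightarrow(k,l,i,j)$, which exchanges the roles of the cutting and the fusing chord, pairs each such summand with another one having the same underlying necklace; the symmetry \eqref{2: skewsymmetry on scalar product} of $\Theta$ together with the orientation signs renders the two contributions equal and opposite, so the total sum vanishes. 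Assembling (a)--(d) shows that the assignment defines a properad morphism $\LoB_{d,d}\to\cE nd_{Cyc^\bu(W)}$, which is exactly the claim.
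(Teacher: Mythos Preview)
Your approach is correct and is the classical direct verification (as in \cite{Ch}), but it differs from the paper's proof. The paper does not check the relations in $\cE nd_{Cyc^\bu(W)}$ at all; instead it factors the desired morphism as
\[
\LoB_{d,d}\ \stackrel{s^\diamond}{\lon}\ \cR\cG ra_d\ \stackrel{\rho_W}{\lon}\ \cE nd_{Cyc^\bu(W)},
\]
where $\cR\cG ra_d$ is the prop of ribbon graphs introduced in \S 4. The representation $\rho_W$ (Theorem~{\ref{4: Theorem on repr of Rgra in CycW}}) is essentially your ``necklace'' bookkeeping promoted to a prop action, and the morphism $s^\diamond$ (Theorem~{\ref{4: Theorem on LieB --> Rgra}}) sends the bracket and cobracket generators to the one-edge graphs $\xy(0,0)*{\bullet}="a";(6,0)*{\bu}="b";\ar @{-} "a";"b" <0pt>\endxy$ and $\xy(0,-2)*{\bu}="A";(0,-2)*{\bu}="B";"A"; "B" **\crv{(6,6) & (-6,6)};\endxy$. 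The Jacobi, co-Jacobi and Drinfeld relations then become identities among two-edge ribbon graphs, and --- this is the payoff --- the involutivity relation is trivial: $s^\diamond$ sends $[\ ,\ ]\circ\vartriangle$ to the genus-one graph with two crossed edges, which vanishes in $\cR\cG ra_d(1,1)$ for every $d$ because it admits an orientation-reversing automorphism (Example~{\ref{4: example on vanishing (1,1) graph}}(i)). Your involution $(i,j,k,l)\leftrightarrow(k,l,i,j)$ is exactly the shadow of this edge-swapping automorphism, so the two arguments are the same computation in different clothing; the paper's packaging simply replaces the sign chase by a one-line symmetry observation. Your route is self-contained; the paper's route is slicker for (d) and, more importantly, situates the proposition inside the machinery that drives the rest of the paper.
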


We shall show in \S~{\ref{4 remark on proof of LieBi on CycW}} below a new short and elementary
proof of this proposition with the help
of a certain properad of ribbon graphs $\cR \cG ra_d$ which comes equipped with the canonical representation
in $Cyc^\bu(W)$.

\subsection{Deformation complexes of (involutive) Lie bialgebras and Moyal type brackets}
\label{2: DefTh invoLieB and Moyal}  According to the general theory \cite{MV}, the set of $\HoLoBcd$-algebra structures in a dg
vector space $V$ can be identified with the set  Maurer-Cartan elements
  of a  graded Lie algebra,
\Beq\label{2: g_V diamond}
\fg_V^\diamond:=\Hom(V,V)[1]\ \oplus \ \Def\left(\HoLoBcd \stackrel{0}{\rar} \cE nd_V\right),
\Eeq
which controls deformations of the zero morphism from $\HoLoBcd$ to the endomorphism
prop  $\cE nd_V$; the summand $\Hom(V,V)[1]$ takes care about deformations of the differential in $V$. As a $\Z$-graded vector space,
\Beqrn
\fg_V^\diamond[-c-d]&=&\prod_{a\geq 0, m,n\geq 1} \Hom_{\bS_m\times \bS_n}\left(\sgn_m^{|c|}\ot\sgn_n^{|d|}[c(m+a-1)+d(n+a-1)],
V^{\ot m}\ot (V^*)^{\ot  n}\right)[-c-d]\\
&=& \prod_{a\geq0, m,n\geq 1} \odot^m(V[-c])\hat{\ot}  \odot^n(V^*[-d])[-(c+d)a]\\
&=& \displaystyle  \widehat{\odot^{\bu\geq 1}}\left(V[-c]\, \oplus\, V^*[-d]\right)[[\hbar]]
\Eeqrn
where $ \hbar$ is a formal  parameter\footnote{For a vector space $W$ we denote by $W[[\hbar]]$
 the vector space of formal power series in $\hbar$ with coefficients in $W$. For later use we denote by $\hbar^m W[[\hbar]]$ the subspace of $W[[\hbar]]$ spanned by series of the form
 $\hbar^m f$ for some $f\in W[[\hbar]]$.}
 of degree $c+d$. Note that $\fg_V^\diamond[-c-d]$
is a graded commutative algebra.
The Lie brackets $\{\ ,\ \}$ in $\fg_V^\diamond$ can be read, according to \cite{MV}, from the formula (\ref{2: d on Lie inv infty}) for the differential
in the minimal resolution $\HoLoBcd$. They are best described as a degree $-(c+d)$ linear map,
$$
\Ba{rccc}
 \{\ ,\ \}: & \fg_V^\diamond[-c-d] \ot \fg_V^\diamond[-c-d]  & \lon & \fg_V^\diamond[-c-d][-c-d]\\
& (f,g) &\lon & \{f,g\}
\Ea
$$
which in turn is best described in terms of a $\K[[\hbar]]$-linear associative non-commutative product $\star_\hbar$
in $\widehat{\odot^{\bu\geq 1}}\left(V[-c] + V^*[-d]\right)[[\hbar]]$ which deforms the standard graded commutative product and is
given on arbitrary polynomials,
$$
v_Iv'_J:=v_{i_1}v_{i_2}\cdots v_{\# I}v'_{j_1}v'_{j_2}\cdots v'_{j_{\# J}} \in \odot^{\# I}(V[-c])\ot
  \odot^{\# J}(V^*[-d]),
$$
as follows
\Beqrn
(v_Iv'_{J}) \star_\hbar (v_Kv'_{L})&:= &(-1)^{|v'_J||v_k|}v_Iv_Kv'_{J}v'_{L} + \\
&& \sum_{k=1}^{\max\{\# I, \# L\}} \frac{\hbar^{k}}{k!}\sum_{s:[k]\hook I\atop
t:[k]\rar L} (-1)^\var \left( \prod_{i=1}^k \langle v_{s(i)}, v'_{t(i)}\rangle \right) v_{I\setminus s([k])} v_K v'_{J} v'_{L\setminus t([k])}
\Eeqrn
where $(-1)^\var$ is the standard Koszul sign, and the second summation runs over all injections
of the set $[k]$ into sets $I$ and $L$. The associativity of the product $\star_\hbar$  implies that the degree $-(c+d)$ brackets
\Beq\label{2: brackets in g_V diamond}
\{f,g\}:= \frac{f \star_\hbar g - (-1)^{|f||g|} g \star_\hbar h}{\hbar}
\Eeq
 make $\fg_V^\diamond$ into a (degree shifted) Lie algebra
which is precisely the one which  controls the deformation theory of the zero morphism
of props  $\HoLoBcd {\rar} \cE nd_V$.
Hence { Maurer-Cartan elements of this Lie algebra, that is,  degree $c+d+1$ elements $\ga$
in  $\displaystyle  \widehat{\odot^{\bu\geq 1}}\left(V[-c]\, \oplus\, V^*[-d]\right)[[\hbar]]$ satisfying the equation
$$
\{\ga,\ga\}=0
$$
are in one-to-one correspondence with $\HoLoBcd$ structures in $V$.}


\sip

As usual for the star products, the limit
$$
\{f,g\}_0:=\lim_{\hbar\rar 0} \frac{f \star_\hbar g - (-1)^{|f||g|} g \star_\hbar h}{\hbar}
$$
 defines a  Poisson structure of degree $-(c+d)$. The Lie algebra
$
 \left(\widehat{\odot^{\bu\geq 1}}\left(V[-c] + V^*[-d]\right), \ \{\ ,\ \}_0\right)
$
is precisely the degree shifted deformation complex  $\fg_V[-c-d]$,
\Beq\label{2: g_V}
\fg_V:=\Hom(V,V)[1]\ \oplus \ \Def\left(\HoLBcd \stackrel{0}{\lon} \cE nd_V\right),
\Eeq
and its MC elements are in 1-1 correspondence with $\HoLBcd$ algebra structures in $V$.

\subsubsection{\bf A compact description of $\HoLoBcd$ algebras}\label{2: deformation theory of inv Lie bialgebras}
The fact that the deformation complex (\ref{2: g_V diamond}) is most naturally described in terms of formal power series in $\hbar$
prompts us to consider properads in the symmetric monoidal category
of topological modules over the graded algebra $\K[[\hbar]]$ with $|\hbar|=c+d$. Consider
the following formal power series in
$\HoLoBcd[[\hbar]]$,
\Beq\label{2: formal power series of generators}
\Ba{c}\resizebox{15mm}{!}{\begin{xy}
 <0mm,0mm>*{\bu};<0mm,0mm>*{}**@{},
 <-0.6mm,0.44mm>*{};<-8mm,5mm>*{}**@{-},
 <-0.4mm,0.7mm>*{};<-4.5mm,5mm>*{}**@{-},
 <0mm,0mm>*{};<-1mm,5mm>*{\ldots}**@{},
 <0.4mm,0.7mm>*{};<4.5mm,5mm>*{}**@{-},
 <0.6mm,0.44mm>*{};<8mm,5mm>*{}**@{-},
   <0mm,0mm>*{};<-8.5mm,5.5mm>*{^1}**@{},
   <0mm,0mm>*{};<-5mm,5.5mm>*{^2}**@{},
   <0mm,0mm>*{};<4.5mm,5.5mm>*{^{m\hspace{-0.5mm}-\hspace{-0.5mm}1}}**@{},
   <0mm,0mm>*{};<9.0mm,5.5mm>*{^m}**@{},
 <-0.6mm,-0.44mm>*{};<-8mm,-5mm>*{}**@{-},
 <-0.4mm,-0.7mm>*{};<-4.5mm,-5mm>*{}**@{-},
 <0mm,0mm>*{};<-1mm,-5mm>*{\ldots}**@{},
 <0.4mm,-0.7mm>*{};<4.5mm,-5mm>*{}**@{-},
 <0.6mm,-0.44mm>*{};<8mm,-5mm>*{}**@{-},
   <0mm,0mm>*{};<-8.5mm,-6.9mm>*{^1}**@{},
   <0mm,0mm>*{};<-5mm,-6.9mm>*{^2}**@{},
   <0mm,0mm>*{};<4.5mm,-6.9mm>*{^{n\hspace{-0.5mm}-\hspace{-0.5mm}1}}**@{},
   <0mm,0mm>*{};<9.0mm,-6.9mm>*{^n}**@{},
 \end{xy}}\Ea
 :=
 \sum_{a\geq 3-m-n\atop
 a\geq 0}^\infty \hbar^{a}
\Ba{c}\resizebox{16mm}{!}{ \xy
(-9,-6)*{};
(0,0)*+{a}*\cir{}
**\dir{-};
(-5,-6)*{};
(0,0)*+{a}*\cir{}
**\dir{-};
(9,-6)*{};
(0,0)*+{a}*\cir{}
**\dir{-};
(5,-6)*{};
(0,0)*+{a}*\cir{}
**\dir{-};
(0,-6)*{\ldots};
(-10,-8)*{_1};
(-6,-8)*{_2};
(10,-8)*{_m};
(-9,6)*{};
(0,0)*+{a}*\cir{}
**\dir{-};
(-5,6)*{};
(0,0)*+{a}*\cir{}
**\dir{-};
(9,6)*{};
(0,0)*+{a}*\cir{}
**\dir{-};
(5,6)*{};
(0,0)*+{a}*\cir{}
**\dir{-};
(0,6)*{\ldots};
(-10,8)*{_1};
(-6,8)*{_2};
(10,8)*{_n};
\endxy}\Ea
\ \ \ \ \ \ \ \ \ \ \ \ \ \mbox{for} \ \ m,n\geq 1.
\Eeq
These are homogeneous elements in  $\HoLoBcd[[\hbar]]$ of degree
$1+c(1-m)+d(1-n)$. Note that
$$
\begin{xy}
 <0mm,0mm>*{\bu};
 <0mm,-0.5mm>*{};<0mm,-3mm>*{}**@{-},
 <0mm,0.5mm>*{};<0mm,3mm>*{}**@{-},
 \end{xy}:= \sum_{a=1}^\infty \hbar^{a}
 \xy
(0,-4)*{};
(0,0)*+{a}*\cir{}
**\dir{-};
(0,4)*{};
(0,0)*+{a}*\cir{}
**\dir{-};
\endxy \in \hbar \HoLoBcd[[\hbar]]
$$
The differential (\ref{2: d on Lie inv infty}) takes now the form of the formal power
series in $\hbar$,
\Beq\label{2: d_hbar}
\delta
\Ba{c}\resizebox{15mm}{!}{\xy
(-9,-6)*{};
(0,0)*+{\bu}
**\dir{-};
(-5,-6)*{};
(0,0)*+{}
**\dir{-};
(9,-6)*{};
(0,0)*+{}
**\dir{-};
(5,-6)*{};
(0,0)*+{}
**\dir{-};
(0,-6)*{\ldots};
(-10,-8)*{_1};
(-6,-8)*{_2};
(10,-8)*{_m};
(-9,6)*{};
(0,0)*+{}
**\dir{-};
(-5,6)*{};
(0,0)*+{}
**\dir{-};
(9,6)*{};
(0,0)*+{}
**\dir{-};
(5,6)*{};
(0,0)*+{}
**\dir{-};
(0,6)*{\ldots};
(-10,8)*{_1};
(-6,8)*{_2};
(10,8)*{_n};
\endxy}\Ea
=
\sum_{l\geq 1}\sum_{[m]=I_1\sqcup I_2\atop
[n]=J_1\sqcup J_2} \hbar^{l-1}(-1)^\var
\Ba{c}
%
%
\Ba{c}\resizebox{22mm}{!}{\xy
(0,0)*+{\bu}="b",
(10,10)*+{\bu}="c",
%
(-9,6)*{}="1",
(-7,6)*{}="2",
(-2,6)*{}="3",
(-3.5,5)*{...},
(-4,-6)*{}="-1",
(-2,-6)*{}="-2",
(4,-6)*{}="-3",
(1,-5)*{...},
(0,-8)*{\underbrace{\ \ \ \ \ \ \ \ }},
(0,-11)*{_{J_1}},
(-6,8)*{\overbrace{ \ \ \ \ \ \ }},
(-6,11)*{_{I_1}},
(6,16)*{}="1'",
(8,16)*{}="2'",
(14,16)*{}="3'",
(11,15)*{...},
(11,6)*{}="-1'",
(16,6)*{}="-2'",
(18,6)*{}="-3'",
(13.5,6)*{...},
(15,4)*{\underbrace{\ \ \ \ \ \ \ }},
(15,1)*{_{J_2}},
(10,18)*{\overbrace{ \ \ \ \ \ \ \ \ }},
(10,21)*{_{I_2}},
%
(0,2)*-{};(8.0,10.0)*-{}
**\crv{(0,10)};
(0.5,1.8)*-{};(8.5,9.0)*-{}
**\crv{(0.4,7)};
(1.5,0.5)*-{};(9.1,8.5)*-{}
**\crv{(5,1)};
(1.7,0.0)*-{};(9.5,8.6)*-{}
**\crv{(6,-1)};
(5,5)*+{...};
\ar @{-} "b";"1" <0pt>
\ar @{-} "b";"2" <0pt>
\ar @{-} "b";"3" <0pt>
\ar @{-} "b";"-1" <0pt>
\ar @{-} "b";"-2" <0pt>
\ar @{-} "b";"-3" <0pt>
\ar @{-} "c";"1'" <0pt>
\ar @{-} "c";"2'" <0pt>
\ar @{-} "c";"3'" <0pt>
\ar @{-} "c";"-1'" <0pt>
\ar @{-} "c";"-2'" <0pt>
\ar @{-} "c";"-3'" <0pt>
\endxy}\Ea
\Ea
\Eeq
where $l$ counts the number of internal edges connecting two vertices on the r.h.s. \mip

Let $\HoLhBcd$ be a dg properad in the category of topological $\K[[\hbar]]$
modules which is freely generated over $\K[[\hbar]]$ by corollas (\ref{2: formal power series of generators}),
and is equipped with differential (\ref{2: d_hbar}). Then we have the following obvious facts
\Bi
\item[(i)] There is a canonical morphism of dg properads
$$
\Ba{rccc}
 &  \HoLhBcd  & \lon & \HoLoBcd[[\hbar]]\vspace{2mm}\\
            &
            \Ba{c}\resizebox{14mm}{!}{\begin{xy}
 <0mm,0mm>*{\bu};<0mm,0mm>*{}**@{},
 <-0.6mm,0.44mm>*{};<-8mm,5mm>*{}**@{-},
 <-0.4mm,0.7mm>*{};<-4.5mm,5mm>*{}**@{-},
 <0mm,0mm>*{};<-1mm,5mm>*{\ldots}**@{},
 <0.4mm,0.7mm>*{};<4.5mm,5mm>*{}**@{-},
 <0.6mm,0.44mm>*{};<8mm,5mm>*{}**@{-},
   <0mm,0mm>*{};<-8.5mm,5.5mm>*{^1}**@{},
   <0mm,0mm>*{};<-5mm,5.5mm>*{^2}**@{},
   <0mm,0mm>*{};<4.5mm,5.5mm>*{^{m\hspace{-0.5mm}-\hspace{-0.5mm}1}}**@{},
   <0mm,0mm>*{};<9.0mm,5.5mm>*{^m}**@{},
 <-0.6mm,-0.44mm>*{};<-8mm,-5mm>*{}**@{-},
 <-0.4mm,-0.7mm>*{};<-4.5mm,-5mm>*{}**@{-},
 <0mm,0mm>*{};<-1mm,-5mm>*{\ldots}**@{},
 <0.4mm,-0.7mm>*{};<4.5mm,-5mm>*{}**@{-},
 <0.6mm,-0.44mm>*{};<8mm,-5mm>*{}**@{-},
   <0mm,0mm>*{};<-8.5mm,-6.9mm>*{^1}**@{},
   <0mm,0mm>*{};<-5mm,-6.9mm>*{^2}**@{},
   <0mm,0mm>*{};<4.5mm,-6.9mm>*{^{n\hspace{-0.5mm}-\hspace{-0.5mm}1}}**@{},
   <0mm,0mm>*{};<9.0mm,-6.9mm>*{^n}**@{},
 \end{xy}}\Ea & \lon &
\displaystyle \sum_{a\geq 3-m-n\atop
 a\geq 0}^\infty \hbar^{a}
\resizebox{15mm}{!}{\mbox{\xy
(-9,-6)*{};
(0,0)*+{a}*\cir{}
**\dir{-};
(-5,-6)*{};
(0,0)*+{a}*\cir{}
**\dir{-};
(9,-6)*{};
(0,0)*+{a}*\cir{}
**\dir{-};
(5,-6)*{};
(0,0)*+{a}*\cir{}
**\dir{-};
(0,-6)*{\ldots};
(-10,-8)*{_1};
(-6,-8)*{_2};
(10,-8)*{_m};
(-9,6)*{};
(0,0)*+{a}*\cir{}
**\dir{-};
(-5,6)*{};
(0,0)*+{a}*\cir{}
**\dir{-};
(9,6)*{};
(0,0)*+{a}*\cir{}
**\dir{-};
(5,6)*{};
(0,0)*+{a}*\cir{}
**\dir{-};
(0,6)*{\ldots};
(-10,8)*{_1};
(-6,8)*{_2};
(10,8)*{_n};
\endxy}}
\Ea
$$

\item[(ii)] There is a 1-1 correspondence between morphisms of dg properads
$
\rho: \HoLoBcd \lon \cP
$
in the category of graded vector spaces over $\K$, and morphisms
of dg properads
$
\rho_\hbar: \HoLhBcd \lon \cP[[\hbar]]
$
in the category of topological $\K[[\hbar]]$-modules satisfying the condition
$
\rho_\hbar\left(\begin{xy}
 <0mm,0mm>*{\bu};
 <0mm,-0.5mm>*{};<0mm,-2.5mm>*{}**@{-},
 <0mm,0.5mm>*{};<0mm,2.5mm>*{}**@{-},
 \end{xy}\right)\in \hbar\cP[[\hbar]]
$
(Let us call such continuous maps $\rho_\hbar$  {\em admissible}).

\item[(iii)] The deformation complex (\ref{2: g_V diamond}) can be equivalently rewritten as $\fg_V^\diamond=\Hom(V,V)[1] \oplus \Def_{adm}\left(\HoLhBcd \stackrel{0}{\lon} \cE nd_V[[\hbar]]\right)$, where the second summand describes deformations of the zero morphism only in the class of admissible morphisms.
\Ei

The properad $\HoLhBcd$ has much fewer generators than $\HoLoBcd$ so that working with the former makes sometimes presentation shorter.

\subsection{An action of the Grothendieck-Teichm\"uller group on (involutive) Lie bialgebras}
There is a highly non-trivial action of the oriented versions of the  Kontsevich graph complexes $\mathsf{GC_{c+d}}$
on the dg  props $\HoLBcd$ and $\HoLoBcd$  which was discovered in \cite{Wi2, MW2}, and which plays a very important role in this paper.

\subsubsection{\bf Kontsevich graph complexes} Let $G_{n,l}$ be the set of directed graphs $\Ga$ with vertex set $V(\Ga)=\{1, \ldots , n\}$
and the set of directed edges $E(\Ga)=\{1, \ldots , k\}$, i.e.\ we assume that
both vertices and edges of $\Ga$ carry numerical labels. There is
a natural right action of the group $\bS_n \times  \bS_l \ltimes
 (\bS_2)^l$ on this set, with $\bS_n$ acting by relabeling the vertices, $\bS_l$ by relabeling the
edges and the $(\bS_2)^l$ via the reversing the directions of the edges.
For each fixed integer $d$, consider a collection of $\bS_n$-modules,
$$
\cG ra_{d}=\left\{\cG ra_d(n):= \prod_{l\geq 0} \K \langle G_{n,l}\rangle \ot_{ \bS_l \ltimes
 (\bS_2)^l}  \sgn_l^{|d|}\ot \sgn_2^{l|d-1|} [l(d-1)]   \right\}_{n\geq 1}
$$
Thus an element of $\cG ra_d(n)$ can be understood as a pair $(\Ga, or)$, where
$\Ga$ is a {\em undirected}\, graph with a fixed bijection $V(\Ga)\rar [n]$,
and $or$ is a choice of {\em orientation}\, in $\Ga$ which depends on the parity of $d$:
for $d$ even $or$ is a total ordering of its edges (up to an even permutation)
while for $d$ odd $or$ is a choice  a  direction on each edge of $\Ga$ (up to its reversing  and the simultaneous multiplication of the graph $\Ga$ by $-1$).
For every $d$ and every graph $\Ga$ there are only two possible orientations, $or$ and $or^{opp}$, in $\Ga$,
and we have a relation $(\Ga, or)=-(\Ga,or^{opp})$. In particular, if $\Ga$ admits an automorphism
which swaps orientations, then  $(\Ga, or)=(\Ga,or^{opp})=-(\Ga,or)=0$. We abbreviate from now on $(\Ga, or)$
to $\Ga$ keeping in mind that some orientation is implicitly chosen, for example
\Beq\label{2: examples of graphs from Gra}
\Ba{c}\resizebox{6mm}{!}{\xy
(0,0)*+{_1}*\cir{}="b",
(0,7)*+{_2}*\cir{}="c",
(6,3.5)*+{_3}*\cir{}="a",
\ar @{-} "b";"c" <0pt>
\ar @{-} "b";"a" <0pt>
\ar @{-} "c";"a" <0pt>
\endxy}
\Ea \in \cG ra_{2k}\ \ , \ \
\Ba{c}\resizebox{6mm}{!}{\xy
(0,0)*+{_1}*\cir{}="b",
(0,7)*+{_2}*\cir{}="c",
(6,3.5)*+{_3}*\cir{}="a",
\ar @{->} "b";"c" <0pt>
\ar @{->} "b";"a" <0pt>
\ar @{->} "c";"a" <0pt>
\endxy}
\Ea = -\Ba{c}\resizebox{6mm}{!}{\xy
(0,0)*+{_1}*\cir{}="b",
(0,7)*+{_2}*\cir{}="c",
(6,3.5)*+{_3}*\cir{}="a",
\ar @{->} "b";"c" <0pt>
\ar @{->} "b";"a" <0pt>
\ar @{<-} "c";"a" <0pt>
\endxy}
\Ea \in \cG ra_{2k+1}
\Eeq
The homological degree of a graph from $\cG ra_{d}$ is given by assigning degree $1-d$ to each edge. For example, all the graphs shown just above have degree $3(1-d)$.

\sip

The $\bS$-module $\cG ra_d$ is an operad. The operadic composition,
$$
\Ba{rccc}
\circ_i: &  \cG ra_d(n) \times \cG ra_d(m) &\lon & \cG ra_d(m+n-1),  \ \ \forall\ i\in [n]\\
         &       (\Ga_1, \Ga_2) &\lon &      \Ga_1\circ_i \Ga_2,
\Ea
$$
is defined by substituting the graph $\Ga_2$ into the $i$-labeled vertex $v_i$ of $\Ga_1$ and taking a sum over  re-attachments of dangling edges (attached before to $v_i$) to vertices of $\Ga_2$
in all possible ways.

\sip

For any operad $\cP=\{\cP(n)\}_{n\geq 1}$  in the category of graded vector spaces,
the linear the map
$$
\Ba{rccc}
[\ ,\ ]:&  \sP \ot \sP & \lon & \sP\\
& (a\in \cP(n), b\in \cP(m)) & \lon &
[a, b]:= \sum_{i=1}^n a\circ_i b - (-1)^{|a||b|}\sum_{i=1}^m b\circ_i a\ \in \cP(m+n-1)
\Ea
$$
makes a graded vector space
$
\sP:= \prod_{n\geq 1}\cP(n)$
into a Lie algebra \cite{KM}; moreover, these brackets induce a Lie algebra structure on the subspace
of invariants
$
\sP^\bS:=  \prod_{n\geq 1}\cP(n)^{\bS_n}$. In particular,
the graded vector space
$$
\mathsf{fGC}_{d}:= \prod_{n\geq 1} \cG ra_{d}(n)^{\bS_n}[d(n-1)]
$$
is a Lie algebra with respect to the above Lie brackets, and as such it can be identified
with the deformation complex $\Def(\caL ie_{d}\stackrel{0}{\rar} \cG ra_{d})$. Hence non-trivial Maurer-Cartan elements $\al$ of $(\mathsf{fGC}_{d}, [\ ,\ ])$ give us non-trivial morphisms of operads
$\al:\caL ie_{d} {\rar} \cG ra_{d}$. One such non-trivial morphism  is easy to detect --- it is given on the generator of $\caL ie_{d}$ by \cite{Wi}
\Beq\label{2: alpha map from Lie to gra}
\al \left(\Ba{c}\begin{xy}
 <0mm,0.66mm>*{};<0mm,3mm>*{}**@{-},
 <0.39mm,-0.39mm>*{};<2.2mm,-2.2mm>*{}**@{-},
 <-0.35mm,-0.35mm>*{};<-2.2mm,-2.2mm>*{}**@{-},
 <0mm,0mm>*{\circ};<0mm,0mm>*{}**@{},
   <0.39mm,-0.39mm>*{};<2.9mm,-4mm>*{^{_2}}**@{},
   <-0.35mm,-0.35mm>*{};<-2.8mm,-4mm>*{^{_1}}**@{},
\end{xy}\Ea\right)=\left\{
\Ba{ll}
\xy
 (0,0)*{\bullet}="a",
(5,0)*{\bu}="b",
\ar @{-} "a";"b" <0pt>
\endxy:=
\Ba{c}\resizebox{6.3mm}{!}{\xy
(0,0)*+{_1}*\cir{}="b",
(8,0)*+{_2}*\cir{}="c",
\ar @{-} "b";"c" <0pt>
\endxy}
\Ea  +
\Ba{c}\resizebox{6.3mm}{!}{\xy
(0,0)*+{_2}*\cir{}="b",
(8,0)*+{_1}*\cir{}="c",
\ar @{-} "b";"c" <0pt>
\endxy}
\Ea & \mathrm{for}\ d\ \mathrm{even}\\
\xy
 (0,0)*{\bullet}="a",
(5,0)*{\bu}="b",
\ar @{->} "a";"b" <0pt>
\endxy:=
\Ba{c}\resizebox{6.3mm}{!}{\xy
(0,0)*+{_1}*\cir{}="b",
(8,0)*+{_2}*\cir{}="c",
\ar @{->} "b";"c" <0pt>
\endxy}
\Ea  -
\Ba{c}\resizebox{7mm}{!}{\xy
(0,0)*+{_2}*\cir{}="b",
(8,0)*+{_1}*\cir{}="c",
\ar @{->} "b";"c" <0pt>
\endxy}
\Ea & \mathrm{for}\ d\ \mathrm{odd}\\
\Ea
\right.
\Eeq
Note that graphs from $\mathsf{fGC}_{d+1}$ have vertices' labels symmetrized (for $d$ even) or skew-symmetrized (for $d$ odd) so that in pictures we can and will omit labels of vertices completely and denote them by black bullets as in the formula above. The above morphism makes
 $(\mathsf{fGC}_{d}, [\ ,\ ])$ into a {\em differential}\, Lie algebra with the differential $\delta:= [\al ,\ ]$. This dg lie algebra has three important subalgebras,
\Bi
\item[(i)] $\mathsf{fcGC}_d \subset \mathsf{fGC}_d$ which is spanned by connected graphs,
\item[(ii)] $\mathsf{GC}_d^2 \subset \mathsf{fcGC}_d$ which is spanned by graphs with at least bivalent vertices, and
\item[(iii)] $\mathsf{GC}_d \subset \mathsf{GC}_d^2$ which is spanned by graphs with at least trivalent vertices.
\Ei
It is obvious that the cohomology $H^\bu(\mathsf{fGC}_{d})$ is completely determined
by $H^\bu(\mathsf{fcGC}_{d})$; it was shown in \cite{Ko2,Wi} that the cohomology
$H^\bu(\mathsf{fcGC}_{d})$ is essentially determined by $H^\bu(\mathsf{GC}_{d})$
as one has the decomposition
$$
H^\bu(\mathsf{fcGC}_{d}) = H^\bu(\mathsf{GC}_{d}^2) = H^\bu(\mathsf{GC}_{d})\ \oplus\ \bigoplus_{j\geq 1\atop j\equiv 2d+1 \mod 4} \K[d-j],
$$
where the summand  $ \K[d-j]$ is generated by the loop-type graph with $j$ binary vertices. Note that the homological degree of graph $\Ga$ from $\mathsf{fGC}_{d}$ is given by
$
|\Ga|=d(\# V(\Ga) -1) + (1-d) \# E(\Ga).
$
It was proven in \cite{Wi} that
$$
H^0(\mathsf{GC}_{2})=\fg\fr\ft
$$
where $\fg\fr\ft$ is the Lie algebra of the Grothendieck-Teichm\"uller group introduced by Drinfeld in the context of deformation quantization of Lie bialgebras \cite{D2}. This group play an important role in many  other areas of mathematics, in particular in the knot theory, in deformation quantizations
of Poisson manifolds \cite{Ko4}  and of Lie bialgebras \cite{EK}, and in the classification theory of solutions of Kashiwara-Vergne problem \cite{AT}.

\subsection{A class of representations of $\cG ra_{d}$ and $ \mathsf{fGC}_{d}$} Let $W$ be a {\em finite}-dimensional vector space equipped with a degree $1-d$ scalar product $\Theta$ satisfying (\ref{2: skewsymmetry on scalar product}).
 Then there is a representation
$$
\Ba{rccc}
\rho: & \cG ra_{d} & \lon & \cE nd_{\odot^\bu W}\\
      & \Ga \in \cG ra_{d}(n) & \lon & \rho_\Ga\in \Hom((\odot^\bu W)^{\ot n}, \odot^\bu W)
      \Ea
$$
given by
$$
\rho_\Ga(f_1 \ot \ldots \ot f_n):=  \mu\left( \prod_{e\in E(\Ga)} \Delta_e (f_1\ot \ldots \ot f_n) \right)
$$
where $\mu: \ot^n( \odot^\bu W) \rar \odot^\bu W$ is the standard multiplication map and, for each edge $e$ connecting vertex labelled by $i\in [n]$ to the vertex labelled by $j\in [n]$, the operator $\Delta_e$ acts only the $i$-th and $j$-th tensor factors $f_i\ot f_j$ as follows
$$
\Ba{rccc}
\Delta_e: & \odot^\bu W \ot \odot^\bu W & \rar & \odot^\bu W\ot \odot^\bu W\\
          & (w_1w_2 \cdots w_p) \ot  (w_1'w_2'\cdots w_q') &\rar & \displaystyle\sum_{k=1}^p\sum_{l=1}^m
 \pm
\Theta(w_k,w_l) w_1\cdots  w_{k-1}w_{k+1}\cdots w_p w'_1\cdots w'_{l-1}w'_{l+1}\cdots w'_q
\Ea
$$
The image of the graph $\xy
 (0,0)*{\bullet}="a",
(5,0)*{\bu}="b",
\ar @{-} "a";"b" <0pt>
\endxy$/ $\xy
 (0,0)*{\bullet}="a",
(5,0)*{\bu}="b",
\ar @{->} "a";"b" <0pt>
\endxy$ under this representations makes the vector space $V:=\odot^\bu W$ into a $\caL ie_{d}$
algebra so that one can consider its Chevalier-Eilenberg complex
$$
CE^\bu(V, V):= \Def( \caL ie_{d}\stackrel{\rho\circ \al}{\lon} \cE nd_V)\simeq  \prod_{n\geq 1} \Hom(\odot^n (V[d]), V[d]).
$$
The representation $\rho$ induces a canonical morphism of dg Lie algebras
$$
 \mathsf{fGC}_{d} \lon CE^\bu(V, V)
$$
which gives us an interpretation of the graph complex  $\mathsf{fGC}_{d}$ as a {\em universal}\, version of $CE^\bu(V, V)$.

\subsection{Oriented graph complexes}\label{2: subsection on operad Gra} Let $G_{n,l}^{or}$ be the subset of $G_{n,l}$
 consisting of directed graphs $\Ga$ with no wheels, that is, directed sequences of edges forming an oriented closed path; let us call such directed graphs {\em oriented}. The $\bS$-module
$$
\cG ra^{or}_{d}=\left\{\cG ra_d(n):= \prod_{l\geq 0} \K \langle G_{n,l}\rangle \ot_{ \bS_l}  \sgn_l^{|d-1|}  \right\}_{n\geq 1}
$$
is an operad which comes with a canonical morphism $\al:\caL ie_{d} {\rar} \cG ra^{or}_{d}$
given by the same formula (\ref{2: alpha map from Lie to gra});
the associated graph complex
$$
\mathsf{fGC}_{d}^{or}:=\Def(\caL ie_{d} \stackrel{\al}{\rar} \cG ra^{or}_{d})
$$
is called the {\em oriented}\, graph complex. It contains a subcomplex $\mathsf{GC}_{d}^{or}$
spanned by connected graphs whose vertices are at least bivalent and which contain no vertices with precisely one input edge and one output edge. It was proven in \cite{Wi2} that that
the cohomology of the oriented graph complex  $\mathsf{GC}_{d+1}^{or}$ is related
 to the cohomology of the ordinary graph complex $\mathsf{fcGC}_d$ as follows,
$$
H^\bu(\mathsf{GC}_{d+1}^{or})=H^\bu(\mathsf{fcGC}_d)=H^\bu(\mathsf{GC}_{d})\ \oplus\ \bigoplus_{j\geq 1\atop j\equiv 2d+1 \mod 4} \K[d-j].
$$
In particular, $H^0( \mathsf{GC}_{3}^{or})=H^0(\mathsf{GC}_2)=\fg\fr\ft$.

\sip

Note that the operad $\cG ra^{or}_{d}$ and the dg Lie algebra $\mathsf{GC}_{d}^{or}$ admit representations in $\odot^\bu W$ and, resp., $CE^\bu(\odot^\bu W,
\odot^\bu W)$ for {\em arbitrary}\, vector spaces $W$ equipped with derree $1-d$ scalar products (\ref{2: skewsymmetry on scalar product}), including {\em infinite}-dimensional ones.

\mip

Consider next a Lie algebra  $\mathsf{GC}_{2d+1}^{or}[[\hbar]]$, where the formal parameter $\hbar$ has homological degree $2d$ and the Lie brackets are defined as the $\K[[\hbar]]$ linear extension
of the standard Lie brackets in $\mathsf{GC}_{2d+1}^{or}$. This Lie algebra admits the following
$MC$ element \cite{CMW}
$$
\Phi_\hbar:= \sum_{k=1}^\infty \hbar^{k-1} \underbrace{
\Ba{c}\resizebox{6mm}{!}  {\xy
(0,5)*{...},
   \ar@/^1pc/(0,0)*{\bullet};(0,10)*{\bullet}
   \ar@/^{-1pc}/(0,0)*{\bullet};(0,10)*{\bullet}
   \ar@/^0.6pc/(0,0)*{\bullet};(0,10)*{\bullet}
   \ar@/^{-0.6pc}/(0,0)*{\bullet};(0,10)*{\bullet}
 \endxy}
 \Ea}_{k\ \mathrm{edges}}
$$
which makes $\mathsf{GC}_{2d+1}^{or}[[\hbar]]$ into a complex with the differential
$$
\Ba{rccc}
\delta_\hbar: & \sG\sC_{2d+1}^{or}[[\hbar]] &\lon & \sG\sC_{2d+1}^{or}[[\hbar]]\\
         &  \Ga &  \lon & \delta_\hbar\Ga:= [\Phi_\hbar,\Ga]_{\mathrm{gra}}
\Ea
$$
 The induced differential  in $\sG\sC_{2d+1}^{or}=\sG\sC_{2d+1}^{or}[[\hbar]]/ \hbar \sG\sC_{2d+1}^{or}[[\hbar]]$ is precisely the
original
differential $\delta$. One has $H^0( \sG\sC_{3}^{or}[[\hbar]],\delta_\hbar)=\fg\fr\ft$ (see \cite{CMW}).

\subsubsection{\bf Actions of oriented graph complexes on genus completions of  $\HoLBcd$ and $\HoLoBcd$}
The properads $\LB_{c,d}$ and $\LoB_{c,d}$ (as well as their minimal resolutions $\HoLBcd$ and $\HoLoBcd$)  are naturally graded by the genus of the graphs. Let us denote by ${\wLBcd}$ and ${\wLoBcd}$ (resp., ${\wHoLBcd}$ and ${\wHoLoBcd}$) their genus completions.
The natural maps ${\wHoLBd} \rar \wLBd$ and ${\wHoLoBd}\rar \wLoBd$ are quasi-isomorphisms.

\sip

There is a natural right action of $\sG\sC_{c+d}^{or}$ on the properad ${\wHoLBcd}$  by continuous properadic derivations \cite{MW2}.
Concretely, for any graph $\Gamma$ one defines the derivation $F(\Gamma)\in \mathrm{Der}({\wHoLBd})$ sending the generating $(m,n)$ corolla of $\wHoLBcd$ to the linear combination of graphs
\Beq \label{equ:def GC action 1}
\left( \Ba{c}\resizebox{14mm}{!}{\begin{xy}
 <0mm,0mm>*{\circ};<0mm,0mm>*{}**@{},
 <-0.6mm,0.44mm>*{};<-8mm,5mm>*{}**@{-},
 <-0.4mm,0.7mm>*{};<-4.5mm,5mm>*{}**@{-},
 <0mm,0mm>*{};<-1mm,5mm>*{\ldots}**@{},
 <0.4mm,0.7mm>*{};<4.5mm,5mm>*{}**@{-},
 <0.6mm,0.44mm>*{};<8mm,5mm>*{}**@{-},
   <0mm,0mm>*{};<-8.5mm,5.5mm>*{^1}**@{},
   <0mm,0mm>*{};<-5mm,5.5mm>*{^2}**@{},
   <0mm,0mm>*{};<4.5mm,5.5mm>*{^{m\hspace{-0.5mm}-\hspace{-0.5mm}1}}**@{},
   <0mm,0mm>*{};<9.0mm,5.5mm>*{^m}**@{},
 <-0.6mm,-0.44mm>*{};<-8mm,-5mm>*{}**@{-},
 <-0.4mm,-0.7mm>*{};<-4.5mm,-5mm>*{}**@{-},
 <0mm,0mm>*{};<-1mm,-5mm>*{\ldots}**@{},
 <0.4mm,-0.7mm>*{};<4.5mm,-5mm>*{}**@{-},
 <0.6mm,-0.44mm>*{};<8mm,-5mm>*{}**@{-},
   <0mm,0mm>*{};<-8.5mm,-6.9mm>*{^1}**@{},
   <0mm,0mm>*{};<-5mm,-6.9mm>*{^2}**@{},
   <0mm,0mm>*{};<4.5mm,-6.9mm>*{^{n\hspace{-0.5mm}-\hspace{-0.5mm}1}}**@{},
   <0mm,0mm>*{};<9.0mm,-6.9mm>*{^n}**@{},
 \end{xy}}\Ea
\right)
\cdot \Gamma=
 \sum
 \Ba{c}\resizebox{10mm}{!}{ \xy
 (-5,7)*{^{_1}},
 (-3,7)*{^{_2}},
 (5.5,7)*{^{_m}},
 (-5,-7.9)*{^{_1}},
 (-3,-7.9)*{^{_2}},
 (5.5,-7.9)*{^{_n}},
(1,4.5)*+{...},
(1,-4.5)*+{...},
(0,0)*+{\Ga}="o",
(-5,6)*{}="1",
(-3,6)*{}="2",
(5,6)*{}="3",
(5,-6)*{}="4",
(-3,-6)*{}="5",
(-5,-6)*{}="6",
\ar @{-} "o";"1" <0pt>
\ar @{-} "o";"2" <0pt>
\ar @{-} "o";"3" <0pt>
\ar @{-} "o";"4" <0pt>
\ar @{-} "o";"5" <0pt>
\ar @{-} "o";"6" <0pt>
\endxy}\Ea
\Eeq
where the sum is taken over all ways of attaching the incoming and outgoing legs such that all vertices are at least trivalent and have at least one incoming and one outgoing edge. This action gives us a morphism of dg Lie algebras  $F : \GC_{c+d+1}^{or}\to \mathrm{Der}(\wHoLBcd)$ which is a quasi-isomorphism up to one class in  $\mathrm{Der}(\wHoLBcd)$ corresponding to the 1-parameter rescaling automorphism
$$
\Ba{c}\resizebox{14mm}{!}{\begin{xy}
 <0mm,0mm>*{\circ};<0mm,0mm>*{}**@{},
 <-0.6mm,0.44mm>*{};<-8mm,5mm>*{}**@{-},
 <-0.4mm,0.7mm>*{};<-4.5mm,5mm>*{}**@{-},
 <0mm,0mm>*{};<-1mm,5mm>*{\ldots}**@{},
 <0.4mm,0.7mm>*{};<4.5mm,5mm>*{}**@{-},
 <0.6mm,0.44mm>*{};<8mm,5mm>*{}**@{-},
   <0mm,0mm>*{};<-8.5mm,5.5mm>*{^1}**@{},
   <0mm,0mm>*{};<-5mm,5.5mm>*{^2}**@{},
   <0mm,0mm>*{};<4.5mm,5.5mm>*{^{m\hspace{-0.5mm}-\hspace{-0.5mm}1}}**@{},
   <0mm,0mm>*{};<9.0mm,5.5mm>*{^m}**@{},
 <-0.6mm,-0.44mm>*{};<-8mm,-5mm>*{}**@{-},
 <-0.4mm,-0.7mm>*{};<-4.5mm,-5mm>*{}**@{-},
 <0mm,0mm>*{};<-1mm,-5mm>*{\ldots}**@{},
 <0.4mm,-0.7mm>*{};<4.5mm,-5mm>*{}**@{-},
 <0.6mm,-0.44mm>*{};<8mm,-5mm>*{}**@{-},
   <0mm,0mm>*{};<-8.5mm,-6.9mm>*{^1}**@{},
   <0mm,0mm>*{};<-5mm,-6.9mm>*{^2}**@{},
   <0mm,0mm>*{};<4.5mm,-6.9mm>*{^{n\hspace{-0.5mm}-\hspace{-0.5mm}1}}**@{},
   <0mm,0mm>*{};<9.0mm,-6.9mm>*{^n}**@{},
 \end{xy}}\Ea
 \lon
 \la^{m+n-2}
 \Ba{c}\resizebox{14mm}{!}{\begin{xy}
 <0mm,0mm>*{\circ};<0mm,0mm>*{}**@{},
 <-0.6mm,0.44mm>*{};<-8mm,5mm>*{}**@{-},
 <-0.4mm,0.7mm>*{};<-4.5mm,5mm>*{}**@{-},
 <0mm,0mm>*{};<-1mm,5mm>*{\ldots}**@{},
 <0.4mm,0.7mm>*{};<4.5mm,5mm>*{}**@{-},
 <0.6mm,0.44mm>*{};<8mm,5mm>*{}**@{-},
   <0mm,0mm>*{};<-8.5mm,5.5mm>*{^1}**@{},
   <0mm,0mm>*{};<-5mm,5.5mm>*{^2}**@{},
   <0mm,0mm>*{};<4.5mm,5.5mm>*{^{m\hspace{-0.5mm}-\hspace{-0.5mm}1}}**@{},
   <0mm,0mm>*{};<9.0mm,5.5mm>*{^m}**@{},
 <-0.6mm,-0.44mm>*{};<-8mm,-5mm>*{}**@{-},
 <-0.4mm,-0.7mm>*{};<-4.5mm,-5mm>*{}**@{-},
 <0mm,0mm>*{};<-1mm,-5mm>*{\ldots}**@{},
 <0.4mm,-0.7mm>*{};<4.5mm,-5mm>*{}**@{-},
 <0.6mm,-0.44mm>*{};<8mm,-5mm>*{}**@{-},
   <0mm,0mm>*{};<-8.5mm,-6.9mm>*{^1}**@{},
   <0mm,0mm>*{};<-5mm,-6.9mm>*{^2}**@{},
   <0mm,0mm>*{};<4.5mm,-6.9mm>*{^{n\hspace{-0.5mm}-\hspace{-0.5mm}1}}**@{},
   <0mm,0mm>*{};<9.0mm,-6.9mm>*{^n}**@{},
 \end{xy}}\Ea
 \ \ \ \ \ \ \ \la\in \K
$$
of  $\wHoLBcd$.  As a complex the continuous derivations $\mathrm{Der}(\wHoLBcd)$ are identical to the deformation complex $\Def(\HoLBcd\rar \wHoLBcd)[1]$,
so that we have a canonical morphism of complexes $ \GC_{c+d+1}^{or}\rar  \Def(\HoLBcd\rar \wHoLBcd)[1]$
which induces \cite{MW2} in turn isomorphisms of cohomology groups,
$$
 H^{\bu+1}\left(\Def(\HoLBcd\rar \wLBcd)\right) \cong
H^\bu(\GC_{c+d+1}^{or}) \ \oplus \ \K \cong H^\bu( \mathsf{GC}_{c+d}^2)\ \oplus \ \K
$$
where the summand $\K$ (placed in degree zero) corresponds to the rescaling automorphism.
Therefore, for any morphism of properads $\wHoLBcd \rar \cP$ we have an associated morphism
of cohomology groups
\Beq\label{2: Holieb to P}
H^\bu( \mathsf{GC}_{c+d}^2)\ \oplus\ \K
\lon  H^{\bu+1}\left(\Def(\HoLBcd\rar \wHoLBcd)\right)
\lon
 H^{\bu+1}\left(\Def(\HoLBcd\rar \cP)\right).
\Eeq


Similarly, the dg Lie algebra $(\GC_{c+d+1}^{or}[[\hbar]], d_\hbar)$ (with $c+d\in 2\Z$)
acts naturally from the right on the properad $\wHoLoBcd$ by continuous properadic derivations. More precisely, let $\Gamma\in \GC_{c+d+1}^{or}$ be a graph.
Then to the element $\hbar^k\Gamma\in \GC_{c+d+1}^{or}[[\hbar]]$ we assign a continuous derivation of $\wHoLoBcd$ whose values on the generators are given by
$$
\left(
\Ba{c}\resizebox{16mm}{!}{\xy
(-9,-6)*{};
(0,0)*+{a}*\cir{}
**\dir{-};
(-5,-6)*{};
(0,0)*+{a}*\cir{}
**\dir{-};
(9,-6)*{};
(0,0)*+{a}*\cir{}
**\dir{-};
(5,-6)*{};
(0,0)*+{a}*\cir{}
**\dir{-};
(0,-6)*{\ldots};
(-10,-8)*{_1};
(-6,-8)*{_2};
(10,-8)*{_n};
(-9,6)*{};
(0,0)*+{a}*\cir{}
**\dir{-};
(-5,6)*{};
(0,0)*+{a}*\cir{}
**\dir{-};
(9,6)*{};
(0,0)*+{a}*\cir{}
**\dir{-};
(5,6)*{};
(0,0)*+{a}*\cir{}
**\dir{-};
(0,6)*{\ldots};
(-10,8)*{_1};
(-6,8)*{_2};
(10,8)*{_m};
\endxy}\Ea
\right)
 \cdot (\hbar^k\Gamma)
=\left\{\Ba{ll}
 \sum
\Ba{c}\resizebox{11mm}{!}{ \xy
 (-5,7)*{^{_1}},
 (-3,7)*{^{_2}},
 (5.5,7)*{^{_m}},
 (-5,-7.9)*{^{_1}},
 (-3,-7.9)*{^{_2}},
 (5.5,-7.9)*{^{_n}},
(1,4.5)*+{...},
(1,-4.5)*+{...},
(0,0)*+{\Ga}="o",
(-5,6)*{}="1",
(-3,6)*{}="2",
(5,6)*{}="3",
(5,-6)*{}="4",
(-3,-6)*{}="5",
(-5,-6)*{}="6",
\ar @{-} "o";"1" <0pt>
\ar @{-} "o";"2" <0pt>
\ar @{-} "o";"3" <0pt>
\ar @{-} "o";"4" <0pt>
\ar @{-} "o";"5" <0pt>
\ar @{-} "o";"6" <0pt>
\endxy}\Ea & \mbox{if}\ k\geq a\\
0   &  \mbox{if}\ k <a\\
\Ea
\right.
$$
where the  sum is over  all graphs obtained by  attaching the external legs to $\Gamma$ in all possible ways and  assigning weights to the vertices in all  ways such that the weights sum to $a-k$ (see \cite{MW2} for full details). By a change of sign the right action may be transformed into a left action and hence we obtain a map of Lie algebras
\[
 F_\hbar \colon \sG\sC_{c+d+1}^{or}[[\hbar]]\to \mathrm{Der}(\wHoLoBcd)\,
\]
which is a quasi-isomorphism \cite{MW2}
up to $\K[[\hbar]]$ span of the derivation corresponding to the 1-parameter automorphism
$$
\Ba{c}\resizebox{16mm}{!}{\xy
(-9,-6)*{};
(0,0)*+{a}*\cir{}
**\dir{-};
(-5,-6)*{};
(0,0)*+{a}*\cir{}
**\dir{-};
(9,-6)*{};
(0,0)*+{a}*\cir{}
**\dir{-};
(5,-6)*{};
(0,0)*+{a}*\cir{}
**\dir{-};
(0,-6)*{\ldots};
(-10,-8)*{_1};
(-6,-8)*{_2};
(10,-8)*{_n};
(-9,6)*{};
(0,0)*+{a}*\cir{}
**\dir{-};
(-5,6)*{};
(0,0)*+{a}*\cir{}
**\dir{-};
(9,6)*{};
(0,0)*+{a}*\cir{}
**\dir{-};
(5,6)*{};
(0,0)*+{a}*\cir{}
**\dir{-};
(0,6)*{\ldots};
(-10,8)*{_1};
(-6,8)*{_2};
(10,8)*{_m};
\endxy}\Ea
\lon
\la^{m+n+2a-2}
\Ba{c}\resizebox{16mm}{!}{\xy
(-9,-6)*{};
(0,0)*+{a}*\cir{}
**\dir{-};
(-5,-6)*{};
(0,0)*+{a}*\cir{}
**\dir{-};
(9,-6)*{};
(0,0)*+{a}*\cir{}
**\dir{-};
(5,-6)*{};
(0,0)*+{a}*\cir{}
**\dir{-};
(0,-6)*{\ldots};
(-10,-8)*{_1};
(-6,-8)*{_2};
(10,-8)*{_n};
(-9,6)*{};
(0,0)*+{a}*\cir{}
**\dir{-};
(-5,6)*{};
(0,0)*+{a}*\cir{}
**\dir{-};
(9,6)*{};
(0,0)*+{a}*\cir{}
**\dir{-};
(5,6)*{};
(0,0)*+{a}*\cir{}
**\dir{-};
(0,6)*{\ldots};
(-10,8)*{_1};
(-6,8)*{_2};
(10,8)*{_m};
\endxy}\Ea
$$
of $\wHoLoBcd$. For any morphism of properads $\wHoLoBcd \rar \cP$ we have an associated morphism
of cohomology groups
\Beq\label{2: Holiebdi to P}
H^\bu( \mathsf{GC}_{c+d+1}^{or}[[\hbar]], \delta_\hbar)\ \oplus\ \K \lon
 H^{\bu+1}\left(\Def(\HoLoBcd\rar \cP)\right).
\Eeq
In \S 5 below we introduce a prop(erad) of ribbon graphs $\cR \cG ra_{d}$ which comes equipped with
a non-trivial map $\wHoLoBdd \rar \cR \cG ra_{d}$ (and hence with a map $\wHoLBdd \rar \cR \cG ra_{d}$ as well as its ``forgetful" versions $\wHoLBcd \rar \cR \cG ra_{d}$)  to which all the above observations apply.

\bip

{\Large
\section{\bf Involutive Lie bialgebras and quantum $A_\infty$ algebras }
}

\mip
\subsection{BV algebras}
A graded commutative algebra $A=\oplus_{i\in \Z} A^i$ is called a {\em BV algebra}
if it comes equipped
with a degree $1$ derivation $\Delta$ of order $\leq 2$,
$$
\Delta=\underbrace{\Delta_1}_{\mathrm order\ 1\ derivation} +
\underbrace{\Delta_2}_{\mathrm order\ 2\ derivation}
$$
such that $\Delta^2=0$. The order 2 part of the derivation makes
$A$ into a $\caL ie_0$ algebra with the bracket
$$
\Ba{rccl}
\{\ ,\ \}: & \odot^2 A & \lon & A\\
         &  (a_1, a_2) & \lon & \{a_1, a_2\}:=(-1)^{|a_1|} \Delta_2(a_1a_2)-
         (-1)^{|a_1|} \Delta_2(a_1)a_2 - a_1\Delta_2(a_2).
         \Ea
$$
while order 1 part of the derivation makes this Lie algebra differential,
$$
\Delta_1\{a_1, a_2\}=\{\Delta_1(a_1), a_2\} - (-1)^{|a_1|}\{a_1,\Delta_1(a_2)\}.
$$
Note that if $A$ is a {\em free}\, graded commutative algebra, i.e\ $A=\odot^\bu W$
for some vector space $W$, then the $BV$ operator $\Delta$ is uniquely determined
by its restriction,
$
\Delta|_{\odot^2 W}: \odot^2 W \lon \odot^{\leq 2}W.
$

\sip

A degree zero element $S$ in a $BV$ algebra $A$ is called a {\em master function}
if it satisfies the equation
$$
\Delta S + \frac{1}{2}\{S,S\}=0.
$$
If the exponent $exp(S)=1+S+ \frac{1}{2}S^2 +\ldots $ makes sense in $A$, then
the above equation can be rewritten equivalently as
$
\Delta(e^S)=0.
$

\subsection{Quantum diamond functions}\label{3: subsect on inv Lie bialg and quantum master functions}
Let $(V, [\ ,\ ])$ be a $\caL ie_{d}$ algebra (so that $V[d-1]$ is an ordinary
  Lie algebra). It is well-known that the Lie brackets in $V$ can be understood as a degree 1 codifferential $\Delta_{_{[\,,\,]}}$ in
  the (completed) graded commutative coalgebra,
  $$
  C_d^\bu(V):=\widehat{\odot^{\bu}}(V[d])
  $$
which gives us also an order 2 derivation of $C_d^\bu(V)$ as a graded commutative algebra
and hence makes $C_d^\bu(V)$ into a $BV$ algebra, and in particular, into a $\caL ie_0$-algebra
 with the  brackets $\{\ ,\ \}$ given as in the subsection above.


\mip

If $(V, [\ ,\ ],\vartriangle)$ is a $\LBcd$-algebra, the  cobracket
$$
\vartriangle: V[d]\lon \odot^2(V[d])[1-c-d]
$$
extends to a derivation $\delta_\vartriangle$
of $C_d^\bu(V)$ as a graded commutative algebra, so that
$$
\hbar\delta_\vartriangle: C_d^\bu(V)[[\hbar]] \lon C_d^\bu(V)[[\hbar]], \ \ \ |\hbar|=c+d,
$$
is a differential in $C_d^\bu(V)[\hbar]]$. The Lie bialgebra
compatibility conditions on $[\ ,\ ]$ and $\vartriangle$ imply
that  $\hbar\delta_\vartriangle$ respects the Lie brackets,
$$
\delta_\vartriangle \{a,b\}= \{\delta_\vartriangle (a) ,b\} - (-1)^{|a|}
\{a, \delta_\vartriangle (b)\}
$$
and hence makes $(C_d^\bu(V)[\hbar]], \{\ ,\ \}, \hbar\delta_\vartriangle )$
into {\em dg}\, $\caL ie_0$ algebra. Note that  $(C_d^\bu(V)[\hbar]], \{\ ,\ \}, \Delta_{_{[\,,\,]}})$
is also a {\em dg}\, $\caL ie_0$ algebra.

\mip

Finally, if $(V, [\ ,\ ], \vartriangle)$ is a $\LoBcd$-algebra, then the above two dg Lie
algebra structures on  $C_d^\bu(V)$ can be combined into a single one,
$$
\left(C_d^\bu(V)[\hbar]], \{\ ,\ \}, \hbar\delta_\vartriangle  + \Delta_{_{[\,,\,]}}\right),
$$
that is, the sum $\Delta:=\hbar\delta_\vartriangle  + \Delta_{_{[\,,\,]}}$ defines a
differential in the Lie algebra $(C_d^\bu(V)[[\hbar]],\{\ ,\ \})$. In fact, this differential is a derivation of
order $\leq 2$ so that the data
$$
\left(C_d^\bu(V)[\hbar]], \hbar\delta_\vartriangle  + \Delta_{[\ ,\ ]}\right),
$$
is a $BV$ algebra.  By analogy to \cite{Ba1,Ba2, Ha, Ha2}, one can define a
{\em quantum master function}\, associated to a $\LoBcd$-algebra  $V$  as a degree
zero element $S\in C_d^\bu(V)[[\hbar]]$ which satisfies  the equation
\Beq\label{3: master eqn for inv Lie bial}
\hbar\delta_\vartriangle  S + \Delta_{_{[\,,\,]}}S + \frac{1}{2}\{S,S\}=0
\Eeq
and lies in the dg Lie subalgebra
$$
 Q_\hbar (C_d^\bu(V)):=\prod_{m\geq 1}\hbar^{m-1}\odot^{m}(V[d])[[\hbar]] \subset  C_d^\bu(V)[[\hbar]]
$$
As $\LoBcd$ algebras are sometimes called {\em diamond}\, algebras, one can call
these solutions {\em quantum diamond functions}\, in order to avoid confusion with the graded commutative Batalin-Vilkovisky formalism.

\subsubsection{\bf Basic example}\label{3: Subsubsect Basic Example} Let $V$ be $Cyc^\bu(W)$ equipped with its standard  $\LoB_{d,d}$-structure
induced from a (skew)symmetric product in a vector space $W$ of degree $1-d$
(see \S {\ref{2: subsection on cyclic words}}). Then we have the following list of associated algebraic structures
\Bi
\item[(i)] {\em Cyclic $A_\infty$ algebra}\, structures in $W$ are in 1-1 correspondence with degree zero
elements $\al$ in $Cyc^{\bu\geq 2}(W)[d]$ satisfying the equation $\{\al,\al\}=0$. We recall that
the {\em  Hochschild complex}\, of a cyclic $A_\infty$ algebra $\al$ is, by definition, the Lie algebra
 $(Cyc^{\bu}(W)[d], \{\ ,\ \})$ equipped with the differential $d_\al:=\{\al,\ \}$.

\item[(ii)] {\em Quantum $A_\infty$ algebra}\, structures in $W$ are, by definition \cite{Ba1},
quantum diamond
functions $S\in Q_\hbar(C_d^\bu(Cyc^{\bu\geq 1}W))$ satisfying an extra boundary condition
 $$
 S|_{\hbar=0}\in Cyc^{\bu\geq 2}W [d].
 $$
According to Serguei Barannikov  \cite{Ba1,Ba2}, quantum $A_\infty$ algebras give rise to (co)homology
 classes in the Kontsevich quotient, $\overline{\cM}_{g, n}^K$ of the Deligne-Mumford
 moduli stack.

 \sip

 The Hochschild complex of a quantum $A_\infty$ algebra $S$ is, by definition, the Lie algebra
 $(Q_\hbar(C_d^\bu(Cyc^{\bu}W)), \{\ ,\ \})$ equipped with the differential
 $$
 d_S:= \hbar\delta_\vartriangle  + \Delta_{_{[\,,\,]}} + \{S,\ \}
 $$

\item[(iii)] One can consider an ``intermediate" object between the two just considered   ---  a degree zero element
$\pi\in \widehat{\odot^\bu}(Cyc^\bu(W))[d]$ satisfying the equation $[\pi,\pi]=0$. For $d=2$ this structure generalizes the classical notion of a {\em Poisson structure}\, on the affine vector space $W$,
and can be called a {\em non-commutative}\, Poisson structure on $W$ as it contains, in general,
cyclic ``words" in $W$ of length $\geq 2$; in fact ordinary polynomial (or formal power series) Poisson structures $\pi$ on $W$ form
a subclass of non-commutative ones characterized by the condition that $\pi$ belongs to the subspace $\widehat{\odot^\bu}(W)[2]$ of
 $\widehat{\odot}^\bu(Cyc^\bu(W))[2]$ spanned by cyclic ``words" of length one. Thus a non-commutative Poisson structure reduces in one extreme case to a cyclic $A_\infty$ structure on $W$
 and in another extreme case to a standard Poisson structure on $W$.
 We prove below that there is a highly non-trivial (in general) action of the Grothendieck-Techm\"uller group $GRT_1$ on the set of non-commutative Poisson structures
on arbitrary $W$ extending the known action of $GRT_1$ on ordinary Poisson structures.
\Ei

\subsection{A reparametrization of quantum diamond functions}\label{3: Subsec on Hamiltons parametr of quaA_infty} In the above notations,
consider a rescaling map of homological degree $c+d$,
$$
\Ba{rccc}
H: &  Q_\hbar (C_d^\bu(V)) & \lon &
\sC_c^\bu(V):=\widehat{\odot^{\bu\geq 1}}(V[-c])[[\hbar]]\\
&
\hbar^{m-1}\odot^{m}(V[d])[[\hbar]] & \stackrel{s^{(1-m)(c+d)}
\hbar^{1-m} }{\lon} &
\odot^{m}(V[-c])[[\hbar]]
\Ea
$$
which multiplies a generating (over $\K[[\hbar]]$) monomial $\hbar^{m-1}v_1\odot v_2\odot\ldots \odot v_m $ in
$Q_\hbar (C_d^\bu(V))$
by $\hbar^{1-m}$ and simultaneously shifts its homological grading by $(1-m)(c+d)$  (so that the total degree of the output equals the total degree
of the input $+\ (c+d)$).

\sip

It is easy to see that, if $(V,[ \ ,\ ], \Delta)$ is a $\LoBcd$-algebra, then $\sC_c^\bu(V)$ is a dg $\caL ie_{c+d}$ algebra with brackets $\{\ , \ \}$ (of degree $1-c-d$) and the differential $\delta_\triangledown+ \hbar \Delta_{[\ , \ ]}$.
The morphism $H$ above extends to isomorphism of dg $\caL ie$ algebras  $Q_\hbar (C_d^\bu(V)) \rar \sC_d^\bu(V)$ and sends, in particular,
quantum diamond functions into degree $c+d$ elements $S$ in
$\sC_d^\bu(V)$
satisfying the equation

\Beq\label{3: rescaled master eqn for inv Lie bial}
\delta_\vartriangle  S + \hbar \Delta_{_{[\,,\,]}}S +  \frac{1}{2}\{S,S\}=0.
\Eeq

Quantum $\cA ss_\infty$ algebras in this parametrization have been studied in
\cite{Ha}; they correspond to solutions $S\in \sC_{c=d}^\bu(Cyc^{\bu\geq 1} W)$ of
(\ref{3: rescaled master eqn for inv Lie bial}) which satisfy a ``boundary" condition
$$
S|_{\hbar=0}\in Cyc^{\bu\geq 2} W[-d]\, \oplus\, \odot^{\bu\geq 2} (Cyc^{\bu\geq 1} W[-d]).
$$
%
%
%
%
%
%
Alastair Hamilton also considered in \cite{Ha}  a more general class of solutions $S$ of (\ref{3: rescaled master eqn for inv Lie bial}) which belong to the full graded commutative algebra
$\widehat{\odot^{\bu}}(V[-d])[2d][[\hbar]]$ and has shown that
under certain boundary conditions (and for $d$ even) they give us homology classes
in the Looijenga  generalization, $\overline{\cM}_{g, n}^L$, of the Kontsevich
moduli space $\overline{\cM}_{g, n}^K$.



\sip

Any quantum diamond function $S\in \sC_c^\bu(V)$ makes the $\caL ie_{c+d}$ algebra $(\sC_d^\bu(V), \{\ ,\ \})$
into a {\em dg}\, $\caL ie_{c+d}$ algebra with the twisted  differential
$$
\delta_S:= \delta_\triangledown + \hbar \Delta_{[\ , \ ]} + \{S,\ \}
$$
called the {\em Hochschild complex}\, of $S$. In the special case $V=Cyc(W)$ and $c=d$ this notion gives us
an equivalent (to the one given \S {\ref{3: Subsubsect Basic Example}}(ii)) definition of the Hochschild complex of a quantum $\cA ss_\infty$ algebra $S$.

\bip

{\Large
\section{\bf Prop of ribbon graphs and
involutive Lie bialgebras}
}

\mip

\subsection{Ribbon graphs} A {\em graph}\, $\Ga$ is, by definition, a triple
$(H(\Ga), \sigma_1, \sim)$ consisting of a finite set  $H(\Ga)$ called the set of
{\em half-edges}, a fixed point free involution $\sigma_1: H(\Ga)\rar H(\Ga)$
whose set of orbits, $E(\Ga):=H(\Ga)/\sigma_1$, is called the set of (unordered)
{\em edges}, and an equivalence relation $\sim$ on $H(\Ga)$ with the corresponding set of
equivalence classes, $V(\Ga):= H(\Ga)/\sim$,
called the set of {\em vertices}. For a vertex $v\in V(\Ga)$ the associated subset
$H(v):=p^{-1}(v)\subset H(\Ga)$, $p: H(\Ga)\rar V(\Ga)$ being the natural projection,
is called the set of half-edges {\em attached to}\, $v$; the number $\# H(v)=:|v|$ is
called the {\em valency}\, of the vertex $v$. Any graph $\Ga$ can be be visualized as
a 1-dimensional $CW$ complex whose $0$-cells are vertices of $\Ga$ and 1-dimensional
cells are edges. Thus it makes sense to talk about {\em connected}\, graphs, and
about {\em connected components}\, of a graph which is not connected.

\sip

A {\em ribbon graph}\, $\Ga$ is, by definition, a triple
$(H(\Ga), \sigma_1, \sigma_0)$ consisting of a finite set  $H(\Ga)$ called the
set of {\em half edges}\,
$H(\Ga)$, a fixed point free involution $\sigma_1: H(\Ga)\rar H(\Ga)$ whose
set of orbits, $E(\Ga):=H(\Ga)/\sigma_1$, is called the set of  {\em edges}, and
a permutation $\sigma_0: H(\Ga)\rar H(\Ga)$. The set of orbits, $V(\Ga):=H(\Ga)/\sigma_0$,
is called the set of {\em vertices} \, of the ribbon graph. Each orbit of $\sigma_0$
comes equipped with an induced cyclic ordering so that
a ribbon graph is just an ordinary graph equipped with a choice of cyclic ordering
on subset of half-edges, $H(v)\subset H(\Ga)$, attached to each vertex $v\in V(\Ga)$.
The orbits of the permutation $\sigma_\infty:= \sigma_0^{-1}\circ \sigma_1$
are called {\em boundaries}\, of the ribbon graph $\Ga$. The set of boundaries of $\Ga$ is
 denoted by $B(\Ga)$.



\sip

The set of  ribbon graphs with $n$ vertices,  $m$ boundaries and $l$ edges is
denoted by $\cR\cB_{m,n}^l$. Every connected ribbon graph $\Ga$ from
 $\cR\cB_{n,m}^l$ can be visualized as a topological 2-surface $S'_\Ga$ with
 $m$ boundary circles and $n$ distinguished points which is obtained from   the $CW$
 realization of $\Ga$ by thickening its
 1-cells into 2-dimensional strips, thickening every vertex $v$ into a closed disk (whose
 center is  a ``distinguished point"), and then gluing the strips to the disks in
 accordance with the given cyclic ordering in $H(v)$. If we also glue a disk
 into each boundary circle of the 2-surface $S'_\Ga$, then we get a {\em closed}\,
 topological $2$-surface $S''_\Ga$ of genus $g= 1+\frac{1}{2}(l-m-n)$. This motivates
 the following definition: for any connected ribbon graph $\Ga$ the number
 \Beq\label{4: genus of a ribbon graph}
 g= 1+\frac{1}{2}\left(\# E(\Ga) - \# V(\Ga)- \# B(\Ga)\right)
 \Eeq
is called the {\em genus}\, of $\Ga$.

\mip

We shall use in this paper a slightly different geometric realization
of a ribbon graph $\Ga \in \cR\cB_{n,m}^l$: in $S'_\Ga$ we always cut out ``small"
open disks with centers at the distinguished points (vertices) and get therefore a
topological 2-surface $S_\Ga$ with $m+n$ boundary circles; $n$ boundary circles
corresponding to vertices are called {\em in}-circles, while
$m$ boundary circles corresponding to elements of $B(\Ga)$ are called
{\em out}-circles. For example,
\bip
$$
\ \Ga_1=\xy
(0,-2)*{\bu}="A";
(0,-2)*{\bu}="B";
"A"; "B" **\crv{(6,6) & (-6,6)};
\endxy \hspace{0mm}\in \cR\cB_{2,1}^1
\ \ \Rightarrow \ \ S_{\Ga_1}= \hspace{-4mm}\Ba{c}\mbox{
\xy
(0,-1)*\ellipse(3,1){.};
(0,-1)*\ellipse(3,1)__,=:a(-180){-};
(-3,6)*\ellipse(3,1){-};
(3,6)*\ellipse(3,1){-};
(-3,12)*{}="1"; 
(3,12)*{}="2"; 
(-9,12)*{}="A2";
(9,12)*{}="B2"; 
"1";"2" **\crv{(-3,7) & (3,7)};
(-3,-2)*{}="A";
(3,-2)*{}="B"; 
(-3,1)*{}="A1";
(3,1)*{}="B1"; 
"A";"A1" **\dir{-};
"B";"B1" **\dir{-}; 
"B2";"B1" **\crv{(8,7) & (3,5)};
"A2";"A1" **\crv{(-8,7) & (-3,5)};
\endxy
}\Ea,
\ \ \ \ \ \ \ \ \ \ \
\Ga_2=
\xy
 (0,0)*{\bullet}="a",
(6,0)*{\bu}="b",
\ar @{-} "a";"b" <0pt>
\endxy
\in \cR\cB_{1,2}^1  \  \Rightarrow \
S_{\Ga_2}= \hspace{-3mm}\Ba{c}\mbox{
 \xy
(0,0.6)*\ellipse(3,1){-};
(-3,-6)*\ellipse(3,1){.};
(-3,-6)*\ellipse(3,1)__,=:a(-180){-};
(3,-6)*\ellipse(3,1){.};
(3,-6)*\ellipse(3,1)__,=:a(-180){-};
(-3,-12)*{}="1";
(3,-12)*{}="2";
(-9,-12)*{}="A2";
(9,-12)*{}="B2";
"1";"2" **\crv{(-3,-7) & (3,-7)};
(-3,1)*{}="A";
(3,1)*{}="B";
(-3,-1)*{}="A1";
(3,-1)*{}="B1";
"A";"A1" **\dir{-};
"B";"B1" **\dir{-};
"B2";"B1" **\crv{(8,-7) & (3,-5)};
"A2";"A1" **\crv{(-8,-7) & (-3,-5)};
\endxy
 }
\Ea
$$
\mip
$$
\ \Ga_3=\xy
(0,-2)*{\bu}="A";
(0,-2)*{\bu}="B";
"A"; "B" **\crv{(-4,6) & (10,6)};
"A"; "B" **\crv{(-10,6) & (4,6)};
\endxy\in \cR\cB_{1,1}^2
\ \ \ \ \   \Rightarrow  \ \ \ \ \ S_{\Ga_3}=
\Ba{c}\mbox{
 \xy
(0,0.6)*\ellipse(3,1){-};
(-3,-12)*{}="1";
(3,-12)*{}="2";
(-9,-12)*{}="A2";
(9,-12)*{}="B2";
"1";"2" **\crv{(-3,-7) & (3,-7)};
(-3,1)*{}="A";
(3,1)*{}="B";
(-3,-1)*{}="A1";
(3,-1)*{}="B1";
"A";"A1" **\dir{-};
"B";"B1" **\dir{-};
"B2";"B1" **\crv{(8,-7) & (3,-5)};
"A2";"A1" **\crv{(-8,-7) & (-3,-5)};
\endxy}\\
\ \mbox{\xy
(0,-1)*\ellipse(3,1){.};
(0,-1)*\ellipse(3,1)__,=:a(-180){-};
(-3,12)*{}="1"; 
(3,12)*{}="2"; 
(-9,12)*{}="A2";
(9,12)*{}="B2"; 
"1";"2" **\crv{(-3,7) & (3,7)};
(-3,-2)*{}="A";
(3,-2)*{}="B"; 
(-3,1)*{}="A1";
(3,1)*{}="B1"; 
"A";"A1" **\dir{-};
"B";"B1" **\dir{-}; 
"B2";"B1" **\crv{(8,7) & (3,5)};
"A2";"A1" **\crv{(-8,7) & (-3,5)};
\endxy}
\Ea
$$
\mip
\[
\Ga_4=\hspace{-3mm}\Ba{c}\mbox{\xy
(0,-2)*{\bu}="A";
(0,-2)*{\bu}="B";
"A"; "B" **\crv{(6,6) & (-6,6)};
"A"; "B" **\crv{(6,-10) & (-6,-10)};
\endxy}\Ea \hspace{-2mm}
\in \cR\cB_{3,1}^2 \ \Rightarrow \
S_{\Ga_4}= \hspace{-6mm}\Ba{c}\mbox{
\xy
(0,0)*\ellipse(3,1){.};
(0,0)*\ellipse(3,1)__,=:a(-180){-};
(-6,8)*\ellipse(3,1){-};
(6,8)*\ellipse(3,1){-};
(0,8)*\ellipse(3,1){-};
%
(3,16)*{}="1";
(9,16)*{}="2";
"1";"2" **\crv{(3,10) & (9,10)};
(-3,16)*{}="1";
(-9,16)*{}="2";
"1";"2" **\crv{(-3,10) & (-9,10)};
(-15,16)*{}="A2";
(15,16)*{}="B2";
(-3,0)*{}="A";
(3,0)*{}="B";
(-3,1)*{}="A1";
(3,1)*{}="B1";
"A";"A1" **\dir{-};
"B";"B1" **\dir{-};
"B2";"B1" **\crv{(13,6) & (2,8)};
"A2";"A1" **\crv{(-13,6) & (-2,8)};
\endxy
}
\Ea \hspace{-3mm},
\ \ \ \ \ \ \ \ \ \ \
\Ga_5=
\xy
 (0,0)*{\bullet}="a",
(4,0)*{\bu}="b",
(-4,0)*{\bu}="b'",
\ar @{-} "b'";"b" <0pt>
\endxy
\in \cR\cB_{1,3}^1 \ \Rightarrow\ \
S_{\Ga_5}= \hspace{-6mm}\Ba{c}\mbox{
\xy
(0,0)*\ellipse(3,1){-};
(-6,-8)*\ellipse(3,1){.};
(6,-8)*\ellipse(3,1){.};
(0,-8)*\ellipse(3,1){.};
(-6,-8)*\ellipse(3,1)__,=:a(-180){-};
(6,-8)*\ellipse(3,1)__,=:a(-180){-};
(0,-8)*\ellipse(3,1)__,=:a(180){-};
%
(3,-16)*{}="1";
(9,-16)*{}="2";
"1";"2" **\crv{(3,-10) & (9,-10)};
(-3,-16)*{}="1";
(-9,-16)*{}="2";
"1";"2" **\crv{(-3,-10) & (-9,-10)};
(-15,-16)*{}="A2";
(15,-16)*{}="B2";
(-3,0)*{}="A";
(3,0)*{}="B";
(-3,-1)*{}="A1";
(3,-1)*{}="B1";
"A";"A1" **\dir{-};
"B";"B1" **\dir{-};
"B2";"B1" **\crv{(13,-6) & (2,-8)};
"A2";"A1" **\crv{(-13,-6) & (-2,-8)};
\endxy
}
\Ea
\]
\bip

Therefore it is sometimes useful to represent our ribbon graphs $\Ga$ as 1-dimensional
$CW$ complexes with vertices
blown up into anticlockwise oriented dashed circles:
$$\xy
 (0,0)*{\bullet}="a",
(6,0)*{\bu}="b",
\ar @{-} "a";"b" <0pt>
\endxy\ \ \Leftrightarrow \ \
\xy
 (0,0)*{
\xycircle(3,3){.}};
(12,0)*{
\xycircle(3,3){.}};
 (3,0)*{}="a",
(9,0)*{}="b",
\ar @{-} "a";"b" <0pt>
\endxy\ \ ,\ \ \ \ \ \  \ \ \ \ \ \ \ \
\xy
(0,-2)*{\bu}="A";
(0,-2)*{\bu}="B";
"A"; "B" **\crv{(6,6) & (-6,6)};
\endxy \ \ \Leftrightarrow \ \
\xy
 (0,0)*{
\xycircle(3,3){.}};
(-3,0)*{}="1";
(3,0)*{}="2";
"1";"2" **\crv{(-6,10) & (6,10)};
\endxy
$$

Note that edges attached to a vertex  split the corresponding dashed circle
into a union of dashed intervals, and we call each such interval a {\em corner}\, of the
ribbon graph $\Ga$ under consideration, and denote the set of all corners of $\Ga$ by
 $C(\Ga)$. There are two natural partitions of the set $C(\Ga)$ into disjoint unions of
 subsets,
 $$
 C(\Ga)=\coprod_{v\in V(\Ga)} C(v),\ \ \ \ \ \
 C(\Ga)=\coprod_{b\in B(\Ga)} C(b),\ \ \
 $$
 such that each subset $C(v)$ and $C(b)$ comes equipped with a
canonically induced cyclic ordering.
 It is sometimes helpful to employ this second partition of $C(\Ga)$ to represent each
  boundary $b$ of $\Ga$ as an anticlockwise oriented topological circle of the form
$
 \Ba{c}\resizebox{10mm}{!}{\xy
(6,-8)*{\circlearrowleft},
 (0,0)*{}="a1",
(10,0)*{}="a2",
(13,-3)*{}="a3",
(13,-13)*{}="a4",
(10,-16)*{}="a5",
(0,-16)*{}="a6",
(-3,-13)*{}="a7",
(-3,-3)*{}="a8",
\ar @{-} "a1";"a2" <0pt>
\ar @{.} "a2";"a3" <0pt>
\ar @{-} "a3";"a4" <0pt>
\ar @{.} "a4";"a5" <0pt>
\ar @{-} "a5";"a6" <0pt>
\ar @{.} "a6";"a7" <0pt>
\ar @{-} "a7";"a8" <0pt>
\ar @{.} "a8";"a1" <0pt>
\endxy}\vspace{2mm}\Ea
$
where dashed intervals represent (always different) corners of $\Ga$ corresponding to $b$,
and solid intervals
represent (not necessarily different) edges of $\Ga$ which belong to $b$ when it is viewed
as a cycle of the permutation $\sigma_\infty$. For example, the graph
$\xy
(0,-2)*{\bu}="A";
(0,-2)*{\bu}="B";
"A"; "B" **\crv{(-4,6) & (10,6)};
"A"; "B" **\crv{(-10,6) & (4,6)};
\endxy\in \cR\cB_{1,1}^2$ has $4$ corners belonging to one and the same
boundary, and this boundary  gets represented exactly by the above picture.
This pictorial representation of boundaries makes obvious the following claim-definition
which will be used below.

\subsubsection{\bf Claim}\label{4: claim on edge between corners} { Let
$\Ga\in \cR\cB_{m,n}^l$ be a ribbon graph, and let
$c_1$ and $c_2$ be two (not necessarily different) corners of\, $\Ga$. Let
$e_{c_1,c_2}(\Ga)$ be the ribbon graph obtained from $\Ga$ by attaching a new edge $e$
connecting $c_1$ to $c_2$. Then
 $e_{c_1,c_2}(\Ga)\in \cR\cB_{m-1,n}^{l+1}$ if and only if the corners $c_1$ and
 $c_2$ belong to {\em different}\, boundaries as in the picture
$
 \Ba{c}\resizebox{27mm}{!}{\xy
(20,-1)*{e},
 (0,0)*{}="a1",
(10,0)*{}="a2",
(13,-3)*{}="a3",
(13,-13)*{}="a4",
(10,-16)*{}="a5",
(0,-16)*{}="a6",
(-3,-13)*{}="a7",
(-3,-3)*{}="a8",
 (30,0)*{}="b1",
(40,0)*{}="b2",
(43,-3)*{}="b3",
(43,-13)*{}="b4",
(40,-16)*{}="b5",
(30,-16)*{}="b6",
(27,-13)*{}="b7",
(27,-3)*{}="b8",
(11.5,-1.5)*{}="1";
(28.5,-1.5)*{}="2";
"1";"2" **\crv{(20,3) & (20,3)};
\ar @{-} "a1";"a2" <0pt>
\ar @{.} "a2";"a3" <0pt>
\ar @{-} "a3";"a4" <0pt>
\ar @{.} "a4";"a5" <0pt>
\ar @{-} "a5";"a6" <0pt>
\ar @{.} "a6";"a7" <0pt>
\ar @{-} "a7";"a8" <0pt>
\ar @{.} "a8";"a1" <0pt>
\ar @{-} "b1";"b2" <0pt>
\ar @{.} "b2";"b3" <0pt>
\ar @{-} "b3";"b4" <0pt>
\ar @{.} "b4";"b5" <0pt>
\ar @{-} "b5";"b6" <0pt>
\ar @{.} "b6";"b7" <0pt>
\ar @{-} "b7";"b8" <0pt>
\ar @{.} "b8";"b1" <0pt>
\endxy}\vspace{2mm}\Ea
$,
and  $e_{c_1,c_2}(\Ga)\in \cR\cB_{m+1,n}^{l+1}$ if and only if the corners $c_1$ and
$c_2$ belong to \vspace{-4mm} the {\em same}\, boundary of $\Ga$ as in the picture
$
\vspace{-7mm} \Ba{c}\resizebox{18mm}{!}{\xy
(2.5,5.5)*{e},
 (11.5,-1.5)*{}="1",
(-1.5,-14.5)*{}="2",
 (0,0)*{}="a1",
(10,0)*{}="a2",
(13,-3)*{}="a3",
(13,-13)*{}="a4",
(10,-16)*{}="a5",
(0,-16)*{}="a6",
(-3,-13)*{}="a7",
(-3,-3)*{}="a8",
"1";"2" **\crv{(20,27) & (-27,-20)};
\ar @{-} "a1";"a2" <0pt>
\ar @{.} "a2";"a3" <0pt>
\ar @{-} "a3";"a4" <0pt>
\ar @{.} "a4";"a5" <0pt>
\ar @{-} "a5";"a6" <0pt>
\ar @{.} "a6";"a7" <0pt>
\ar @{-} "a7";"a8" <0pt>
\ar @{.} "a8";"a1" <0pt>
\endxy}\vspace{4mm}\Ea
$
}.
For example, the graph $\Ga_1=\xy
(0,-2)*{\bu}="A";
(0,-2)*{\bu}="B";
"A"; "B" **\crv{(6,6) & (-6,6)};
\endxy \hspace{0mm}\in \cR\cB_{2,1}^1$ has two corners, $C(\Ga_1)=\{c_1,c_2\}$,
which belong to two different boundaries so that
$
e_{c_1,c_2}(\Ga_1)=\xy
(0,-2)*{\bu}="A";
(0,-2)*{\bu}="B";
"A"; "B" **\crv{(-4,6) & (10,6)};
"A"; "B" **\crv{(-10,6) & (4,6)};
\endxy\in \cR\cB_{1,1}^2
$
while
$
e_{c_1,c_1}(\Ga_1)=e_{c_2,c_2}(\Ga_1)=\Ba{c}\mbox{\xy
(0,-2)*{\bu}="A";
(0,-2)*{\bu}="B";
"A"; "B" **\crv{(6,6) & (-6,6)};
"A"; "B" **\crv{(6,-10) & (-6,-10)};
\endxy}\Ea
\in \cR\cB_{3,1}^2
$.

\sip

A (ribbon) graph is called {\em directed}\, if its every edge is equipped
with a choice of flow (``direction"), e.g.
$\xy
 (0,0)*{\bullet}="a",
(6,0)*{\bu}="b",
\ar @{->} "a";"b" <0pt>
\endxy$ and $\xy
(0,-2)*{\bu}="A";
(0,-2)*{\bu}="B";
{\ar@{->} (-1.69,2.4)*{}; (-1.695,2.5)*{}};
"A"; "B" **\crv{(6,6) & (-6,6)};
\endxy$
are directed (ribbon) graphs.

\subsection{Prop $\cR Gra_d$} Let $\cR_{m,n}^l$ be the set of (isomorphism classes of)
directed
ribbon graphs $\Ga$ equipped with bijections $E(\Ga)\rar [l]$, $V(\Ga) \rar [n]=\{1,2,\ldots, n\}$ and $B(\Ga) \rar [\bar{m}]=\{\bar{1},\ldots, \bar{m}\}$.
The permutation group $\bS_l$ acts on elements of $\cR_{m,n}^l$ by changing the orderings
of edges, while the group $\bS_2^{\times l}$ acts by flipping the directions of edges.
For an integer $d\in \Z$, let $sgn_l^{(d)}$ be the one dimensional representation of
the group $P_l:=\bS_l\times \bS_2^{\times l}$
on which (i) $\bS_l$ acts trivially for $d$ odd and by sign for $d$ even, and
(ii) each $\bS_2$ acts trivially for $d$ even and by sign for $d$ odd.
 For every pair of natural numbers
$m,n\in \N$ we define a graded vector space over a field $\K$,
$$
\cR\cG ra_{d}(m,n):=\bigoplus_{l\geq 0} \left(\K\langle \cR_{m,n}^l \rangle \ot_{P_l}
sgn^{(d)}_l\right)[l(d-1)]
$$
Thus a generator $\Ga$ of $\cR\cG ra_d(m,n)$ can be understood as the isomorphism
class of a ribbon graph whose vertices are labelled by elements of $[n]$, boundaries are labelled
by elements of $[m]$, whose every edge is assigned degree $1-d$, and which is equipped with an
{\em orientation}\, which depends on parity of $d$:
\Bi
 \item for even $d$ the orientation of $\Ga$ is defined as a choice
 of ordering of the set of edges $E(\Ga)$ up to a sign action of $\bS_{\# E(\Ga)}$,
\item for odd $d$ the orientation of $\Ga$ is defined as a choice
of a direction on each edge up to a sign action of $\bS_2$.

\Ei
Note that every ribbon graph $\Ga$ has precisely two possible orientations, $or$ and $or^{opp}$,
and $\Ga$ vanishes if it admits an automorphism which changes the orientation.

\sip

{\em A warning}: we sometimes
show neither the labellings of vertices and boundaries nor the choice of an orientation
(e.g.\, directions of edges) in our pictures of elements of $\cR\cG ra_d(m,n)$ below; in such cases some choices of these data are assumed by default.

\subsubsection{\bf Examples} \label{4: example on vanishing (1,1) graph}
(i)
The graph $\xy
(0,-2)*{\bu}="A";
(0,-2)*{\bu}="B";
"A"; "B" **\crv{(-4,6) & (10,6)};
"A"; "B" **\crv{(-10,6) & (4,6)};
\endxy$ equals zero in $\cR\cG ra_{d}(1,1)$ for {\em any}\, $d$ as it admits a changing orientation automorphism.

(ii) The permutation group $\bS_2$ acts on $\xy
 (0,2)*{^1},
(6,2)*{^2},
 (0,0)*{\bullet}="a",
(6,0)*{\bu}="b",
\ar @{-} "a";"b" <0pt>
\endxy\in \cR\cG ra_{d+1}(1,2)$ by changing the labels of the vertices. One has, for
$(12)\in \bS_2$,
$$(12)( \xy
 (0,2)*{^1},
(6,2)*{^2},
 (0,0)*{\bullet}="a",
(6,0)*{\bu}="b",
\ar @{-} "a";"b" <0pt>
\endxy
)
=(-1)^{d} \xy
 (0,2)*{^2},
(6,2)*{^1},
 (0,0)*{\bullet}="a",
(6,0)*{\bu}="b",
\ar @{-} "a";"b" <0pt>
\endxy
$$

(iii) The permutation group $\bS_{\bar{2}}$ acts on  $\xy
 (0.5,1)*{^{\bar{1}}},
(0.5,5)*{^{\bar{2}}},
(0,-2)*{\bu}="A";
(0,-2)*{\bu}="B";
"A"; "B" **\crv{(6,6) & (-6,6)};
\endxy \in \cR\cG ra_{d}(2,1)$ by  flipping the labels $\bar{1}$ and $\bar{2}$ of the boundaries. One has, for
$(\bar{1}\bar{2})\in \bS_{\bar{2}}$,
$$
(\bar{1}\bar{2})(\xy
 (0.5,1)*{^{\bar{1}}},
(0.5,5)*{^{\bar{2}}},
(0,-2)*{\bu}="A";
(0,-2)*{\bu}="B";
"A"; "B" **\crv{(6,6) & (-6,6)};
\endxy
)
=(-1)^{d}\ \xy
 (0.5,1)*{^{\bar{2}}},
(0.5,5)*{^{\bar{1}}},
(0,-2)*{\bu}="A";
(0,-2)*{\bu}="B";
"A"; "B" **\crv{(6,6) & (-6,6)};
\endxy
$$
in $\cR\cG ra_{d}(2,1)$.
\bip

The collection of $\bS$-bimodules,
$$
\cR\cG ra_{d}:=\left\{\cR\cG ra_{d}(m,n)\right\},
$$
forms a prop in the category of graded vector spaces with respect to the
horizontal composition
$$
 \Ba{rccc}
 \circ: & \cR\cG ra_d(m_1,n_1)\ot_\K \cR\cG ra_d(m_2,n_2) &\lon &
 \cR\cG ra_d(m_1+m_2,n_1+n_2)\\
        &    \Ga_2\ot  \Ga_1 & \lon & \Ga_2\sqcup \Ga_1
        \Ea
 $$
defined as the disjoint union of ribbon graphs,
and the vertical composition,
 $$
 \Ba{rccc}
 \circ: & \cR\cG ra_d(p,m)\ot_\K \cR\cG ra_d(m,n) &\lon & \cR\cG ra_d(p,n)\\
        &    \Ga_2\ot  \Ga_1 & \lon & \Ga_2\circ \Ga_1
        \Ea
 $$
which is defined by gluing, for every  $i\in [m]$, the  $i$-th oriented boundary
($out$-circle) of $\Ga_1$,
$$
 \Ba{c}\resizebox{15mm}{!}{\xy
(6,-8)*{\circlearrowleft},
 (0,0)*{}="a1",
(10,0)*{}="a2",
(13,-3)*{}="a3",
(13,-13)*{}="a4",
(10,-16)*{}="a5",
(0,-16)*{}="a6",
(-3,-13)*{}="a7",
(-3,-3)*{}="a8",
\ar @{-} "a1";"a2" <0pt>
\ar @{.} "a2";"a3" <0pt>
\ar @{-} "a3";"a4" <0pt>
\ar @{.} "a4";"a5" <0pt>
\ar @{-} "a5";"a6" <0pt>
\ar @{.} "a6";"a7" <0pt>
\ar @{-} "a7";"a8" <0pt>
\ar @{.} "a8";"a1" <0pt>
\endxy}\Ea
$$
to the $i$-th oriented blown-up vertex ($in$-circle) of $\Ga_2$,
$$
 \Ba{c}\resizebox{21mm}{!}{\xy
 (0,0)*{
\xycircle(6,6){.}};
(-6,0)*{}="1";
(-16,0)*{}="1'";
(4.2,4.2)*{}="2";
(12,12)*{}="2'";
(4.2,-4.2)*{}="3";
(12,-12)*{}="3'";
(6,0)*{}="4";
(16,0)*{}="4'";
(-4.2,4.2)*{}="5";
(-12,12)*{}="5'";
(-4.2,-4.2)*{}="6";
(-12,-12)*{}="6'";
(-0.3,6)*{}="a";
(-0.4,6)*{}="b";
\ar @{-} "1";"1'" <0pt>
\ar @{-} "2";"2'" <0pt>
\ar @{-} "3";"3'" <0pt>
\ar @{-} "4";"4'" <0pt>
\ar @{-} "5";"5'" <0pt>
\ar @{->} "a";"b" <0pt>
\ar @{-} "6";"6'" <0pt>
\endxy}\Ea
$$
 as follows  (cf.\ \cite{TZ}): first we place the
 $out$-circle inside the $in$-one, then we erase the dashed $in$-circle leaving edges
 attached to that circle hanging (for a moment) in the ``air",
 and finally we take a sum over all possible reattachments of the hanging edges to the dashed intervals (corners) of the
$out$-circle  while respecting the cyclic orderings on both sets ---  the
set of hanging edges of the $i$-th vertex of $\Ga_2$ and the set of corners of the
$i$-th boundary of $\Ga_1$,
$$
\sum
 \Ba{c}\resizebox{20mm}{!}{\xy
(6,-8)*{\circlearrowleft},
 (0,0)*{}="a1",
(10,0)*{}="a2",
(13,-3)*{}="a3",
(13,-13)*{}="a4",
(10,-16)*{}="a5",
(0,-16)*{}="a6",
(-3,-13)*{}="a7",
(-3,-3)*{}="a8",
(-1,-1)*{}="1";
(-6,7)*{}="1'";
(12,-2)*{}="2";
(20,4)*{}="2'";
(12,-14)*{}="3";
(18,-19)*{}="3'";
(-1.8,-1.8)*{}="4";
(-8,5)*{}="4'";
(-2.5,-2.5)*{}="5";
(-11,2)*{}="5'";
(11,-15)*{}="6";
(15,-22)*{}="6'";
\ar @{-} "1";"1'" <0pt>
\ar @{-} "2";"2'" <0pt>
\ar @{-} "3";"3'" <0pt>
\ar @{-} "4";"4'" <0pt>
\ar @{-} "5";"5'" <0pt>
\ar @{-} "6";"6'" <0pt>
\ar @{-} "a1";"a2" <0pt>
\ar @{.} "a2";"a3" <0pt>
\ar @{-} "a3";"a4" <0pt>
\ar @{.} "a4";"a5" <0pt>
\ar @{-} "a5";"a6" <0pt>
\ar @{.} "a6";"a7" <0pt>
\ar @{-} "a7";"a8" <0pt>
\ar @{.} "a8";"a1" <0pt>
\endxy}
\Ea
$$
Every ribbon graph in this linear combination comes equipped naturally with an induced
orientation, and belongs in fact to $\cR\cG ra_{d}(p,n)$.
It is obvious that this operation satisfies the required axioms for vertical compositions; it is slightly less
obvious that every summand in the resulting linear combination of ribbon graph has
precisely $p$ boundaries, see \S {\ref{4: claim on edge between corners}} above for a hint.
The graph $\bu$ consisting of a single vertex serves as the unit in $\cR \cG ra_{d}$.

\mip

The subspace of $\cR \cG ra_{d}$ spanned by {\em connected}\, ribbon graphs forms a
{\em properad}\, which we denote by the same symbol $\cR \cG ra_{d}$; this should not lead
to confusion as the prop closure of the properad $\cR \cG ra_d$ is precisely the prop
$\cR \cG ra_{d}$; similarly we do not distinguish in notation, for example,
 properad $\LoBd$ and prop
$\LoBd$ as the former completely  determines the latter and vice versa. Thus when we talk about {\em properad}\,  $\cR \cG ra_{d}$ we always mean the subspace of {\em connected}\, ribbon graphs; when we talk about {\em prop}\, $\cR \cG ra_{d}$ we mean the full space of (not necessary connected) ribbon graphs.

\mip

The main motivation for the above definition of the properad $\cR \cG ra_d$
is the following theorem-construction.

\subsubsection{{\bf Theorem}}\label{4: Theorem on repr of Rgra in CycW}
{\em Let $W$ be an arbitrary graded vector space and  $
Cyc^\bu W=\oplus_{n\geq 1} (W^{\ot n})^{\Z_n},
$ the associated space of cyclic words. Then any  (not necessarily non-degenerate)
 pairing $\Theta:  W\ot W  \lon  \K[1-d]$
satisfying (\ref{2: skewsymmetry on scalar product})
gives canonically rise to a representation of prop(erad)s,
$$
\rho_W: \cR \cG ra_{d} \lon \cE nd_{Cyc^\bu W}.
$$
}

\begin{proof} We shall show the argument for $d$ even; the case of $d$ odd is analogous,
and is left to the reader.
 Let
\Beq\label{4: cyclic words cW}
\left\{\cW_i:= (w_{i_1}\ot\ldots\ot w_{{l_i}})_{\Z_{l_i}}\right\}_{ 1\leq i\leq n,\ \ l_i\in \N}\,
\Eeq
be a collection of $n$ cyclic words from $Cyc^\bu(W)$, and let $\Ga\in \cR \cG ra_{d}(m,n)$ be
 a ribbon graph
 with vertices $(v_1,\ldots, v_i,\ldots, v_n)$
and
boundaries $(b_1,\ldots, b_j,\ldots, b_m)$ such that  $\# H(v_i) \leq l_i$,
$\forall i\in [n]$, i.e.\ the valency of each vertex $v_i$ is less than or equal
to the length of the $i$-th cyclic word. A {\em state}\, $s$ on $\Ga$ is, by
definition, an injective morphism of cyclically ordered sets,
$$
s: H(v_i) \lon (w_{i_1}\ot\ldots\ot w_{{l_i}})_{\Z_{l_i}},
$$
that is, an assignment to each half edge $h_i\in H(v_i)$ of an element
$s(h_i)$ from the set  $(w_{i_1}\ot\ldots\ot w_{{l_i}})_{\Z_{l_i}}$
in a way which respects natural cyclic orderings of both sets. To each state $s$ we
associate an element
$$
\Phi_\Ga^s(\cW_1,\ldots, \cW_n)\in \ot^m Cyc^\bu W
$$
as follows:
\Bi
\item[(i)] recall that orbits, $e=(h, \sigma_1(h))$, of the involution
$\sigma_1: H(\Ga)\rar H(\Ga)$ form  the set of edges, $E(\Ga)$, of the graph $\Ga$.
Let us choose arbitrarily a half-edge representative  $h$
of each edge $e=(h, \sigma_1(h))\in E(\Ga)$, and let us denote this set of representatives by $\frac{1}{2}H(\Ga)\simeq E(\Ga)$.
The {\em weight}\, of a state $s$ is, by definition, the number
    $$
    \la_s:=\prod_{h\in  \frac{1}{2}H(\Ga)}\Theta\left(s(h), s(\sigma_1(h))\right).
    $$
\item[(ii)] The complement,  $(w_{i_1},\ldots,  w_{i_{l_p}})\setminus \Img s$,
splits into a  disjoint (cyclically ordered)  union of totally ordered subsets,
$\coprod_{c\in C(v)}I_c$,
    parameterized by the set of corners of the vertex $v$.

\item[(iii)] To each boundary $b_j\in B(\Ga)$ we can now associate
a cyclic word
$$\displaystyle
\cW'_j:=\left(\bigotimes_{c\in C(b_j)} I_c\right)_{Z_{\sum_{c\in C(b_j)}|I_c|}}
$$
where the tensor product is taken along the given cyclic ordering in the set $C(b_q)$,
and then define
$$
\Phi_\Ga^s(\cW_1,\ldots, \cW_n):=(-1)^\sigma \la_s
\cW'_{b_1}\ot\ldots \ot \cW'_{b_m}
$$
where $(-1)^\sigma$ is the standard Koszul sign of the regrouping permutation,
$$
\sigma:\cW_1\ot\ldots \ot \cW_n\lon \prod_{h\in  \frac{1}{2}H(\Ga)}\left(s(h)\ot
s(\sigma_1(h))\right)\ot \cW'_{b_1}\ot\ldots \ot \cW'_{b_m}.
$$
\Ei
Note that $\Phi_\Ga^s(\cW_1,\ldots, \cW_n)$ does not depend on the choice of half-edge representatives of  edges, i.e.\ on the choice of an isomorphism   $\frac{1}{2}H(\Ga)\simeq E(\Ga)$.

\sip

Finally we define a linear map,
$$
\Ba{rccc}
\rho_W: & \cR\cG ra_{d}(m,n) & \lon & \Hom(\ot^n Cyc^\bu W, \ot^m Cyc^\bu W) \\
 & \Ga & \lon & \rho_W(\Ga),
 \Ea
$$
by setting the value of $\rho_W(\Ga)$ on cyclic words (\ref{4: cyclic words cW}) to be
equal to
$$
\rho_W(\Ga)(\cW_1,\ldots, \cW_n):=\left\{\Ba{cl} 0 & \mbox{if}\ \# H(v_p)> l_p\
\mbox{for some}\ p\in [n] \\
\displaystyle \sum_{\mathrm{all\ possiblle}\atop \mathrm{states}\ s} \Phi_\Ga^s(\cW_1,\ldots,
 \cW_n) &  \mbox{otherwise} \Ea\right.
$$
It is now straightforward to check that the map $\rho_W$ respects prop
compositions  in  $\cR \cG ra_d$ and $\cE nd_{Cyc^\bu W}$ (as the prop structure in
  $\cR \cG ra_{d}$  was designed above just to make this claim true).
\end{proof}

\subsubsection{{\bf Theorem} {\em (cf.\ \cite{CS})}}\label{4: Theorem on LieB --> Rgra}
{\em There is a morphism of prop(erad)s,
$$
s^\diamond: \LoB_{d,d} \lon \cR\cG ra_{d}
$$
given on generators as follows,
\Beq\label{4: maps s from LoBd to RGra}
s^\diamond\left(
 \begin{xy}
 <0mm,-0.55mm>*{};<0mm,-2.5mm>*{}**@{-},
 <0.5mm,0.5mm>*{};<2.2mm,2.2mm>*{}**@{-},
 <-0.48mm,0.48mm>*{};<-2.2mm,2.2mm>*{}**@{-},
 <0mm,0mm>*{\circ};<0mm,0mm>*{}**@{},
 \end{xy}\right)=\xy
(0,-2)*{\bu}="A";
(0,-2)*{\bu}="B";
"A"; "B" **\crv{(6,6) & (-6,6)};
\endxy
 \ \ \  , \ \ \
s^\diamond\left(
 \begin{xy}
 <0mm,0.66mm>*{};<0mm,3mm>*{}**@{-},
 <0.39mm,-0.39mm>*{};<2.2mm,-2.2mm>*{}**@{-},
 <-0.35mm,-0.35mm>*{};<-2.2mm,-2.2mm>*{}**@{-},
 <0mm,0mm>*{\circ};<0mm,0mm>*{}**@{},
 \end{xy}\right)=\xy
 (0,0)*{\bullet}="a",
(6,0)*{\bu}="b",
\ar @{-} "a";"b" <0pt>
\endxy
\Eeq
}

\begin{proof} One has to show that the map $s^\diamond$ sends to zero
all the Lie bialgebra relations (\ref{R for LieB})  as well as the involutivity relation
$$
\Ba{c}\resizebox{5.5mm}{!}{
\xy
 (0,0)*{\circ}="a",
(0,6)*{\circ}="b",
(3,3)*{}="c",
(-3,3)*{}="d",
 (0,9)*{}="b'",
(0,-3)*{}="a'",
\ar@{-} "a";"c" <0pt>
\ar @{-} "a";"d" <0pt>
\ar @{-} "a";"a'" <0pt>
\ar @{-} "b";"c" <0pt>
\ar @{-} "b";"d" <0pt>
\ar @{-} "b";"b'" <0pt>
\endxy}
\Ea=0
$$
 among the generators $\begin{xy}
 <0mm,-0.55mm>*{};<0mm,-2.5mm>*{}**@{-},
 <0.5mm,0.5mm>*{};<2.2mm,2.2mm>*{}**@{-},
 <-0.48mm,0.48mm>*{};<-2.2mm,2.2mm>*{}**@{-},
 <0mm,0mm>*{\circ};<0mm,0mm>*{}**@{},
 \end{xy}$ and  $\begin{xy}
 <0mm,0.66mm>*{};<0mm,3mm>*{}**@{-},
 <0.39mm,-0.39mm>*{};<2.2mm,-2.2mm>*{}**@{-},
 <-0.35mm,-0.35mm>*{};<-2.2mm,-2.2mm>*{}**@{-},
 <0mm,0mm>*{\circ};<0mm,0mm>*{}**@{},
 \end{xy}$. The involutivity relation is respected by $\rho$ as
$$
s^\diamond\left(\Ba{c}\resizebox{4.5mm}{!}{\xy
 (0,0)*{\circ}="a",
(0,6)*{\circ}="b",
(3,3)*{}="c",
(-3,3)*{}="d",
 (0,9)*{}="b'",
(0,-3)*{}="a'",
\ar@{-} "a";"c" <0pt>
\ar @{-} "a";"d" <0pt>
\ar @{-} "a";"a'" <0pt>
\ar @{-} "b";"c" <0pt>
\ar @{-} "b";"d" <0pt>
\ar @{-} "b";"b'" <0pt>
\endxy}\Ea\right)=\xy
(0,-2)*{\bu}="A";
(0,-2)*{\bu}="B";
"A"; "B" **\crv{(-4,6) & (10,6)};
"A"; "B" **\crv{(-10,6) & (4,6)};
\endxy
$$
and the latter graph vanishes identically  in $\cR\cG ra_{d}(1,1)$ indeed (see
\S {\ref{4: example on vanishing (1,1) graph}}(i)).
All the other required conditions on $s^\diamond$ can be checked by as elementary calculations
as the above one; we leave the details to the reader.
\end{proof}

\subsubsection{\bf Remark}\label{4 remark on proof of LieBi on CycW}
Composing the above morphism $s^\diamond$ with the representation $\rho_W$ of
Theorem~{\ref{4: Theorem on repr of Rgra in CycW}}, we get a proof of Proposition
~{\ref{2: Proposition on inv LieBi in CycW}}.

\subsection{Ribbon graph complex and universal deformations
of  a class of involutive
Lie bialgebras}\label{3: subsection RGCd^diamond} Using results of \S {\ref{2: deformation theory of inv Lie bialgebras}}
we can now explicitly describe the dg Lie algebra, the {\em  Ribbon Graph Complex},
$$
\mathsf{RGC}_d^\diamond:=\Def\left(\LoB_{d,d} \stackrel{s^\diamond}{\rar} \cR\cG ra_{d}\right)\equiv\Def\left(\HoLoB_{d,d} \stackrel{s'}{\rar} \cR\cG ra_{d}\right)
,
$$
which controls deformations of the following composition of morphisms of dg {\em properads},
$$
s':\HoLoB_{d,d} \stackrel{\mathrm{proj}}{\lon} \LoB_{d,d} \stackrel{s^\diamond}{\lon} \cR \cG ra_{d}.
$$
Here the first arrow is the natural projection and the second arrow is given by
Theorem {\ref{4: Theorem on repr of Rgra in CycW}}. One can understand this deformation
theory as the theory of {\em universal}\,
$\HoLoBd$-deformations of the standard involutive
bialgebra structure in a vector space $W$ equipped with a scalar product $\Theta$ as in \S {\ref{2: subsection on cyclic words}}.

\mip

Let $\hbar$ be a formal variable of homological degree $2d$, then we have a continuous
(in the $\hbar$-adic topology) morphism of dg properads,
$$
s^\hbar: \HoLB_{d,d}^\hbar \stackrel{proj}{\lon} \LoB_{d,d}[[\hbar]] \stackrel{s^\diamond}{\lon}
\cR \cG ra_{d}[[\hbar]],
$$
and an isomorphism of dg Lie algebras,
$$
\mathsf{RGC}^\diamond_d\simeq \Def_{adm}\left(\HoLB_{d,d}^\hbar \stackrel{s^\hbar}{\rar}
\cR\cG ra_{d}[[\hbar]]\right)
$$
where the complex on the r.h.s.\ describes deformations of the map $s^\hbar$ within the
class of all possible {\em admissible}\,  properadic morphisms,
$f: \HoLB_{d,d}^\hbar \stackrel{f}{\rar} \cR\cG ra_{d}[[\hbar]]$, that is, the ones which
satisfy the condition
$$
s^\hbar\left(\begin{xy}
 <0mm,0mm>*{\bu};
 <0mm,-0.5mm>*{};<0mm,-2.5mm>*{}**@{-},
 <0mm,0.5mm>*{};<0mm,2.5mm>*{}**@{-},
 \end{xy}\right)\in \hbar\,\cR\cG ra_{d+1}[[\hbar]].
$$
Therefore, we have a canonical isomorphism of graded vector spaces,
$$
\mathsf{RGC}_{d}^\diamond=\hbar\,\cR\cG ra_{d}(1,1)[[\hbar]]\ \ \oplus\ \
\bigoplus_{m,n\geq 1\atop m+n\geq 3} \left(\cR\cG ra_{d}(m,n)\ot_{\bS_m\times\bS_n}
(sgn_n^{|d|}\ot
sgn_m^{|d|})[[\hbar]]\right)[d(2-m-n)]
$$
i.e.\ a homogeneous element $\Ga$ of $\mathsf{RGC}_{d}^\diamond$ can be understood as a formal
power series
$$
\Ga=\sum_{a\geq 0} \Ga_{a} \hbar^a
$$
where $\Ga_{a}$ is
 a connected {\em oriented}\, ribbon graph  with $m=\# V(\Ga_{a})$ vertices and $n=\# B(\Ga_{a})$
  where
\Bi
\item the orientation $or$ of $\Ga_a$ is defined  a choice of the unit vector\footnote{For a finite set $S$ we denote $\det(S):=\wedge^{\# S}(\K[S])$, where $\K[S]$ is the linear span of $S$ over the field $\K$; we also equip $\det(S)$ with a Euclidian norm.}
in the one-dimensional vector space $\det (E(\Ga_a))$ (for $d$ even)
or $\det (V(\Ga_a))\ot \det(B(\Ga_a)) \bigotimes_{e\in E(\Ga_a)}\det(H(e))$ (for $d$ odd), where $H(e)$ is the set of two half-edges of the given edge $e$; every graph has precisely two orientations, $(\Ga_a, or)$ and $(\Ga_a, -or)$, and we identify
$(\Ga_a, or)=-(\Ga_a, -or)$.

\sip

\item $
|\Ga_{(a)}|= d\left(\# V(\Ga_{(a)})+ \# B(\Ga_{(a)}) -2)+ (1-d)\# E(\Ga_{(a)}\right) -
 2ad= -2(g+a)d + \# E(\Ga)$,
where $g=1-\frac{1}{2}(\# V(\Ga_{a})- \# E(\Ga_a) +\# B(\Ga_{a}))$ is the genus of the compact Riemann
surface associated to $\Ga_a$.

\item $\Ga_{0}=0$\ \ if\ \ $\# V(\Ga_{0})= \# B(\Ga_{0})=1$.
\Ei

The Lie brackets $[\ ,\ ]$ in $\mathsf{RGC}_{d}^\diamond$ can be read off from the
differential
(\ref{2: d_hbar}) along the lines explained in \cite{MV}. Maurer-Cartan elements in
$(\mathsf{RGC}_d^\diamond, [\ ,\ ])$  are in one-to-one correspondence with admissible morphisms of dg props,
$(\HoLhBd,\delta_\hbar) \rar (\cR \cG ra_{d}[[\hbar]],0)$, or, equivalently, with arbitrary
 morphisms
of dg props $(\HoLoBd,\delta) \rar (\cR \cG ra_{d},0)$; here we understand $\cR \cG ra_{d}$
 as a prop with zero differential. In particular, the canonical morphism $s^\diamond$ determined by Theorem~{\ref{4: Theorem on LieB --> Rgra}}
is given by the following Maurer-Cartan element,
$$
\Ga_{\diamond}=  \xy
 (0,0)*{\bullet}="a",
(6,0)*{\bu}="b",
\ar @{-} "a";"b" <0pt>
\endxy\ \  + \ \xy
(0,-2)*{\bu}="A";
(0,-2)*{\bu}="B";
"A"; "B" **\crv{(6,6) & (-6,6)};
\endxy
$$
It makes $\mathsf{RGC}^\diamond_{d}$ into a complex  with the differential
$
\delta_\diamond:=[\Ga_\diamond,\ ]
$. For a connected ribbon graph $\Ga$ one has (up to signs)
\Beqr
\delta_\diamond\Ga &=& \underbrace{\sum_{v\in V(\Ga)}\sum_{H(v)=I_1\sqcup I_2\atop I_1,I_2
\neq \emptyset} \pi_{I_1,I_2}(\Ga)}_{\delta\Ga}
 + \underbrace{\sum_{b\in B(\Ga)}\sum_{c_1,c_2\ in\ C(b)\atop c_1\neq c_2}
 e_{c_1,c_2}(\Ga)}_{\Delta_1\Ga}+\label{4: d_diamond in fRGC}\\
&&+
\underbrace{\hbar\sum_{b_1,b_2\in B(\Ga)\atop b_1\neq b_2} \sum_{c_1\in C(b_1)\atop
c_2\in C(b_2)} e_{c_1,c_2}(\Ga)}_{\Delta_2'\Ga} + \underbrace{\hbar\sum_{v_1,v_2\in
 V(\Ga)\atop v_1\neq v_2} \sum_{c_1\in C(v_1)\atop
c_2\in C(v_2)} \pi_{c_1,c_2}(\Ga)}_{\Delta_2''\Ga} \nonumber
\Eeqr
where
\Bi
\item[-] $e_{c_1,c_2}(\Ga)$ is the graph obtained from $\Ga$ by attaching a new edge
connecting the corner $c_1$ to the corner $c_2$ (see \S {\ref{4: claim on edge between corners}})
\item[-] for a fixed splitting, $H(v)=I_1\sqcup I_2$, of the set of half-edges  attached
to the vertex $v$ into two non-empty subsets
$I_1$ and $I_2$ (each equipped with the induced cyclic ordering), the associated linear
combination of ribbon graphs $\pi_{I_1,I_2}(\Ga)$  is obtained from $\Ga$ by (i) replacing
the vertex $v$
with the 2-vertex graph $\xy
 (-2,0)*{\bullet}="a",
(2,0)*{\bu}="b",
\ar @{-} "a";"b" <0pt>
\endxy$, (ii)  attaching half-edges from $I_1$ (respectively, $I_2$) to one vertex $v_1$
(respectively, another vertex $v_2$)) of the graph  $\xy
 (-2,0)*{\bullet}="a",
(2,0)*{\bu}="b",
\ar @{-} "a";"b" <0pt>
\endxy$, and (iii) taking a sum over possible ways of equipping the sets
$H(v_1)=I_1\sqcup \xy
 (0,-1.5)*{}="a",
(0,1.5)*{}="b",
\ar @{-} "a";"b" <0pt>
\endxy$ and $H(v_2)=I_2\sqcup \xy
 (0,-1.5)*{}="a",
(0,1.5)*{}="b",
\ar @{-} "a";"b" <0pt>
\endxy$ with cyclic structures  which agree with the given cyclic structures on the
subsets $I_1$ and, respectively, $I_2$;
\item[-] for any fixed pair of different vertices, $v_1$ and $v_2$, of $\Ga$,
$$
v_1=
 \Ba{c}\resizebox{15mm}{!}{\xy
 (0,0)*{
\xycircle(6,6){.}};
(-6,0)*{}="1";
(-16,0)*{}="1'";
(4.2,4.2)*{}="2";
(12,12)*{}="2'";
(4.2,-4.2)*{}="3";
(12,-12)*{}="3'";
(6,0)*{}="4";
(16,0)*{}="4'";
(-4.2,4.2)*{}="5";
(-12,12)*{}="5'";
(-4.2,-4.2)*{}="6";
(-12,-12)*{}="6'";
(0,-9)*{c_1};
\ar @{-} "1";"1'" <0pt>
\ar @{-} "2";"2'" <0pt>
\ar @{-} "4";"4'" <0pt>
\ar @{-} "5";"5'" <0pt>
\endxy}\Ea\ \ \ , \ \ \ \ \ \ \ \ \ \
v_2=
 \Ba{c}\resizebox{14mm}{!}{\xy
 (0,0)*{
\xycircle(6,6){.}};
(-6,0)*{}="1";
(-16,0)*{}="1'";
(4.2,4.2)*{}="2";
(12,12)*{}="2'";
(4.2,-4.2)*{}="3";
(12,-12)*{}="3'";
(6,0)*{}="4";
(16,0)*{}="4'";
(-4.2,4.2)*{}="5";
(-12,12)*{}="5'";
(-4.2,-4.2)*{}="6";
(-12,-12)*{}="6'";
(0,8)*{c_2};
\ar @{-} "1";"1'" <0pt>
\ar @{-} "3";"3'" <0pt>
\ar @{-} "4";"4'" <0pt>
\ar @{-} "6";"6'" <0pt>
\endxy}\Ea
$$
and for any
fixed corners $c_1$ and $c_2$ belonging to $C(v_1)$ and, respectively, $C(v_2)$,
 the associated graph  $\pi_{c_1,c_2}(\Ga)$ is obtained from $\Ga$ by glueing vertices
 $v_1$ and $v_2$ into a single vertex along the marked corners as shown in the picture,

$$
\Ba{c}
\mbox{$v_1  \Ba{c}
\xy
(0,0)*\ellipse(6,6),=:a(180){.};
(-6,0)*{}="1";
(-16,0)*{}="1'";
(4.2,4.2)*{}="2";
(12,12)*{}="2'";
(6,0)*{}="4";
(16,0)*{}="4'";
(-4.2,4.2)*{}="5";
(-12,12)*{}="5'";
\ar @{-} "1";"1'" <0pt>
\ar @{-} "2";"2'" <0pt>
\ar @{-} "4";"4'" <0pt>
\ar @{-} "5";"5'" <0pt>
\endxy
\Ea$}
\vspace{-1mm}
\\
\hspace{3.5mm}
\mbox{$\xy
 (-6,5)*{}="1",
(-6,-8)*{}="1'",
(6,5)*{}="2",
(6,-8)*{}="2'",
(-6,-2)*{}="a";
(6,-2)*{}="b";
"a";"b" **\crv{(-26,3) & (26,3)};
\ar @{.} "1";"1'" <0pt>
\ar @{.} "2";"2'" <0pt>
\endxy$}
\vspace{-3.3mm}
\\
\mbox{$v_2 \Ba{c}
\xy
(0,0)*\ellipse(6,6)__,=:a(-180){.};
(-6,0)*{}="1";
(-16,0)*{}="1'";
(4.2,-4.2)*{}="3";
(12,-12)*{}="3'";
(6,0)*{}="4";
(16,0)*{}="4'";
(-4.2,-4.2)*{}="6";
(-12,-12)*{}="6'";
\ar @{-} "1";"1'" <0pt>
\ar @{-} "3";"3'" <0pt>
\ar @{-} "4";"4'" <0pt>
\ar @{-} "6";"6'" <0pt>
\endxy\Ea$}
\Ea
$$
and then attaching a new loop-like edge to the new vertex as also shown on the picture above.
\Ei

\subsubsection{\bf Remarks}\label{4: Remarks} (i) Let $\sG_{m,n}^l$ be the subspace of $\mathsf{RGC}^\diamond_{d}(m,n)$ spanned (over $\K[[\hbar]])$ by graphs with $l$ edges.
Then for  $\Ga \in  \sG_{m,n}^l$ one has $\delta_\diamond\Ga=\delta\Ga+ \Delta_1\Ga+
 \hbar(\Delta_2'\Ga+\Delta_2'') \Ga$, where $\delta\Ga \in \sG_{m,n+1}^{l+1}$,
$\Delta_1\Ga \in \sG_{m+1,n}^{l+1}$, $\Delta_2'\Ga \in \sG_{m-1,n}^{l+1}$ and
$\Delta_2''\Ga \in \sG_{m,n-1}^{l+1}$. We denote $\Delta_2:= \Delta_2' + \Delta_2''$
so that $\delta_\diamond=\delta +\Delta_1 + \hbar\Delta_2$.

\sip

It follows that operations $\p_1:=\delta + \hbar\Delta_2'$ and
$\p_2:= \Delta_1 + \hbar \Delta_2''$ are also differentials in $\RGC_d^\diamond$.
If we consider a morphism of properads
\[
 s^\diamond_1 : \LoB_{d,d} \lon \RGra_d
\]
such that
\Beq\label{4: map s^diamond_1}
s^\diamond_1\left(
 \begin{xy}
 <0mm,-0.55mm>*{};<0mm,-2.5mm>*{}**@{-},
 <0.5mm,0.5mm>*{};<2.2mm,2.2mm>*{}**@{-},
 <-0.48mm,0.48mm>*{};<-2.2mm,2.2mm>*{}**@{-},
 <0mm,0mm>*{\circ};<0mm,0mm>*{}**@{},
 \end{xy}\right)=0
 \ \ \  , \ \ \
s^\diamond_1\left(
 \begin{xy}
 <0mm,0.66mm>*{};<0mm,3mm>*{}**@{-},
 <0.39mm,-0.39mm>*{};<2.2mm,-2.2mm>*{}**@{-},
 <-0.35mm,-0.35mm>*{};<-2.2mm,-2.2mm>*{}**@{-},
 <0mm,0mm>*{\circ};<0mm,0mm>*{}**@{},
 \end{xy}\right)=\xy
 (0,0)*{\bullet}="a",
(6,0)*{\bu}="b",
\ar @{-} "a";"b" <0pt>
\endxy
\Eeq
The associated  ribbon graph complex
$\Def\left(\LoB_{d,d} \stackrel{s^\diamond_1}{\rar} \cR\cG ra_{d}\right)$ coincides
with $\RGC_d^\diamond$ as a graded vector space but comes equipped with the differential
$\p_1=\delta+ \hbar \Delta_2'$. Similarly, the complex $(\RGC_d,\p_2)$ can be identified with
$\Def\left(\LoB_{d,d} \stackrel{s^\diamond_2}{\rar} \cR\cG ra_{d}\right)$, where $s^\diamond_2$ is a version of the map (\ref{4: map s^diamond_1})
which sends to zero the Lie generator (rather than the coLie one).

\mip

(ii)
The quotient dg Lie algebra $\mathsf{RGC}_d:=\mathsf{RGC}_d^\diamond/\hbar
\mathsf{RGC}_d^\diamond$ controls deformations of the canonical map $s: \LB_{d,d}\rar \cR\cG ra_{d}$ given by the composition  $\LB_{d,d}\rar \LoB_{d,d}\stackrel{s^\diamond}{\rar} \cR\cG ra_{d}$ , or,
equivalently, universal
deformations of the standard Lie bialgebra structure on $Cyc^\bu W$ (with involutivity condition
forgotten). Its differential is given by the first two terms, $\delta + \Delta_1$, in
(\ref{4: d_diamond in fRGC}). This complex plays an important role below. The operator $\Delta_2: \RGC_d\rar \RGC_d$
has degree $1-2d$; it commutes with  $\delta + \Delta_1$
and satisfies $\Delta_2^2=0$.

\sip

\sip

Note  that the deformation complex $\Def\left(\LB_{d,d} \stackrel{s^*}{\rar} \cR\cG ra_{d}\right)$  associated to the
composition map $$
s^*: \LB_{d,d} \lon \LoB_{d,d}\stackrel{s_1^\diamond}{\lon} \cR\cG ra_d
$$
is identical to
the complex $(\RGC_d,\delta)$.

\mip


\mip

(iii) The case $d=0$ is of special interest as the complex $\mathsf{RGC}:= (\mathsf{RGC}_0, \delta)$
computes cohomology of the moduli space $\cM_{g,n}$ of genus $g$ algebraic curves with $n$ punctures
as shown in (\ref{1: Cohomology of RBC and M_g,n}). The complex $\mathsf{RGC}$ splits into a direct sum
$$
\mathsf{RGC}:= \mathsf{RGC}^{\geq 3} \oplus \mathsf{RGC}^{\leq 2}
$$
where  $\mathsf{RGC}^{\geq 3}$ is spanned by ribbon graphs with all vertices at least trivalent
and $\mathsf{RGC}^{\leq 2}$ is spanned by graphs which have at least one vertex of valency $\leq 2$.
The cohomology of the first direct summand is given by (see \cite{Ko3,Ko4,Ko2,LZ, Ha} for detailed definitions, proofs and further references)
$$
H^\bu(\mathsf{RGC}^{\geq 3})=\prod_{\substack{g,n \\ n>0,2-2g<n} }\left( H^{6g-6+3n-k}(\cM_{g,n}, \mathbb{K}) \otimes \sgn_n \right)/ \bS_n
$$
while (cf.\ \cite{Wi})
$$
H^\bu\left(\mathsf{RGC}^{\leq 2}\right)=
\begin{cases}
\mathbb K & \text{for $k=1,4,7,\dots$} \\
0 & \text{otherwise}
\end{cases}
$$
Note that in $d=0$ case all the operators $\delta$, $\Delta_1$ and $\Delta_2$ have degree $1$ so that it makes sense to equip the graded vector space $\mathsf{RGC}$  not only with differentials $\delta$, $\delta+\Delta_1$, but also with $\delta + \Delta_1+ \Delta_2$. By Remark (i) above, the operations $\delta+ \Delta_2'$ and $\Delta_1+ \Delta_2''$ are also differentials in $\mathsf{RGC}$.
%

\sip

It is worth emphasizing that the complex $(\mathsf{RGC},\delta)$
is not identical to the Kontsevich ribbon graph complex in \cite{Ko3} as it computes cohomology
   $H^\bu\left(\cM_{g,n}, \mathbb{Q}\right)$ with numbering of punctures {\em skew}-symmetrized, rather than symmetrized.

\mip

(iv) The complexes  $(\RGCd,\delta)$ with $d$ even (or for $d$ odd) are obviously isomorphic (up to degree shifts) to each other.  It is less evident the isomorphism claim holds true  for {\em all}\, $d$   as the one-dimensional vector spaces  $\det (E(\Ga_a))$ and
 $\det (V(\Ga_a))\ot \det(B(\Ga_a)) \bigotimes_{e\in E(\Ga_a)}\det(H(e))$ (which determine orientations for $d$ even and $d$ odd respectively) are canonically isomorphic to each other --- see Proposition 1 in \cite{CV}.  Hence all the complexes $(\RGCd,\delta)$   compute the (degree shifted versions of) cohomology groups $(H^\bu\left(\cM_{g,n}, \mathbb{K}\right) \otimes \sgn_n )/ \bS_n$.

 \mip

(v) To any ribbon graph from $\mathsf{RGC}_d$ one can associate the genus $g$ of the associated compact Riemann surface, and the differential $\delta$ preserves this numerical characteristic. Therefore the
complex $(\mathsf{RGC}_d, \delta)$ decomposes into a direct sum
$$
\mathsf{RGC}_d= \bigoplus_{g\geq 0} \mathsf{RGC}_d^{(g)}
$$
and one has an explicit relation between complexes for various $d$,
$$
\mathsf{RGC}_d^{(g)}= \mathsf{RGC}^{(g)}[2gd].
$$
%
%
\sip

(vi) Let $^*$ be a map $\cR^l_{m,n}\rar \cR^l_{n,m}$ associating to a ribbon graph $\Ga$
its dual graph $\Ga^*$ defined as follows: represent $\Ga$ as surface with $m$ punctures and $n$ output boundaries, glue punctured disks to each of the $n$ boundaries, and finally draw a ribbon graph $\Ga^*$ on the resulting surface
with $V(\Ga^*)$ identified with punctures of the glued disks and $E(\Ga^*)$ obtained as follows: two vertices from $V(\Ga^*)$ are connected by an edge in if and only if
the associated to these punctures boundaries  of $\Ga$ share a common edge in $\Ga$.
   The map $^*$ induces involutions
of the vector spaces  $\mathsf{RGC}_d^\diamond$ and $\mathsf{RGC}_d$. As
the ribbon graph dual to $\xy
 (0,0)*{\bullet}="a",
(6,0)*{\bu}="b",
\ar @{-} "a";"b" <0pt>
\endxy$ is $\xy
(0,-2)*{\bu}="A";
(0,-2)*{\bu}="B";
"A"; "B" **\crv{(6,6) & (-6,6)};
\endxy$, these involutions commute with the differentials in both complexes $\RGC^\diamond, \delta_\diamond)$ and $(\RGC, \delta + \Delta_1)$.



\bip

\subsubsection{{\bf Variation: Odd case and moduli space of curves with symmetric punctures}}\label{sec:defRGCgeneral}
The family of complexes
$$
(\RGC_{d},\delta) =\Def\left(\LB_{d,d} \stackrel{s^*}{\rar} \cR\cG ra_{d}\right) \ \ \ \forall\ d\in \Z,
$$
defined in \S {\ref{4: Remarks}}(ii) is in fact a subfamily of a larger family of complexes.   Indeed,
for any $c,d\in \Z$ we may define a morphism of properads
\[
 s^* : \LieB_{c,d} \to \RGra_d
\]
such that
$$
s^*\left(
 \begin{xy}
 <0mm,-0.55mm>*{};<0mm,-2.5mm>*{}**@{-},
 <0.5mm,0.5mm>*{};<2.2mm,2.2mm>*{}**@{-},
 <-0.48mm,0.48mm>*{};<-2.2mm,2.2mm>*{}**@{-},
 <0mm,0mm>*{\circ};<0mm,0mm>*{}**@{},
 \end{xy}\right)=0
 \ \ \  , \ \ \
s^*\left(
 \begin{xy}
 <0mm,0.66mm>*{};<0mm,3mm>*{}**@{-},
 <0.39mm,-0.39mm>*{};<2.2mm,-2.2mm>*{}**@{-},
 <-0.35mm,-0.35mm>*{};<-2.2mm,-2.2mm>*{}**@{-},
 <0mm,0mm>*{\circ};<0mm,0mm>*{}**@{},
 \end{xy}\right)=\xy
 (0,0)*{\bullet}="a",
(6,0)*{\bu}="b",
\ar @{-} "a";"b" <0pt>
\endxy
$$
Hence we obtain the ribbon graph complexes
\[
 \RGC_{c,d} := \Def\left(\LieB_{c,d} \stackrel{s^*}{\rar} \cR\cG ra_{d}\right)\equiv\Def\left(\hoLieB_{c,d}  \stackrel{s'}{\rar} \cR\cG ra_{d}\right) \ \ \ \ \ \forall\ c,d\in \Z.
\]
which for $c=d$ coincide with the complexes $(\RGC_{d},\delta)$ considered in \S {\ref{4: Remarks}}(ii). Here $s'$ stands for the composition of the canonical projection $\hoLieB_{c,d}\rar \LB_{c,d}$  with $s^*$.

\sip

In particular, the ribbon graph complex
\[
 \RGC_{odd} := \RGC_{0,1}
\]
is the one originally considered by Kontsevich \cite{Ko6,Ko1,Ko2}; it computes the cohomology of the moduli space of curves with unidentifiable punctures. More concretely, we can split as before
\[
 \RGC_{odd} := \RGC_{odd}^{\leq 2}\oplus \RGC_{odd}^{\geq 3},
\]
where the first summand consists of series of graphs that have at least one vertex of valence $\leq 2$, while the second summand consists of graphs all of whose vertices are at least bivalent.
Then as shown in \cite{Ko2} $H(\RGC_{odd}^{\leq 2})$ is spanned by classes represented by loops of $k$ bivalent vertices, with $k=3,7,11,\dots$, while
\[
H(\RGC_{odd}^{\geq 3})
\cong
\prod_{\substack{g,n \\ n>0,2-2g<n} }\left( H^{4g-4+2n-k}(\cM_{g,n}, \mathbb{K}) \right)/ \bS_n.
.
\]

More generally, all $\RGC_{c,d}$ for $c$, $d$ of the same parity are essentially isomorphic to $\RGC$, up to certain degree shifts, and all  $\RGC_{c,d}$ for $c$, $d$ of opposite parity are essentially isomorphic to $\RGC_{odd}$, again up to certain degree shifts.

\sip

We finally note that apart from the ``trivial'' map $s^*$ more interesting maps from the properad of odd Lie bialgebras may be considered.

{\small


\subsubsection{{\bf Theorem}}\label{4: Theorem on LieB_0,1 --> Rgra_1}
{\em There is a properad map
$$
s^\Theta: \hoLieB_{0,1} \lon \cR\cG ra_{1},
$$
defined on generators such that
\begin{align*}
s^\Theta\left(
\resizebox{14mm}{!}{\begin{xy}
 <0mm,0mm>*{\circ};<0mm,0mm>*{}**@{},
 <-0.6mm,0.44mm>*{};<-8mm,5mm>*{}**@{-},
 <-0.4mm,0.7mm>*{};<-4.5mm,5mm>*{}**@{-},
 <0mm,0mm>*{};<-1mm,5mm>*{\ldots}**@{},
 <0.4mm,0.7mm>*{};<4.5mm,5mm>*{}**@{-},
 <0.6mm,0.44mm>*{};<8mm,5mm>*{}**@{-},
   <0mm,0mm>*{};<-8.5mm,5.5mm>*{^1}**@{},
   <0mm,0mm>*{};<-5mm,5.5mm>*{^2}**@{},
   <0mm,0mm>*{};<4.5mm,5.5mm>*{^{m\hspace{-0.5mm}-\hspace{-0.5mm}1}}**@{},
   <0mm,0mm>*{};<9.0mm,5.5mm>*{^m}**@{},
 <-0.6mm,-0.44mm>*{};<-8mm,-5mm>*{}**@{-},
 <-0.4mm,-0.7mm>*{};<-4.5mm,-5mm>*{}**@{-},
 <0mm,0mm>*{};<-1mm,-5mm>*{\ldots}**@{},
 <0.4mm,-0.7mm>*{};<4.5mm,-5mm>*{}**@{-},
 <0.6mm,-0.44mm>*{};<8mm,-5mm>*{}**@{-},
   <0mm,0mm>*{};<-8.5mm,-6.9mm>*{^1}**@{},
   <0mm,0mm>*{};<-5mm,-6.9mm>*{^2}**@{},
   <0mm,0mm>*{};<4.5mm,-6.9mm>*{^{n\hspace{-0.5mm}-\hspace{-0.5mm}1}}**@{},
   <0mm,0mm>*{};<9.0mm,-6.9mm>*{^n}**@{},
 \end{xy}}
 \right)=
 \begin{cases}
 \frac 1 m
\sum_{\sigma\in\bS_m}
 \underbrace{
\Ba{c}
{\xy
(5,0)*{...},
   \ar@/^1pc/(0,0)*{\bullet};(10,0)*{\bullet}
   \ar@/^{-1pc}/(0,0)*{\bullet};(10,0)*{\bullet}
   \ar@/^0.6pc/(0,0)*{\bullet};(10,0)*{\bullet}
   \ar@/^{-0.6pc}/(0,0)*{\bullet};(10,0)*{\bullet}
 \endxy}
 \Ea}_{m\ \mathrm{edges}}
%
%
&\text{if $n=2$ and $m$ odd}
\\
0 & \text{otherwise}
\end{cases},
\end{align*}
where the notation on the right shall indicate that one sums over all ways of numbering the $m$
 boundaries.}
\begin{proof}
 We have to show that the right-hand side of (the $\hoLieB_{0,1}$-analog of) \eqref{LBk_infty} is zero.
 All ribbon graphs appearing on this right-hand side are easily checked to be of the form
 \[
\begin{tikzpicture}[baseline=-.65ex]
\node[int](v) at (0,0){};
\node[int](x) at (1,-.5){};
\node[int](w) at (1,.5){};
\draw (v) edge [bend right] (w) edge [bend left] node[above] {$\scriptstyle k_1\times $} (w) edge (w);
\draw (v) edge [bend left] (x) edge [bend right] node[below] {$\scriptstyle k_2\times $} (x) edge (x);
\draw (w) edge [bend left] node[right] {$\scriptstyle k_3\times $} (x) edge [bend right] (x) edge (x);
\end{tikzpicture}
 \]
with some numbering of the punctures, where (in the notation of \eqref{LBk_infty}) $n=3$ and $m=k_1+k_2+k_3$ is even. Fix some numbering of the punctures for concreteness (which  does not matter by symmetry).
Let us count which terms on the right-hand side of \eqref{LBk_infty} can produce terms such as the above for given $k_1,k_2,k_3$.
The terms that can contribute correspond to $|I_1|$ being equal to one of the $k_j$, which must be odd. Since $m$ is even, either all of the $k_j$ are even (then there is no contribution) or exactly two are odd, say $k_2$ and $k_3$.
Let us first suppose that $|I_1|=k_3$. Then the above contribution to the right-hand side of \eqref{LBk_infty} comes with a combinatorial multiplicity factor $1$, given that the fixed numbering of the punctures uniquely determines how the edges must be attached.
Similarly, the contribution with $|I_1|=k_2$ comes with a factor $-1$, the minus arising from the necessity to once permute the input labels.
If $k_2\neq k_3$, the sum is zero. Otherwise, if $k_2=k_3$ then the term is killed due to the anti-symmetrization on the input labels.
\end{proof}
%

\subsubsection{\bf Corollary}\label{4: Corollary on Holieb to Gra via wheels} {\em There are properad maps
$$
s^W_1:  \hoLieB_{1,0} \lon \cR\cG ra_{1}
$$
and
$$
s^W_2:  \hoLieB_{0,1} \lon \cR\cG ra_{1}\{1\}
$$
defined on the generators as follows
\begin{align*}
s^W_{i}\left(
\resizebox{16mm}{!}{\begin{xy}
 <0mm,0mm>*{\circ};<0mm,0mm>*{}**@{},
 <-0.6mm,0.44mm>*{};<-8mm,5mm>*{}**@{-},
 <-0.4mm,0.7mm>*{};<-4.5mm,5mm>*{}**@{-},
 <0mm,0mm>*{};<-1mm,5mm>*{\ldots}**@{},
 <0.4mm,0.7mm>*{};<4.5mm,5mm>*{}**@{-},
 <0.6mm,0.44mm>*{};<8mm,5mm>*{}**@{-},
   <0mm,0mm>*{};<-8.5mm,5.5mm>*{^1}**@{},
   <0mm,0mm>*{};<-5mm,5.5mm>*{^2}**@{},
   <0mm,0mm>*{};<4.5mm,5.5mm>*{^{m\hspace{-0.5mm}-\hspace{-0.5mm}1}}**@{},
   <0mm,0mm>*{};<9.0mm,5.5mm>*{^m}**@{},
 <-0.6mm,-0.44mm>*{};<-8mm,-5mm>*{}**@{-},
 <-0.4mm,-0.7mm>*{};<-4.5mm,-5mm>*{}**@{-},
 <0mm,0mm>*{};<-1mm,-5mm>*{\ldots}**@{},
 <0.4mm,-0.7mm>*{};<4.5mm,-5mm>*{}**@{-},
 <0.6mm,-0.44mm>*{};<8mm,-5mm>*{}**@{-},
   <0mm,0mm>*{};<-8.5mm,-6.9mm>*{^1}**@{},
   <0mm,0mm>*{};<-5mm,-6.9mm>*{^2}**@{},
   <0mm,0mm>*{};<4.5mm,-6.9mm>*{^{n\hspace{-0.5mm}-\hspace{-0.5mm}1}}**@{},
   <0mm,0mm>*{};<9.0mm,-6.9mm>*{^n}**@{},
 \end{xy}}
 \right)=
 \begin{cases}
 \frac 1 n
\sum_{\sigma\in\bS_n}(-1)^{\la_{i}|\sigma|}
\resizebox{16mm}{!}{\xy
(-12,13)*{^{\sigma(1)}},
(0,17)*{^{\sigma(2)}},
(12,13)*{^{\sigma(3)}},
(-18,0)*{^{\sigma(n)}},
(-12,-14)*{^{\sigma(n-1)}},
(-11,11)*{\bullet}="2",
(0,15)*{\bullet}="3",
(11,11)*{\bullet}="4",
(15,0)*{\bullet}="5",
(11,-11)*{\bullet}="6",
(0,-15)*{\bullet}="7",
(-15,0)*{\bullet}="n",
(-11,-11)*{\bullet}="n-1",
\ar @{->} "2";"3" <0pt>
\ar @{->} "3";"4" <0pt>
\ar @{->} "4";"5" <0pt>
\ar @{->} "5";"6" <0pt>
\ar @{->} "6";"7" <0pt>
\ar @{->} "7";"n-1" <0pt>
\ar @{->} "n-1";"n" <0pt>
\ar @{->} "n";"2" <0pt>
\endxy}
&\text{if $m=2$ and $n$ odd}
\\
0 & \text{otherwise}
\end{cases}
\end{align*}
where $i=1,2$, $\la_1=0$, $\la_2=1$ and $|\sigma|\in \Z_2$ is the parity of the permutation $\sigma$.}

\begin{proof}
The claim about $s_1^W$ follows Theorem {\ref{4: Theorem on LieB_0,1 --> Rgra_1}} upon applying the {\em duality}\, (in sense of inverting inputs and outputs of the properad, see \S{\ref{4: Remarks}}(vi)) functor to its main formula, and noting that the graph dual to the ``$m$-theta" graph on the r.h.s.\ of the map $s^\Theta$ os precisely the graph with $m$ binary vertices.\

The second claim follows from the first one upon noticing that  $\hoLieB_{0,1}= \hoLieB_{0,1}\{1\}$.
\end{proof}

\subsubsection{\bf Corollary}{\em Given any vector space $W$ equipped with a symplectic form  of degree zero.
The associated vector space $Cyc(W)$ comes equipped with a canonical formal power series Poisson structure given by the sum
over all possible odd wheels as shown explicitly  in Corollary \S {\ref{4: Corollary on Holieb to Gra via wheels}}}.

\begin{proof} The prop $ \hoLieB_{1,0}$ is identical to the prop $\mathsf{Lie}^{\hspace{-0.5mm} 1}\mathsf{Bi}_\infty$ from \cite{Me1,Me3} and hence has the property that there is a 1-1 correspondence between representations of $\hoLieB_{1,0}$ in a graded vector space $V$ and formal power series Poisson structures on $V$ with vanishing constant term at $0\in V$. Composing the canonical representation $\cR \cG ra_1\rar \cE nd_{Cyc(W)}$ with the map $s_1^W$ we obtain the required claim.
\end{proof}


\bip


{\Large
\section{\bf Twisting of props by morphisms from $\HoLoBd$\\  and the stable ribbon graph complex}
}

\mip

\subsection{Operads twisted by morphisms from Lie type operads}
For any morphism of operads
$$
f: \caH olie_{d}\rar \f
 $$
from a (degree shifted) operad of strongly homotopy Lie algebras to an arbitrary dg operad $\f$, one can construct  \cite{Wi}
 an associated {\em twisted}\, (by $f$) dg operad,
$$
\cT w\f =\left\{ \cT w\f(n):=\prod_{j\geq 0}\cP(n+j)\ot_{\bS_j} (\K[-d])^{\ot j}\right\}
$$
  which often has interesting cohomology and  applications. For example, this twisting construction gives us almost for gratis dg operads of Kontsevich graphs and the dg operad of braces out of much simpler non-differential operads, and also short proofs of some non-trivial theorems about their representations.
 The differential and operadic compositions in  $\cT w\f$ are explicitly given in \cite{Wi}; one can understand (and even reconstruct) these structures  through the following main property  of $\cT w\f$: given (i) any representation  $\rho: \f\rar \cE nd_V$, (ii) any
 (pro)nilpotent algebra $\mathfrak{n}$,  (iii) any Maurer-Cartan element $m\in V\ot \mathfrak{n}$ with respect to the $\mathfrak{n}$-linear  $\caH olie_{d}$ algebra structure in $V\ot \mathfrak{n}$ induced by the composition $\rho\circ f: \caH olie_{d+1}\rar \cE nd_V$, then there is an induced representation
 $$
 \rho_m: \cT w\f \lon \cE nd_{V\ot \mathfrak{n}}
 $$
  in the $\mathfrak{n}$-module $V\ot \mathfrak{n}$ equipped with the twisted $\mathfrak{n}$-linear differential
 $$
 \delta^{tw} v\ot a:= \left(\delta v + \sum_{n\geq 2}\mu_n(m,\ldots,m,v) \right)a \ \ \ \  \forall v\in V,\ a\in \mathfrak{n},
 $$
 where $\delta$ is the original (untwisted) differential in $V$ and
 the operations $\{\mu_n: \ot^n V \rar V[-d]\}_{n\geq 1}$ are precisely the values of the composition $\rho\circ f$ on the generators of  $\caH olie_{d}$.

 \sip

 The dg operad $\cT w \f$ comes equipped with the natural action  of the dg Lie algebra
 $\Def(\caL ie_{d}\rar \f)$ via operadic derivations \cite{Wi}.

  \sip
  Let us illustrate all notions and claims in the concrete examples.

\subsubsection{\bf Examples}\label{5: Graphs_d+1 example}

(i) The operad $\cG ra_{d}$ (see \S {\ref{2: subsection on operad Gra}}) comes equipped with a non-trivial map
$$
\al': \caH olie_{d}\lon \caL ie_{d} \stackrel{\al}{\lon}  \cG ra_{d}
$$
where the first arrow is the canonical projection and $\al$ is the map defined in (\ref{2: alpha map from Lie to gra}). Hence the operad $\cG ra_{d}$ can be twisted by $\al'$ into a new dg operad
$\cT w\cG ra_{d}$ which we describe explicitly. An element $\Ga\in \cT w \cG ra_{d}(n)$ is  graph $\Ga$ from $\cG ra_{d}(n+j)\ot_{\bS_j}(\K[-d])^{\ot j}$ which has $n+j$ vertices such that $n$ vertices
are labelled by numbers from $[n]$ as in $\cG ra_{d}(n)$ while the  remaining $j$ vertices have labels (skew)symmetrized and hence can be viewed as unlabelled. We call
these unlabelled vertices {\em internal}\, and denote by black bullets, for example
$$
\Ba{c}\resizebox{3mm}{!}{ \xy
 (0,8)*{\bu}="A";
 (0,0)*+{_1}*\frm{o}="B";
 \ar @{-} "A";"B" <0pt>
\endxy} \Ea
\in \cT w\cG ra_{d+1}(1)\ \ \ , \ \ \
\Ba{c}\resizebox{17mm}{!}{ \xy
(-1.5,5)*{}="1",
(1.5,5)*{}="2",
(9,5)*{}="3",
 (0,0)*{\bu}="A";
  (9,0)*{\bu}="O";
 (-6,-10)*+{_1}*\frm{o}="B";
  (6,-10)*+{_2}*\frm{o}="C";
   (14,-10)*+{_3}*\frm{o}="D";
 "A"; "B" **\crv{(-5,-0)}; 
  "A"; "D" **\crv{(5,-0.5)};
   "A"; "B" **\crv{(5,-1)};
  "A"; "C" **\crv{(-5,-7)};
   "A"; "O" **\crv{(5,5)};
\ar @{-} "O";"C" <0pt>
\ar @{-} "O";"D" <0pt>
 \endxy}
 \Ea \in \cT w\cG ra_{d+1}(3)
$$
The set of internal vertices is denote by $V_\bu(\Ga)$; the remaining vertices are called {\em external}\, and denoted by white circles, $V(\Ga)\setminus V_\bu(\Ga)=: V_\circ(\Ga)$.
The operadic composition
$$
\Ba{rccc}
\circ_{i}: & \cT w\cG ra_{d}(n) \ot \cT w\cG ra_{d}(m) &\lon &  \cT w\cG ra_{d}(n+m-1)\\
& \Ga_1\ot \Ga_2 &\lon & \Ga_1 \circ_i \Ga_2
\Ea
$$
is given as in $\cG ra_{d}$, i.e.\ by substituting the graph $\Ga_2$ into the $i$-th external vertex of $\Ga_1$ and taking a sum over all attachments of the resulting hanging edges in $\Ga_1$ to external and internal vertices  of $\Ga_2$.

\sip

The right action of the graph complex $\mathsf{fGC}_{d}$ by derivations is defined by
$$
\Ba{rccc}
\cdot & \cT w\cG ra_{d} \ot \mathsf{fGC}_{d}  & \lon & \cT w \cG ra_{d}\\
    & \Ga \ot \ga & \lon & \Ga\cdot \ga :=\sum_{v\in V_\bu(\Ga)} \Ga\cdot_v \ga
    \Ea
$$
where $\Ga\cdot_v\ga$ is given by  substitution of the graph $\ga$ into the internal vertex $v$  and taking the sum over all possible attachments of the resulting hanging
edges to external and internal vertices  of $\Ga$.

\sip

The operad $\cG ra_{d}$ is non-differential so that the differential $\delta$ in $\cT w\cG ra_{d}$ is completely determined by the map $\al'$, i.e.\ by formulae (\ref{2: alpha map from Lie to gra}). It is given explicitly by \cite{Wi}
$$
\delta\Ga:=
\Ba{c}\resizebox{3mm}{!}{  \xy
 (0,6)*{\bu}="A";
 (0,0)*+{_1}*\frm{o}="B";
 \ar @{-} "A";"B" <0pt>
\endxy} \Ea
  \circ_1 \Ga\ \
- \ \ (-1)^{|\Ga|} \sum_{v\in V_\circ(\Ga)}  \Ga \circ_v
\Ba{c}\resizebox{3mm}{!}{  \xy
 (0,6)*{\bu}="A";
 (0,0)*+{_1}*\frm{o}="B";
 \ar @{-} "A";"B" <0pt>
\endxy} \Ea
\  -\ (-1)^{|\Ga|} \Ga \cdot \left(\xy
 (0,0)*{\bullet}="a",
(5,0)*{\bu}="b",
\ar @{-} "a";"b" <0pt>
\endxy\right)
$$
Let $\cG raphs_{d}$ be a dg suboperad of $\cT w\cG ra_{d}$ spanned by graphs
 with all
internal vertices at least trivalent and with no connected component consisting entirely of internal vertices; it was first introduced by Kontsevich (for $d=2$) in \cite{Ko4}.
\sip

Let $\cP ois_{d}$ be a quadratic operad generated by degree zero
corolla $\Ba{c}\resizebox{7mm}{!}{ \begin{xy}
 <0mm,0.66mm>*{};<0mm,3mm>*{}**@{-},
 <0.39mm,-0.39mm>*{};<2.2mm,-2.2mm>*{}**@{-},
 <-0.35mm,-0.35mm>*{};<-2.2mm,-2.2mm>*{}**@{-},
 <0mm,0mm>*{\circ};<0mm,0mm>*{}**@{},
   <0.39mm,-0.39mm>*{};<2.9mm,-4mm>*{^2}**@{},
   <-0.35mm,-0.35mm>*{};<-2.8mm,-4mm>*{^1}**@{},
\end{xy}}\Ea
=
\Ba{c}\resizebox{7mm}{!}{ \begin{xy}
 <0mm,0.66mm>*{};<0mm,3mm>*{}**@{-},
 <0.39mm,-0.39mm>*{};<2.2mm,-2.2mm>*{}**@{-},
 <-0.35mm,-0.35mm>*{};<-2.2mm,-2.2mm>*{}**@{-},
 <0mm,0mm>*{\circ};<0mm,0mm>*{}**@{},
   <0.39mm,-0.39mm>*{};<2.9mm,-4mm>*{^1}**@{},
   <-0.35mm,-0.35mm>*{};<-2.8mm,-4mm>*{^2}**@{},
\end{xy}}\Ea$
and degree $1-d$ corolla
$\Ba{c}\resizebox{7mm}{!}{ \begin{xy}
 <0mm,0.66mm>*{};<0mm,3mm>*{}**@{-},
 <0.39mm,-0.39mm>*{};<2.2mm,-2.2mm>*{}**@{-},
 <-0.35mm,-0.35mm>*{};<-2.2mm,-2.2mm>*{}**@{-},
 <0mm,0mm>*{\bu};<0mm,0mm>*{}**@{},
   <0.39mm,-0.39mm>*{};<2.9mm,-4mm>*{^2}**@{},
   <-0.35mm,-0.35mm>*{};<-2.8mm,-4mm>*{^1}**@{},
\end{xy}}\Ea
=(-1)^{d}
\Ba{c}\resizebox{7mm}{!}{ \begin{xy}
 <0mm,0.66mm>*{};<0mm,3mm>*{}**@{-},
 <0.39mm,-0.39mm>*{};<2.2mm,-2.2mm>*{}**@{-},
 <-0.35mm,-0.35mm>*{};<-2.2mm,-2.2mm>*{}**@{-},
 <0mm,0mm>*{\bu};<0mm,0mm>*{}**@{},
   <0.39mm,-0.39mm>*{};<2.9mm,-4mm>*{^1}**@{},
   <-0.35mm,-0.35mm>*{};<-2.8mm,-4mm>*{^2}**@{},
\end{xy}}\Ea
 $
which are subject to the following relations
\Beq\label{5: Jacobi identity for Lie operad}
\Ba{c}\resizebox{8mm}{!}{  \begin{xy}
 <0mm,0mm>*{\bu};<0mm,0mm>*{}**@{},
 <0mm,0.69mm>*{};<0mm,3.0mm>*{}**@{-},
 <0.39mm,-0.39mm>*{};<2.4mm,-2.4mm>*{}**@{-},
 <-0.35mm,-0.35mm>*{};<-1.9mm,-1.9mm>*{}**@{-},
 <-2.4mm,-2.4mm>*{\bu};<-2.4mm,-2.4mm>*{}**@{},
 <-2.0mm,-2.8mm>*{};<0mm,-4.9mm>*{}**@{-},
 <-2.8mm,-2.9mm>*{};<-4.7mm,-4.9mm>*{}**@{-},
    <0.39mm,-0.39mm>*{};<3.3mm,-4.0mm>*{^3}**@{},
    <-2.0mm,-2.8mm>*{};<0.5mm,-6.7mm>*{^2}**@{},
    <-2.8mm,-2.9mm>*{};<-5.2mm,-6.7mm>*{^1}**@{},
 \end{xy}}\Ea
\ + \
\Ba{c}\resizebox{8mm}{!}{  \begin{xy}
 <0mm,0mm>*{\bu};<0mm,0mm>*{}**@{},
 <0mm,0.69mm>*{};<0mm,3.0mm>*{}**@{-},
 <0.39mm,-0.39mm>*{};<2.4mm,-2.4mm>*{}**@{-},
 <-0.35mm,-0.35mm>*{};<-1.9mm,-1.9mm>*{}**@{-},
 <-2.4mm,-2.4mm>*{\bu};<-2.4mm,-2.4mm>*{}**@{},
 <-2.0mm,-2.8mm>*{};<0mm,-4.9mm>*{}**@{-},
 <-2.8mm,-2.9mm>*{};<-4.7mm,-4.9mm>*{}**@{-},
    <0.39mm,-0.39mm>*{};<3.3mm,-4.0mm>*{^2}**@{},
    <-2.0mm,-2.8mm>*{};<0.5mm,-6.7mm>*{^1}**@{},
    <-2.8mm,-2.9mm>*{};<-5.2mm,-6.7mm>*{^3}**@{},
 \end{xy}}\Ea
\ + \
 \Ba{c}\resizebox{8mm}{!}{ \begin{xy}
 <0mm,0mm>*{\bu};<0mm,0mm>*{}**@{},
 <0mm,0.69mm>*{};<0mm,3.0mm>*{}**@{-},
 <0.39mm,-0.39mm>*{};<2.4mm,-2.4mm>*{}**@{-},
 <-0.35mm,-0.35mm>*{};<-1.9mm,-1.9mm>*{}**@{-},
 <-2.4mm,-2.4mm>*{\bu};<-2.4mm,-2.4mm>*{}**@{},
 <-2.0mm,-2.8mm>*{};<0mm,-4.9mm>*{}**@{-},
 <-2.8mm,-2.9mm>*{};<-4.7mm,-4.9mm>*{}**@{-},
    <0.39mm,-0.39mm>*{};<3.3mm,-4.0mm>*{^1}**@{},
    <-2.0mm,-2.8mm>*{};<0.5mm,-6.7mm>*{^3}**@{},
    <-2.8mm,-2.9mm>*{};<-5.2mm,-6.7mm>*{^2}**@{},
 \end{xy}}\Ea
\Eeq
and
$$
 \Ba{c}\resizebox{8mm}{!}{ \begin{xy}
 <0mm,0mm>*{\circ};<0mm,0mm>*{}**@{},
 <0mm,0.69mm>*{};<0mm,3.0mm>*{}**@{-},
 <0.39mm,-0.39mm>*{};<2.4mm,-2.4mm>*{}**@{-},
 <-0.35mm,-0.35mm>*{};<-1.9mm,-1.9mm>*{}**@{-},
 <-2.4mm,-2.4mm>*{\circ};<-2.4mm,-2.4mm>*{}**@{},
 <-2.0mm,-2.8mm>*{};<0mm,-4.9mm>*{}**@{-},
 <-2.8mm,-2.9mm>*{};<-4.7mm,-4.9mm>*{}**@{-},
    <0.39mm,-0.39mm>*{};<3.3mm,-4.0mm>*{^3}**@{},
    <-2.0mm,-2.8mm>*{};<0.5mm,-6.7mm>*{^2}**@{},
    <-2.8mm,-2.9mm>*{};<-5.2mm,-6.7mm>*{^1}**@{},
 \end{xy}}\Ea
\ = \
 \Ba{c}\resizebox{8mm}{!}{ \begin{xy}
 <0mm,0mm>*{\circ};<0mm,0mm>*{}**@{},
 <0mm,0.69mm>*{};<0mm,3.0mm>*{}**@{-},
 <0.39mm,-0.39mm>*{};<1.9mm,-1.9mm>*{}**@{-},
 <-0.35mm,-0.35mm>*{};<-1.9mm,-1.9mm>*{}**@{-},
 <2.4mm,-2.4mm>*{\circ};<-2.4mm,-2.4mm>*{}**@{},
 <2.0mm,-2.8mm>*{};<0mm,-4.9mm>*{}**@{-},
 <2.8mm,-2.9mm>*{};<4.7mm,-4.9mm>*{}**@{-},
    <0.39mm,-0.39mm>*{};<-3mm,-4.0mm>*{^1}**@{},
    <-2.0mm,-2.8mm>*{};<0mm,-6.7mm>*{^2}**@{},
    <-2.8mm,-2.9mm>*{};<5.2mm,-6.7mm>*{^3}**@{},
 \end{xy}}\Ea\ \ \ , \ \ \
  \Ba{c}\resizebox{8mm}{!}{ \begin{xy}
 <0mm,0mm>*{\bu};<0mm,0mm>*{}**@{},
 <0mm,0.69mm>*{};<0mm,3.0mm>*{}**@{-},
 <0.39mm,-0.39mm>*{};<2.4mm,-2.4mm>*{}**@{-},
 <-0.35mm,-0.35mm>*{};<-1.9mm,-1.9mm>*{}**@{-},
 <-2.4mm,-2.4mm>*{\circ};<-2.4mm,-2.4mm>*{}**@{},
 <-2.0mm,-2.8mm>*{};<0mm,-4.9mm>*{}**@{-},
 <-2.8mm,-2.9mm>*{};<-4.7mm,-4.9mm>*{}**@{-},
    <0.39mm,-0.39mm>*{};<3.3mm,-4.0mm>*{^3}**@{},
    <-2.0mm,-2.8mm>*{};<0.5mm,-6.7mm>*{^2}**@{},
    <-2.8mm,-2.9mm>*{};<-5.2mm,-6.7mm>*{^1}**@{},
 \end{xy}}\Ea
\ = \
\Ba{c} \resizebox{8mm}{!}{ \begin{xy}
 <0mm,0mm>*{\circ};<0mm,0mm>*{}**@{},
 <0mm,0.69mm>*{};<0mm,3.0mm>*{}**@{-},
 <0.39mm,-0.39mm>*{};<2.4mm,-2.4mm>*{}**@{-},
 <-0.35mm,-0.35mm>*{};<-1.9mm,-1.9mm>*{}**@{-},
 <-2.4mm,-2.4mm>*{\bu};<-2.4mm,-2.4mm>*{}**@{},
 <-2.0mm,-2.8mm>*{};<0mm,-4.9mm>*{}**@{-},
 <-2.8mm,-2.9mm>*{};<-4.7mm,-4.9mm>*{}**@{-},
    <0.39mm,-0.39mm>*{};<3.3mm,-4.0mm>*{^2}**@{},
    <-2.0mm,-2.8mm>*{};<0.5mm,-6.7mm>*{^3}**@{},
    <-2.8mm,-2.9mm>*{};<-5.2mm,-6.7mm>*{^1}**@{},
 \end{xy}}\Ea
\ + \
 \Ba{c}\resizebox{8mm}{!}{ \begin{xy}
 <0mm,0mm>*{\circ};<0mm,0mm>*{}**@{},
 <0mm,0.69mm>*{};<0mm,3.0mm>*{}**@{-},
 <0.39mm,-0.39mm>*{};<2.4mm,-2.4mm>*{}**@{-},
 <-0.35mm,-0.35mm>*{};<-1.9mm,-1.9mm>*{}**@{-},
 <-2.4mm,-2.4mm>*{\bu};<-2.4mm,-2.4mm>*{}**@{},
 <-2.0mm,-2.8mm>*{};<0mm,-4.9mm>*{}**@{-},
 <-2.8mm,-2.9mm>*{};<-4.7mm,-4.9mm>*{}**@{-},
    <0.39mm,-0.39mm>*{};<3.3mm,-4.0mm>*{^1}**@{},
    <-2.0mm,-2.8mm>*{};<0.5mm,-6.7mm>*{^3}**@{},
    <-2.8mm,-2.9mm>*{};<-5.2mm,-6.7mm>*{^2}**@{},
 \end{xy}}\Ea.
$$
The operad $\cP ois_2$ is precisely the well-known operad $e_2$ of Gerstenhaber algebras; the operad $\cP ois_{d}$ for $d\geq 2$ is often called the operad of $(d-1)$-algebras \cite{Ko4} while the case $d=0$ corresponds to the operad of ordinary Poisson algebras. We always assume that $\cP ois_{d}$ is a dg operad with trivial differential.

\sip

We have a canonical morphism of dg operads
$$
\Ba{ccc}
\cP ois_{d} & \lon & \cG raphs_{d}\\
\Ba{c}\resizebox{6mm}{!}{\begin{xy}
 <0mm,0.66mm>*{};<0mm,3mm>*{}**@{-},
 <0.39mm,-0.39mm>*{};<2.2mm,-2.2mm>*{}**@{-},
 <-0.35mm,-0.35mm>*{};<-2.2mm,-2.2mm>*{}**@{-},
 <0mm,0mm>*{\circ};<0mm,0mm>*{}**@{},
   <0.39mm,-0.39mm>*{};<2.9mm,-4mm>*{^2}**@{},
   <-0.35mm,-0.35mm>*{};<-2.8mm,-4mm>*{^1}**@{},
\end{xy}}\Ea & \lon &
\Ba{c}\resizebox{9mm}{!}{ \xy
 (0,0)*+{_1}*\frm{o};
 (7,0)*+{_2}*\frm{o};
\endxy} \Ea\\
\Ba{c}\resizebox{6mm}{!}{\begin{xy}
 <0mm,0.66mm>*{};<0mm,3mm>*{}**@{-},
 <0.39mm,-0.39mm>*{};<2.2mm,-2.2mm>*{}**@{-},
 <-0.35mm,-0.35mm>*{};<-2.2mm,-2.2mm>*{}**@{-},
 <0mm,0mm>*{\bu};<0mm,0mm>*{}**@{},
   <0.39mm,-0.39mm>*{};<2.9mm,-4mm>*{^2}**@{},
   <-0.35mm,-0.35mm>*{};<-2.8mm,-4mm>*{^1}**@{},
\end{xy}}\Ea
&\lon&
\Ba{c} \resizebox{8mm}{!}{\xy
 (7,0)*+{_2}*\frm{o}="B";
 (0,0)*+{_1}*\frm{o}="A";
 \ar @{-} "A";"B" <0pt>
\endxy} \Ea
\Ea
$$
which is proven in \cite{Ko4,LV} to be a quasi-isomorphism.

\mip

(ii) Let $\cR\cT ra_d$ be a suboperad of the properad $\cR\cG ra_d$ spanned by connected
graphs of genus zero, i.e.\ by ribbon (unrooted) trees. There is a
morphism of operads,
$$
\nu: \caL ie_d \lon  \cR\cT ra_d
$$
given on the generators by formula (\ref{2: alpha map from Lie to gra}) so that one can construct an associated twisted dg operad
$$
\cR\cT rees_d:= Tw(\cR\cT ra_d).
$$
Its cohomology is equal \cite{Wa} to the (degree shifted) gravity operad $\cG rav\{d-2\}$ introduced by
Ezra Getzler \cite{Ge}; in this picture the cohomology is generated by the ribbon tree given on the r.h.s.\ of
(\ref{2: alpha map from Lie to gra}) and also by (connected) ribbon trees with  black vertices of valency 3 and white vertices of valency 1.
%
The dg operad $\cR\cT rees_d$ acts naturally on the Hochschild complex (see \S {\ref{3: Subsubsect Basic Example}}(i)) of any cyclic
$A_\infty$ algebra.

\sip

In this section we shall construct a dg operad which admits a natural action on the Hochschild complex of any quantum $A_\infty$ algebra; it is obtained by twisting
of a certain polydifferential operad canonically associated with the full properad $\cR \cG ra_d$ of ribbon graphs.

\subsubsection{\bf Twisting by maps from a Lie type operad}
 We shall use below operads twisted by morphisms from a slightly different Lie type operad.
  Let $\caL ie^\diamond_{d}$ be an operad generated by degree $1$ corolla $\Ba{c}\resizebox{1.7mm}{!}{\begin{xy}
 <0mm,-0.55mm>*{};<0mm,-3mm>*{}**@{-},
 <0mm,0.5mm>*{};<0mm,3mm>*{}**@{-},
 <0mm,0mm>*{\bullet};<0mm,0mm>*{}**@{},
 \end{xy}}\Ea$ and degree $1-d$ corolla
 $\Ba{c}\resizebox{6mm}{!}{\begin{xy}
 <0mm,0.66mm>*{};<0mm,3mm>*{}**@{-},
 <0.39mm,-0.39mm>*{};<2.2mm,-2.2mm>*{}**@{-},
 <-0.35mm,-0.35mm>*{};<-2.2mm,-2.2mm>*{}**@{-},
 <0mm,0mm>*{\bu};<0mm,0mm>*{}**@{},
   <0.39mm,-0.39mm>*{};<2.9mm,-4mm>*{^2}**@{},
   <-0.35mm,-0.35mm>*{};<-2.8mm,-4mm>*{^1}**@{},
\end{xy}}\Ea
=(-1)^{d}
\Ba{c}\resizebox{6mm}{!}{\begin{xy}
 <0mm,0.66mm>*{};<0mm,3mm>*{}**@{-},
 <0.39mm,-0.39mm>*{};<2.2mm,-2.2mm>*{}**@{-},
 <-0.35mm,-0.35mm>*{};<-2.2mm,-2.2mm>*{}**@{-},
 <0mm,0mm>*{\bu};<0mm,0mm>*{}**@{},
   <0.39mm,-0.39mm>*{};<2.9mm,-4mm>*{^1}**@{},
   <-0.35mm,-0.35mm>*{};<-2.8mm,-4mm>*{^2}**@{},
\end{xy}}\Ea
 $
which are subject to relations (\ref{5: Jacobi identity for Lie operad})
and the following ones
\footnote{
The  minimal resolution $\caH o lie^\diamond_{d}$  of the operad $\caL ie^\diamond_{d}$ is  generated  by the (skew)symmetric corollas
$
\Ba{c}\resizebox{10mm}{!}{ \xy
(-7.5,-8.6)*{_{_1}};
(-4.1,-8.6)*{_{_2}};
(9.0,-8.5)*{_{_{k}}};
(0.0,-6)*{...};
(0,5)*{};
(0,0)*+\hbox{$_{{p}}$}*\frm{o}
**\dir{-};
(-4,-7)*{};
(0,0)*+\hbox{$_{{p}}$}*\frm{o}
**\dir{-};
(-7,-7)*{};
(0,0)*+\hbox{$_{{p}}$}*\frm{o}
**\dir{-};
(8,-7)*{};
(0,0)*+\hbox{$_{{p}}$}*\frm{o}
**\dir{-};
(4,-7)*{};
(0,0)*+\hbox{$_{{p}}$}*\frm{o}
**\dir{-};
\endxy}\Ea
%
%
$
of homological degree  $1 - d(k-1 +p)$ and has the  differential \cite{CMW}
$
d\Ba{c}
\resizebox{10mm}{!}{ \xy
(-7.5,-8.6)*{_{_1}};
(-4.1,-8.6)*{_{_2}};
(9.0,-8.5)*{_{_{k}}};
(0.0,-6)*{...};
(0,5)*{};
(0,0)*+\hbox{$_{{p}}$}*\frm{o}
**\dir{-};
(-4,-7)*{};
(0,0)*+\hbox{$_{{p}}$}*\frm{o}
**\dir{-};
(-7,-7)*{};
(0,0)*+\hbox{$_{{p}}$}*\frm{o}
**\dir{-};
(8,-7)*{};
(0,0)*+\hbox{$_{{p}}$}*\frm{o}
**\dir{-};
(4,-7)*{};
(0,0)*+\hbox{$_{{p}}$}*\frm{o}
**\dir{-};
\endxy}\Ea
=
\sum_{p=q+r}\sum_{[k]=I_1\sqcup I_2}
\Ba{c}
%
%
\resizebox{11mm}{!}{ \xy
(0,0)*+{q}*\cir{}="b",
(10,10)*+{r}*\cir{}="c",
%
(-4,-6)*{}="-1",
(-2,-6)*{}="-2",
(4,-6)*{}="-3",
(1,-5)*{...},
(0,-8)*{\underbrace{\ \ \ \ \ \ \ \ }},
(0,-11)*{_{I_1}},
(10,16)*{}="2'",
(11,4)*{}="-1'",
(16,4)*{}="-2'",
(18,4)*{}="-3'",
(13.5,4)*{...},
(15,2)*{\underbrace{\ \ \ \ \ \ \ }},
(15,-1)*{_{I_2}},
%
\ar @{-} "b";"c" <0pt>
\ar @{-} "b";"-1" <0pt>
\ar @{-} "b";"-2" <0pt>
\ar @{-} "b";"-3" <0pt>
%
\ar @{-} "c";"2'" <0pt>
\ar @{-} "c";"-1'" <0pt>
\ar @{-} "c";"-2'" <0pt>
\ar @{-} "c";"-3'" <0pt>
\endxy}
\Ea
$.
Representations, $\rho: \caH o lie^\diamond_{d}\rar \cE nd_V$, of this operad in a dg vector space $(V,d)$
is the same thing as continuous representations of  the  operad $\caH o lie_{d}[[\hbar]]$
in the topological vector space $V[[\hbar]]$ equipped with the differential
$
d + \sum_{p\geq 1}\hbar^p \Delta_p$, $\Delta_p:=\rho \left(\Ba{c}
\resizebox{4mm}{!}{ \xy
(0,5)*{};
(0,0)*+{_p}*\cir{}
**\dir{-};
(0,-5)*{};
(0,0)*+{_p}*\cir{}
**\dir{-};
\endxy}\Ea\right),
$
where the formal parameter $\hbar$ is assumed to have homological degree $d$.}
$$
\Ba{c}\resizebox{1.6mm}{!}{\begin{xy}
 <0mm,0mm>*{};<0mm,-3mm>*{}**@{-},
 <0mm,0mm>*{};<0mm,6mm>*{}**@{-},
 <0mm,0mm>*{\bullet};
 <0mm,3mm>*{\bullet};
 \end{xy}}\Ea=0\ \ ,
  \ \ \
\Ba{c}\begin{xy}
 <0mm,0mm>*{\bu};
 <0mm,0.69mm>*{};<0mm,3.0mm>*{}**@{-},
 <0.39mm,-0.39mm>*{};<2.4mm,-2.4mm>*{}**@{-},
 <-0.35mm,-0.35mm>*{};<-1.9mm,-1.9mm>*{}**@{-},
 <-2.4mm,-2.4mm>*{\bu};
 <-2.4mm,-2.8mm>*{};<-2.4mm,-4.9mm>*{}**@{-};
    <3.3mm,-4.0mm>*{^2};
    <-2.4mm,-6.7mm>*{^1};
 \end{xy}\Ea
\ + \
\Ba{c}\begin{xy}
 <0mm,0mm>*{\bu};
 <0mm,0.69mm>*{};<0mm,3.0mm>*{}**@{-},
 <-0.39mm,-0.39mm>*{};<-2.4mm,-2.4mm>*{}**@{-},
 <0.35mm,-0.35mm>*{};<1.9mm,-1.9mm>*{}**@{-},
 <2.4mm,-2.4mm>*{\bu};
 <2.4mm,-2.8mm>*{};<2.4mm,-4.9mm>*{}**@{-};
    <-3.3mm,-4.0mm>*{^1};
    <2.4mm,-6.7mm>*{^2};
 \end{xy}\Ea
\ + \
 \Ba{c}\begin{xy}
 <-2.4mm,0.7mm>*{\bu};
 <-2.4mm,1.69mm>*{};<-2.4mm,-2.0mm>*{}**@{-},
  <-2.4mm,1mm>*{};<-2.4mm,4mm>*{}**@{-},
 <-2.4mm,-2.4mm>*{\bu};
 <-2.0mm,-2.8mm>*{};<0mm,-4.9mm>*{}**@{-},
 <-2.8mm,-2.9mm>*{};<-4.7mm,-4.9mm>*{}**@{-},
    <-2.0mm,-2.8mm>*{};<0.5mm,-6.7mm>*{^2}**@{},
    <-2.8mm,-2.9mm>*{};<-5.2mm,-6.7mm>*{^1}**@{},
 \end{xy}\Ea=0.
$$
We always understand $\caL ie^\diamond$ as a dg operad with trivial differential.

\sip

Given a morphism of dg operads
$$
f: \caL ie_{d}^\diamond \lon (\f,\delta)
$$
the image $f\left(\begin{xy}
 <0mm,-0.55mm>*{};<0mm,-3mm>*{}**@{-},
 <0mm,0.5mm>*{};<0mm,3mm>*{}**@{-},
 <0mm,0mm>*{\bullet};<0mm,0mm>*{}**@{},
 \end{xy}\right)\in \f(1)$ induces a differential $\delta_\bu$ in $\f$ through the canonocal action of $\f(1)$ on $\f$ as derivations. Moreover, the sum $\delta + \delta_\bu$ is also a differential in $\f$.
The composition
$$
\caL ie_{d} \hook \caL ie_{d}^\diamond \stackrel{f}{\lon} \f
$$
is a morphism of {\em dg}\, operads with $\f$ assumed to have the new differential $\delta + \delta_\bu$.
Applying now the standard twisting construction to the above map from $\caL ie_{d}$ to
$(\f, \delta + \delta_\bu)$ we get a twisted dg operad which we denote by $\cT w^\diamond\f$.
As a non-differential operad $\cT w^\diamond\f$ is identical to $Tw\f$, but it comes equipped with a slightly different differential affected by
the image $f\left(\begin{xy}
 <0mm,-0.55mm>*{};<0mm,-3mm>*{}**@{-},
 <0mm,0.5mm>*{};<0mm,3mm>*{}**@{-},
 <0mm,0mm>*{\bullet};<0mm,0mm>*{}**@{},
 \end{xy}\right)$.


\subsection{Polydifferential operad associated to a prop} In this section we construct
a functor from the category of (augmented) props to the category of operads which has the property
that for any prop $\cP=\{\cP(m,n)\}_{m,n\geq 1}$ and its representation
$$
\rho: \cP \lon \cE nd_V
$$
in a vector space $V$, the associated operad $\f\cP=\{\f\cP(k)\}_{k\geq 1}$
admits an associated representation,
$$
\rho^{poly}: \f\cP \lon \cE nd_{\odot^\bu V}
$$
in the graded commutative algebra $\odot^\bu V$ on which elements $p\in\cP$ act
as  polydifferential operators.

\mip

The idea is simple, and is best expressed in some basis $\{x^\al\}$ in $V$
 (so that $\odot^\bu V\simeq \K[x^\al]$).
Any element $p\in \cP(m,n)$ gets transformed by $\rho$ into a linear map
$$
\Ba{rccc}
\rho(p): & \ot^n V & \lon & \ot^m V\\
         & x^{\al_1}\ot x^{\al_2} \ot\ldots \ot x^{\al_n} & \lon &
 \displaystyle \sum_{\be_1,\be_2,...,\be_m} A^{\al_1\al_2...\al_n}_{\be_1\be_2...,\be_m} x^{\be_1}\ot x^{\be_2} \ot\ldots \ot x^{\be_n}
\Ea
$$
for some $A^{\al_1\al_2...\al_n}_{\be_1\be_2...,\be_m}\in \K$.

\sip

Then, for any partition $[n]=I_1\sqcup \ldots  \sqcup I_k$ of $[n]$ into a disjoint union of (possibly, not all non-empty) subsets, $I_i=\{s_{i_1}, s_{i_2}, \ldots, s_{i_{\# I_i}}\}$, $1\leq i\leq k$, we can associate to $p$ a polydifferential operator
$$
\Ba{rccc}
p^{poly}: & \ot^k (\odot^\bu V) & \lon & \odot^\bu V\\
         & f_1(x)\ot f_2(x) \ot\ldots \ot f_n(x) & \lon &
\displaystyle \sum_{\al_1,...,\al_n\atop
\be_1,..., \be_m}\frac{1}{m!} x^{\be_1}x^{\be_2} \cdots x^{\be_m} A^{\al_{I_1}... \al_{I_k}}_{\be_1\be_2...,\be_m} \frac{\p^{\# I_1} f_1}{\p x^{\al_{I_1}}}\cdots
\frac{\p^{\# I_k} f_k}{\p x^{\al_{I_k}}}
\Ea
$$
where $\al_{I_i}$ for $i\in [k]$ stands for the multindex $\al_{s_{i_1}}\al_{s_{i_2}}\ldots
\al_{s_{\# I_i}}$ and
$$
\frac{\p^{\# I_i} f_i}{\p x^{\al_{I_k}}}:= \left\{\Ba{rl} f_i & \mbox{if}\ \# I_i=0\\
 \frac{\p^{\# I_k} f_k}{ \p x^{\al_{s_{i_2}}}x^{\al_{s_{i_2}}}\ldots \p x^{\al_{s_{\# I_i}}} }
 &  \mbox{if}\ \# I_i\geq 1\\
 \Ea\right.
$$
This association $p\rar p^{poly}$ (for any fixed partition of $[n]$) is independent of the choice of a basis used in the construction. Our purpose is to construct an operad $\f\cP$
 out of the prop $\cP$ together with a linear map
$$
F_{[n]=I_1\sqcup...\sqcup I_k}: \cP(m,n) \lon \f\cP(k)
$$
such that, for any representation $\rho:\cP\rar \cE nd_V$,  the operad $\f\cP$ admits a natural representation $\rho^{poly}$ in
 $\odot^\bu V$ and
$$
p^{poly}= \rho^{poly}\left(F_{[n]=I_1\sqcup...\sqcup I_k}(p)\right).
$$
Note that $\odot^\bu V$ carries a natural representation of the operad $\cC om$ of commutative algebras so that
the latter  can also be incorporated into  $\f\cP$ in the form of operators corresponding, in the above notation,
to the case when all the sets $I_i$ are empty.

\subsubsection{\bf Some notation} Sometimes we understand a prop $\cP$ in the category of graded vector spaces  as a collection of $\bS_m^{op}\times \bS_n$ bimodules $\cP(m,n)$ (with --  solely for simplicity
of some formulae below -- an assumption that $m,n\geq 1$)
and sometimes as a functor from the category of pairs of non-empty finite sets to  the category of graded vector spaces,
$$
\cP=\{\cP(I,J)\}
$$
where each vector space $\cP(I,J)$ is a $S_I^{op}\times S_J$-bimodule; in particular,
$\cP(m,n)= \cP([m],[n])$. The horizontal composition in $\cP$ is denoted by $\circ_H$,
$$
\Ba{rccc}
\circ_H: & \cP(I_1,J_1)\ot \cP(I_2,J_2) & \lon & \cP(I_1\sqcup I_2, J_1\sqcup J_2)\\
         & p_1 \ot p_2 &  \lon &   p_1\circ_H p_2.
\Ea
$$
For any two injections of sets $f: S \hook J_1$ and $g: S\hook I_2$ there is a vertical
composition denoted by $_{f(S)}\circ_{g(S)}$,
$$
\Ba{rccc}
_{f(S)}\circ_{g(S)}: & \cP(I_1,J_1)\ot \cP(I_2,J_2) & \lon & \cP(I_1\sqcup (I_2\setminus i_2(S)), (J_1\setminus j_1(S))\sqcup J_2)\\
         & p_1 \ot p_2 &  \lon &   {p_1}\ _{f(S)}\circ_{g(S)} p_2
\Ea
$$
which glues $S$-labelled (via $f$) outputs of $p_2$ with the corresponding
$S$-labelled (via $g$) inputs of $p_1$.

\subsubsection{\bf Remark} Up to now all our props and operads were unital by default. The functor
$\f$ can be defined for an an arbitrary prop. However, some of our construction in this paper
become nicer if we assume that the props we consider are augmented,
$$
\cP= \K \oplus \bar{\cP},
$$
and apply the functor $\f$ to the augmentation ideal $\bar{\cP}$ only (see the Definition below). This assumption holds true in all applications of the functor $\f$ in this paper.

\sip

 It is worth noting that in some cases this restriction of the functor $\f$ to the augmentation ideal comes at a price of loosing in $\f\cP$ {\em rescaling}\, operators which  can be  useful in applications.

\subsubsection{\bf Definition}\label{5: Definition of OP oprad} Let $\cP$ be an augmented prop. Define a collection of (completed with respect to the filtration by the number of outputs and inputs) $\bS$-modules,
$$
\f\cP(k) :=\cC om(k)\ \oplus \prod_{m,n\geq 1}\bigoplus_{[n]=\ J_1\sqcup ...\sqcup J_k\atop
\# J_1,..., \# J_k\geq 0} \f\cP^m_{J_1,...,J_k}
$$
where
$$
\f\cP^m_{J_1,...,J_k}:=
\id_m \ot_{\bS_m^{op}} \ot \bar{\cP}(m,n)\ot_{S_{J_1}\times ...\times S_{J_k}} \id_{S_{J_1}} \ot \ldots \ot \id_{S_{J_k}}
$$
where $\id_I$ stands for the trivial one-dimensional representation of the permutation group $S_I$.
Thus an element of the summand $ \f\cP^m_{J_1,...,J_k}\subset \f\cP(k)$ is an element of $\cP(m,\# J_1+ \ldots + \# J_k)$ with all its $m$ outputs symmetrized and all its inputs in each bunch $J_s\subset [n]$, $s\in [k]$, also symmetrized. We assume from now on that all legs in each bunch $J_s$ are labelled by the same integer $s$; this defines an action of the group $\bS_k$ on $\f\cP(k)$.
There is a canonical linear map,
\Beq\label{5: m=0 projection}
\pi^m_{J_1,...,J_k}: \bar{\cP}(m,n) \lon \f\cP^m_{J_1,...,J_k}.
\Eeq

\mip

It is often useful to represent elements $p$ of the (non-unital) prop  $\bar{\cP}$ as (decorated) corollas,
$$
p\ \ \sim
\Ba{c}\resizebox{10mm}{!}{ \xy
(0,4.5)*+{...},
(0,-4.5)*+{...},
(0,0)*{\circ}="o",
(-5,5)*{}="1",
(-3,5)*{}="2",
(3,5)*{}="3",
(5,5)*{}="4",
(-3,-5)*{}="5",
(3,-5)*{}="6",
(5,-5)*{}="7",
(-5,-5)*{}="8",
(-5.5,7)*{_1},
(-3,7)*{_2},
(3,6)*{},
(5.9,7)*{m},
(-3,-7)*{_2},
(3,-7)*+{},
(5.9,-7)*{n},
(-5.5,-7)*{_1},
\ar @{-} "o";"1" <0pt>
\ar @{-} "o";"2" <0pt>
\ar @{-} "o";"3" <0pt>
\ar @{-} "o";"4" <0pt>
\ar @{-} "o";"5" <0pt>
\ar @{-} "o";"6" <0pt>
\ar @{-} "o";"7" <0pt>
\ar @{-} "o";"8" <0pt>
\endxy}\Ea \in \bar{\cP}(m,n)
$$
The image of such an element under the projection $\pi^m_{J_1,...,J_k}$ is represented pictorially as the same  corolla whose output legs are decorated by the same symbol 1 (which is omitted in the pictures) and the input legs
decorated with possibly {\em coinciding}\,  indices as in the following picture

$$
\Ba{c}\resizebox{15mm}{!}{\xy
(-9,-6)*{};
(0,0)*{\circ}
**\dir{-};
(-7.5,-6)*{};
(0,0)*{\circ }
**\dir{-};
(-6,-6)*{};
(0,0)*{\circ }
**\dir{-};
(-1,-6)*{};
(0,0)*{\circ }
**\dir{-};
(0,-6)*{};
(0,0)*{\circ }
**\dir{-};
(1,-6)*{};
(0,0)*{\circ }
**\dir{-};
(9,-6)*{};
(0,0)*{\circ }
**\dir{-};
(7.5,-6)*{};
(0,0)*{\circ }
**\dir{-};
(6,-6)*{};
(0,0)*{\circ }
**\dir{-};
(-3,-5)*{...};
(3,-5)*{...};
(-8,-9)*{\underbrace{\ \  }_{J_1}};
(0,-9)*{\underbrace{\ \  }_{J_i}};
(8,-9)*{\underbrace{\ \  }_{J_k}};
(0,9)*{\overbrace{ }^{[m]}};
(-2,6)*{};
(0,0)*{\circ }
**\dir{-};
(-0.7,6)*{};
(0,0)*{\circ }
**\dir{-};
(0.7,6)*{};
(0,0)*{\circ }
**\dir{-};
(2,6)*{};
(0,0)*{\circ }
**\dir{-};
\endxy}\Ea
\ \ \ \ \ \ \ \ \ \ \ \mbox{or} \ \ \ \ \
\Ba{c}\resizebox{15mm}{!}{\xy
(-9,-6)*{};
(0,0)*{\circ }
**\dir{-};
(-7.5,-6)*{};
(0,0)*{\circ }
**\dir{-};
(-6,-6)*{};
(0,0)*{\circ }
**\dir{-};
(-1,-6)*{};
(0,0)*{\circ }
**\dir{-};
(0,-6)*{};
(0,0)*{\circ }
**\dir{-};
(1,-6)*{};
(0,0)*{\circ }
**\dir{-};
(9,-6)*{};
(0,0)*{\circ }
**\dir{-};
(7.5,-6)*{};
(0,0)*{\circ }
**\dir{-};
(6,-6)*{};
(0,0)*{\circ }
**\dir{-};
(-3,-5)*{...};
(3,-5)*{...};
(-9,-7.5)*{_1};
(-7.5,-7.5)*{_1};
(-6,-7.5)*{_1};
(-1.1,-7.5)*{_i};
(1.1,-7.5)*{_i};
(0,-7.5)*{_i};
(7.8,-7.5)*{_k};
(6.3,-7.5)*{_k};
(9.6,-7.5)*{_k};
%
(-2,6)*{};
(0,0)*{\circ }
**\dir{-};
(-0.7,6)*{};
(0,0)*{\circ }
**\dir{-};
(0.7,6)*{};
(0,0)*{\circ }
**\dir{-};
(2,6)*{};
(0,0)*{\circ }
**\dir{-};
\endxy}\Ea \ \  1\leq i\leq k.
$$
Note that some of the sets $J_i$ can be empty so that some of the numbers decorating inputs can have no
legs attached! For example, an element
$
q= \Ba{c}\resizebox{8mm}{!}{\xy
(0,0)*{\circ}="o",
(-5,5)*{}="1",
(-2,5)*{}="2",
(2,5)*{}="3",
(5,5)*{}="4",
(-3,-5)*{}="5",
(3,-5)*{}="6",
(5,-5)*{}="7",
(-5,-5)*{}="8",
(0,-5)*{}="9",
(-5.5,7)*{_1},
(-2,7)*{_2},
(2,7)*{_3},
(5.9,7)*{_4},
(-3,-7)*{_2},
(3.5,-7)*{_4},
(5.9,-7)*{_5},
(-5.5,-7)*{_1},
(0,-7)*{_3},
\ar @{-} "o";"1" <0pt>
\ar @{-} "o";"2" <0pt>
\ar @{-} "o";"3" <0pt>
\ar @{-} "o";"4" <0pt>
\ar @{-} "o";"5" <0pt>
\ar @{-} "o";"6" <0pt>
\ar @{-} "o";"7" <0pt>
\ar @{-} "o";"8" <0pt>
\ar @{-} "o";"9" <0pt>
\endxy}\Ea\in \bar{\cP}(4,5)
$
can generate several different elements in $\f\cP$,
\Beq\label{5: examples of OP}
\Ba{c}\resizebox{10mm}{!}{\xy
(0,0)*{\circ}="o",
(-2.5,5)*{}="1",
(-0.8,5)*{}="2",
(0.8,5)*{}="3",
(2.5,5)*{}="4",
(-3,-5)*{}="5",
(3,-5)*{}="6",
(5,-5)*{}="7",
(-5,-5)*{}="8",
(0,-5)*{}="9",
(-3,-7)*{_1},
(3.5,-7)*{_1},
(5.9,-7)*{_2},
(-5.5,-7)*{_1},
(0,-7)*{_2},
\ar @{-} "o";"1" <0pt>
\ar @{-} "o";"2" <0pt>
\ar @{-} "o";"3" <0pt>
\ar @{-} "o";"4" <0pt>
\ar @{-} "o";"5" <0pt>
\ar @{-} "o";"6" <0pt>
\ar @{-} "o";"7" <0pt>
\ar @{-} "o";"8" <0pt>
\ar @{-} "o";"9" <0pt>
\endxy}\Ea \in \f\cP(2) \ \ \ \ , \ \ \ \
\Ba{c}\resizebox{14mm}{!}{\xy
(0,0)*{\circ}="o",
(-2.5,5)*{}="1",
(-0.8,5)*{}="2",
(0.8,5)*{}="3",
(2.5,5)*{}="4",
(-3,-5)*{}="5",
(3,-5)*{}="6",
(5,-5)*{}="7",
(-5,-5)*{}="8",
(0,-5)*{}="9",
(-3,-7)*{_1},
(3.5,-7)*{_1},
(5.9,-7)*{_2},
(-5.5,-7)*{_1},
(0,-7)*{_2},
(10,-7)*{_3},
(12,-7)*{_4},
\ar @{-} "o";"1" <0pt>
\ar @{-} "o";"2" <0pt>
\ar @{-} "o";"3" <0pt>
\ar @{-} "o";"4" <0pt>
\ar @{-} "o";"5" <0pt>
\ar @{-} "o";"6" <0pt>
\ar @{-} "o";"7" <0pt>
\ar @{-} "o";"8" <0pt>
\ar @{-} "o";"9" <0pt>
\endxy}\Ea \in \f\cP(4)\ \ , \ \ etc.
\Eeq
Often (but not always) it is useful to represent elements of $\f\cP$ not as corollas
decorated by elements from $\cP$ whose legs are labelled by possibly coinciding natural numbers, but
as graphs having two types of vertices:  the small one (with is decorated by an element of $\bar{\cP}$) and new big ones corresponding to inputs of $\f\cP$ and having a ``non-coinciding"  numerical labelling
$$
\Ba{c}\resizebox{15mm}{!}{\xy
(-9,-6)*{};
(0,0)*{\circ }
**\dir{-};
(-7.5,-6)*{};
(0,0)*{\circ }
**\dir{-};
(-6,-6)*{};
(0,0)*{\circ }
**\dir{-};
(-1,-6)*{};
(0,0)*{\circ }
**\dir{-};
(0,-6)*{};
(0,0)*{\circ }
**\dir{-};
(1,-6)*{};
(0,0)*{\circ }
**\dir{-};
(9,-6)*{};
(0,0)*{\circ }
**\dir{-};
(7.5,-6)*{};
(0,0)*{\circ }
**\dir{-};
(6,-6)*{};
(0,0)*{\circ }
**\dir{-};
(-3,-5)*{...};
(3,-5)*{...};
(-9,-7.5)*{_1};
(-7.5,-7.5)*{_1};
(-6,-7.5)*{_1};
(-1.1,-7.5)*{_i};
(1.1,-7.5)*{_i};
(0,-7.5)*{_i};
(7.8,-7.5)*{_k};
(6.3,-7.5)*{_k};
(9.6,-7.5)*{_k};
%
(-2,6)*{};
(0,0)*{\circ }
**\dir{-};
(-0.7,6)*{};
(0,0)*{\circ }
**\dir{-};
(0.7,6)*{};
(0,0)*{\circ }
**\dir{-};
(2,6)*{};
(0,0)*{\circ }
**\dir{-};
\endxy}\Ea\
\ \  {\simeq}
 \ \
\Ba{c}\resizebox{15mm}{!}{\xy
(-9,-6)*{};
(0,0)*{\circ }
**\dir{-};
(-7.5,-6)*{};
(0,0)*{\circ }
**\dir{-};
(-6,-6)*{};
(0,0)*{\circ }
**\dir{-};
(-1,-6)*{};
(0,0)*{\circ }
**\dir{-};
(0,-6)*{};
(0,0)*{\circ }
**\dir{-};
(1,-6)*{};
(0,0)*{\circ }
**\dir{-};
(9,-6)*{};
(0,0)*{\circ }
**\dir{-};
(7.5,-6)*{};
(0,0)*{\circ }
**\dir{-};
(6,-6)*{};
(0,0)*{\circ }
**\dir{-};
(-3,-5)*{...};
(3,-5)*{...};
%
(-2,6)*{};
(0,0)*{\circ }
**\dir{-};
(-0.7,6)*{};
(0,0)*{\circ }
**\dir{-};
(0.7,6)*{};
(0,0)*{\circ }
**\dir{-};
(2,6)*{};
(0,0)*{\circ }
**\dir{-};
(-8.2,-7.9)*+\hbox{${{1}}$}*\frm{o};
(8.2,-7.9)*+\hbox{${{\, k\, }}$}*\frm{o};
(0,-7.9)*+\hbox{${{\, i\, }}$}*\frm{o}
\endxy}\Ea
$$
In this notation elements (\ref{5: examples of OP}) gets represented, respectively, as
$$
\Ba{c}\resizebox{10mm}{!}{\xy
(-2.5,5)*{}="1",
(-0.8,5)*{}="2",
(0.8,5)*{}="3",
(2.5,5)*{}="4",
 (0,0)*{\circ}="A";
 (-6,-10)*+{_1}*\frm{o}="B";
  (6,-10)*+{_2}*\frm{o}="C";
 "A"; "B" **\crv{(-5,-0)}; 
 "A"; "B" **\crv{(-5,-6)};
  "A"; "C" **\crv{(5,-0.5)};
   "A"; "B" **\crv{(5,-1)};
  "A"; "C" **\crv{(-5,-7)};
  \ar @{-} "A";"1" <0pt>
\ar @{-} "A";"2" <0pt>
\ar @{-} "A";"3" <0pt>
\ar @{-} "A";"4" <0pt>
 \endxy}
 \Ea
 \ \ \ \ \ \mbox{and} \ \ \ \ \
\Ba{c}\resizebox{17mm}{!}{ \xy
(-2.5,5)*{}="1",
(-0.8,5)*{}="2",
(0.8,5)*{}="3",
(2.5,5)*{}="4",
 (0,0)*{\circ}="A";
 (-6,-10)*+{_1}*\frm{o}="B";
  (6,-10)*+{_2}*\frm{o}="C";
   (12,-10)*+{_3}*\frm{o};
   (18,-10)*+{_4}*\frm{o};
 "A"; "B" **\crv{(-5,-0)}; 
 "A"; "B" **\crv{(-5,-6)};
  "A"; "C" **\crv{(5,-0.5)};
   "A"; "B" **\crv{(5,-1)};
  "A"; "C" **\crv{(-5,-7)};
  \ar @{-} "A";"1" <0pt>
\ar @{-} "A";"2" <0pt>
\ar @{-} "A";"3" <0pt>
\ar @{-} "A";"4" <0pt>
 \endxy}
 \Ea
$$
while generators of $\cC om(n)\subset \f\cP(n)$ as
$
\Ba{c}\resizebox{15mm}{!}{
\xy
(0,2)*+{_1}*\frm{o};
(6,2)*+{_2}*\frm{o};
(18,2)*+{_n}*\frm{o};
(13,2)*+{\cdots};
\endxy}\Ea
$.

\sip

We denote  $\bar{\cP}(0,n):={\cC om}(n)\simeq\id_n$, $n\geq 1$, and set $\f\cP^0_{J_1,...,J_k}$
to be $\cC om(k)$ for $J_1=\ldots= J_k=\emptyset$ and zero otherwise; we also extend the map
(\ref{5: m=0 projection}) to the value $m=0$
$$
\pi^0_{J_1,...,J_k}: \bar{\cP}(0,n) \lon \f\cP^0_{J_1,...,J_k}
$$
in the obvious way.

\bip

For any $i\in [k]$ consider a map
$$
\Ba{rccc}
\circ_i: & \f\cP(k) \ot \f\cP(l) & \lon & \f\cP(k+l-1)
\Ea
$$
which is given on arbitrary elements $a\in \f\cP^m_{J_1,...,J_k}\subset \f\cP(k)$
and $b\in \f\cP^q_{I_1,...,I_l}\subset \f\cP(l)$
as follows
\Bi
\item[(a)] If $m,q\geq 1$,
\Beqrn
a\circ_i b &:= & \sum_{r=1}^{\max(\# I_i,q)} \sum_{f: [r]\hook I_i\atop
f: [r]\hook [q]} \sum_{J_i\setminus f([r])=T_1\sqcup ...\sqcup T_l\atop
\# T_1,..., \# T_l\geq 0}
\pi^{m+q- r}_{J_1,\ldots, J_{i-1}, I_1\sqcup T_1,...,I_l\sqcup T_l, J_{i+1},..., J_{k}} (a\ _{f([r])}\circ_{g([r])}\ b)   \\
&&
+\ \ \sum_{I_i=T_1\sqcup ...\sqcup T_l\atop
\# T_1,..., \# T_l\geq 0}
\pi^{m+q}_{J_1,\ldots, J_{i-1}, J_1\sqcup T_1,...,J_l\sqcup T_l, J_{i+1},..., J_{k}} (a\ \circ_H\ b)
\Eeqrn
\item[(b)] if $m\geq 1$, $q=0$,
$$
a\circ_i \left(\Ba{c}\resizebox{14mm}{!}{\xy
(0,2)*+{_1}*\frm{o};
(6,2)*+{_2}*\frm{o};
(18,2)*+{_l}*\frm{o};
(13,2)*+{\cdots};
\endxy}\Ea \right):= \sum_{J_i=T_1\sqcup ...\sqcup T_l\atop
\# T_1,..., \# T_l\geq 0}
\pi^{m}_{J_1,\ldots, J_{i-1}, T_1,..., T_l, J_{i+1},..., J_{k}} (a)
$$
\item[(c)] if $m=0$, $q=0$,
$$
\left(\Ba{c}\resizebox{14mm}{!}{\xy
(0,2)*+{_1}*\frm{o};
(6,2)*+{_2}*\frm{o};
(18,2)*+{_k}*\frm{o};
(13,2)*+{\cdots};
\endxy}\Ea \right)\circ_i
\left(\Ba{c}\resizebox{14mm}{!}{\xy
(0,2)*+{_1}*\frm{o};
(6,2)*+{_2}*\frm{o};
(18,2)*+{_l}*\frm{o};
(13,2)*+{\cdots};
\endxy}\Ea \right):=
\Ba{c}\resizebox{14mm}{!}{\xy
(0,2)*+{_1}*\frm{o};
(6,2)*+{_2}*\frm{o};
(19,2)*+{_{_{_{{l+k-1}}}}}*\frm{o};
(11,2)*+{\cdots};
\endxy}\Ea
$$

\Ei
The first group of elements in (a) belongs, for each $r$, to the subspace $\f\cP^{m+q-r}_{J_1,\ldots, J_{i-1}, J_1\sqcup T_1,...,J_l\sqcup T_l, J_{i+l+1},..., J_{k+l-1}}$ of $ \f\cP(k+l-1)$ and is obtained from $a$ and $b$ by (i) taking vertical compositions of along  $r$-labelled outputs of $b$ and the corresponding $r$-labelled inputs of $a$ belonging to the bunch
$J_i$, (ii) symmetrizing over the outputs of $a$ with the remaining $q-r$
outputs of $b$, and (iii) taking the sum over all possible assignments of the remaining
$\# J_i -r$ inputs of $a$ to the input bunches $I_1,..., I_l$ of $b$.
The second group of elements in (a) belongs to the subspace $\f\cP^{m+q}_{J_1,\ldots, J_{i-1}, J_1\sqcup T_1,...,J_l\sqcup T_l, J_{i+l+1},..., J_{k+l-1}}$ of $ \f\cP(k+l-1)$ and is obtained from $a$ and $b$ by taking their horizontal composition, symmetrizing over the all outputs $m+p$ outputs, and taking a sum over all possible assignments of the input legs of $a$ from
the bunch
$J_i$  to the input bunches $I_1,..., I_l$ of $b$.

\sip

Using ``big-circle" notation for inputs one can understand the operadic composition $a\circ_i b$ in a way similar to the description of the operadic composition in the operad $\cG ra_{d+1}$: substitute the decorated corolla $b=\Ba{c}\resizebox{10mm}{!}{\xy
(-9,-6)*{};
(0,0)*{\circ }
**\dir{-};
(-7.5,-6)*{};
(0,0)*{\circ }
**\dir{-};
(-6,-6)*{};
(0,0)*{\circ }
**\dir{-};
(9,-6)*{};
(0,0)*{\circ }
**\dir{-};
(7.5,-6)*{};
(0,0)*{\circ }
**\dir{-};
(6,-6)*{};
(0,0)*{\circ }
**\dir{-};
(0,-5)*{...};
(-2,6)*{};
(0,0)*{\circ }
**\dir{-};
(-0.7,6)*{};
(0,0)*{\circ }
**\dir{-};
(0.7,6)*{};
(0,0)*{\circ }
**\dir{-};
(2,6)*{};
(0,0)*{\circ }
**\dir{-};
(-8.2,-7.9)*+\hbox{${{1}}$}*\frm{o};
(8.2,-7.9)*+\hbox{${{\, l\, }}$}*\frm{o};
\endxy}\Ea
$ into the $i$-th input  vertex of
$a=\Ba{c}\resizebox{12mm}{!}{\xy
(-9,-6)*{};
(0,0)*{\circ }
**\dir{-};
(-7.5,-6)*{};
(0,0)*{\circ }
**\dir{-};
(-6,-6)*{};
(0,0)*{\circ }
**\dir{-};
(-1,-6)*{};
(0,0)*{\circ }
**\dir{-};
(0,-6)*{};
(0,0)*{\circ }
**\dir{-};
(1,-6)*{};
(0,0)*{\circ }
**\dir{-};
(9,-6)*{};
(0,0)*{\circ }
**\dir{-};
(7.5,-6)*{};
(0,0)*{\circ }
**\dir{-};
(6,-6)*{};
(0,0)*{\circ }
**\dir{-};
(-3,-5)*{...};
(3,-5)*{...};
%
(-2,6)*{};
(0,0)*{\circ }
**\dir{-};
(-0.7,6)*{};
(0,0)*{\circ }
**\dir{-};
(0.7,6)*{};
(0,0)*{\circ }
**\dir{-};
(2,6)*{};
(0,0)*{\circ }
**\dir{-};
(-8.2,-7.9)*+\hbox{${{1}}$}*\frm{o};
(8.2,-7.9)*+\hbox{${{\, k\, }}$}*\frm{o};
(0,-7.9)*+\hbox{${{\, i\, }}$}*\frm{o}
\endxy}\Ea$ and then take the sum over all possible ways to attach the hanging edges (previously attached to the $i$-th vertex) to the out-put legs of $b$  and to the input big-circle vertices of $b$. The element  $\xy (0,0)*+{_1}*\frm{o};\endxy$ plays the role of the unit in the operad $\f\cP$.

\subsubsection{\bf Proposition}\label{5: Proposition on OP and its representations} {\em (i) For any augmented prop $\cP$ the associated data $(\f\cP, \circ_i)$
is an operad (called the {\em polydifferential operad associated to $\cP$}).

\sip

(ii) Any representation of the augmented prop $\cP$ in a graded vector space $V$ induces canonically an associated
representation of the operad $\f\cP$ in $\odot^\bu V$.}

\begin{proof} Given an arbitrary representation $\rho: \cP \rar \cE nd_V$. Compositions $\circ_i$
have been defined in such a way that
the morphism of $\bS$-modules $\rho^{poly}: \f\cP\rar \cE nd_{\odot^\bu V}$ given explicitly above satisfies the condition
$$
\rho^{poly}(a\circ_i b)= \rho^{poly}(a)\circ_i \rho^{poly}(b).
$$
Put another way, compositions $\circ_i$ in $\f\cP$ are just combinatorial incarnations of the standard substituition of polydifferential operators acting on $\odot^\bu V$. That these compositions satisfy
axioms of an operad is a straightforward check (which is easiest to do in local coordinates as in the beginning of \S 5.2).
\end{proof}

\subsubsection{\bf Example}\label{5: subsubsect on Examples in OP operad}
 Consider elements $\begin{xy}
 <0mm,0.66mm>*{};<0mm,3mm>*{}**@{-},
 <0.39mm,-0.39mm>*{};<2.2mm,-2.2mm>*{}**@{-},
 <-0.35mm,-0.35mm>*{};<-2.2mm,-2.2mm>*{}**@{-},
 <0mm,0mm>*{\circ};<0mm,0mm>*{}**@{},
   <0mm,0.66mm>*{};<0mm,3.4mm>*{^1}**@{},
   <0.39mm,-0.39mm>*{};<2.9mm,-4mm>*{^2}**@{},
   <-0.35mm,-0.35mm>*{};<-2.8mm,-4mm>*{^1}**@{},
\end{xy}\in \cP(1,2)$ and $\begin{xy}
 <0mm,-0.55mm>*{};<0mm,-2.5mm>*{}**@{-},
 <0.5mm,0.5mm>*{};<2.2mm,2.2mm>*{}**@{-},
 <-0.48mm,0.48mm>*{};<-2.2mm,2.2mm>*{}**@{-},
 <0mm,0mm>*{\circ};<0mm,0mm>*{}**@{},
 <0mm,-0.55mm>*{};<0mm,-3.8mm>*{_1}**@{},
 <0.5mm,0.5mm>*{};<2.7mm,2.8mm>*{^2}**@{},
 <-0.48mm,0.48mm>*{};<-2.7mm,2.8mm>*{^1}**@{},
 \end{xy}\in \cP(2,1)$.
 We can associate to them, e.g., the following three elements in the operad $\f\cP$,
$$
\Ba{c}{\xy
(-3,-3)*{};
(0,0)*{\circ}
**\dir{-};
(3,-3)*{};
(0,0)*{\circ }
**\dir{-};
(0,4)*{};
(0,0)*{\circ }
**\dir{-};
(-3.5,-4.5)*{_1};
(3.5,-4.5)*{_2};
\endxy}\Ea \in \f\cP(2) \  , \ \ \ \
\Ba{c}{\xy
(-0.7,-4)*{};
(-0,0)*{\circ}
**\dir{-};
(0.7,-4)*{};
(0,0)*{\circ }
**\dir{-};
(0,4)*{};
(0,0)*{\circ}
**\dir{-};
(-0.8,-5.6)*{_1};
(0.8,-5.6)*{_1};
\endxy}\Ea
\ , \
\Ba{c}{\xy
(-0.7,4)*{};
(-0,0)*{\circ}
**\dir{-};
(0.7,4)*{};
(0,0)*{\circ}
**\dir{-};
(0,-4)*{};
(0,0)*{\circ }
**\dir{-};
(-0,-5.6)*{_1};
\endxy}\Ea
\in \f\cP(1)
$$
We have in $\f\cP$,
$$
\Ba{c}{\xy
(-3,-3)*{};
(0,0)*{\circ}
**\dir{-};
(3,-3)*{};
(0,0)*{\circ}
**\dir{-};
(0,4)*{};
(0,0)*{\circ}
**\dir{-};
(-3.5,-4.5)*{_1};
(3.5,-4.5)*{_2};
\endxy}
\Ea
\
 \circ_2
 \
\Ba{c}{\xy
(-3,-3)*{};
(0,0)*{\circ}
**\dir{-};
(3,-3)*{};
(0,0)*{\circ }
**\dir{-};
(0,4)*{};
(0,0)*{\circ}
**\dir{-};
(-3.5,-4.5)*{_1};
(3.5,-4.5)*{_2};
\endxy}
\Ea
=
\Ba{c}{\xy
(-3,-3)*{};
(0,0)*{\circ}
**\dir{-};
(4,-4)*{\circ };
(0,0)*{\circ}
**\dir{-};
(4,-4)*{\circ};
(7.5,-7.5)*{}
**\dir{-};
(4,-4)*{\circ};
(1.5,-7.5)*{}
**\dir{-};
(0,4)*{};
(0,0)*{\circ}
**\dir{-};
(-3.5,-4.5)*{_1};
(1.5,-9)*{_2};
(7.5,-9)*{_3};
\endxy}
\Ea\ \ + \ \
\Ba{c}{\xy
(-3,-3)*{};
(0,0)*{\circ}
**\dir{-};
(3,-3)*{};
(0,0)*{\circ}
**\dir{-};
(0,4)*{};
(0,0)*+{\circ}
**\dir{-};
(-3.5,-4.5)*{_1};
(3,-4.5)*{_2};
\endxy}
\Ea
\hspace{-3.5mm}
\Ba{c}{\xy
(-3,-3)*{};
(0,0)*{\circ}
**\dir{-};
(3,-3)*{};
(0,0)*{\circ}
**\dir{-};
(0,4)*{};
(0,0)*{\circ}
**\dir{-};
(-3,-4.5)*{_2};
(3.5,-4.5)*{_3};
\endxy}
\Ea
\ \ + \ \
\Ba{c}{\xy
(-3,-3)*{};
(0,0)*{\circ }
**\dir{-};
(10,-3)*{};
(0,0)*{\circ }
**\dir{-};
(0,4)*{};
(0,0)*{\circ}
**\dir{-};
(-3.5,-4.5)*{_1};
(10,-4.5)*{_3};
\endxy}
\Ea
\hspace{-9.5mm}
\Ba{c}{\xy
(-3,-3)*{};
(0,0)*{\circ }
**\dir{-};
(3,-3)*{};
(0,0)*{\circ }
**\dir{-};
(0,4)*{};
(0,0)*{\circ }
**\dir{-};
(-3,-4.5)*{_2};
(3.5,-4.5)*{_3};
\endxy}
\Ea
$$

$$
\Ba{c}{\xy
(-0.7,-4)*{};
(-0,0)*{\circ }
**\dir{-};
(0.7,-4)*{};
(0,0)*{\circ }
**\dir{-};
(0,4)*{};
(0,0)*{\circ }
**\dir{-};
(-0.8,-6)*{_1};
(0.8,-6)*{_1};
\endxy}\Ea\ \circ_1 \
\Ba{c}{\xy
(-3,-3)*{};
(0,0)*{\circ }
**\dir{-};
(3,-3)*{};
(0,0)*{\circ }
**\dir{-};
(0,4)*{};
(0,0)*{\circ }
**\dir{-};
(-3.5,-4.5)*{_1};
(3.5,-4.5)*{_2};
\endxy}\Ea
=
\Ba{c}{\xy
(0.8,-7.5)*{};
(2,0)*{\circ }
**\dir{-};
(4,-4)*{\circ };
(2,0)*{\circ }
**\dir{-};
(4,-4)*{\circ };
(7.5,-7.5)*{}
**\dir{-};
(4,-4)*{\circ };
(1.5,-7.5)*{}
**\dir{-};
(2,4)*{};
(2,0)*{\circ }
**\dir{-};
(1.0,-9)*{_1};
(2.0,-9)*{_1};
(7.5,-9)*{_2};
\endxy}
\Ea
+
\Ba{c}{\xy
(8.3,-7.5)*{};
(6,0)*{\circ }
**\dir{-};
(4,-4)*{\circ };
(6,0)*{\circ }
**\dir{-};
(4,-4)*{\circ };
(7.5,-7.5)*{}
**\dir{-};
(4,-4)*{\circ };
(1.5,-7.5)*{}
**\dir{-};
(6,4)*{};
(6,0)*{\circ }
**\dir{-};
(8.8,-9)*{_2};
(2.0,-9)*{_1};
(7.2,-9)*{_2};
\endxy}
\Ea
+
\Ba{c}{\xy
(-0.7,-3)*{};
(-0,0)*{\circ }
**\dir{-};
(0.7,-3)*{};
(0,0)*{\circ }
**\dir{-};
(0,4)*{};
(0,0)*{\circ }
**\dir{-};
(-0.8,-4.5)*{_1};
(0.9,-4.5)*{_1};
\endxy}
\Ea
\hspace{-3.5mm}
\Ba{c}{\xy
(-3,-3)*{};
(0,0)*{\circ }
**\dir{-};
(3,-3)*{};
(0,0)*{\circ }
**\dir{-};
(0,4)*{};
(0,0)*{\circ }
**\dir{-};
(-2.8,-4.5)*{_1};
(3.5,-4.5)*{_2};
\endxy}
\Ea
+
\Ba{c}{\xy
(-3,-3)*{};
(0,0)*{\circ }
**\dir{-};
(3,-3)*{};
(0,0)*{\circ }
**\dir{-};
(0,4)*{};
(0,0)*{\circ }
**\dir{-};
(-2.8,-4.5)*{_1};
(3.5,-4.5)*{_2};
\endxy}
\Ea
\hspace{-3.5mm}
\Ba{c}{\xy
(-0.7,-3)*{};
(-0,0)*{\circ }
**\dir{-};
(0.7,-3)*{};
(0,0)*{\circ }
**\dir{-};
(0,4)*{};
(0,0)*{\circ }
**\dir{-};
(-0.7,-4.5)*{_2};
(1.0,-4.5)*{_2};
\endxy}
\Ea
\ \ + \ \
\Ba{c}{\xy
(4,-3)*{};
(0,0)*{\circ }
**\dir{-};
(10,-3)*{};
(0,0)*{\circ }
**\dir{-};
(0,4)*{};
(0,0)*{\circ }
**\dir{-};
(3.9,-4.5)*{_1};
(10,-4.5)*{_2};
\endxy}
\Ea
\hspace{-9.5mm}
\Ba{c}{\xy
(-3,-3)*{};
(0,0)*{\circ }
**\dir{-};
(3,-3)*{};
(0,0)*{\circ }
**\dir{-};
(0,4)*{};
(0,0)*{\circ }
**\dir{-};
(-2.8,-4.5)*{_1};
(3.5,-4.5)*{_2};
\endxy}
\Ea
$$

$$
\Ba{c}{\xy
(-0.7,-4)*{};
(-0,0)*{\circ }
**\dir{-};
(0.7,-4)*{};
(0,0)*{\circ }
**\dir{-};
(0,4)*{};
(0,0)*{\circ }
**\dir{-};
(-0.8,-6)*{_1};
(0.8,-6)*{_1};
\endxy}\Ea\ \circ_1 \
\Ba{c}{\xy
(-0.7,-4)*{};
(-0,0)*{\circ }
**\dir{-};
(0.7,-4)*{};
(0,0)*{\circ }
**\dir{-};
(0,4)*{};
(0,0)*{\circ }
**\dir{-};
(-0.8,-6)*{_1};
(0.8,-6)*{_1};
\endxy}\Ea
=
\Ba{c}{\xy
(1.5,-7.5)*{};
(2,0)*{\circ }
**\dir{-};
(4,-4)*{\circ };
(2,0)*{\circ }
**\dir{-};
(4,-4)*{\circ };
(3.5,-7.5)*{}
**\dir{-};
(4,-4)*{\circ };
(2.3,-7.5)*{}
**\dir{-};
(2,4)*{};
(2,0)*{\circ }
**\dir{-};
(1.0,-9)*{_1};
(2.6,-9)*{_1};
(4,-9)*{_1};
\endxy}
\Ea
\ + \
\Ba{c}{\xy
(-0.7,-4)*{};
(-0,0)*{\circ }
**\dir{-};
(0.7,-4)*{};
(0,0)*{\circ }
**\dir{-};
(0,4)*{};
(0,0)*{\circ }
**\dir{-};
(-0.5,-6)*{_1};
(0.8,-6)*{_1};
\endxy}\Ea
\hspace{-4mm}
\Ba{c}{\xy
(-0.7,-4)*{};
(-0,0)*{\circ }
**\dir{-};
(0.7,-4)*{};
(0,0)*{\circ }
**\dir{-};
(0,4)*{};
(0,0)*{\circ }
**\dir{-};
(-0.5,-6)*{_1};
(0.9,-6)*{_1};
\endxy}\Ea
\ \ \ \ \ , \ \ \ \ \
\Ba{c}{\xy
(-0.7,-4)*{};
(-0,0)*{\circ }
**\dir{-};
(0.7,-4)*{};
(0,0)*{\circ }
**\dir{-};
(0,4)*{};
(0,0)*{\circ }
**\dir{-};
(-0.8,-5.6)*{_1};
(0.8,-5.6)*{_1};
\endxy}\Ea
\ \circ_1 \
\Ba{c}{\xy
(-0.7,4)*{};
(0,0)*{\circ }
**\dir{-};
(0.7,4)*{};
(0,0)*{\circ }
**\dir{-};
(0,-4)*{};
(0,0)*{\circ }
**\dir{-};
(-0,-5.6)*{_1};
\endxy}\Ea
=
\Ba{c}
\xy
 (0,0)*{\circ}="a",
(0,6)*{\circ}="b",
(3,3)*{}="c",
(-3,3)*{}="d",
 (0,9)*{}="b'",
(0,-3)*{}="a'",
\ar@{-} "a";"c" <0pt>
\ar @{-} "a";"d" <0pt>
\ar @{-} "a";"a'" <0pt>
\ar @{-} "b";"c" <0pt>
\ar @{-} "b";"d" <0pt>
\ar @{-} "b";"b'" <0pt>
\endxy
\Ea
\ + \
\begin{xy}
 <0mm,-1.3mm>*{};<0mm,-3.5mm>*{}**@{-},
 <0.38mm,-0.2mm>*{};<2.0mm,2.0mm>*{}**@{-},
 <-0.38mm,-0.2mm>*{};<1.2mm,5.0mm>*{}**@{-},
<0mm,-0.8mm>*{\circ};
 <2.4mm,2.4mm>*{\circ};
 <2.77mm,2.0mm>*{};<1.4mm,-3.5mm>*{}**@{-},
 <2.4mm,3mm>*{};<2.4mm,5.2mm>*{}**@{-},
     <0mm,-1.3mm>*{};<0mm,-5.4mm>*{^1}**@{},
     <2.5mm,2.3mm>*{};<1.6mm,-5.4mm>*{^1}**@{},
    \end{xy}
$$

\subsection{ Polydifferential operads and hypergraphs} The above examples show that the horizontal compositions in a prop $\cP$  play as important role in the definition of the polydifferential operad $\f\cP$ as vertical ones so that to apply the polydifferential functor to a properad (or operad) $\cP$
one has to take first its prop enveloping $\cU\cP$ and then apply $\f$ to the latter; for an augmented properad $\cP$ we understand $\cU\cP$ as $\K \oplus \cU\bar{\cP}$ and define
$$
\f\cP:=\f(\cU{\cP}).
$$
The case of properads is of special interest as elements of $\f\cP$ can be understood now as {\em hypergraphs}, that is, generalizations of graphs in which edges can connect more than two vertices.
For example, elements of a properad $\cP$
$$
p=
\Ba{c}\resizebox{8mm}{!}{ \xy
(0,0)*{\circ}="o",
(-1.5,5)*{}="2",
(1.5,5)*{}="3",
(-1.5,-5)*{}="5",
(1.5,-5)*{}="6",
(4,-5)*{}="7",
(-4,-5)*{}="8",
(-1.5,7)*{_1},
(1.5,7)*{_2},
(-1.5,-7)*{_2},
(1.5,-7)*{_3},
(4,-7)*{_4},
(-4,-7)*{_1},
\ar @{-} "o";"2" <0pt>
\ar @{-} "o";"3" <0pt>
\ar @{-} "o";"5" <0pt>
\ar @{-} "o";"6" <0pt>
\ar @{-} "o";"7" <0pt>
\ar @{-} "o";"8" <0pt>
\endxy}\Ea\in \cP(2,4) \ \ \ \mbox{and} \ \ \ \
q=\Ba{c}\resizebox{7mm}{!}{  \xy
(0,0)*{\circ}="o",
(0,5)*{}="1",
(-3,-5)*{}="2",
(3,-5)*{}="3",
(0,-5)*{}="4",
(0,7)*{_1},
(3.5,-7)*{_3},
(-3.5,-7)*{_1},
(0,-7)*{_2},
\ar @{-} "o";"1" <0pt>
\ar @{-} "o";"2" <0pt>
\ar @{-} "o";"3" <0pt>
\ar @{-} "o";"4" <0pt>
\endxy}\Ea\in \cP(1,3)
$$
generate, for example, an element
$$
\Ba{c}\resizebox{8mm}{!}{  \xy
(-2,0)*{^p};
(0,0)*{\circ}="o",
(-2,5)*{}="2",
(2,5)*{}="3",
(-1.5,-5)*{}="5",
(1.5,-5)*{}="6",
(4,-5)*{}="7",
(-4,-5)*{}="8",
(-2,7)*{_1},
(2,7)*{_2},
(-1.5,-7)*{_2},
(1.5,-7)*{_3},
(4,-7)*{_4},
(-4,-7)*{_1},
\ar @{-} "o";"2" <0pt>
\ar @{-} "o";"3" <0pt>
\ar @{-} "o";"5" <0pt>
\ar @{-} "o";"6" <0pt>
\ar @{-} "o";"7" <0pt>
\ar @{-} "o";"8" <0pt>
\endxy}\Ea
\Ba{c}\resizebox{7.3mm}{!}{ \xy
(2,0)*{^q};
(0,0)*{\circ}="o",
(0,5)*{}="1",
(-3,-5)*{}="2",
(3,-5)*{}="3",
(0,-5)*{}="4",
(0,7)*{_3},
(3.5,-7)*{_7},
(-3.5,-7)*{_5},
(0,-7)*{_6},
\ar @{-} "o";"1" <0pt>
\ar @{-} "o";"2" <0pt>
\ar @{-} "o";"3" <0pt>
\ar @{-} "o";"4" <0pt>
\endxy}\Ea\ \in \cU\cP(3,7)
$$
which in turn gives, for example, rise to an element
$$
\Ba{c}\resizebox{15mm}{!}{  \xy
(-2,0.8)*{^p};
(11,0.8)*{^q};
(-1.5,5)*{}="1",
(1.5,5)*{}="2",
(9,5)*{}="3",
 (0,0)*{\circ}="A";
  (9,0)*{\circ}="O";
 (-6,-10)*+{_1}*\frm{o}="B";
  (6,-10)*+{_2}*\frm{o}="C";
   (14,-10)*+{_3}*\frm{o}="D";
 "A"; "B" **\crv{(-5,-0)}; 
  "A"; "D" **\crv{(5,-0.5)};
   "A"; "B" **\crv{(5,-1)};
  "A"; "C" **\crv{(-5,-7)};
  \ar @{-} "A";"1" <0pt>
\ar @{-} "A";"2" <0pt>
\ar @{-} "O";"C" <0pt>
\ar @{-} "O";"D" <0pt>
\ar @{-} "O";"3" <0pt>
 \endxy}
 \Ea \in \f\cP(3)
$$
which looks like a real graph with vertices of two types --- the small ones which are decorated
by elements of the properad $\cP$ and big ones corresponding to the inputs of the operad $\f\cP$.
In fact it is better to understand this graph as a {\em hypergraph} with  small vertices
playing the role of {\em hyperedges}. With this interpretation of elements of $\f\cP$ one can recover, for example, the operad of graphs  $\cG ra_{d}$ from \S {\ref{2: subsection on operad Gra}} as a particular polydifferential operad (see the following example as well as example {\ref{5: example Konts graph complexes}} below).

\subsubsection{\bf Example}\label{5: Example def of prop T}  Let $\cT_d$ ($\cT$ standing for {\em trivial})  be a properad such that
\[
 \cT_d(m,n)
 =
 \begin{cases}
  \K[d-1] & \text{if $n=2$ and $m=1$} \\
  \K & \text{if $n=1$ and $m=1$} \\
  0 & \text{otherwise}
 \end{cases}.
\]
We define the compositions such that the arity $(1,1)$ element is the properadic unit,
and such that all properadic compositions of the arity $(2,1)$ element are zero.
The properad is clearly augmented, the augmentation being the projection onto the part of arity $(1,1)$.
In this case $\f\cT_d= \cG ra_{d}$.

\subsection{Twisting of properads by morphisms from (involutive) Lie bialgebras in the case $c=d=0$} Let us start for pedagogical reasons with the case $c=d=0$ when all the generators of $\LB_{0,0}/\LoB_{0,0}$ and $\caL ie_0/\caL ie_0^\diamond$ have degree $+1$. Given a morphism of properads
$$
g: \LoB_{0,0} \lon \cP ,
$$
let us denote the images of generators of $\LoB_{0,0}$ under $g$
by the corollas
  $\Ba{c}\resizebox{7mm}{!}{ \xy
(-3,3)*{};
(0,0)*{\circledcirc }
**\dir{-};
(3,3)*{};
(0,0)*{\circledcirc }
**\dir{-};
(0,-4)*{};
(0,0)*{\circledcirc }
**\dir{-};
(-3.5,4.5)*{_1};
(3.5,4.5)*{_2};
\endxy}\Ea$
 and
 $\Ba{c}\resizebox{7mm}{!}{\xy
(-3,-3)*{};
(0,0)*{\circledcirc }
**\dir{-};
(3,-3)*{};
(0,0)*{\circledcirc }
**\dir{-};
(0,4)*{};
(0,0)*{\circledcirc }
**\dir{-};
(-3.5,-4.5)*{_1};
(3.5,-4.5)*{_2};
\endxy}\Ea$
  (some or both of them can, in principle, stand for zero).
 In these notations we have the following statement.

\subsubsection{\bf Lemma}\label{5: Lemma on Lie_0^dimanod to P}
{\em For any a morphism of properads
$
g:\LoB_{0,0} \lon \cP
$
there is an associated morphism of operads,
$$
g^{\diamond}: \caL ie^\diamond_0 \lon \f\cP
$$
given on the generators of by}
$$
g^{\diamond}:\left(\begin{xy}
 <0mm,-0.55mm>*{};<0mm,-3mm>*{}**@{-},
 <0mm,0.5mm>*{};<0mm,3mm>*{}**@{-},
 <0mm,0mm>*{\bullet};<0mm,0mm>*{}**@{},
 \end{xy}\right):= \Ba{c}{\xy
(-0.7,4)*{};
(-0,0)*{\circledcirc }
**\dir{-};
(0.7,4)*{};
(0,0)*{\circledcirc }
**\dir{-};
(0,-4)*{};
(0,0)*{\circledcirc}
**\dir{-};
(-0,-5.6)*{_1};
\endxy}\Ea
 +
 \Ba{c}{\xy
(-0.7,-4)*{};
(-0,0)*{\circledcirc }
**\dir{-};
(0.7,-4)*{};
(0,0)*{\circledcirc }
**\dir{-};
(0,4)*{};
(0,0)*{\circledcirc }
**\dir{-};
(-0.8,-5.6)*{_1};
(0.8,-5.6)*{_1};
\endxy}\Ea
\ \ \ \ , \ \ \ \
g^{\diamond}\left(\Ba{c}\begin{xy}
 <0mm,0.66mm>*{};<0mm,3mm>*{}**@{-},
 <0.39mm,-0.39mm>*{};<2.2mm,-2.2mm>*{}**@{-},
 <-0.35mm,-0.35mm>*{};<-2.2mm,-2.2mm>*{}**@{-},
 <0mm,0mm>*{\bu};<0mm,0mm>*{}**@{},
   <0.39mm,-0.39mm>*{};<2.9mm,-4mm>*{^2}**@{},
   <-0.35mm,-0.35mm>*{};<-2.8mm,-4mm>*{^1}**@{},
\end{xy}\Ea\right) = \Ba{c}{\xy
(-3,-3)*{};
(0,0)*{\circledcirc }
**\dir{-};
(3,-3)*{};
(0,0)*{\circledcirc }
**\dir{-};
(0,4)*{};
(0,0)*{\circledcirc }
**\dir{-};
(-3.5,-4.5)*{_1};
(3.5,-4.5)*{_2};
\endxy}\Ea
$$
\begin{proof} The claim is proven once one checks the following three equations
$$
\left(
 \Ba{c}{\xy
(-0.7,4)*{};
(-0,0)*{\circledcirc }
**\dir{-};
(0.7,4)*{};
(0,0)*{\circledcirc }
**\dir{-};
(0,-4)*{};
(0,0)*{\circledcirc }
**\dir{-};
(-0,-5.6)*{_1};
\endxy}\Ea + \Ba{c}{\xy
(-0.7,-4)*{};
(-0,0)*{\circledcirc }
**\dir{-};
(0.7,-4)*{};
(0,0)*{\circledcirc }
**\dir{-};
(0,4)*{};
(0,0)*{\circledcirc }
**\dir{-};
(-0.8,-5.6)*{_1};
(0.8,-5.6)*{_1};
\endxy}\Ea
\right)
\circ_1
\left(
 \Ba{c}{\xy
(-0.7,4)*{};
(-0,0)*{\circledcirc }
**\dir{-};
(0.7,4)*{};
(0,0)*{\circledcirc }
**\dir{-};
(0,-4)*{};
(0,0)*{\circledcirc }
**\dir{-};
(-0,-5.6)*{_1};
\endxy}\Ea +
\Ba{c}{\xy
(-0.7,-4)*{};
(-0,0)*{\circledcirc }
**\dir{-};
(0.7,-4)*{};
(0,0)*{\circledcirc }
**\dir{-};
(0,4)*{};
(0,0)*{\circledcirc }
**\dir{-};
(-0.8,-5.6)*{_1};
(0.8,-5.6)*{_1};
\endxy}\Ea
\right)
=0,
$$

$$
\left( \Ba{c}{\xy
(-0.7,4)*{};
(-0,0)*{\circledcirc }
**\dir{-};
(0.7,4)*{};
(0,0)*{\circledcirc }
**\dir{-};
(0,-4)*{};
(0,0)*{\circledcirc }
**\dir{-};
(-0,-5.6)*{_1};
\endxy}\Ea + \Ba{c}{\xy
(-0.7,-4)*{};
(-0,0)*{\circledcirc }
**\dir{-};
(0.7,-4)*{};
(0,0)*{\circledcirc }
**\dir{-};
(0,4)*{};
(0,0)*{\circledcirc }
**\dir{-};
(-0.8,-5.6)*{_1};
(0.8,-5.6)*{_1};
\endxy}\Ea\right)
\circ_1
\Ba{c}{\xy
(-3,-3)*{};
(0,0)*{\circledcirc }
**\dir{-};
(3,-3)*{};
(0,0)*{\circledcirc }
**\dir{-};
(0,4)*{};
(0,0)*{\circledcirc }
**\dir{-};
(-3.5,-4.5)*{_1};
(3.5,-4.5)*{_2};
\endxy}\Ea
+
\Ba{c}{\xy
(-3,-3)*{};
(0,0)*{\circledcirc }
**\dir{-};
(3,-3)*{};
(0,0)*{\circledcirc }
**\dir{-};
(0,4)*{};
(0,0)*{\circledcirc }
**\dir{-};
(-3.5,-4.5)*{_1};
(3.5,-4.5)*{_2};
\endxy}\Ea \circ_1
\left(
\Ba{c}{\xy
(-0.7,4)*{};
(-0,0)*{\circledcirc }
**\dir{-};
(0.7,4)*{};
(0,0)*{\circledcirc }
**\dir{-};
(0,-4)*{};
(0,0)*{\circledcirc }
**\dir{-};
(-0,-5.6)*{_1};
\endxy}\Ea
 +
 \Ba{c}{\xy
(-0.7,-4)*{};
(-0,0)*{\circledcirc }
**\dir{-};
(0.7,-4)*{};
(0,0)*{\circledcirc }
**\dir{-};
(0,4)*{};
(0,0)*{\circledcirc }
**\dir{-};
(-0.8,-5.6)*{_1};
(0.8,-5.6)*{_1};
\endxy}\Ea
\right)
\
+
\
\Ba{c}{\xy
(-3,-3)*{};
(0,0)*{\circledcirc }
**\dir{-};
(3,-3)*{};
(0,0)*{\circledcirc }
**\dir{-};
(0,4)*{};
(0,0)*{\circledcirc }
**\dir{-};
(-3.5,-4.5)*{_1};
(3.5,-4.5)*{_2};
\endxy}\Ea \circ_2
\left(
\Ba{c}{\xy
(-0.7,4)*{};
(-0,0)*{\circledcirc }
**\dir{-};
(0.7,4)*{};
(0,0)*{\circledcirc }
**\dir{-};
(0,-4)*{};
(0,0)*{\circledcirc }
**\dir{-};
(-0,-5.6)*{_1};
\endxy}\Ea
 +
 \Ba{c}{\xy
(-0.7,-4)*{};
(-0,0)*{\circledcirc }
**\dir{-};
(0.7,-4)*{};
(0,0)*{\circledcirc }
**\dir{-};
(0,4)*{};
(0,0)*{\circledcirc }
**\dir{-};
(-0.8,-5.6)*{_1};
(0.8,-5.6)*{_1};
\endxy}\Ea
\right)
  = 0
$$
$$
\Ba{c}{\xy
(-3,-3)*{};
(0,0)*{\circledcirc }
**\dir{-};
(4,-4)*{\circledcirc };
(0,0)*{\circledcirc }
**\dir{-};
(4,-4)*{\circledcirc };
(7.5,-7.5)*{}
**\dir{-};
(4,-4)*{\circledcirc };
(1.5,-7.5)*{}
**\dir{-};
(0,4)*{};
(0,0)*{\circledcirc }
**\dir{-};
(-3.5,-4.5)*{_1};
(1.5,-9)*{_2};
(7.5,-9)*{_3};
\endxy}\Ea
+
\Ba{c}{\xy
(-3,-3)*{};
(0,0)*{\circledcirc }
**\dir{-};
(4,-4)*{\circledcirc };
(0,0)*{\circledcirc }
**\dir{-};
(4,-4)*{\circledcirc };
(7.5,-7.5)*{}
**\dir{-};
(4,-4)*{\circledcirc };
(1.5,-7.5)*{}
**\dir{-};
(0,4)*{};
(0,0)*{\circledcirc }
**\dir{-};
(-3.5,-4.5)*{_3};
(1.5,-9)*{_1};
(7.5,-9)*{_2};
\endxy}
\Ea
+
\Ba{c}{\xy
(-3,-3)*{};
(0,0)*{\circledcirc }
**\dir{-};
(4,-4)*{\circledcirc };
(0,0)*{\circledcirc }
**\dir{-};
(4,-4)*{\circledcirc };
(7.5,-7.5)*{}
**\dir{-};
(4,-4)*{\circledcirc };
(1.5,-7.5)*{}
**\dir{-};
(0,4)*{};
(0,0)*{\circledcirc }
**\dir{-};
(-3.5,-4.5)*{_2};
(1.5,-9)*{_3};
(7.5,-9)*{_1};
\endxy}
\Ea
=0.
$$
The last equation follows immediately from the second equation in (\ref{R for LieB}). The first two equations follow from the computations shown in the above Examples, and the relations for generators of $\LoB_{0,0}$.
\end{proof}

\subsubsection{\bf Remark} In fact there is a one-parameter family of morphisms $f^\diamond$ associated
to any morphism $f$ (in the above notations). Let $\hbar$ be a (formal) parameter of degree zero.
Then under the assumptions of the above Lemma there a morphism of operads
$$
g_\hbar^\diamond: \caL ie^\diamond \lon \f\cP[\hbar]
$$
given on generators of $\caL ie^\diamond$ by the following formulae
\Beq\label{5: formulae for F hbar morphism}
g_\hbar^\diamond\left(\begin{xy}
 <0mm,-0.55mm>*{};<0mm,-3mm>*{}**@{-},
 <0mm,0.5mm>*{};<0mm,3mm>*{}**@{-},
 <0mm,0mm>*{\bullet};<0mm,0mm>*{}**@{},
 \end{xy}\right):=  \Ba{c}{\xy
(-0.7,4)*{};
(-0,0)*{\circledcirc }
**\dir{-};
(0.7,4)*{};
(0,0)*{\circledcirc }
**\dir{-};
(0,-4)*{};
(0,0)*{\circledcirc }
**\dir{-};
(-0,-5.6)*{_1};
\endxy}\Ea
+
\hbar
\Ba{c}{\xy
(-0.7,-4)*{};
(-0,0)*{\circledcirc }
**\dir{-};
(0.7,-4)*{};
(0,0)*{\circledcirc }
**\dir{-};
(0,4)*{};
(0,0)*{\circledcirc }
**\dir{-};
(-0.8,-5.6)*{_1};
(0.8,-5.6)*{_1};
\endxy}\Ea
\ \ \ \ , \ \ \ \
f_\hbar^\diamond\left(\Ba{c}\begin{xy}
 <0mm,0.66mm>*{};<0mm,3mm>*{}**@{-},
 <0.39mm,-0.39mm>*{};<2.2mm,-2.2mm>*{}**@{-},
 <-0.35mm,-0.35mm>*{};<-2.2mm,-2.2mm>*{}**@{-},
 <0mm,0mm>*{\bu};<0mm,0mm>*{}**@{},
   <0.39mm,-0.39mm>*{};<2.9mm,-4mm>*{^2}**@{},
   <-0.35mm,-0.35mm>*{};<-2.8mm,-4mm>*{^1}**@{},
\end{xy}\Ea\right) = \Ba{c}{\xy
(-3,-3)*{};
(0,0)*{\circledcirc }
**\dir{-};
(3,-3)*{};
(0,0)*{\circledcirc }
**\dir{-};
(0,4)*{};
(0,0)*{\circledcirc }
**\dir{-};
(-3.5,-4.5)*{_1};
(3.5,-4.5)*{_2};
\endxy}\Ea
\Eeq

\subsubsection{\bf Remark}
Let us next adopt the above lemma/construction for
generic values of the integer parameters $c$ and $d$. 
Denote
$$
\f_{c,d}\cP:= \f(\cP\{c\})
$$
and notice that elements (cf.\ Example \S {\ref{5: subsubsect on Examples in OP operad}})  $\begin{xy}
 <0mm,0.66mm>*{};<0mm,3mm>*{}**@{-},
 <0.39mm,-0.39mm>*{};<2.2mm,-2.2mm>*{}**@{-},
 <-0.35mm,-0.35mm>*{};<-2.2mm,-2.2mm>*{}**@{-},
 <0mm,0mm>*{\circ};<0mm,0mm>*{}**@{},
   <0mm,0.66mm>*{};<0mm,3.4mm>*{^1}**@{},
   <0.39mm,-0.39mm>*{};<2.9mm,-4mm>*{^2}**@{},
   <-0.35mm,-0.35mm>*{};<-2.8mm,-4mm>*{^1}**@{},
\end{xy}\in \cP(1,2)$ and $\begin{xy}
 <0mm,-0.55mm>*{};<0mm,-2.5mm>*{}**@{-},
 <0.5mm,0.5mm>*{};<2.2mm,2.2mm>*{}**@{-},
 <-0.48mm,0.48mm>*{};<-2.2mm,2.2mm>*{}**@{-},
 <0mm,0mm>*{\circ};<0mm,0mm>*{}**@{},
 <0mm,-0.55mm>*{};<0mm,-3.8mm>*{_1}**@{},
 <0.5mm,0.5mm>*{};<2.7mm,2.8mm>*{^2}**@{},
 <-0.48mm,0.48mm>*{};<-2.7mm,2.8mm>*{^1}**@{},
 \end{xy}\in \cP(2,1)$ of homological degrees $1-d$ and, respectively, $1-c$ give us elements,
$$
\Ba{c}{\xy
(-3,-3)*{};
(0,0)*{\circ}
**\dir{-};
(3,-3)*{};
(0,0)*{\circ }
**\dir{-};
(0,4)*{};
(0,0)*{\circ }
**\dir{-};
(-3.5,-4.5)*{_1};
(3.5,-4.5)*{_2};
\endxy}\Ea \in \f_{c,d}\cP(2) \  , \ \ \ \
\Ba{c}{\xy
(-0.7,-4)*{};
(-0,0)*{\circ}
**\dir{-};
(0.7,-4)*{};
(0,0)*{\circ }
**\dir{-};
(0,4)*{};
(0,0)*{\circ}
**\dir{-};
(-0.8,-5.6)*{_1};
(0.8,-5.6)*{_1};
\endxy}\Ea
\ , \
\Ba{c}{\xy
(-0.7,4)*{};
(-0,0)*{\circ}
**\dir{-};
(0.7,4)*{};
(0,0)*{\circ}
**\dir{-};
(0,-4)*{};
(0,0)*{\circ }
**\dir{-};
(-0,-5.6)*{_1};
\endxy}\Ea
\in \f_{c,d}\cP(1)
$$
of homological degrees $1-c-d$, $1-c-d$ and $1$ respectively.
%
%
%

\subsection{Twisting in the Lie bialgebra case}\label{5: Lie case for twisting} Any morphism of dg properads
$$
g: \LBcd \lon (\cP, \delta),
$$
implies by Lemma {\ref{5: Lemma on Lie_0^dimanod to P}} an associated morphism of dg operads
$$
\Ba{rccc}
g^{\diamond}: & \caL ie^\diamond_{c+d} & \lon & (\f_{c,d}\cP,\delta) \\
\Ea
$$
given explicitly by
$$
g^{\diamond}\left( \Ba{c}\begin{xy}
 <0mm,0.66mm>*{};<0mm,3mm>*{}**@{-},
 <0.39mm,-0.39mm>*{};<2.2mm,-2.2mm>*{}**@{-},
 <-0.35mm,-0.35mm>*{};<-2.2mm,-2.2mm>*{}**@{-},
 <0mm,0mm>*{\bu};<0mm,0mm>*{}**@{},
   <0.39mm,-0.39mm>*{};<2.9mm,-4mm>*{^2}**@{},
   <-0.35mm,-0.35mm>*{};<-2.8mm,-4mm>*{^1}**@{},
\end{xy}\Ea\right):=f\left( \Ba{c}\begin{xy}
 <0mm,0.66mm>*{};<0mm,3mm>*{}**@{-},
 <0.39mm,-0.39mm>*{};<2.2mm,-2.2mm>*{}**@{-},
 <-0.35mm,-0.35mm>*{};<-2.2mm,-2.2mm>*{}**@{-},
 <0mm,0mm>*{\circ};<0mm,0mm>*{}**@{},
   <0.39mm,-0.39mm>*{};<2.9mm,-4mm>*{^2}**@{},
   <-0.35mm,-0.35mm>*{};<-2.8mm,-4mm>*{^1}**@{},
\end{xy}\Ea\right)=:
\Ba{c}{\xy
(-3,-3)*{};
(0,0)*{\circledcirc }
**\dir{-};
(3,-3)*{};
(0,0)*{\circledcirc }
**\dir{-};
(0,4)*{};
(0,0)*{\circledcirc }
**\dir{-};
(-3.5,-4.5)*{_1};
(3.5,-4.5)*{_2};
\endxy}\Ea
\ \ \ \  \mbox{and}\ \ \ \
g^{\diamond}\left(\begin{xy}
 <0mm,-0.55mm>*{};<0mm,-3mm>*{}**@{-},
 <0mm,0.5mm>*{};<0mm,3mm>*{}**@{-},
 <0mm,0mm>*{\bullet};<0mm,0mm>*{}**@{},
 \end{xy}\right)
 :=
 f\left(\Ba{c}
 \begin{xy}
 <0mm,-0.55mm>*{};<0mm,-2.5mm>*{}**@{-},
 <0.5mm,0.5mm>*{};<2.2mm,2.2mm>*{}**@{-},
 <-0.48mm,0.48mm>*{};<-2.2mm,2.2mm>*{}**@{-},
 <0mm,0mm>*{\circ};<0mm,0mm>*{}**@{},
 \end{xy}\Ea
 \right)
 =:
  \Ba{c}{\xy
(-0.7,4)*{};
(-0,0)*{\circledcirc }
**\dir{-};
(0.7,4)*{};
(0,0)*{\circledcirc }
**\dir{-};
(0,-4)*{};
(0,0)*{\circledcirc }
**\dir{-};
(-0,-5.6)*{_1};
\endxy}\Ea
$$
The image $g^{\diamond}\left(\begin{xy}
 <0mm,-0.55mm>*{};<0mm,-3mm>*{}**@{-},
 <0mm,0.5mm>*{};<0mm,3mm>*{}**@{-},
 <0mm,0mm>*{\bullet};<0mm,0mm>*{}**@{},
 \end{xy}\right)$ defines a differential $\dco$ in $\f_{c,d}\cP$ through the standard action of $\f_{c,d}\cP(1)$ on the operad  $\f_{c,d}\cP$ by derivations;
 moreover, the sum $\delta+\dco$ is a differential in $\f_{c,d}\cP$
 such that the restriction of $g^\diamond$ to the suboperad $\caL ie_{c+d}$ gives us
 a morphism of {\em dg}\, operads,
\Beq \label{equ:hoLie to OP}
 g': \caL ie_{c+d} \lon (\f_{c,d}\cP,\delta + \dco)
\Eeq

Therefore we obtain two deformation complexes
$$
\PGCcd:=\Def \left( \LBcd \stackrel{g}{\lon} \cP \right)
$$
and
$$
\mathsf{fs}\cP\mathsf{GC}_{c,d}:= \Def\left(\caL ie_{c+d}  \stackrel{g'}{\lon} \f_{c,d}\cP\right)
$$
The latter complex is called the {\em full stable $\cP$-graph complex}. Note that here is an abuse of notation --- both complexes are uniquely determined by a morphism $g:\LB_{c,d}\rar \cP$, not just by the properad $\cP$  --- while the symbol $g$ is omitted from the notation. The associated to the morphism $g^\diamond$ twisted operad is denoted by $f\cP \cG raphs_{c,d}$ ($f$ for {\em full}). Generators of $f\cP \cG raphs_{c,d}$ are given by hypergraphs
$$
\Ba{c}\resizebox{19mm}{!}{  \xy
(-2,0.8)*{^p};
(11,0.8)*{^q};
(-1.5,5)*{}="1",
(1.5,5)*{}="2",
(9,5)*{}="3",
 (0,0)*{\circ}="A";
  (9,0)*{\circ}="O";
    (18,-4)*{\bu}="L";
 (-6,-10)*+{_1}*\frm{o}="B";
  (6,-10)*{\bu}="C";
   (14,-10)*+{_2}*\frm{o}="D";
 "A"; "B" **\crv{(-5,-0)}; 
  "A"; "D" **\crv{(5,-0.5)};
   "A"; "B" **\crv{(5,-1)};
  "A"; "C" **\crv{(-5,-7)};
  \ar @{-} "A";"1" <0pt>
\ar @{-} "A";"2" <0pt>
\ar @{-} "O";"C" <0pt>
\ar @{-} "O";"D" <0pt>
\ar @{-} "O";"L" <0pt>
\ar @{-} "D";"L" <0pt>
\ar @{-} "O";"3" <0pt>
 \endxy}
 \Ea \in f\cP\cG raphs_{c,d}(2)
$$
with hyperedges decorated by elements of $\cP$ and with vertices of two types, external and internal (cf.
Example {\ref{5: Graphs_d+1 example}}), while elements of $\mathsf{fs}\cP\mathsf{RG}_{c,d}$ are given by similar graphs
but with no external vertices, e.g.
$$
\Ba{c}\resizebox{18mm}{!}{  \xy
(-2,0.8)*{^p};
(11,0.8)*{^q};
(-1.5,5)*{}="1",
(1.5,5)*{}="2",
(9,5)*{}="3",
 (0,0)*{\circ}="A";
  (9,0)*{\circ}="O";
 (-6,-10)*{\bu}="B";
  (6,-10)*{\bu}="C";
   (14,-10)*{\bu}="D";
 "A"; "B" **\crv{(-5,-0)}; 
  "A"; "D" **\crv{(5,-0.5)};
   "A"; "B" **\crv{(5,-1)};
  "A"; "C" **\crv{(-5,-7)};
  \ar @{-} "A";"1" <0pt>
\ar @{-} "A";"2" <0pt>
\ar @{-} "O";"C" <0pt>
\ar @{-} "O";"D" <0pt>
\ar @{-} "O";"3" <0pt>
 \endxy}
 \Ea \ \ \ , \ \ \
\Ba{c}\resizebox{19mm}{!}{ \xy
(-2,0.8)*{^{p'}};
(11,0.8)*{^q};
(-1.5,5)*{}="1",
(1.5,5)*{}="2",
(9,5)*{}="3",
 (0,0)*{\circ}="A";
  (9,0)*{\circ}="O";
 (-6,-10)*{\bu}="B";
  (0,-10)*{\bu}="B'";
  (6,-10)*{\bu}="C";
   (14,-10)*{\bu}="D";
 "A"; "B" **\crv{(-5,-0)}; 
  "A"; "B'" **\crv{(-5,-0.5)};
   "A"; "B" **\crv{(5,-1)};
  %
  \ar @{-} "A";"1" <0pt>
\ar @{-} "A";"2" <0pt>
\ar @{-} "O";"C" <0pt>
\ar @{-} "O";"D" <0pt>
\ar @{-} "O";"3" <0pt>
 \endxy}
 \Ea
 \in \mathsf{fs}\cP\mathsf{GC}_{c,d}.
$$
Denote by $\sPGCcd$ the dg Lie subalgebra of  $\mathsf{fs}\cP\mathsf{GC}_{c,d}$ spanned by {\em connected}\, graphs (as the first graph in the picture just above) and call the {\em stable $\cP$-graph complex}. Similarly, we denote by $\cP\cG raphs_{c,d}$ the suboperad of $f\cP\cG raphs_{c,d}$ spanned by connected graphs.

\sip

Note that the map $g'$ above induces a non-trivial morphism of dg operads
\Beq\label{5: e_d to OP_d}
g^{ind}: \cP ois_{c+d} \lon (\f_{c,d}\cP, \delta+ \dco)
\Eeq
and hence, via the inclusion $\f_{c,d}\cP\hook \cP \cG raphs_{c,d}$, a morphism
\Beq\label{5: e_d to Pgraphs_d}
g^{ind}: \cP ois_{c+d} \lon  \cP \cG raphs_{c,d}.
\Eeq

\subsubsection{\bf Proposition}\label{5: Prop on PGCd to sPGCd}
 {\em There  is a canonical monomorphism of dg Lie algebras
\Beq\label{5: i from PGDC to sPGC cd}
i: \PGCcd\lon \sPGCcd
\Eeq
which identifies $\PGCcd$  with the subspace of $\sPGCcd$ spanned by graphs with univalent black vertices.}

\begin{proof} Let us first show that the map $i$ makes sense.
We have
$$
\PGCcd=\prod_{m,n} sgn_m^{|c|} \ot_{\bS_m^{op}} \ot \bar{\cP}(m,n)\ot_{\bS_n} \sgn_n^{|d|}[c(1-m) + d(1-n)].
$$
while
$$
\mathsf{fs}\cP\mathsf{GC}_{c,d}:= \prod_n \f_{c,d}\cP(n) \ot \sgn_n^{|c|+|d|}
[(c+d)(1-n)]
$$
The subspace in $\mathsf{fs}\cP\mathsf{GC}_{c,d}$ spanned by connected graphs with univalent black vertices
is given by
$$
\mathsf{S}:= \prod_n \cS(n) \ot \sgn_n^{|c|+|d|}[(c+d)(1-n)]
$$
where
$$
\cS(n)=\prod_{m}\f(\cP\{c\})^m_{[n]}
$$
is the direct summand in $\f_{c,d}\cP(n)$
associated with the maximal non-empty partition $[n]=\underbrace{1+\ldots + 1}_n$.
As
$$
\f(\cP\{c\})^m_{[n]}=\bar{\cP}\{d\}(m,n)= \sgn_m^{|c|}\ot \bar{\cP}(m,n)\ot \sgn_n^{|c|}[c(n-m)],
$$
we conclude
%
\Beqrn
\mathsf{S}&=& \prod_n  \cS(n) \ot \sgn_n^{|c|+|d|}[(c+d)(1-n)]\\
 & = &\prod_{m,n} \sgn_m^{|c|}\ot \bar{\cP}(m,n) \ot  \sgn_n^{|d|}[c(n-m)+(c+d)(1-n)]\\
 &=& \PGCd
\Eeqrn
as required. It remains to show compatibility of the monomorphism $i$ with differentials which is almost obvious and is left to the reader.
%
%
%
\end{proof}

\subsubsection{\bf Example: Kontsevich graph complexes}\label{5: example Konts graph complexes} Consider  again the properad $\cT_d$ introduced in \S {\ref{5: Example def of prop T}}. It comes equipped with a canonical morphism
\begin{equation}\label{equ:LieB to Td}
 \LieB_{c,d} \to \op T_d
\end{equation}
which sends the Lie bracket generator to the unique generator of $\cT_d$, and the Lie cobracket generator to zero.We have
\begin{align*}
\f_{c,d}(\cT_d) &\cong  \cG ra_{c+d}\\
 \RGCx{\op T_d} &\cong \K
 \\
 \SRGCx{\op T_d} &\cong \fcGC_{c+d} \\
 \RGraphsx{\op T_d} &\cong \Graphs_{c+d}.
\end{align*}

This makes the Kontsevich graph complexes/graph operads special cases of our general construction.

\subsubsection{\bf Example: complexes of marked  graphs}\label{5: Example def of prop T'}
As a second toy example (which will be important later on) let us consider another properad $\op T_{c,d}$ defined such that
\[
 \op T_{c,d}(m,n)
 =
 \begin{cases}
  \K[d-1] & \text{if $m=1$ and $n=2$} \\
  \K[c-1] & \text{if $m=2$ and $n=1$} \\
  \K & \text{if $m=n=1$} \\
  0 & \text{otherwise}
 \end{cases}.
\]
Again we define a generator of the arity $(1,1)$ subspace to be the unit, and define all properadic compositions except those with the unit to be trivial. The properad is clearly augmented.
There is an evident map
$\LieB_{c,d}\to\op T_{c,d}$ given by sending the two generators to the two elements spanning $\op T$.
Again, let us consider the associated graph complexes and twisted dg operad,
\begin{align*}
 \RGCx{\op T_{c,d}} &\cong \K \oplus \K
 \\
 \SRGCx{\op T_{c,d}} &=: \fcGC_{c+d}^\marked \\
 \RGraphsx{\op T_{c,d}} &=: \Graphs_{c+d}^\marked.
\end{align*}
These graph graph complexes (resp.\ dg operad) may be identified with complexes (resp.\ dg operad)  of ordinary graphs, with markings at some of the vertices.
Concretely, the marking signifies the presence of a hyperedge in $\op T'_{c,d}(2,1)$ at the vertex
(there can be at most one such hyperedge at any vertex, otherwise the graph vanishes). The differential in marked complexes/operads acts by splitting the vertices and redistributing edges as in the unmarked case with the only difference that when a marked vertex splits into two, one takes a sum over decorating each new vertex with the mark.
There is an evident commutative diagram of maps of properads,
$$
\xymatrix{
  \LieB_{c,d} \ar[r] \ar[d]_{\equiv} &   \cT_{c,d} \ar[d]\\
 \LieB_{c,d}  \ar[r] & \cT_d
 }
$$
 which induces the following maps of  graph complexes (resp.\ dg operads)
\begin{align*}
 \fcGC_{c+d}^\marked \to \fcGC_{c+d} \\
 \Graphs_{c+d}^\marked \to \Graphs_{c+d}.
\end{align*}

We claim that, homologically, the markings are irrelevant.

\subsubsection{\bf Proposition}{\em
 The map $\Graphs_{c+d}^\marked \to \Graphs_{c+d}$ is a quasi-isomorphism of operads.
 The map
 \[
 \fcGC_{c+d}^\marked \to \fcGC_{c+d} \oplus \K
 \]
 is a quasi-isomorphism of complexes, where the $\K$ on the right stands for the graph with a single vertex which is marked.}

\begin{proof}
 The proof is essentially the same as that of \cite[Proposition 3.4]{Wi}, where it is shown the tadpoles (edges connecting a vertex to itself) can be omitted from the graph complex. Here, one has markings instead of tadpoles, however, the argument and final conclusion stay the same.
\end{proof}

\subsection{Maps from the Poisson operad}\label{sec:}
\newcommand{\thoe}{\widetilde{\hoe}}
We denote by $\hoe_n$ the minimal resolution of the $n$-Poisson operad.
An algebra over $\hoe_n$ is in particular a $\hoLie_n$-algebra, and a $\hoCom$-algebra, with additional (somewhat complicated) homotopies ensuring compatibility between these structures.
In many cases however such a structure simplifies as follows: the $\hoCom$-structure is an honest commutative algebra structure, the $\hoLie_n$-operations are derivations in each slot with respect to the commutative product and all other $\hoe_n$-operations vanish.
Let us denote the quotient of $\hoe_n$ governing such restricted $\hoe_n$-algebras by $\thoe_n$.
Concretely, $\thoe_n$ is generated by $\hoLie_n$ and one binary commutative product generator, such that the $\hoLie_n$ operations are multi-derivations with respect to the commutative product, cf. also the introductory section of \cite{Wi2}.

\sip

Now let $\hoLieB_{c,d}\to \cP$ be a properad map and recall the map \eqref{equ:hoLie to OP},
\[
 \hoLie_{c+d}\to \f_{c,d}\cP.
\]
This map can be easily extended to a map
\[
 \thoe_{c+d}\to \f_{c,d}\cP.
\]
by declaring the commutative product generator to be mapped to the graph with two vertices and no hyperedge.
The multi-derivation relation is satisfied in this case since the $\hoLie_{c+d}$-generators are mapped to graphs with all vertices univalent.
Hence by composition with the projection $\hoe_{c+d}\to \thoe_{c+d}$ we obtain an operad map
\[
  \hoe_{c+d}\to \thoe_{c+d}\to \f_{c,d}\cP.
\]
By functoriality of operadic twisting we obtain a map
\[
  \Tw \hoe_{c+d}\to \Tw \f_{c,d}\cP.
\]
Composing with the canonical map $\hoe_{c+d} \to\Tw \hoe_{c+d}$ (cf. \cite{DW}) we obtain
\[
 \hoe_{c+d}\to \Tw \f_{c,d}\cP.
\]
Unpacking the definition of the twisting functor, one finds that this map again factors through $\thoe_{c+d}$, with the commutative product generator mapped to the graph with two vertices and no edge, while the $k$-ary $\hoLie_{c+d}$-generator gets mapped to a series of graphs with one hyperedge, $k$ univalent external and $l\geq 0$ univalent internal vertices.
Overall, we find the following result.

\begin{proposition}\label{prop:hoen to RGraphs}
 There is a map of operads
 \[
  \hoe_{c+d} \to \thoe_{c+d} \to \RGraphsPn{c,d}\subset \Tw \f_{c,d} \cP
 \]
 extending the canonical map from $\hoLie_{c+d}$, and such that the commutative product generator is sent to the graph with two vertices and no edge.
\end{proposition}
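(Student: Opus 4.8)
The plan is to verify, one link at a time, the chain of maps assembled in the discussion preceding the statement, treating the only genuine algebraic input — the multi-derivation compatibility — with care and leaving the remaining steps to functoriality. First I would check that the assignment $\thoe_{c+d}\to\f_{c,d}\cP$ is a well-defined map of operads. By \eqref{equ:hoLie to OP} the $\hoLie_{c+d}$-part is already a morphism of dg operads into $(\f_{c,d}\cP,\delta+\dco)$, so the $\hoLie_{c+d}$-relations hold automatically, and the restriction to $\hoLie_{c+d}\subset\thoe_{c+d}$ recovers exactly the canonical map. The commutative generator is sent to the two-vertex graph with no hyperedge, which is graded commutative and associative since $\cC om\hookrightarrow\f\cP$. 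The one relation that must be verified by hand is that each $\hoLie_{c+d}$-generator acts as a multi-derivation of the commutative product. This holds because, under \eqref{equ:hoLie to OP}, every bracket generator is sent to a graph all of whose external legs are univalent: a univalent leg at an external vertex is a first-order polydifferential operator in that input, so applying it to a product distributes by the Leibniz rule. This is precisely the defining relation of $\thoe_{c+d}$, hence the extension exists.

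Next I would invoke the functoriality of operadic twisting to obtain $\Tw\hoe_{c+d}\to\Tw\f_{c,d}\cP$ from the composite $\hoe_{c+d}\to\thoe_{c+d}\to\f_{c,d}\cP$, regarded as a morphism of operads under $\hoLie_{c+d}$, and precompose with the canonical map $\hoe_{c+d}\to\Tw\hoe_{c+d}$ of \cite{DW}, yielding the operad map $\hoe_{c+d}\to\Tw\f_{c,d}\cP$. To see that it factors through $\thoe_{c+d}$ I would unpack the twisting on generators: the commutative generator acquires no internal vertices and is still sent to the two-vertex edgeless graph, while the $k$-ary $\hoLie_{c+d}$-generator is sent to the series of graphs obtained by attaching $l\geq 0$ univalent internal (black) vertices to its single hyperedge. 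Since every external \emph{and} newly created internal leg remains univalent, the multi-derivation relations continue to hold verbatim, and consequently all the higher $\hoe_{c+d}$-operations are forced to map to zero; hence the composite factors as $\hoe_{c+d}\to\thoe_{c+d}\to\Tw\f_{c,d}\cP$ with the product generator mapped to the two-vertex graph with no edge, as asserted.

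Finally I would observe that the image lands in $\RGraphsPn{c,d}$. Every graph produced has its internal vertices univalent and attached to a hyperedge which also carries external legs, so no connected component consists entirely of internal vertices; this property is preserved under operadic composition, so the whole map takes values in the claimed suboperad. I expect the main obstacle to be precisely the factorization-through-$\thoe_{c+d}$ after twisting: one must be certain that twisting the Poisson operad does not create genuine higher Poisson operations in the image. This is exactly where the univalence of all legs — external ones from the $\hoLie_{c+d}$-generators and internal ones introduced by the Maurer--Cartan element of the twisting — is used, since it forces the only surviving graphs to be of multi-derivation type and thereby collapses the full $\hoe_{c+d}$-structure onto its $\thoe_{c+d}$-quotient.
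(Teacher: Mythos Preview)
Your proposal is correct and follows essentially the same route as the paper: the proposition is stated in the text as a summary of the preceding discussion in \S 5.6, and your write-up reproduces that discussion faithfully --- first extending \eqref{equ:hoLie to OP} to $\thoe_{c+d}$ via the univalence of external legs, then applying functoriality of $\Tw$ together with the canonical map $\hoe_{c+d}\to\Tw\hoe_{c+d}$ of \cite{DW}, and finally observing that univalence persists after twisting so the map still factors through $\thoe_{c+d}$. Your explicit check that the image lands in $\RGraphsPn{c,d}$ is a detail the paper leaves implicit.
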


\newcommand{\choLieB}{\widehat{\hoLieB}} 

\subsection{Example}\label{sec:examplePLiecd}
Let us study the constructions of the previous section in a more instructive example. We take for $\cP$ the genus completion $\choLieB_{c,d}$ of $\hoLieB_{c,d}$ with the natural inclusion
\[
 \hoLieB_{c,d} \to \cP.
\]
In this case the resulting $\cP$-graph complexes and $\cP$-graph operads have mostly been studied in the literature.
First, in this case
\[
 \RGCPn{c,d} =\Def(\hoLieB_{c,d} \to \choLieB_{c,d})
\]
is the completed deformation complex of the properad $\hoLieB_{c,d}$. This complex has been studied in \cite{CMW,MW2}.
Elements can be seen as series of {\em oriented}\, graphs (that is, directed graphs without {\em closed}\, paths of directed edges)  with inputs and outputs as the following picture shall illustrate:
\[
 \resizebox{17mm}{!}{ \xy
(0,0)*{\bu}="d1",
(10,0)*{\bu}="d2",
(-5,-5)*{}="dl",
(5,-5)*{}="dc",
(15,-5)*{}="dr",
(0,10)*{\bu}="u1",
(10,10)*{\bu}="u2",
(5,15)*{}="uc",
(15,15)*{}="ur",
(0,15)*{}="ul",
\ar @{<-} "d1";"d2" <0pt>
\ar @{<-} "d1";"dl" <0pt>
\ar @{<-} "d1";"dc" <0pt>
\ar @{<-} "d2";"dc" <0pt>
\ar @{<-} "d2";"dr" <0pt>
\ar @{<-} "u1";"d1" <0pt>
\ar @{<-} "u1";"d2" <0pt>
\ar @{<-} "u2";"d2" <0pt>
\ar @{<-} "u2";"d1" <0pt>
\ar @{<-} "uc";"u2" <0pt>
\ar @{<-} "ur";"u2" <0pt>
\ar @{<-} "ul";"u1" <0pt>
\endxy}
\]
As remarked in \cite{CMW} and proven in \cite{MW2} there is a map
\begin{equation}\label{equ:GCor to DefLieB}
 \GCor_{c+d+1} \to \RGCPn{c,d}[1],
\end{equation}
given graphically by attaching an arbitrary number of inputs and outputs to a graph in $\GCor_{c+d}$
\[
\Ga \lon  \sum_{m,n\geq 1}
\overbrace{
  \underbrace{
 \Ba{c}\resizebox{9mm}{!}  {\xy
(0,4.5)*+{...},
(0,-4.5)*+{...},
(0,0)*+{\Ga}="o",
(-5,5)*{}="1",
(-3,5)*{}="2",
(3,5)*{}="3",
(5,5)*{}="4",
(-3,-5)*{}="5",
(3,-5)*{}="6",
(5,-5)*{}="7",
(-5,-5)*{}="8",
\ar @{->} "o";"1" <0pt>
\ar @{->} "o";"2" <0pt>
\ar @{->} "o";"3" <0pt>
\ar @{->} "o";"4" <0pt>
\ar @{<-} "o";"5" <0pt>
\ar @{<-} "o";"6" <0pt>
\ar @{<-} "o";"7" <0pt>
\ar @{<-} "o";"8" <0pt>
\endxy}\Ea
}_{n\times}
 }^{m\times}.
\]
and retaining only graphs in which each vertex has at least one input and one output.
As shown in \cite[Theorem 1.2.1]{MW2} the map \eqref{equ:GCor to DefLieB} is a quasi-isomorphism up to a one-dimensional subspace of the cohomology of $\RGCPn{c,d}$ spanned by the following series of graphs
 \begin{equation}\label{equ:extraclass}
  \sum_{m,n}(m+n-2)
  \overbrace{
  \underbrace{
 \Ba{c}\resizebox{9mm}{!}  {\xy
(0,4.5)*+{...},
(0,-4.5)*+{...},
(0,0)*{\bu}="o",
(-5,5)*{}="1",
(-3,5)*{}="2",
(3,5)*{}="3",
(5,5)*{}="4",
(-3,-5)*{}="5",
(3,-5)*{}="6",
(5,-5)*{}="7",
(-5,-5)*{}="8",
\ar @{->} "o";"1" <0pt>
\ar @{->} "o";"2" <0pt>
\ar @{->} "o";"3" <0pt>
\ar @{->} "o";"4" <0pt>
\ar @{<-} "o";"5" <0pt>
\ar @{<-} "o";"6" <0pt>
\ar @{<-} "o";"7" <0pt>
\ar @{<-} "o";"8" <0pt>
\endxy}\Ea
}_{n\times}
 }^{m\times}.
\end{equation}
As shown in \cite{Wi2} (and re-proven in this paper below) we furthermore have $H(\GCor_{c+d+1})\cong H(\GC_{c+d})$, so that in this example the unstable $\cP$-graph cohomology can be understood in terms of the ordinary graph cohomology.

\sip

Next consider the polydifferential operad $\f_{c,d}\cP$ for  $\cP=\hoLieB_{c,d}$. For the present example, elements of $\f\cP_{c,d}(r)$ are series of oriented graphs whose inputs are ``attached'' to $r$ external vertices labelled $1,\dots,r$, as in the following example (where all the edges are directed from the bottom to the top as usually),
\Beq\label{5: graphs from OP for P=Holieb}
\Ba{c}\resizebox{16mm}{!}{  \xy
(-1.5,10)*{}="1",
(1.5,10)*{}="2",
(11,18)*{}="3",
 (0,5)*{\bu}="A";
  (11,13)*{\bu}="O";
    (18,8)*{\bu}="L";
 (-5,-7)*+{_1}*\frm{o}="B";
   (17,-7)*+{_2}*\frm{o}="D";
 "A"; "B" **\crv{(-5,-0)};
  "A"; "D" **\crv{(5,-0.5)};
  \ar @{-} "A";"1" <0pt>
\ar @{-} "A";"2" <0pt>
\ar @{-} "O";"D" <0pt>
\ar @{-} "O";"L" <0pt>
\ar @{-} "D";"L" <0pt>
\ar @{-} "O";"3" <0pt>
\ar @{-} "B";"L" <0pt>
\ar @{-} "B";"O" <0pt>
 \endxy}
 \Ea
\Eeq

The differential of $\f\cP_{c,d}$ has two terms $\delta + d$, with $\delta$ stemming from the differential on $\cP$ and $d$ coming from the map $\hoLieB_{c,d} \to \cP$.
 Combinatorially, the part $\delta$ splits non-external (denoted by black bullets in our pictures) vertices in the oriented graphs
\[
\delta:
\Ba{c}\resizebox{11mm}{!}{\begin{xy}
 <0mm,0mm>*{\bu};<0mm,0mm>*{}**@{},
 <-0.6mm,0.44mm>*{};<-8mm,5mm>*{}**@{-},
 <-0.4mm,0.7mm>*{};<-4.5mm,5mm>*{}**@{-},
 <0mm,5mm>*{\ldots},
 <0.4mm,0.7mm>*{};<4.5mm,5mm>*{}**@{-},
 <0.6mm,0.44mm>*{};<8mm,5mm>*{}**@{-},
     <0mm,7mm>*{\overbrace{\ \ \ \ \ \ \ \ \ \ \ \ \ \ \ \ }},
     <0mm,9mm>*{^m},
 <-0.6mm,-0.44mm>*{};<-8mm,-5mm>*{}**@{-},
 <-0.4mm,-0.7mm>*{};<-4.5mm,-5mm>*{}**@{-},
 <0mm,0mm>*{};<-1mm,-5mm>*{\ldots}**@{},
 <0.4mm,-0.7mm>*{};<4.5mm,-5mm>*{}**@{-},
 <0.6mm,-0.44mm>*{};<8mm,-5mm>*{}**@{-},
   <0mm,-7mm>*{\underbrace{\ \ \ \ \ \ \ \ \ \ \ \ \ \ \ \ }},
     <0mm,-9mm>*{_n},
 \end{xy}}\Ea
\ \ \lon  \ \
 \sum_{[m]=I_1\sqcup I_2\atop
 [n]=J_1\sqcup J_2
}\hspace{0mm}
\pm
\Ba{c}\resizebox{20mm}{!}{ \begin{xy}
 <0mm,0mm>*{\bu},
 <-0.6mm,0.44mm>*{};<-8mm,5mm>*{}**@{-},
 <-0.4mm,0.7mm>*{};<-4.5mm,5mm>*{}**@{-},
 <0mm,0mm>*{};<0mm,5mm>*{\ldots}**@{},
 <0.4mm,0.7mm>*{};<4.5mm,5mm>*{}**@{-},
 <0.6mm,0.44mm>*{};<12.4mm,4.8mm>*{}**@{-},
     <0mm,0mm>*{};<-2mm,7mm>*{\overbrace{\ \ \ \ \ \ \ \ \ \ \ \ }}**@{},
     <0mm,0mm>*{};<-2mm,9mm>*{^{I_1}}**@{},
 <-0.6mm,-0.44mm>*{};<-8mm,-5mm>*{}**@{-},
 <-0.4mm,-0.7mm>*{};<-4.5mm,-5mm>*{}**@{-},
 <0mm,0mm>*{};<-1mm,-5mm>*{\ldots}**@{},
 <0.4mm,-0.7mm>*{};<4.5mm,-5mm>*{}**@{-},
 <0.6mm,-0.44mm>*{};<8mm,-5mm>*{}**@{-},
      <0mm,0mm>*{};<0mm,-7mm>*{\underbrace{\ \ \ \ \ \ \ \ \ \ \ \ \ \ \
      }}**@{},
      <0mm,0mm>*{};<0mm,-10.6mm>*{_{J_1}}**@{},
 <13mm,5mm>*{\bu},
 <12.6mm,5.44mm>*{};<5mm,10mm>*{}**@{-},
 <12.6mm,5.7mm>*{};<8.5mm,10mm>*{}**@{-},
 <13mm,5mm>*{};<13mm,10mm>*{\ldots}**@{},
 <13.4mm,5.7mm>*{};<16.5mm,10mm>*{}**@{-},
 <13.6mm,5.44mm>*{};<20mm,10mm>*{}**@{-},
      <13mm,5mm>*{};<13mm,12mm>*{\overbrace{\ \ \ \ \ \ \ \ \ \ \ \ \ \ }}**@{},
      <13mm,5mm>*{};<13mm,14mm>*{^{I_2}}**@{},
 <12.4mm,4.3mm>*{};<8mm,0mm>*{}**@{-},
 <12.6mm,4.3mm>*{};<12mm,0mm>*{\ldots}**@{},
 <13.4mm,4.5mm>*{};<16.5mm,0mm>*{}**@{-},
 <13.6mm,4.8mm>*{};<20mm,0mm>*{}**@{-},
     <13mm,5mm>*{};<14.3mm,-2mm>*{\underbrace{\ \ \ \ \ \ \ \ \ \ \ }}**@{},
     <13mm,5mm>*{};<14.3mm,-4.5mm>*{_{J_2}}**@{},
 \end{xy}}\Ea
\]
as explained in (\ref{LBk_infty}).
The part $d$ acts by splitting from external vertices and by attaching corollas to outputs
\[
d:
\Ba{c}\resizebox{16mm}{!}{  \xy
(-1.5,10)*{}="1",
(1.5,10)*{}="2",
(11,18)*{}="3",
 (0,5)*{\bu}="A";
  (11,13)*{\bu}="O";
    (18,8)*{\bu}="L";
 (-5,-7)*+{_1}*\frm{o}="B";
   (17,-7)*+{_2}*\frm{o}="D";
 "A"; "B" **\crv{(-5,-0)};
  "A"; "D" **\crv{(5,-0.5)};
  %
  \ar @{-} "A";"1" <0pt>
\ar @{-} "A";"2" <0pt>
\ar @{-} "O";"D" <0pt>
\ar @{-} "O";"L" <0pt>
\ar @{-} "D";"L" <0pt>
\ar @{-} "O";"3" <0pt>
\ar @{-} "B";"L" <0pt>
\ar @{-} "B";"O" <0pt>
 \endxy}
 \Ea
\ \lon \
\sum
\Ba{c}\resizebox{16mm}{!}{  \xy
(-1.5,13)*{}="1",
(1.5,13)*{}="2",
(11,18)*{}="3",
(-9,4)*{}="4",
(-5,4)*{}="5",
<-4.5mm,5.5mm>*{\overbrace{\ \ \ \ \ \ \ \ \  }},
     <-4.5mm,6.9mm>*{^m},
 (0,8)*{\bu}="A";
  (11,13)*{\bu}="O";
    (18,8)*{\bu}="L";
 (-5,-7)*+{_1}*\frm{o}="B";
   (17,-7)*+{_2}*\frm{o}="D";
(-5,0)*{\bu}="E";
 "A"; "E" **\crv{(-5,-0)};
  "A"; "D" **\crv{(5,-0.5)};
  \ar @{-} "A";"1" <0pt>
\ar @{-} "A";"2" <0pt>
\ar @{-} "O";"D" <0pt>
\ar @{-} "O";"L" <0pt>
\ar @{-} "D";"L" <0pt>
\ar @{-} "O";"3" <0pt>
\ar @{-} "B";"L" <0pt>
\ar @{-} "E";"O" <0pt>
\ar @{-} "B";"E" <0pt>
\ar @{-} "E";"4" <0pt>
\ar @{-} "E";"5" <0pt>
 \endxy}
 \Ea
 \ \ +
 \ \
 \sum
 \Ba{c}\resizebox{16mm}{!}{  \xy
(-1.5,10)*{\bu}="1",
(2.5,10)*{}="2",
(11,18)*{}="3",
(-4.5,14)*{}="1'",
(-1.5,14)*{}="2'",
(1.5,14)*{}="3'",
<-1.5mm,15.9mm>*{\overbrace{\ \ \ \ \ \ }},
     <-1.5mm,17.3mm>*{^m},
 (0,5)*{\bu}="A";
  (11,13)*{\bu}="O";
    (18,8)*{\bu}="L";
 (-5,-7)*+{_1}*\frm{o}="B";
   (17,-7)*+{_2}*\frm{o}="D";
 "A"; "B" **\crv{(-5,-0)};
  "A"; "D" **\crv{(5,-0.5)};
  \ar @{-} "A";"1" <0pt>
\ar @{-} "A";"2" <0pt>
\ar @{-} "O";"D" <0pt>
\ar @{-} "O";"L" <0pt>
\ar @{-} "D";"L" <0pt>
\ar @{-} "O";"3" <0pt>
\ar @{-} "B";"L" <0pt>
\ar @{-} "B";"O" <0pt>
\ar @{-} "1'";"1" <0pt>
\ar @{-} "2'";"1" <0pt>
\ar @{-} "3'";"1" <0pt>
 \endxy}\Ea
\]
More precisely, in the r.h.s.\ of this formula
\Bi
\item the first summation symbol stands for the double summation: one summation runs over white vertices of the input graph, and the second summation corresponds to the substitution into, say, $k$-th white vertex  $\xy (0,0)*+{_k}*\frm{o};\endxy$  the sum of corollas
\Beq\label{5: sum of (1,m) corollas}
\sum_{m\geq 2} \Ba{c}\resizebox{11mm}{!}{\begin{xy}
 <0mm,0mm>*{\bu};<0mm,0mm>*{}**@{},
 <-0.6mm,0.44mm>*{};<-8mm,5mm>*{}**@{-},
 <-0.4mm,0.7mm>*{};<-4.5mm,5mm>*{}**@{-},
 <0mm,5mm>*{\ldots},
 <0.4mm,0.7mm>*{};<4.5mm,5mm>*{}**@{-},
 <0.6mm,0.44mm>*{};<8mm,5mm>*{}**@{-},
     <0mm,7mm>*{\overbrace{\ \ \ \ \ \ \ \ \ \ \ \ \ \ \ \ }},
     <0mm,9mm>*{^m},
 <0.0mm,-0.44mm>*{};<0mm,-5mm>*{}**@{-},
 \end{xy}}\Ea
\Eeq
and then summing over all attachments of the ``hanging edges" (i.e.\ the ones attached previously to
 $\xy (0,0)*+{_k}*\frm{o};\endxy$) to the output legs of (\ref{5: sum of (1,m) corollas})
 (so that at least one output leg is hit)
 and to $\xy (0,0)*+{_k}*\frm{o};\endxy$ itself; we assume that the input leg of (\ref{5: sum of (1,m) corollas}) always hits $\xy (0,0)*+{_k}*\frm{o};\endxy$ (see the picture for $k=1$);
\item the second summation symbol stands for the sum over all ways to attach
(\ref{5: sum of (1,m) corollas}) to the output legs of the input graph.
\Ei

\sip

The map
\[
\thoe_{c+d} \to \f_{c,d}\cP
\]
in this case sends the product generator $m$ and the $\hoLie_{c+d}$-generators $\mu_k$ ($k=2,3,\dots$) to the following series of graphs
\[
m\lon \Ba{c}\resizebox{7mm}{!}
{
\xy
(0,1)*+{_1}*\frm{o};
(5,1)*+{_2}*\frm{o};
\endxy}\Ea \ \ \ \ , \ \ \ \
\mu_k \lon
\sum_{m\geq 1}
\Ba{c}\resizebox{14mm}{!}{  \xy
(0,8)*{}="1";
(-2,8)*{}="2";
(2,8)*{}="3";
(2,-4)*{\ldots};
    (0,+3)*{\bu}="L";
 (-8,-5)*+{_1}*\frm{o}="B";
  (-3,-5)*+{_2}*\frm{o}="C";
   (8,-5)*+{_k}*\frm{o}="D";
    <0mm,10mm>*{\overbrace{ \ \ \ \ \ \ \ \ }},
     <0mm,12mm>*{_m},
\ar @{-} "D";"L" <0pt>
\ar @{-} "C";"L" <0pt>
\ar @{-} "B";"L" <0pt>
\ar @{-} "1";"L" <0pt>
\ar @{-} "2";"L" <0pt>
\ar @{-} "3";"L" <0pt>
 \endxy}
 \Ea
\]

\newcommand{\hoqBV}{\mathcal{H}\mathit{oq}\mathcal{BV}} 
\newcommand{\thoqBV}{\widetilde{\hoqBV}}
\newcommand{\qBV}{\mathit{q}\mathcal{BV}} 

More generally, the above map can be extended to a map $\thoqBV_{c+d}\to \f\cP$ (see Appendix \ref{sec:qBV} for the definition of the operad $\thoqBV_{c+d}$) by sending the generators $m$ and $\mu_k^r$ to the series
\begin{equation}\label{equ:hoBVmap}
m\lon \Ba{c}\resizebox{7mm}{!}
{
\xy
(0,1)*+{_1}*\frm{o};
(5,1)*+{_2}*\frm{o};
\endxy}\Ea \ \ \ \ , \ \ \ \
\mu_k^r \lon
\sum_{m\geq 1}
\frac{(m-1)^r}{r!}
\Ba{c}\resizebox{14mm}{!}{  \xy
(0,8)*{}="1";
(-2,8)*{}="2";
(2,8)*{}="3";
(2,-4)*{\ldots};
    (0,+3)*{\bu}="L";
 (-8,-5)*+{_1}*\frm{o}="B";
  (-3,-5)*+{_2}*\frm{o}="C";
   (8,-5)*+{_k}*\frm{o}="D";
    <0mm,10mm>*{\overbrace{ \ \ \ \ \ \ \ \ }},
     <0mm,12mm>*{_m},
\ar @{-} "D";"L" <0pt>
\ar @{-} "C";"L" <0pt>
\ar @{-} "B";"L" <0pt>
\ar @{-} "1";"L" <0pt>
\ar @{-} "2";"L" <0pt>
\ar @{-} "3";"L" <0pt>
 \endxy}
 \Ea
\end{equation}

\begin{lemma}
 The assignment \eqref{equ:hoBVmap} defines a map of operads $\thoqBV_{c+d}\to \f\cP$.
\end{lemma}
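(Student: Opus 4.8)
The plan is to verify that \eqref{equ:hoBVmap} respects the defining relations of $\thoqBV_{c+d}$ recalled in Appendix~\ref{sec:qBV}. Recall that $\thoqBV_{c+d}$ is generated by the commutative product $m$ together with the operations $\mu_k^r$ ($k\geq 2$, $r\geq 0$), and that its relations fall into three groups: (i) associativity and commutativity of $m$; (ii) the multiderivation relations expressing each $\mu_k^r$ as a derivation of $m$ in every slot; and (iii) the quantum homotopy relations among the $\mu_k^r$, which couple operations of different weight $r$ through the BV operator. The images in \eqref{equ:hoBVmap} all consist of graphs carrying a single $\cP$-decorated hyperedge whose $k$ inputs are attached, one each, to the $k$ external vertices, and whose outputs are symmetrised; I will organise the verification so that groups~(i) and~(ii), as well as the $r=0$ part of group~(iii), reduce to facts already established, leaving only the genuinely quantum part of~(iii) to be checked.

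First I would dispose of the elementary relations. The product $m$ is sent to the edgeless two-vertex graph, which is precisely the image of the generator of the canonical suboperad $\cC om\hookrightarrow\f\cP$; hence group~(i) holds. For group~(ii), since each input leg of the hyperedge lands on a distinct external vertex, inserting the product $m$ at any argument merely redistributes that leg, exactly as for an ordinary multiderivation; this is the same computation that already shows $\thoe_{c+d}\to\f\cP$ is well defined. Moreover the $r=0$ generators satisfy $(m-1)^0/0!=1$, so $\mu_k^0\mapsto\sum_{m\geq1}(\text{graph})_m$ is exactly the map $\hoLie_{c+d}\to\f\cP$ of \eqref{equ:hoLie to OP} used in Proposition~\ref{prop:hoen to RGraphs}; thus all relations of $\thoqBV_{c+d}$ not involving the weights $r\geq1$ are already known to hold.

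The substance of the proof is group~(iii) for $r\geq 1$. Here the plan is to treat the weight $r$ via a bookkeeping parameter $t$ and to recognise the coefficients through the generating function $\sum_{r\geq0}\frac{(m-1)^r}{r!}t^r=e^{(m-1)t}$. In these terms the BV operator of $\thoqBV_{c+d}$, which raises $r$ by one, is sent to the operator $N$ that multiplies a single-hyperedge graph with $m$ outputs by its \emph{output-charge} $m-1$, so that \eqref{equ:hoBVmap} becomes $\sum_r\mu_k^r t^r\mapsto e^{tN}\bigl(\Phi_0(\mu_k)\bigr)$, where $\Phi_0$ is the already-verified $r=0$ map. Checking the quantum relations then amounts to showing that $N$ is compatible with the two pieces $\delta+\dco$ of the differential of $\f\cP$: when the properadic differential \eqref{2: d on Lie inv infty} of $\cP=\hoLieB_{c,d}$ splits a hyperedge with $m$ outputs along $[m]=I_1\sqcup I_2$ into two hyperedges joined by $l$ internal edges, the total output-charge decreases by one, and one must match the resulting reweighting against the coefficients dictated by the $\hoqBV$-relations. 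This reduces, after summing over $l$ with the $\hbar^{l-1}$ weights of \eqref{2: d_hbar}, to a binomial identity for the exponential weights $e^{(m-1)t}$; verifying this identity, together with the Koszul-sign and degree-shift bookkeeping coming from the $\{c\}$-shift in $\f_{c,d}\cP$, is the one nonroutine step.

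The main obstacle I anticipate is precisely this last matching: one must be sure that the combinatorial weights $\frac{(m-1)^r}{r!}$ are exactly those forced by the quantum homotopy relations of $\thoqBV_{c+d}$, i.e.\ that the output-charge operator $N$ intertwines the twisting differential $\dco$ and the internal differential $\delta$ in the manner prescribed by the BV generator. Since both sides are controlled by the same gluing-and-splitting combinatorics of $\hoLieB_{c,d}$ governed by \eqref{LBk_infty}, I expect the identity to follow from the additivity of the output-charge under composition together with the factorisation $e^{(m_1-1)t}\,e^{(m_2-1)t}=e^{(m_1+m_2-2)t}$; the only real care needed is in tracking signs and the degree of the BV generator so that the normalisation $1/r!$ is reproduced on the nose.
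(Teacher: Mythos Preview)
The paper's own proof is literally ``A straightforward graphical computation we leave to the reader,'' so there is no argument to compare against; your plan is already more explicit than what the authors offer, and your handling of groups~(i), (ii) and the $r=0$ part of~(iii) is correct.

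Two concrete issues with your treatment of the $r\geq 1$ case. First, a misplaced reference: you invoke \eqref{2: d_hbar} and speak of ``summing over $l$ with the $\hbar^{l-1}$ weights,'' but that formula governs the \emph{involutive} properad $\HoLoBcd$. The lemma concerns $\cP=\whoLieB_{c,d}$, the non-involutive resolution, whose differential is \eqref{LBk_infty} with a single internal edge and no $\hbar$. So the ``binomial identity after summing over~$l$'' is not the operative mechanism here.

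Second, and more substantively: your output-charge operator $N$ does \emph{not} interact with the full differential $\delta+d$ the way you describe. Splitting a hyperedge via $\delta$ \emph{preserves} $N$ (if the original $(m,k)$-vertex splits into pieces of output-arities $|I_1|+1$ and $|I_2|$, the total charge is $(|I_1|+1-1)+(|I_2|-1)=m-1$, unchanged), not ``decreases by one.'' Meanwhile the piece $d$---attaching an $(m',1)$-corolla---raises $N$ by $m'-1$, so $e^{tN}$ is not a dg automorphism and the exponential factorisation alone does not close the argument. What actually makes the verification go through is that the \emph{extra} summands in $\delta_{\hoDLie}\mu_k^r$---those involving the unary generators $\mu_1^s$ with $s\geq 1$, which are absent from $\delta_{\hoLie}\mu_k$---match term-by-term against the $d$-part of $(\delta+d)\Phi(\mu_k^r)$, while the ``ordinary'' summands (with $|I_1|\geq 1$, $|I_2|\geq 2$) match against the $\delta$-part. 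Both matchings use your binomial identity $\sum_{s+t=r}\frac{a^s b^t}{s!t!}=\frac{(a+b)^r}{r!}$, but they have to be organised separately rather than via a single commutation with~$N$. Your generating-function idea remains a useful bookkeeping device; it just needs this refinement to become a proof.
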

\begin{proof}
A straightforward graphical computation we leave to the reader.
\end{proof}

\newcommand{\WiOP}{\cG raphs_{c+d}^{or}}
A slight variant of the operad $\f_{c,d}\cP$ has already been considered in the literature.
In \cite{Wi2} the operad $\WiOP$ is introduced, whose elements are series of oriented graphs as in (\ref{5: examples of OP}), but without output legs, with a similarly defined differential (and with an opposite flow on the edges which is a matter of conventions only).
More concretely, there is a map of operads
\[
 \cG raphs^{or}_{c+d} \to \f_{c,d}\cP
\]
by sending a graph $\Gamma$ to the sum of graphs obtained by attaching output legs to internal vertices, retaining only graphs in which each vertex has at least one outgoing edge.
\[
\Ga \lon  \sum_{m\geq 1}
\overbrace{
 \Ba{c}\resizebox{9mm}{!}  {\xy
(0,4.5)*+{...},
(0,0)*+{\Ga}="o",
(-5,5)*{}="1",
(-3,5)*{}="2",
(3,5)*{}="3",
(5,5)*{}="4",
(-3,-5)*{}="5",
(3,-5)*{}="6",
(5,-5)*{}="7",
(-5,-5)*{}="8",
\ar @{-} "o";"1" <0pt>
\ar @{-} "o";"2" <0pt>
\ar @{-} "o";"3" <0pt>
\ar @{-} "o";"4" <0pt>
\endxy}\Ea
 }^{m\times},
\]
for example,
$$
\Ba{c}\resizebox{16mm}{!}{  \xy
(-1.5,10)*{}="1",
(1.5,10)*{}="2",
(11,18)*{}="3",
 (0,5)*{\bu}="A";
  (11,13)*{\bu}="O";
    (18,8)*{\bu}="L";
 (-5,-7)*+{_1}*\frm{o}="B";
   (17,-7)*+{_2}*\frm{o}="D";
 "A"; "B" **\crv{(-5,-0)};
  "A"; "D" **\crv{(5,-0.5)};
  %
\ar @{-} "O";"D" <0pt>
\ar @{-} "O";"L" <0pt>
\ar @{-} "D";"L" <0pt>
\ar @{-} "B";"L" <0pt>
\ar @{-} "B";"O" <0pt>
 \endxy}
 \Ea
 \
 \lon
 \
 \Ba{c}\resizebox{16mm}{!}{  \xy
(0,10)*{}="1",
(11,18)*{}="3",
 (0,5)*{\bu}="A";
  (11,13)*{\bu}="O";
    (18,8)*{\bu}="L";
 (-5,-7)*+{_1}*\frm{o}="B";
   (17,-7)*+{_2}*\frm{o}="D";
 "A"; "B" **\crv{(-5,-0)};
  "A"; "D" **\crv{(5,-0.5)};
  \ar @{-} "A";"1" <0pt>
\ar @{-} "O";"D" <0pt>
\ar @{-} "O";"L" <0pt>
\ar @{-} "D";"L" <0pt>
\ar @{-} "O";"3" <0pt>
\ar @{-} "B";"L" <0pt>
\ar @{-} "B";"O" <0pt>
 \endxy}
 \Ea
 \
 +
 \
 \Ba{c}\resizebox{16mm}{!}{  \xy
(-1.5,10)*{}="1",
(1.5,10)*{}="2",
(11,18)*{}="3",
 (0,5)*{\bu}="A";
  (11,13)*{\bu}="O";
    (18,8)*{\bu}="L";
 (-5,-7)*+{_1}*\frm{o}="B";
   (17,-7)*+{_2}*\frm{o}="D";
 "A"; "B" **\crv{(-5,-0)};
  "A"; "D" **\crv{(5,-0.5)};
  \ar @{-} "A";"1" <0pt>
\ar @{-} "A";"2" <0pt>
\ar @{-} "O";"D" <0pt>
\ar @{-} "O";"L" <0pt>
\ar @{-} "D";"L" <0pt>
\ar @{-} "O";"3" <0pt>
\ar @{-} "B";"L" <0pt>
\ar @{-} "B";"O" <0pt>
 \endxy}
 \Ea
 \
 +
 \ldots
$$

The map $\thoe_{c+d}\to \f_{c,d}\cP$ factors through the map $\thoe_n\to\WiOP$ from \cite[section 3.5]{Wi2}.

%
It is shown in \cite[Theorem 2]{Wi2} that the map $\thoe_n\to\WiOP$ is a quasi-isomorphism.
By a very similar argument (which we again leave to the reader) one may obtain the following result.

\begin{proposition}\label{prop:qBVOP}
 The map $\hoqBV_{c+d} \to \f_{c,d}\cP$ is a quasi-isomorphism of operads.
\end{proposition}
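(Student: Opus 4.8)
The plan is to run the same argument that \cite[Theorem 2]{Wi2} uses to prove that $\thoe_{c+d}\to\WiOP$ is a quasi-isomorphism, and to bolt onto it the extra bookkeeping needed to account for the output legs, which are present in $\f_{c,d}\cP$ but absent in $\WiOP=\cG raphs_{c+d}^{or}$. First I would record the explicit combinatorial model for $\cP=\choLieB_{c,d}$: an element of $\f_{c,d}\cP(r)$ is a completed series of oriented graphs assembled from the $\hoLieB_{c,d}$-corollas, with $r$ labelled external vertices, finitely many internal (black) vertices, oriented internal edges carrying no directed cycles (as in \cite{CMW,MW2}), and a number of output legs attached to the vertices. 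Its differential is $\delta+d$, with $\delta$ splitting internal vertices (the $\hoLieB_{c,d}$-differential) and $d$ splitting external vertices and attaching the $(1,m)$-corollas \eqref{5: sum of (1,m) corollas} to the outputs, as described in the example above.

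Next I would put a decreasing filtration on both sides by a ``BV-degree'': on the source $\hoqBV_{c+d}$ by the superscript $r$ of the generators $\mu_k^r$ (equivalently, by the number of applications of the BV operator), and on the target $\f_{c,d}\cP$ by the excess $\#\{\text{output legs}\}-\#\{\text{internal vertices}\}$, which is exactly the quantity in which the weight $(m-1)^r/r!$ of \eqref{equ:hoBVmap} is polynomial. The purpose of this filtration is to separate the component of $d$ that creates or destroys output legs from the graph-splitting component. On the associated graded the map then factors into two independent pieces: (i) the oriented-graph part, which reproduces precisely the complex of \cite{Wi2} and identifies the ``graph part'' of $H(\f_{c,d}\cP)$ with $H(\WiOP)\cong H(\thoe_{c+d})$; and (ii) an auxiliary ``output-leg complex'' localised at each internal vertex, whose cohomology is one-dimensional in each output-leg count and assembles exactly into the family of generators $\mu_k^r$.

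The core of the argument is then a comparison of the two spectral sequences attached to these filtrations. On the $E_1$-page the induced map is the quasi-isomorphism $\thoe_{c+d}\to\WiOP$ of \cite[Theorem 2]{Wi2}, tensored with the map on the output-leg factor; the latter is checked to be an isomorphism by a direct combinatorial count, in which the weight $(m-1)^r/r!$ is forced by matching the coproducts defining $\hoqBV_{c+d}$ to the sum over ways of attaching output legs to a single internal vertex (this is the $r\geq 1$ analogue of the $r=0$ verification implicit in \cite{Wi2}). Since the map is a quasi-isomorphism on $E_1$, it is one on the nose, provided the spectral sequences converge.

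The step I expect to be the main obstacle is precisely this convergence. Because the elements of $\f_{c,d}\cP$ are formal series in which the number of output legs is unbounded, the BV-degree filtration is complete and exhaustive but \emph{not} bounded, so one cannot simply invoke a boundedness argument; instead one must work genus- and arity-wise and verify completeness and exhaustiveness carefully, using that $\cP$ is the genus completion $\choLieB_{c,d}$ to guarantee the needed limits exist. The remaining difficulty is combinatorial rather than conceptual: showing that the weighted sums $(m-1)^r/r!$ organise the output-leg complex into something acyclic away from the classes represented by the $\mu_k^r$. Neither point introduces a genuinely new idea beyond \cite{Wi2}, which is why the full verification can reasonably be left to the reader.
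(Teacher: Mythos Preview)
The paper gives no proof beyond the remark that the argument of \cite[Theorem~2]{Wi2} carries over; the intended route is to \emph{re-run} that argument directly for $\f_{c,d}\cP$, with the output legs present throughout, rather than to invoke \cite{Wi2} as a black box. Your plan is different: filter so that the associated graded \emph{becomes} the \cite{Wi2} situation tensored with a local ``output-leg complex''. The specific filtration you propose, however, does not do this. The excess $\#\{\text{output legs}\}-\#\{\text{internal vertices}\}$ is not monotone under the differential in either direction: the piece $\delta$ (splitting an internal $\hoLieB_{c,d}$-corolla into two) lowers the excess by $1$, while the piece of $d$ attaching a corolla with $m$ outputs to an existing output leg raises it by $m-2\geq 0$; so neither $\{\text{excess}\geq p\}$ nor $\{\text{excess}\leq p\}$ is a subcomplex, and there is no associated graded on which a ``graph'' part and an ``output-leg'' part decouple. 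Independently, the map does not respect your filtration: the image of $\mu_k^r$ has coefficient $e^r/r!$ at excess $e$, so its lowest nonzero term sits at excess $1$ for every $r\geq 1$, whereas $\mu_k^r$ lies in BV-filtration $r$ on the source. The observation that the weight is a degree-$r$ polynomial in the excess is correct but does not by itself produce a filtration; one would have to single out a ``polynomial-in-excess'' subspace of the completed $\f_{c,d}\cP$ and prove it is a subcomplex, and that is exactly where the global coupling (e.g.\ $\delta$ redistributing one vertex's output legs between two daughter vertices) obstructs any clean tensor decomposition.

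The cleaner path is the one the paper gestures at: transport the filtration or induction that \cite{Wi2} uses on $\WiOP$ to $\f_{c,d}\cP$ without stripping the output legs off. Where \cite{Wi2} ends up with $\e_{c+d}$, here each surviving internal vertex still carries its free ``number of output legs'' parameter, which is precisely the $u$-direction in $\qBV_{c+d}^\vee\cong\e_{c+d}^\vee[u]$, and one recovers $\qBV_{c+d}$. Convergence is then no worse than in \cite{Wi2} and is not the principal obstacle you should be worrying about.
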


Proposition {\ref{prop:qBVOP}} in turn may be used to understand the stable $\cP$-graph complex
\[
 \SRGCPn{c,d}\subset \Def(\hoLie_{c+d}\to \f_{c,d}\cP)
\]
in this case.
More concretely, since $\f_{c,d}\cP\simeq \qBV_{c+d}$ we may apply Lemma {\ref{lem:DefLieqBV}} to find that
\begin{equation}\label{equ:SRGCPonedim}
 \Def(\hoLie_{c+d}\to \f\cP) \simeq \Def(\hoLie_{c+d}\to \qBV_{c+d}) \simeq \K.
\end{equation}
Hence $H(\SRGCPn{c,d})$ is one-dimensional, and the one class can be traced back to be represented by the series \eqref{equ:extraclass}, which may be seen as an element of $\SRGCPn{c,d}$ via the canonical map $\RGCPn{c,d}\to \SRGCPn{c,d}$.

\subsection{Twisting in the involutive Lie bialgebra case} Consider a morphism of properads
$g: \LoBcd\rar \cP$. Using Lemma {\ref{5: Lemma on Lie_0^dimanod to P}} we obtain an associated morphism of dg operads
\Beq\label{5: morphism f^diamond to O_dP}
\Ba{rccc}
g^{\diamond}: & \caL ie^\diamond_{c+d} & \lon & \f_{c,d}\cP[[\hbar]] \\
\Ea
\Eeq
given explicitly by
$$
g^{\diamond}\left( \Ba{c}\begin{xy}
 <0mm,0.66mm>*{};<0mm,3mm>*{}**@{-},
 <0.39mm,-0.39mm>*{};<2.2mm,-2.2mm>*{}**@{-},
 <-0.35mm,-0.35mm>*{};<-2.2mm,-2.2mm>*{}**@{-},
 <0mm,0mm>*{\bu};<0mm,0mm>*{}**@{},
   <0.39mm,-0.39mm>*{};<2.9mm,-4mm>*{^2}**@{},
   <-0.35mm,-0.35mm>*{};<-2.8mm,-4mm>*{^1}**@{},
\end{xy}\Ea\right):=f\left( \Ba{c}\begin{xy}
 <0mm,0.66mm>*{};<0mm,3mm>*{}**@{-},
 <0.39mm,-0.39mm>*{};<2.2mm,-2.2mm>*{}**@{-},
 <-0.35mm,-0.35mm>*{};<-2.2mm,-2.2mm>*{}**@{-},
 <0mm,0mm>*{\bu};<0mm,0mm>*{}**@{},
   <0.39mm,-0.39mm>*{};<2.9mm,-4mm>*{^2}**@{},
   <-0.35mm,-0.35mm>*{};<-2.8mm,-4mm>*{^1}**@{},
\end{xy}\Ea\right)=:
\Ba{c}{\xy
(-3,-3)*{};
(0,0)*{\circledcirc }
**\dir{-};
(3,-3)*{};
(0,0)*{\circledcirc }
**\dir{-};
(0,4)*{};
(0,0)*{\circledcirc }
**\dir{-};
(-3.5,-4.5)*{_1};
(3.5,-4.5)*{_2};
\endxy}\Ea
\ \ \ \  \mbox{and}\ \ \ \
f^{\diamond}\left(\begin{xy}
 <0mm,-0.55mm>*{};<0mm,-3mm>*{}**@{-},
 <0mm,0.5mm>*{};<0mm,3mm>*{}**@{-},
 <0mm,0mm>*{\bullet};<0mm,0mm>*{}**@{},
 \end{xy}\right)=\hbar
 \Ba{c}{\xy
(-0.7,-4)*{};
(-0,0)*{\circledcirc }
**\dir{-};
(0.7,-4)*{};
(0,0)*{\circledcirc }
**\dir{-};
(0,4)*{};
(0,0)*{\circledcirc }
**\dir{-};
(-0.8,-5.6)*{_1};
(0.8,-5.6)*{_1};
\endxy}\Ea
+
\Ba{c}{\xy
(-0.7,4)*{};
(-0,0)*{\circledcirc }
**\dir{-};
(0.7,4)*{};
(0,0)*{\circledcirc }
**\dir{-};
(0,-4)*{};
(0,0)*{\circledcirc }
**\dir{-};
(-0,-5.6)*{_1};
\endxy}\Ea
$$
where $\hbar$ is a formal parameter of degree $c+d$.
Hence we obtain a deformation complex
$$
\mathsf{fsPGC}_{c,d}^\diamond:= \Def\left(\caL ie^\diamond_{c+d}  \stackrel{g^\diamond}{\lon} \f\cP_{c,d}[[\hbar]]\right)
$$
and its {\em stable}\, subcomplex $\sPGCcd^\diamond$ spanned by connected graphs. The associated
to the map $g^\diamond$ twisted operad is denoted by $f\cP\cG raphs_{c,d}^\diamond$; its suboperad spanned by connected graphs is denoted by $\cP\cG raphs_{c,d}^\diamond$. The deformation complex (called the {\em  diamond $\cP$-graph complex})
$$
\PGCcd^\diamond:=\Def\left( \LoBcd\stackrel{f}{\lon} \cP\right)
$$
sits inside $\mathsf{fsPGC}_{c,d}^\diamond$ as a subcomplex spanned over $\K[[\hbar]]$ by graphs
with univalent black vertices, i.e.\ there is a canonical monomorphism
$$
i: \PGCcd^\diamond \lon \sPGCcd^\diamond
$$
of dg Lie algebras (cf.\ Proposition {\ref{5: Prop on PGCd to sPGCd}}).

\subsection{Stable $\cP$-graph complex and quantum diamond functions}
By Proposition {\ref{5: Proposition on OP and its representations}}, any representation $\rho: \cP\rar \cE nd_V$ of $\cP$ in a dg vector space $(V,\delta)$ induces an associated polydifferential representation $\rho: \f\cP \rar
\cE nd_{\odot^\bu V}$, or equivalently, a continuous representation
%
%
$$
\rho_{c,d}:  \f_{c,d}\cP[[\hbar]] \rar \cE nd_{\odot^\bu (V[-c])}[[\hbar]]
$$
where $\hbar$ is a formal variable of homological degree $c+d$.
Hence any morphism of properads $g: \LoBd\rar \cP$ induces a composition
$$
\rho^\diamond_{c,d}:  \caL ie_{c+d}^\diamond  \stackrel{g^\diamond}{\lon} \f_{c,d}\cP[[\hbar]]
 \stackrel{\rho_\hbar}{\lon} \cE nd_{\odot^\bu(V[-c])}[[\hbar]].
$$
We conclude that an involutive Lie bialgebra structure in $V$ makes the vector space $\sC^\bu_c(V)=\odot^\bu(V[-c])[[\hbar]]$ into a continuous dg $\caL ie_{c+d}$  algebra with the differential $\hbar\Delta_{[\ ,\ ]} + \delta_\vartriangle:=\rho_{c,d}(\hbar
 \Ba{c}{\xy
(-0.7,-4)*{};
(-0,0)*{\circledcirc }
**\dir{-};
(0.7,-4)*{};
(0,0)*{\circledcirc }
**\dir{-};
(0,4)*{};
(0,0)*{\circledcirc }
**\dir{-};
(-0.8,-5.6)*{_1};
(0.8,-5.6)*{_1};
\endxy}\Ea
+
\Ba{c}{\xy
(-0.7,4)*{};
(-0,0)*{\circledcirc }
**\dir{-};
(0.7,4)*{};
(0,0)*{\circledcirc }
**\dir{-};
(0,-4)*{};
(0,0)*{\circledcirc }
**\dir{-};
(-0,-5.6)*{_1};
\endxy}\Ea)$ and with Lie brackets $\{\ ,\ \}:= \rho_{c,d}(\Ba{c}{\xy
(-3,-3)*{};
(0,0)*{\circledcirc }
**\dir{-};
(3,-3)*{};
(0,0)*{\circledcirc }
**\dir{-};
(0,4)*{};
(0,0)*{\circledcirc }
**\dir{-};
(-3.5,-4.5)*{_1};
(3.5,-4.5)*{_2};
\endxy}\Ea)$ of degree $1-(c+d)$.
%
%
 This structure is precisely the one  we discussed in \S {\ref{3: Subsec on Hamiltons parametr of quaA_infty}} so that its $MC$ elements, that is degree $c+d$ elements $S\in \sC^\bu_d(V)$ satisfying equation (\ref{3: rescaled master eqn for inv Lie bial}) are precisely quantum diamond functions of the given $\LoBcd$-algebra, and any such a quantum diamond function
gives rise to a continuous  representation of the twisted operad $\cP\cG raphs_{c,d}^\diamond$ in the Hochschild
complex of $S$ so that the latter can be considered as an operad
of {\em quantum $\cP$-braces} (cf.\ \cite{KoSo}).

\sip

Let
$$
CE^\bu(\sC_c^\bu(V))=\Def\left(\caL ie^\diamond_{c+d} \stackrel{\rho_{c,d}^\diamond}{\lon} \cE nd_{\sC_c^\bu(V)} \right)    \simeq \prod_n \Hom\left(\odot^n\sC_c^\bu(V), \sC_c^\bu(V)\right)[(c+d)(1-n)].
$$
be the standard Chevalley-Eilenberg complex of the dg $\caL ie_{c+d}$ algebra $(\sC_c^\bu(V), \hbar\Delta_{[\ ,\ ]} + \delta_\vartriangle, \{\ ,\ \})$. The representation $\rho$ induces a canonical morphism
of dg Lie algebras
$$
\rho^\diamond: \RGCPn{c,d}^\diamond \lon CE^\bu(\sC_c^\bu(V))
$$
 and hence a morphism of their cohomology groups,
$$
\rho^\diamond: H^0\left(\RGCPn{c,d}^\diamond \right) \lon H^0\left(CE^\bu(\sC_c^\bu(V))\right).
$$
 Let $\sigma$ be an arbitrary element in $H^0\left(\RGCPn{c,d}^\diamond \right)$ and let
 $\hat{\sigma}$ be a cycle representing $\sigma$ in the $\cP$-graph complex $\RGCPn{c,d}^\diamond$.
 Its image $\rho^\diamond(\hat{\sigma})$ is a degree zero cycle in dg Lie algebra $CE^\bu(\sC_c^\bu(V))$ of co-derivations of the co-commutative coalgebra $\odot^\bu(\sC_c^\bu(V)[c+d])$. Codifferentials in latter coalgebra are precisely $\caH \mathit{olie}_{c+d}$ algebra structures in  $\sC_c^\bu(V)$, so that $\rho^\diamond(\hat{\sigma})$ gives us a $\caH \mathit{olie}_{c+d}$-derivation of the dg Lie algebra
 $(\sC_c^\bu(V), \hbar\Delta_{[\ ,\ ]} + \delta_\vartriangle, \{\ ,\ \})$.
 If the exponent of the adjoint action, $\exp(ad_{\rho^\diamond(\hat{\sigma})})$ happens to converge as a linear automorphism
 of  $CE^\bu(\sC_c^\bu(V))$, it gives us a genuine
 $\caH olie_{c+d}$ automorphism (cf.\ \cite{MW}),
$$
F^\sigma=\left\{F^\sigma_k: \odot^k CE^\bu(\sC_d^\bu(V)) \lon CE^\bu(\sC_c^\bu(V))[(c+d)(1-k)]\right\}_{n\geq 1},
$$
of the dg  $\caL ie_{c+d}$ algebra $(\sC_c^\bu(V), \hbar\Delta_{[\ ,\ ]} + \delta_\vartriangle, \{\ ,\ \})$
with $F^\sigma_1=\Id$.
If the exponent does not converge, one plays a standard trick by introducing a formal parameter $u$ (of homological degree zero) and considering a dg Lie algebra $CE^\bu(\sC_c^\bu(V))[[u]]$. As $\exp({u ad_{\rho^\diamond(\hat{\sigma})}})$ converges,
we obtain from $\hat{\sigma}$  a continuous $\caH olie_{c+d}$-automorphism $F^\sigma=\{F^\sigma_k\}$
of the dg Lie algebra $(\sC_c^\bu(V)[[u]], \hbar\Delta_{[\ ,\ ]} + \delta_\vartriangle, \{\ ,\ \})$.
 Hence for any element $S\in \sC_c^\bu(V)[[u]]$ of degree $c+d$
which satisfies the MC equation
$$
\hbar\Delta_{[\ ,\ ]}S + \delta_\vartriangle S + \frac{1}{2}\{S,S\}=0
$$
the series
$$
S^\sigma:= S + \sum_{k\geq 2}\frac{1}{n!} F^\sigma_n(S, \ldots, S)
$$
gives us another solution of the same MC equation. Thus we can formulate the following proposition (cf.\ \cite{MW}).

\subsubsection{\bf Proposition}\label{5: Action H^0PGCd on diamond functions}
 {\em For any $\cP$-algebra $V$, there is an  action of the Lie algebra $H^0(\PGCcd^\diamond)$ on the differential graded Lie algebra $(\sC_c^\bu(V), \hbar\Delta_{[\ ,\ ]} + \delta_\vartriangle, \{\ ,\ \})$ by $\caH olie_{c+d}$ derivations.

 \sip
 In particular, it follows that there is an action of the group  $\exp(u H^0(\PGCd^\diamond)$ on the set of gauge equivalence classes of quantum diamond functions $S\in \sC_c^\bu(V)[[u]]$.}

\mip

 Finally we note that the stable $\cP$-graph complex $\PGCcd^\diamond$ can be understood as the universal (in the sense of  independence of a particular vector space $V$ and of a particular representation $\rho: \cP\rar \cE nd_V$) incarnation
of the Chevalley-Eilenberg complex $(\sC^\bu_{c}(V), \hbar\Delta_{[\ ,\ ]} + \delta_\vartriangle, \{\ ,\ \})$.

\subsection{Applications to the properad of ribbon graphs} In this paper we are mostly interested in the properad $\cR \cG ra_d$ of ribbon graphs which comes equipped with a canonical morphism
$s^\diamond: \LoB_{d,d} \rar \cR\cG ra_d$ given by (\ref{4: maps s from LoBd to RGra}). The associated ribbon graph complexes
$$
\RGCd=\Def\left(\LB_{d,d} \stackrel{s}{\lon} \cR\cG ra\right)  \ \ \ \mbox{and}\ \ \ \RGCd^\diamond=\Def\left(\LoB_{d,d} \stackrel{s^\diamond}{\lon}  \cR\cG ra_d\right)
$$
were described in detail in \S {\ref{3: subsection RGCd^diamond}}.

\sip

The polydifferential operad $\f_d(\cR \cG ra_d)$ is spanned by ribbon graphs whose vertices
can have coinciding numerical labels while boundaries have no labels at all (because their labels are (skew)symmetrized). The canonical morphism
(\ref{5: morphism f^diamond to O_dP}) is given explicitly by

$$
\Ba{rccc}
f^\diamond:  & \caL ie^\diamond_{2d} & \lon & \f_d(\cR \cG ra_d)[[\hbar]] \\
    & \Ba{c}\begin{xy}
    <0.1mm,-4.5mm>*{_1};
 <0mm,-0.55mm>*{};<0mm,-3mm>*{}**@{-},
 <0mm,0.5mm>*{};<0mm,3mm>*{}**@{-},
 <0mm,0mm>*{\bullet};<0mm,0mm>*{}**@{},
 \end{xy}\Ea
 & \lon &  \Ba{c}
 \mbox{\xy (0,-4)*{_1};
(0,-2)*{\bu}="A";
(0,-2)*{\bu}="B";
"A"; "B" **\crv{(6,6) & (-6,6)};
\endxy}\Ea
+
\hbar
  \Ba{c}\mbox{\xy
(0,-2)*{_1}; (6,-2)*{_1};
 (0,0)*{\bullet}="a",
(6,0)*{\bu}="b",
\ar @{-} "a";"b" <0pt>
\endxy}\Ea
 \\
     & \Ba{c}\begin{xy}
 <0mm,0.66mm>*{};<0mm,3mm>*{}**@{-},
 <0.39mm,-0.39mm>*{};<2.2mm,-2.2mm>*{}**@{-},
 <-0.35mm,-0.35mm>*{};<-2.2mm,-2.2mm>*{}**@{-},
 <0mm,0mm>*{\bu};<0mm,0mm>*{}**@{},
   <0.39mm,-0.39mm>*{};<2.9mm,-4mm>*{^2}**@{},
   <-0.35mm,-0.35mm>*{};<-2.8mm,-4mm>*{^1}**@{},
\end{xy}\Ea
 & \lon &  \Ba{c}\mbox{\xy
(0,-2)*{_1}; (6,-2)*{_2};
 (0,0)*{\bullet}="a",
(6,0)*{\bu}="b",
\ar @{-} "a";"b" <0pt>
\endxy}\Ea
 \\
 \Ea
$$
Denote the composition $\caL ie_{2d} \rar \caL ie^\diamond_{2d} \stackrel{f^\diamond|_{\hbar=0}}{\lon}  \f_d(\cR \cG ra_d)$ by $f'$.
The associated {\em stable ribbon graph complex}
$$
\sRGCd:= \Def\left(\caL ie_{2d} \stackrel{f'}{\lon} \f_d(\cR \cG ra_d)  \right)_{connected}
$$
is spanned by connected ribbon graphs whose boundaries and vertices are unlabelled and, moreover, the vertices are grouped
in clusters; the associated to $f'$ twisted operad is denoted by $\cR\cG raphs_d$; it comes equipped with a non-trivial morphism
$$
\cP ois_{2d} \lon \cR \cG raphs_d.
$$
which is proven below to be a quasi-isomorphism.

\sip

Twisting the morphism $f^\diamond$ as explained in the beginning of this section we obtain
a dg operad
$$
\cR \cG raphs_d^\diamond:= Tw\left(\f_d(\cR \cG ra_d)[[\hbar]]  \right)
$$
which admits a canonical representation in the Hochschild complex (see \S {\ref{3: Subsubsect Basic Example}}) of an arbitrary quantum $\cA ss_\infty$ algebra. Therefore $\cR \cG raphs_d$
gives us a ribbon analogue of the famous {\em braces operad}\, introduced by Kontsevich and Soibelman in \cite{KoSo} in the context of ordinary $\cA ss_\infty$ algebras; we call
$\cR \cG raphs_d$ the dg operad of {\em  ribbon braces}.

\sip

The {\em diamond ribbon graph complex}
$$
\sRGCd^\diamond:= \Def\left(\LoBd \stackrel{f^\diamond}{\lon} \f_d(\cR \cG ra_d)[[\hbar]]\right)_{connected}
$$
acts by derivations on the dg operad $\cR \cG raphs_d^\diamond$; this dg Lie algebra
has the property that its zero-th cohomology group $H^0(\sRGCd)$ acts universally
 on homotopy classes of quantum $\cA ss_\infty$ algebra structures on any given graded vector space $W$ equipped a (skew)symmetric scalar product of degree $1-d$ (see Proposition {\ref{5: Action H^0PGCd on diamond functions}}).


\subsection{Remarks} \label{rem:deltadef}
(i) The differential $\delta^\diamond$ in $\sRGCd^\diamond$ (or,
 in general, in $\sPGCd^\diamond$) consists of three  terms,
$$
\delta^\diamond=\delta + \Delta_1 + \hbar\Delta_2
$$
corresponding, respectively, to the ribbon graphs $\Ba{c}\mbox{\xy
(0,-2)*{_1}; (6,-2)*{_2};
 (0,0)*{\bullet}="a",
(6,0)*{\bu}="b",
\ar @{-} "a";"b" <0pt>
\endxy}\Ea$, $\Ba{c}
 \mbox{\xy (0,-4)*{_1};
(0,-2)*{\bu}="A";
(0,-2)*{\bu}="B";
"A"; "B" **\crv{(6,6) & (-6,6)};
\endxy}\Ea$ and, respectively,
$\hbar
  \Ba{c}\mbox{\xy
(0,-2)*{_1}; (6,-2)*{_1};
 (0,0)*{\bullet}="a",
(6,0)*{\bu}="b",
\ar @{-} "a";"b" <0pt>
\endxy}\Ea$.

(ii) The differential in $\sRGCd$ (or, in general, in $\sPGCd$) is given by the sum $\delta + \Delta_1$.

\mip

(iii) The data $(\sRGCd, \delta)$ is also a dg Lie algebra which can be identified
with $\Def(\caL ie_{2d} \stackrel{f^*}{\rar} \f_d(\cR \cG ra_d))$ where the map $f^*$
is given by the composition
$$
\caL ie_{2d} \hook \caL ie^\diamond_{2d} \stackrel{f^\diamond}{\lon} \f_d(\cR \cG ra_d)
$$
where the first arrow is the natural inclusion of operads. The family of complexes $(\sRGCd, \delta)$
is a subfamily of complexes
$$
\mathsf{sRGC}_{c,d}:=  \Def(\caL ie_{c+d} \stackrel{f_{c,d}^*}{\rar} \f_c(\cR \cG ra_d)),\ \ \ \forall\ c,d\in \Z,
$$
where $f_{c,d}^*$ is the composition
$$
\caL ie_{c+d} \hook \caL ie^\diamond_{c+d} \stackrel{f^\diamond}{\lon} \f_c(\cR \cG ra_d)
$$
\sip

(iv) Note that in $d=0$ case all the operators $\delta$, $\Delta_1$ and $\Delta_2$ in $\sRGCd$ have degree $1$ so that it makes sense to equip the graded vector space $\mathsf{sRGC}:=\mathsf{sRGC}_0$ not only only with differentials $\delta$, $\delta+\Delta_1$, but also with $\delta + \Delta_2$.

\bip



{\Large
\section{\bf The cohomology of the stable complex}
}
\mip

\subsection{Standing assumption and an important point}\label{sec:standing_assumption}
In the following we will work with unital augmented properads $\cP$ together with a map $\hoLieB_{c,d}\to \cP$, that satisfy a certain technical condition.
We assume that the properads $\op P$ we consider come equipped with a descending filtration
\[
 \op P = \mF^0 \op P \supset \mF^1 \op P \supset \mF^2\op P \supset \cdots
\]
satisfying the following properties:
\begin{itemize}
 \item The filtration is complete and compatible with the properad structure.\footnote{We consider the filtration to be compatible with the properad structure if any composition of elements in $\mF^{p_1}\cP, \dots, \mF^{p_k}\cP$ lands inside $\mF^{p_1+\dots+p_k}\cP$.}
 \item Endow $\hoLieB_{c,d}$ with the filtration induced by the grading that places the generator of arity $(p,q)$ into degree $p+q-2$.
 Then we require the map $\hoLieB_{c,d}\to \cP$ to respect the filtrations.
 In particular, by the completeness of the filtration on $\cP$ this means that the map $\hoLieB_{c,d}\to \cP$ extends continuously to a map $\whoLieB_{c,d}\to \cP$.
 \item We have $\op P/\mF^2 \op P=\op T_{c,d}$, cf. \S {\ref{5: Example def of prop T'}}, and the map $\hoLieB_{c,d}\to \op T_{c,d}$ factors as
 \[
  \hoLieB_{c,d} \to \cP \to \cP/\mF^2 \cP = \op T_{c,d}.
 \]
 In particular this means that the Lie bracket generator of $\hoLieB_{c,d}$ is mapped to a non-zero element of $\cP$.
\end{itemize}


\sip

Concretely, we will apply our constructions to three cases:
\begin{itemize}
\item To $\cP=\cT_d$, with $\mF^1\cP$ being the kernel of the augmentation and $\mF^p\cP=0$ for $p\geq 2$.
\item To $\cP$ being the genus completion of $\hoLieB_{c,d}$, with the filtration being obtained from the complete grading that places the generator of arity $(p,q)$ into degree $p+q-2$.
 \item To the ribbon graphs properad $\cP=\RGra_d$. Here the filtration will be simply by the number of edges in the graphs.
\end{itemize}

\sip

Our standing assumptions have some important consequences.
First, the properad $\op P$ is pro-nilpotent. Secondly, we have a natural properad map $\op P\to\op T_{c,d} \to  \op T_d$.
Hence, by functoriality of all the constructions, the graph complexes and operads associated to $\op P$ come equipped with natural maps to the ordinary graph complexes and operads: 
\begin{align}
  \RGCPn{c,d} \to \RGCxx{\op T_d}{c,d} &\cong \K
 \\
 \label{equ:SGRCtoGC}
 \SRGCPn{c,d} \to \SRGCxx{\op T_d}{c,d} &\cong \fcGC_{c+d} \\
 \label{equ:RGraphs to Graphs}
 \RGraphsPn{c,d} \to \RGraphsxx{\op T_d}{c,d} &\cong \Graphs_{c+d}.
\end{align}
We will use these facts extensively below.

\subsection{The main vanishing result}
Note that if the standing assumptions above are satisfied we have natural maps of dg Lie algebras
\begin{equation}\label{equ:almostexact}
 \RGCPn{c,d} \to \SRGCPn{c,d} \to \fcGC_{c+d}.
\end{equation}

The composition is not zero, but close to zero: the kernel of the composition is of codimension 2 and acyclic.

\sip

Our main result, which is a generalization of Theorem {\ref{thm:main_RGC}}, is the following.

\begin{theorem}\label{thm:main}
 Suppose that we have a properad map $\hoLieB_{c,d} \to \cP$ satisfying the standing assumptions of\, \S {\ref{sec:standing_assumption}}.
Then there is a long exact sequence in cohomology
\begin{equation}\label{equ:thmmain_longexact}
\cdots \to H^p(\RGCPn{c,d}) \to H^p(\SRGCPn{c,d}) \to H^p(\fcGC_{c+d}) \to H^{p+1}(\RGCPn{c,d}) \to \cdots .
\end{equation}
The morphisms
\[
 H(\RGCPn{c,d}) \to H(\SRGCPn{c,d}) \to H(\fcGC_{c+d})
\]
are induced by \eqref{equ:almostexact}.
The ``connecting'' homomorphism
\[
H^p(\fcGC_{c+d}) \to H^{p+1}(\RGCPn{c,d})=H^{p+1}(\Def(\hoLieB_{c,d}\to \cP))
\]
is given by the canonical map
\[
H^p(\fcGC_{c+d})\cong H^p(\GC_{c+d+1}^{or})\cong H^{p+1}(\Def(\hoLieB_{c,d} \to \hoLieB_{c,d})) \to H^{p+1}(\Def(\hoLieB_{c,d}\to \cP)),
\]
where for the identification $H^p(\fcGC_{c+d})\cong H^p(\GC_{c+d+1}^{or})$ we use a special isomorphism constructed in the proof.
\end{theorem}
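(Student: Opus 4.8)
### Proof plan for Theorem 6.2.1

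The plan is to exhibit the stable complex $\SRGCPn{c,d}$ as the total complex of a short exact sequence of complexes, from which the long exact sequence follows by the standard homological-algebra machinery. First I would isolate the subcomplex $\mathsf{S}\subset \SRGCPn{c,d}$ spanned by connected graphs all of whose black (internal) vertices are univalent. By Proposition 5.8.1 (the analogue of Proposition \ref{5: Prop on PGCd to sPGCd}) this subcomplex is precisely the image of the monomorphism $i\colon \RGCPn{c,d}\hook \SRGCPn{c,d}$, so $\mathsf{S}\cong \RGCPn{c,d}$ as a complex. The quotient $\SRGCPn{c,d}/\mathsf{S}$ is then spanned by graphs carrying at least one black vertex of valence $\geq 2$, and the plan is to identify its cohomology with $H(\fcGC_{c+d})$. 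The desired long exact sequence \eqref{equ:thmmain_longexact} is the connecting sequence of
\[
0 \to \RGCPn{c,d} \to \SRGCPn{c,d} \to \SRGCPn{c,d}/\RGCPn{c,d} \to 0,
\]
so everything reduces to computing the cohomology of the quotient and matching the maps.

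Next I would compute $H\!\left(\SRGCPn{c,d}/\RGCPn{c,d}\right)$ by means of the standing-assumption filtration of \S \ref{sec:standing_assumption}. The filtration $\mF^\bullet\cP$ induces a filtration on the whole stable complex whose associated graded replaces $\cP$ by its associated graded properad, and by the standing assumption $\cP/\mF^2\cP=\op T_{c,d}$ with $\hoLieB_{c,d}\to\cP$ factoring through $\op T_{c,d}$. On the associated graded the hyperedges decorated by $\cP$ collapse to the single bracket generator, so that the leading term of the differential is exactly the vertex-splitting differential of the ordinary (oriented) graph complex. Here I would invoke the computations already assembled in the excerpt: Proposition \ref{prop:qBVOP} identifies $\f_{c,d}\cP\simeq \qBV_{c+d}$ quasi-isomorphically in the $\hoLieB_{c,d}$ example, and more structurally the maps \eqref{equ:SGRCtoGC} and \eqref{equ:RGraphs to Graphs} to the toy properad $\op T_d$ give the comparison with $\fcGC_{c+d}$. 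A spectral-sequence argument on the filtration, together with the vanishing statement recorded just after \eqref{equ:almostexact} (the kernel of the composition $\RGCPn{c,d}\to\SRGCPn{c,d}\to\fcGC_{c+d}$ is acyclic of codimension $2$), should show that the quotient complex computes $H(\fcGC_{c+d})$ up to the understood small correction.

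Finally I would identify the connecting homomorphism with the asserted canonical map. The connecting map sends a cocycle $\gamma\in\fcGC_{c+d}$, lifted to a graph with one genuinely multivalent black vertex, to the graph obtained by applying $\delta+\Delta_1$ and landing back in $\RGCPn{c,d}$; tracing this through the identifications, it is exactly the image of $\gamma$ under the chain of isomorphisms $H^p(\fcGC_{c+d})\cong H^p(\GCor_{c+d+1})\cong H^{p+1}(\Def(\hoLieB_{c,d}\to\hoLieB_{c,d}))$ followed by $\Def(\hoLieB_{c,d}\to\hoLieB_{c,d})\to\Def(\hoLieB_{c,d}\to\cP)$. The first of these isomorphisms is the content of \cite{Wi2,MW2} as recalled around \eqref{equ:GCor to DefLieB}, and the second is functoriality of the deformation complex under $\hoLieB_{c,d}\to\cP$; the map \eqref{equ:GCor to DefLieB} is attaching inputs and outputs to an oriented graph, which matches the leading term of the connecting map under the filtration.

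The main obstacle I anticipate is the identification of the quotient cohomology with $H(\fcGC_{c+d})$ on the nose, rather than merely up to the $\GCor$-versus-$\GC^2$ discrepancy: the passage from the oriented graph complex $\GCor_{c+d+1}$ to $\fcGC_{c+d}$ involves the one-dimensional ``extra class'' of \eqref{equ:extraclass} and the rescaling class, and controlling precisely which of these survive — so that the connecting homomorphism uses the \emph{special} isomorphism $H^p(\fcGC_{c+d})\cong H^p(\GCor_{c+d+1})$ promised in the statement — is delicate. I expect this to force a careful bookkeeping of the two exceptional classes through the filtration spectral sequence, which is why the theorem must invoke a specifically constructed isomorphism rather than an arbitrary one.
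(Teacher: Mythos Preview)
Your approach is genuinely different from the paper's, and it has a real gap at the step you yourself flag as the main obstacle.

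You set up the short exact sequence $0\to\RGCPn{c,d}\to\SRGCPn{c,d}\to Q\to 0$ with $Q=\SRGCPn{c,d}/\RGCPn{c,d}$, and then need $H(Q)\cong H(\fcGC_{c+d})$. But the filtration argument you sketch does not deliver this. Passing to the associated graded of the $\mF^\bullet\cP$-filtration does \emph{not} reduce the quotient $Q$ to an ordinary graph complex: on the associated graded the hyperedges are decorated by $\gr\cP$, not by $\cT_d$, and the quotient by ``all vertices univalent'' is still a complicated object. Proposition~\ref{prop:qBVOP} applies only to the single example $\cP=\whoLieB_{c,d}$ and says nothing about general $\cP$. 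You also cannot invoke the map $\SRGCPn{c,d}\to\fcGC_{c+d}$ to compare, since by the remark after \eqref{equ:almostexact} it does \emph{not} vanish on $\RGCPn{c,d}$ and so does not factor through $Q$. In short, the identification $H(Q)\cong H(\fcGC_{c+d})$ is asserted, not proved, and I do not see how to prove it directly without essentially reproducing the paper's argument.

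The paper proceeds along an entirely different route and uses a \emph{different} short exact sequence in which $\SRGCPn{c,d}$ is the \emph{quotient}, not the middle term. The key technical input is the operad-level result that the twisted operad $\RGraphsPn{c,d}$ is quasi-isomorphic to $\e_{c+d}$ (the ``First step''); this is proved by an induction on arity using the $\mF^\bullet$-filtration on $\RGraphsPn{c,d}$ itself. From this one deduces $H(\Def(\hoe_{c+d}\to\RGraphsPn{c,d})_\conn)\cong\K[-1]\oplus H(\GC_{c+d}^2)[-1]$. The paper then introduces an auxiliary ``hairy $\cP$-graph complex'' $\HRGCPn{c,d}$, shows it is quasi-isomorphic to this deformation complex, enlarges it to $\HRGCPn{c,d}^{\ext}=\HRGCPn{c,d}\oplus\SRGCPn{c,d}$, and proves $\RGCxx{\overline{\cP}}{c,d}\to\HRGCPn{c,d}^{\ext}$ is a quasi-isomorphism. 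The long exact sequence of the theorem then comes from $0\to\HRGCPn{c,d}\to\HRGCPn{c,d}^{\ext}\to\SRGCPn{c,d}\to 0$, and the identification of the maps is obtained by functoriality, plugging in $\cP=\cT_d$ and $\cP=\whoLieB_{c,d}$. None of this uses the quotient $Q$ you propose.
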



%

\subsection{Proof of Theorem {\ref{thm:main}}}

\subsubsection{\bf First step: $H(\RGraphsPn{c,d})=\e_{c+d}$}
We have seen above that (under our standing assumption) we have natural maps $\hoe_{c+d} \to \RGraphsPn{c,d} \to \Graphs_{c+d}$, cf.\ Proposition {\ref{prop:hoen to RGraphs}} and \eqref{equ:RGraphs to Graphs}.
The key result then is the following.

\subsubsection{\bf Proposition} {\em Under our standing assumptions of section {\ref{sec:standing_assumption}} the maps
\[
\hoe_{c+d} \to \RGraphsPn{c,d} \to \Graphs_{c+d}
\]
are quasi-isomorphisms of operads.}

\mip

It is easy to check that the composition of the two maps in the proposition is the standard quasi-isomorphism $\hoe_{c+d}\to \Graphs_{c+d}$.
Hence the proposition follows by finite dimensionality of $\e_{c+d}$ in each arity if we can show that $H( \RGraphsPn{c,d})=\e_{c+d}$.
To this end we perform an induction on the arity in $ \RGraphsPn{c,d}$. The induction hypothesis states that $H( \RGraphsPn{c,d}(n-1))\cong \e_{c+d}(n-1)$.

\sip

Now endow $\RGraphsPn{c,d}$ with the following filtration.
As in our standing assumption above we have a descending complete filtration $\mF^\bullet\op P$ on $\op P$.
Then we define
\[
\mF^p  \RGraphsPn{c,d}
\]
to consist of series of graphs with $k$ hyperedges in $\mF^{p_1}\op P,\dots , \mF^{p_k}\op P$, 
such that
\[
 p_1+\dots +p_k -k  \geq p.
\]

\subsubsection{\bf Lemma}\label{lem:filtration}
{\em The subspaces $\mF^p \RGraphsPn{c,d}(n)$ define a filtration, i.e., they are subcomplexes.
Furthermore, this filtration is bounded below and complete.
}
\begin{proof}
The filtration is clearly bounded below since the filtration on $\op P$ is trivial for $p\leq 0$ and hence $p_j-1\geq 0$.
It is complete by construction. 
\end{proof}

Then on the associated graded $\gr \RGraphsPn{c,d}$ the only piece of the differential that survives, say $\delta$, is the one creating one internal vertex and one new hyperedge, decorated by the image of the Lie bracket generator under $\hoLieB_{c,d}\to \op P$. Let us call such a hyperedge a \emph{plain edge}.
We claim that $H(\gr  \RGraphsPn{c,d},\delta)\cong \e_{c+d}$.
We endow $\gr  \RGraphsPn{c,d}(n)$ with another very simple decomposition of graded vector spaces (which we may consider as a filtration):
\[
\gr  \RGraphsPn{c,d}(n) =V_0 \oplus V_1 \oplus V_{\geq 1},
\]
where $V_0$ is spanned by graphs such that no hyperedge is connected to external vertex $1$. The space $V_1$ is spanned by graphs for which at external vertex $1$ there is exactly one hyperedge attached (exactly once), and this one hyperedge is a plain edge, i.e., it is decorated by the image of the Lie brachet generator.
The subspace $V_{\geq 1}$ is spanned by all other graphs. The first differential in the spectral sequence is the map
\[
V_1 \leftarrow V_{\geq 1}
\]
obtained by splitting off all (hyper-)edges incident at vertex $1$ to a new internal vertex, connected to $1$ by a plain edge. This is an injection. Furthermore, the cokernel is spanned by graphs such that
\begin{itemize}
\item either $1$ is connected by a plane edge to another external vertex,
\item or  $1$ is connected by a string of $\geq 2$ plain edges to another internal or external vertex.
\end{itemize}

Let us denote the two subspaces of the cokernel generated by these two sorts of graphs $W$ and $W'$, so that the cokernel is $W\oplus W'$. The next differential in the (inner) spectral sequence renders $W'$ acyclic (cf., e.g., the proof of \cite[Proposition 3.4]{Wi}).

For $W$, one uses the same inductive argument leading to the proof that $H(\Graphs_{c+d})\cong \e_{c+d}$ (cf. \cite{LV}), finally concluding that $H( \RGraphsPn{c,d})\cong \e_{c+d}$.
\hfill\qed

\subsubsection{\bf Second step: Analyzing $\Def(\hoe_{c+d}\to  \RGraphsPn{c,d})_\conn$, and deriving the cohomology long exact sequence}
Let us consider the deformation complex
\[
\Def_{operad}(\hoe_{c+d}\to  \RGraphsPn{c,d}).
\]
Elements can be considered as series of graphs in $\RGraphsPn{c,d}$ whose external vertices are organized in  ``clusters''. Consider the ``connected'' subcomplex (spanned by the connected graphs)
\[
\Def_{operad}(\hoe_{c+d}\to  \RGraphsPn{c,d})_\conn \subset \Def_{operad}(\hoe_{c+d}\to  \RGraphsPn{c,d}).
\]
Given Proposition {\ref{prop:hoen to RGraphs}}, we can then conclude that the maps
\[
\Def_{operad}(\hoe_{c+d}\to \e_{c+d} )_\conn \to \Def_{operad}(\hoe_{c+d}\to  \RGraphsPn{c,d})_\conn \to \Def_{operad}(\hoe_{c+d}\to \Graphs_{c+d})_\conn
\]
are quasi-isomorphisms.
Now we may use the result of \cite{Wi} that
\begin{equation}\label{equ:Defhoe2Graphs}
 H(\Def_{operad}(\hoe_{n}\to  \Graphs_n)_\conn) \cong \K[-1] \oplus H(\GC_{n}^2)[-1].
\end{equation}
It then follows that
\begin{equation}\label{equ:Defhoe2RGraphs}
 H(\Def_{operad}(\hoe_{c+d}\to  \RGraphsPn{c,d})_\conn) \cong \K[-1] \oplus H(\GC_{c+d}^2)[-1].
\end{equation}

Now we turn to analyzing the deformation complex $\Def(\hoe_{c+d}\to  \RGraphsPn{c,d})_\conn$, whose cohomology we just computed.
Whenever we consider the deformation complex $\Def(\hoe_{c+d}\to X)$ of a map $\hoe_{c+d}\to X$ factoring through $\e_{c+d}$, the differential (on the deformation complex) has the form
\[
d_{Com} + d_{Lie} + \delta
\]
where $\delta$ is the internal differential induced by the internal differential on $X$, 
$d_{Com}$ is a Harrison type differential and $d_{Lie}$ is the remainder, as in \cite{Wi}.
As in that paper, we may interpret elements of $\Def(\hoe_{c+d}\to  \RGraphsPn{c,d})_\conn$ as elements of  $\RGraphsPn{c,d}$, whose external vertices are organized in clusters, with  $d_{Com}$ keeping the number of clusters the same, but increasing the size of one cluster and $d_{Lie}$ creating one more cluster.

Now there is a natural subcomplex
$$
\HRGCPn{c,d} \subset \Def(\hoe_{c+d}\to  \RGraphsPn{c,d})_\conn
$$
consisting of elements such that all external vertices are in clusters of size one, and each such vertex is connected to exactly one hyperedge (and connected excactly once to the hyperedge). (The notation stands for ``hairy $\cP$-graph complex''. If we replace $\cP$-graphs by ordinary, the complex  $\HRGCPn{c,d}$ would become the hairy graph complex, hence the notation.) Analogously to the case of plain graph complexes we have the following result.

\subsubsection{\bf Proposition} \label{prop:hairy to def}
{\em The inclusion
$$
\HRGCPn{c,d} \to \Def(\hoe_{c+d}\to \RGraphsPn{c,d})_\conn
$$
is a quasi-isomorphism of dg Lie algebras.}
\begin{proof}
One takes a spectral sequence first on the number of internal vertices, plus the number of clusters, so that the differential on the associated graded is $d_{Com}$. But the cohomology of $\Def(\hoe_{c+d}\to \RGraphsPn{c,d})_\conn$ with respect to $d_{Com}$ is just $\HRGCPn{c,d}$.
\end{proof}

Next we analyze the differential on the "hairy $\cP$-graph complex" $\HRGCPn{c,d}$.
It has two pieces
\[
d_{Lie} + \delta.
\]
Again the piece $\delta$ stems from the internal differential on $\RGraphsPn{c,d}$. The piece $d_{Lie}$ adds one or more external vertices, in its own cluster, and connects them by a hyperedge to one internal vertex.
To compute the cohomology we will consider an extended version of the hairy $\cP$-graph complex, which is as a graded vector space obtained by allowing graphs without external vertices, i.e.,
\[
\HRGCPn{c,d}^{ext} = \HRGCPn{c,d} \oplus \SRGCPn{c,d}.
\]
The differential $d_{Lie} + \delta$ naturally extends to this larger vector space, with $\delta$ extending as the differential on $\SRGCPn{c,d}$, and $d_{Lie}$ maps $\SRGCPn{c,d}$ into $\HRGCPn{c,d}$.
Now define $\RGCxx{\overline{\cP}}{c,d}\subset \RGCxx{\cP}{c,d}$ to be the subcomplex which is the kernel of the augmentation $\RGCxx{\cP}{c,d}\to \K$.
Then there is a map
\[
\RGCxx{\overline{\cP}}{c,d} \to \HRGCPn{c,d}^{ext} 
\]
by sending a (non-stable) $\overline{\cP}$-graph to the sum of graphs obtained by coloring its vertices either black or white, i.e., either external or internal. One checks that this map indeed respects the differentials.

\subsubsection{\bf Proposition} \label{prop:RGCtoDef}
{\em The map $\RGCxx{\overline{\cP}}{c,d} \to \HRGCPn{c,d}^{ext}$ above is a quasi-isomorphism.}
\begin{proof}
We endow $\RGCxx{\overline{\cP}}{c,d}$ and $\HRGCPn{c,d}^{ext}$ with the (evident extension of the) descending complete filtration of Lemma {\ref{lem:filtration}}. We claim (say Claim 1) that the above map induces an isomorphism on the cohomology of the associated graded spaces, thus proving the Proposition.
The differential on the associated graded is zero on $\RGCxx{\overline{\cP}}{c,d}$.
The differential on the associated graded of $\HRGCPn{c,d}^{ext}$ acts by the addition of one plain edge, added either by splitting an internal vertex into two internal, an external vertex into an external and an internal vertex, or by attaching a plain edge to a new external vertex to an internal vertex.
To show Claim 1 we endow the associated graded spaces with yet another bounded above descending filtration on the number of internal vertices.
The differential on the associated graded is the piece not adding an internal vertex, so that the graded pieces split according to the number $k$ of internal vertices in graphs.
Combinatorially, the differential adds one new external vertex connected by a plain edge to an internal vertex, and is in particular 0 if $k=0$. Note that the map $\RGCxx{\overline{\cP}}{c,d} \to \HRGCPn{c,d}^{ext}$ induces an isomorphism onto the piece corresponding to $k=0$. It hence suffices to check that the subcomplexes with $k\geq 1$ (i.e., at least one internal vertex) are acyclic. But this is easily established, an explicit homotopy is obtained by removing an external vertex connected by a plain edge to an internal vertex.

\end{proof}

Now consider the short exact sequence
\[
 0\to \HRGCPn{c,d} \to \HRGCPn{c,d}^{ext} \to \SRGCPn{c,d} \to 0.
\]
It induces a corresponding long exact sequence in cohomology
\begin{multline}\label{equ:exact_hairy}
\cdots \to H^p( \HRGCPn{c,d} ) \to H^p( \HRGCPn{c,d}^{ext} ) \to H^p(\SRGCPn{c,d}) \to H^{p+1}( \HRGCPn{c,d} ) \to \cdots \, .
\end{multline}

Note that by Proposition {\ref{prop:hairy to def}} and \eqref{equ:Defhoe2RGraphs} we have
\[
 H(\HRGCPn{c,d}) \cong H(\GC_{c+d}^2)[-1] \oplus \K[-1].
\]
By Proposition {\ref{prop:RGCtoDef}} we have
\[
 H( \HRGCPn{c,d}^{ext} ) \cong H(\RGCxx{\overline{\cP}}{c,d}) \cong H(\RGCPn{c,d})\oplus \K[-1].
\]
By an easy verification we see that the two copies of $\K$ above are mapped to each other in the long exact sequence so that we can omit them to obtain the long exact sequence
\begin{equation}\label{equ:exact_hairy2}
\cdots \to H^{p-1}( \GC_{c+d}^2 ) \to H^p( \RGCPn{c,d} ) \to H^p(\SRGCPn{c,d}) \to H^{p}( \GC_{c+d}^2) \to \cdots \, .
\end{equation}

This shows the existence of the long exact sequence asserted by Theorem {\ref{thm:main}}.
To show this Theorem it hence remains to verify that the morphisms in the long exact sequence indeed agree with the ones stated in the Theorem.
In fact, for the morphisms $H^p( \RGCPn{c,d} ) \to H^p(\SRGCPn{c,d})$ this is obvious. For the other two types of morphisms the verification will be done in the following subsections.



\subsubsection{\bf The simplest example}
Let us apply our findings so far to $\cP=\cT_d$, with the map $\hoLieB_{c,d}\to \cP$ being the projection \eqref{equ:LieB to Td}.
In this case the stable $\cP$-graph complex (essentially) agrees with the ordinary graph complexes as described in section {\ref{5: example Konts graph complexes}}. The unstable $\cP$ graph complex $\RGCPn{c,d}$ is two dimensional and acyclic.

Inserting these data into the long exact sequence \eqref{equ:exact_hairy}, and using Proposition {\ref{prop:hairy to def}} we obtain the long exact sequence
\[
 \cdots \to 0 \to H^{p}(\SRGCPn{c,d} )\cong H^{p}(\fcGC_{c+d}\oplus\K[-1])
 \to H^{p+1}(\HRGCPn{c,d})\cong H^p(\Def(\hoe_{c+d}\to \e_{c+d})_\conn) \to 0 \to \cdots
\]
We hence obtain a rederivation of \eqref{equ:Defhoe2Graphs}, shown in \cite{Wi}.

\sip

There is another interesting consequence of the present example. Clearly, the long exact sequence \eqref{equ:exact_hairy} is functorial in $\hoLieB_{c,d}\to \cP$. But by our standing assumptions any such map fits into a diagram
\[
 \begin{tikzcd}
  \hoLieB_{c,d}\ar{r}\ar{dr} & \cP \ar{d} \\
   & \cT_d
 \end{tikzcd}.
\]
Hence we get a map of the long exact sequences
\[
 \begin{tikzcd}
  \cdots \ar{r} & H^p(\HRGCPn{c,d}) \ar{r}\ar{d} & H^p(\RGCPn{c,d}) \ar{r}\ar{d} & \SRGCPn{c,d} \ar{d} \ar{r}  & \cdots \\
  \cdots \ar{r} & H^p(\GC^2_{c+d}\oplus \K[-1]) \ar{r} & 0 \ar{r} & H^p(\fcGC_{c+d})  \ar{r}  & \cdots
 \end{tikzcd}.
\]

In particular, we can fill in the first missing bit in the proof of Theorem {\ref{thm:main}} and obtain the following.
\begin{lemma}
 The morphisms
 \[
  H^p(\SRGCPn{c,d}) \to H^{p}( \GC_{c+d}^2)
 \]
appearing in the long exact sequence \eqref{equ:exact_hairy2} are induced by the natural projection of complexes
\[
 \SRGCPn{c,d} \to \fcGC_{c+d}
\]
stemming from the projection $\cP\to \cT_d$, together with the identification $H(\fcGC_{c+d})\cong H(\GC_{c+d}^2)$.
\end{lemma}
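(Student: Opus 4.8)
The plan is to deduce the statement from the naturality of the entire construction in the properad morphism $\hoLieB_{c,d}\to\cP$, reducing it to the already-analysed case $\cP=\op T_d$. By the standing assumptions of \S\ref{sec:standing_assumption} this morphism factors as $\hoLieB_{c,d}\to\cP\to\op T_d$, and each of the objects $\RGraphsPn{c,d}$, $\HRGCPn{c,d}$, $\HRGCPn{c,d}^{ext}$, $\SRGCPn{c,d}$ and $\RGCPn{c,d}$, together with the short exact sequence $0\to\HRGCPn{c,d}\to\HRGCPn{c,d}^{ext}\to\SRGCPn{c,d}\to 0$ and the resulting long exact sequences \eqref{equ:exact_hairy}, \eqref{equ:exact_hairy2}, is functorial in it. Applying this functoriality to $\cP\to\op T_d$ produces the morphism of long exact sequences recorded in the commutative ladder displayed just before the lemma. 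First I would note that, under the identification $\SRGCxx{\op T_d}{c,d}\cong\fcGC_{c+d}$ of \eqref{equ:SGRCtoGC}, the left-hand vertical arrow $H^p(\SRGCPn{c,d})\to H^p(\SRGCxx{\op T_d}{c,d})$ is precisely the cohomology map induced by the projection $\SRGCPn{c,d}\to\fcGC_{c+d}$ appearing in the statement.

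The one nonformal point, which I expect to be the main obstacle, is to verify that the right-hand vertical arrow of the ladder is the identity of $H^p(\GC_{c+d}^2)$. Here one must check that the distinguished $\GC^2$-summand of $H(\HRGCPn{c,d})\cong\K[-1]\oplus H(\GC_{c+d}^2)[-1]$ is canonically the same for $\cP$ and for $\op T_d$. This follows by tracing how that summand is produced: Proposition \ref{prop:hairy to def} identifies $\HRGCPn{c,d}$ with $\Def(\hoe_{c+d}\to\RGraphsPn{c,d})_\conn$, and the first step of the proof of Theorem \ref{thm:main} transports \eqref{equ:Defhoe2Graphs} backwards along the quasi-isomorphisms
\[
\Def(\hoe_{c+d}\to\e_{c+d})_\conn \to \Def(\hoe_{c+d}\to\RGraphsPn{c,d})_\conn \to \Def(\hoe_{c+d}\to\Graphs_{c+d})_\conn .
\]
Since these quasi-isomorphisms are induced by the functorial map $\RGraphsPn{c,d}\to\Graphs_{c+d}$ of \eqref{equ:RGraphs to Graphs}, which itself factors the corresponding map for $\op T_d$, the two resulting identifications of the $H(\GC_{c+d}^2)$-summands (for $\cP$ and for $\op T_d$) coincide, and the right-hand vertical arrow is indeed the identity.

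With both vertical arrows identified, commutativity of the relevant square of the ladder reduces the lemma to the case $\cP=\op T_d$: it remains to show that the map $H^p(\fcGC_{c+d})\cong H^p(\SRGCxx{\op T_d}{c,d})\to H^p(\GC_{c+d}^2)$ produced by \eqref{equ:exact_hairy2} is the standard comparison isomorphism $H^\bu(\fcGC_{c+d})=H^\bu(\GC_{c+d}^2)$. For $\cP=\op T_d$ the complex $\RGCxx{\op T_d}{c,d}$ is acyclic, so \eqref{equ:exact_hairy2} degenerates to an isomorphism in this degree; moreover all the complexes and connecting homomorphisms that enter are the ordinary (non-ribbon) graph and deformation complexes of \cite{Wi}, via $\SRGCxx{\op T_d}{c,d}=\fcGC_{c+d}$, $\RGraphsxx{\op T_d}{c,d}=\Graphs_{c+d}$, and the identification of the corresponding hairy complex with $\Def(\hoe_{c+d}\to\Graphs_{c+d})_\conn$. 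Tracing the connecting homomorphism through these identifications is exactly the computation performed in \cite{Wi}, which recognizes it as the canonical decomposition $H^\bu(\fcGC_{c+d})=H^\bu(\GC_{c+d}^2)$; this finishes the reduction and hence the proof.
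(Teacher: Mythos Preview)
Your proof is correct and follows the same approach as the paper: the lemma is deduced from the naturality of the long exact sequence \eqref{equ:exact_hairy}/\eqref{equ:exact_hairy2} under the projection $\cP\to\cT_d$, yielding the commutative ladder displayed just before the lemma. Your write-up is considerably more explicit than the paper's, which treats the lemma as an immediate consequence of that ladder; in particular you spell out the verification that the vertical map on the $\HRGCPn{c,d}$ side is the identity on the $H(\GC_{c+d}^2)$-summand (which is automatic because the identification \eqref{equ:Defhoe2RGraphs} is itself produced via the map $\RGraphsPn{c,d}\to\Graphs_{c+d}$ of \eqref{equ:RGraphs to Graphs}), and you isolate the residual $\cT_d$-case check and attribute it to \cite{Wi}.
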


\subsubsection{\bf Example and a(nother) proof of the main Theorem of \cite{Wi2}}
In this section let us apply the spectral sequence of Theorem {\ref{thm:main}}, which we already costructed, to the properad $\cP$ which is the genus completion of $\hoLieB_{c,d}$ with the map
\[
\hoLieB_{c,d}\to \cP
\]
being taken to be the standard inclusion.

\sip

The (stable and unstable) $\cP$-graph complexes have been discussed in detail in section {\ref{sec:examplePLiecd}}.
The long exact sequence \eqref{equ:thmmain_longexact} in this case reads
\begin{equation*}
 \cdots H^{p-1}(\GC^2_{c+d}) \to \to H^{p-1}(\GCor_{c+d+1}\oplus \K[-1]) \to H^p(\SRGCPn{c,d}) \to H^p(\GC^2_{c+d}) \to \cdots
\end{equation*}
By \eqref{equ:SRGCPonedim} we know that $H(\SRGCPn{c,d})$ is one-dimensional in this case, the single class (of degree 1) being represented by \eqref{equ:extraclass}.
This class is in the image of the map from $H(\RGCPn{c,d})$. Hence we immediately recover the central result of \cite{Wi2}, namely that
\[
 H(\GC^2_{c+d}) \cong  H(\GCor_{c+d+1}).
\]

\subsubsection{\bf The proof of Theorem \ref{thm:main}}
Continuing the example of the previous section, we may use again the functoriality of the long exact sequence. Concretely, any map $\hoLieB_{c,d}\to \cP$ satisfying our standing assumptions fits into the ``tautological'' commutative diagram
\[
 \begin{tikzcd}
  \hoLieB_{c,d} \ar[hookrightarrow]{r} \ar{dr} & \whoLieB_{c,d} \ar{d} \\
  &  \cP
 \end{tikzcd}\, .
\]
Hence functoriality relates the long exact sequence \eqref{equ:exact_hairy2} associated to $\cP$ to that of the previous example
\[
 \begin{tikzcd}
  \cdots \ar{r} & H^{p-1}(\GC^2_{c+d}) \ar{d}{=} \ar[hookrightarrow]{r} & H^{p-1}(\GCor_{c+d+1}\oplus\K[-1])\ar[twoheadrightarrow]{r}\ar{d} & H^p(\K) \ar{r}\ar{d} & \cdots
  \\
  \cdots \ar{r} & H^{p-1}(\GC^2_{c+d})\ar{r} &  H^p(\RGCPn{c,d}) \ar{r} & \SRGCPn{c,d} \ar{r}  & \cdots
 \end{tikzcd}.
\]

In particular, from the left-hand commuting square in this diagram the remaining claim of Theorem {\ref{thm:main}} is evident, and the Theorem thus proven.

%

\bip

{\Large
\section{\bf Proofs of the main Theorems }
}
\mip

\subsection{Proof of Theorem {\ref{thm:main_RGC}} }
We merely apply Theorem {\ref{thm:main}} to $\cP=\RGra$ (respectively, $\cP=\RGra_1$), with the map $\LieB_0\to \RGra$ (respectively, $\LieB_{0,1}\to \RGra_1$) sending the Lie bracket generator to the two-vertex graph and the cobracket generator to zero.
We obtain a long exact sequence of the form \eqref{equ:thmmain_longexact}.
Clearly, in this case the connecting homomorphism $H(\GC_0^2)$ (respectively, from $H(\GC_1^2)$) is zero, since an oriented graph necessarily contains a vertex with (at least) two outputs, to which is assigned the (higher) cobracket generator, i.e., zero.
Theorem {\ref{thm:main_RGC}} hence immediately follows from the exactness of the sequence.
\hfill $\Box$

\sip

The above proof works in fact for arbitrary values of the parameters $c$ and  $d$ so that we  have the following
Theorem (of which Theorem {\ref{thm:main_RGC}} is a special case).

\subsubsection{\bf Theorem} $H(\SRGC_{c,d}) = H(\RGC_{c,d})\oplus H(\GC_{c+d}^2)$

\subsubsection{\bf Corollary: Grothendieck-Teichm\"uller group $GRT_1$ and non-commutative Poisson structures} The case $c=0$, $d=2$ is of special interest as the Lie algebra $\fg\fr\ft_1$
is a Lie subalgebra of $H^0(\GC_2^2)$ and hence, by the above Theorem, of $H^0(\SRGC_{0,2})$.
By Proposition {\ref{5: Action H^0PGCd on diamond functions}}, the Lie algebra
$H^0(\SRGC_{0,2})$ acts by $\caH olie_2$-derivations on the Lie algebra $\odot^\bu (Cyc(W))[[\hbar]]$
equipped with Lie brackets $\{\ ,\ \}$ given explicitly in \S {\ref{2: Proposition on inv LieBi in CycW}}.
As $\fg\fr\ft_1\subset H^0(\SRGC_{0,2})$, we conclude that there is a highly non-trivial, in general, action of the group $GRT_1$  on gauge equivalence classes
of non-commutation Poisson structures $\pi\in u\odot^\bu (Cyc(W))[[\hbar]][[u]]$ (cf.\ \cite{MW}).

\subsection{Proof of Theorem {\ref{thm:main_RGC2}} }
We apply Theorem {\ref{thm:main}} to $\cP=\RGra$ to the map $\LieB_{0,0}\to \RGra$ sending the Lie bracket generator to the graph with two vertices and one edge, and the Lie cobracket generator to the graph with one vertex and one edge, cf. Theorem {\ref{4: Theorem on LieB --> Rgra}}.
This yields the long exact sequence \eqref{equ:longexact}.

Similarly, we may apply Theorem {\ref{thm:main}} to $\cP=\RGra_1$, with the map $\hoLieB_{0,1}\to \RGra_1$ being that of
Theorem {\ref{4: Theorem on LieB_0,1 --> Rgra_1}}. This yields the long exact sequence \eqref{equ:longexact2}.

\subsection{Images of loop classes, and proof of Theorem {\ref{thm:connectinghom}} }
We use the description of the connecting homomorphism of Theorem {\ref{thm:main}} through the oriented graph complex $\GCor_1$. The identification of $H(\GC_0)$ and $H(\GCor_1)$ preserves the loop order grading.
Now consider an oriented graph $\Gamma\in \GCor_1$. We may assume that $\Gamma$ has only bivalent and trivalent vertices, with either exactly two incoming or exactly two outgoing vertices. (If, e.g., there is a vertex with more inputs/outputs the graph is mapped to zero as the images of the corresponding generating morphism of $\hoLieB_0$ in $\RGra_0$ are $0$.)
Let us begin with the source vertices of $\Gamma$ and then compose the images of the bracket and cobracket generators one by one, and record the Euler characteristic of the ribbon graph thus produced.
We can distinguish three types of vertices in $\Gamma$:
\begin{itemize}
\item Vertices with two outputs, no inputs (cobrackets). They create two punctures, one edge and one vertex, the effect on the Euler characteristic is "+2".
\item Vertices with one input and two outputs (cobrackets). They create a puncture and one edge, the effect on the Euler characteristic is zero.
\item Vertices with two inputs (brackets). They destroy a puncture and create an edge, the effect on the Euler characteristic is "-2".
\end{itemize}
So, if we denote the numbers of the above vertices by $v_1$, $v_2$, $v_3$, then the Euler characteristic of all ribbon graphs in the image of $\Gamma$ is $2v_1-2v_3$, and hence the genus is $g=1-v_1+v_3$.
On the other hand, it is not hard to see that the Euler characteristic of $\Gamma$ itself is $v_1-v_3$, and hence $\Gamma$ is of loop order $g$. This shows the second claim of Theorem {\ref{thm:connectinghom}}.

To see the third claim, note that the identification of $H(\GC_0)$ and $H(\GCor_1)$ sends classes of loop order $l$ with $k$ edges to classes with the same loop order, but $k+l$ edges. Now proceeding just as above, we count that the ribbon graphs appearing non-trivially in the image all have $v_1+v_2+v3$ edges (with $v_j$ as above). On the other hand $\Gamma$ itself has $k+l=v_2+2v_3$ edges and loop order $l=1-v_1+v_3$ (as we mentioned above), so that the number of edges of the ribbon graphs in the image is
\[
v_1+v_2+v_3 = v_2+2v_3  - (v_3 - v_1) = k+l-(l-1) = k+1.
\]
Hence the third claim of Theorem {\ref{thm:connectinghom}} is shown.
The first claim follows from the computation of the next subsection.

\subsection{Example computations}
The simplest classes in the graph complexes $\GC_d^2$ are the loop classes $L_r$ consisting of $r$ bivalent vertices, cf. Figure \ref{fig:loops} (left). It is known \cite{Wi2} that under the identification $H(\GC_d^2)\cong \GCor_{d+1}$ these classes correspond to the "zig-zag loop" classes with one more vertex, cf. Figure \ref{fig:loops} (middle). Let us investigate what the images of these classes are (under the map of Theorem {\ref{thm:main}}) in the ribbon graph complexes $(\RGC,\delta+\Delta_1)$ respectively $(\RGC_{odd},\delta+\Delta_3)$.

\begin{proposition}
The torus graphs $T_n\in H(\RGC)$ depicted in Figure \ref{fig:loops} (right) are cocycles and represent non-trivial cohomology classes for $n$ odd.
The image of the loop class $L_{4k+1}\in \GC_0$ (cf. Figure \ref{fig:loops} left) under the connecting homomorphism is the torus class in $(\RGC,\delta+\Delta_1)$ represented by $T_{2k+1}$.
\end{proposition}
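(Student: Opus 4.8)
The plan is to verify the two claims in the proposition separately: first that the torus graphs $T_n$ are cocycles representing nontrivial classes for $n$ odd, and second that the connecting homomorphism sends the loop class $L_{4k+1}$ to the class of $T_{2k+1}$. For the first claim, I would describe $T_n$ explicitly as the ribbon graph on the torus obtained from a single ``zig-zag'' cycle of length $n$; its underlying topological surface has genus $1$, consistent with the genus statement of Theorem~\ref{thm:connectinghom}. To see that $\delta_\diamond T_n = (\delta + \Delta_1) T_n$ vanishes in $(\RGC,\delta+\Delta_1)$, I would apply the explicit formula \eqref{4: d_diamond in fRGC}: the term $\delta T_n$ splits vertices (subdividing the cycle), while $\Delta_1 T_n$ attaches a new edge joining two corners of the \emph{same} boundary. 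The key point is a cancellation between these two contributions, which I expect to follow from a symmetry argument analogous to the one used in the proof of Theorem~\ref{4: Theorem on LieB_0,1 --> Rgra_1}: the antisymmetry coming from the orientation on edges (for $d=0$, an ordering of $E(\Gamma)$ up to sign) forces terms coming from symmetric configurations to cancel in pairs, while the remaining terms pair up $\delta$-outputs with $\Delta_1$-outputs. The nontriviality for $n$ odd I would establish by the same parity obstruction that makes $L_{4k+1}$ nontrivial: an orientation-reversing automorphism kills $T_n$ for $n$ even, whereas for $n$ odd the class survives, and its nontriviality can be checked either directly in low genus or by transporting the known nontriviality of $L_{4k+1}\in H(\GC_0^2)$ through the maps below.

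For the second claim, the strategy is to trace $L_{4k+1}$ through the chain of identifications used in the description of the connecting homomorphism $\rho$ in Theorem~\ref{thm:main}. First I would use the cited result of \cite{Wi2} that under $H(\GC_0^2)\cong H(\GCor_1)$ the loop class $L_{4k+1}$ with $4k+1$ bivalent vertices corresponds to the oriented ``zig-zag loop'' class with one additional vertex, i.e.\ with $4k+2$ vertices, as in Figure~\ref{fig:loops} (middle). Next I would apply the explicit formula for the connecting map as induced by $\Def(\hoLieB_{0,0}\xrightarrow{id}\hoLieB_{0,0})\to\Def(\hoLieB_{0,0}\to\RGra_0)$, which amounts to composing the bracket and cobracket images under $s^\diamond$ of \eqref{4: maps s from LoBd to RGra} along the oriented graph, exactly as in the Euler-characteristic bookkeeping carried out in the previous subsection. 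Concretely, each source/internal/sink vertex of the zig-zag loop contributes the two-vertex graph $\xy (0,0)*{\bullet}="a"; (6,0)*{\bu}="b"; \ar @{-} "a";"b" <0pt>\endxy$ or the single-boundary bubble $\xy (0,-2)*{\bu}="A"; (0,-2)*{\bu}="B"; "A"; "B" **\crv{(6,6) & (-6,6)};\endxy$, and assembling these along the loop produces precisely a torus ribbon graph.

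The counting then determines the value of $n$. Using the notation $v_1,v_2,v_3$ of the previous subsection for the numbers of cobracket-with-two-outputs, cobracket-with-one-input, and bracket vertices, the zig-zag loop corresponding to $L_{4k+1}$ has a definite distribution of these vertex types; substituting into the relation $g=1-v_1+v_3$ and the edge count $v_1+v_2+v_3$ established there, I would read off that the resulting torus class lies in genus $1$ and has the number of edges matching $T_{2k+1}$. The main obstacle I anticipate is not the topology but the \emph{sign and multiplicity bookkeeping}: one must check that the many ribbon graphs produced by the various ways of numbering the boundaries and attaching half-edges according to the cyclic orderings do not cancel, and that what survives is a single nonzero multiple of $T_{2k+1}$ rather than zero. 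This is the same delicate point that appears in Theorem~\ref{4: Theorem on LieB_0,1 --> Rgra_1}, and I would resolve it by fixing an explicit orientation on $T_{2k+1}$ and tracking the Koszul signs through each $s^\diamond$-composition, showing that the alternating contributions reinforce rather than cancel precisely when the loop length is $\equiv 1 \bmod 4$, which simultaneously explains why the odd torus classes $T_{2k+1}$ (as opposed to all $T_n$) arise as images and confirms the first claim of Theorem~\ref{thm:connectinghom} that the image of $\rho$ is infinite dimensional.
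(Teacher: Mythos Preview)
Your approach to the second claim---tracing the loop class $L_{4k+1}$ through the identification $H(\GC_0^2)\cong H(\GCor_1)$ to the zig-zag loop and then composing bracket/cobracket images along the oriented graph---is essentially what the paper does, and the paper dismisses this as ``a simple direct verification.'' The edge and genus counting you reference is exactly the bookkeeping of the preceding subsection.

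The genuine gap is your argument for \emph{non-triviality} of $T_{2k+1}$. You propose to deduce it ``by transporting the known nontriviality of $L_{4k+1}\in H(\GC_0^2)$ through the maps below.'' This is circular: the connecting homomorphism $\rho$ is not known to be injective (indeed, the paper only conjectures injectivity), so the image of a nontrivial class could well be zero. The entire purpose of this proposition is to establish the first bullet of Theorem~\ref{thm:connectinghom}, namely that the image of $\rho$ is infinite-dimensional; you cannot assume that on the way. Your alternative suggestion ``directly in low genus'' does not address the claim for all $k$. Likewise, the observation that an orientation-reversing automorphism kills $T_n$ for $n$ even says nothing about non-exactness for $n$ odd.

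The paper's method is entirely different and is, as it says explicitly, ``the non-trivial part of the Proposition.'' It constructs an explicit cycle $c$ in the pre-dual complex (where the differential contracts or removes an edge) as a formal generating series
\[
c = \sum_{r\geq 1 \text{ odd}}\frac{1}{r}\sum_{j_1,\dots,j_r\geq 0}\sum_{k_1,\dots,k_r\geq 0}\left(\prod_{i=1}^r a_{j_i}a_{k_i}\right)\Gamma_{j_\bullet,k_\bullet}
\]
where $\Gamma_{j_\bullet,k_\bullet}$ is a graph with $r$ vertices in a cycle, the $i$-th edge a string of $2j_i+1$ edges and the $i$-th vertex carrying $2k_i+1$ parallel loops, and the coefficients $a_n$ satisfy the Catalan recursion $a_n=\sum_{j=0}^{n-1}a_ja_{n-j}$ with $a_0=1$. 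The recursion is chosen precisely so that the terms produced by contracting a string to length $0$ (merging two loop-bunches) cancel against those produced by removing one loop (and conversely), making $c$ closed. The homogeneous piece $c_{2k+1}$ with $4k+2$ edges then pairs nontrivially with $T_{2k+1}$, proving non-exactness. This dual-cycle construction is the missing idea in your proposal.
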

\begin{proof}
The statement that the image of $L_{4k+1}$ is $T_{2k+1}$ is a simple direct verification, given that one knows that the loop classes in $\GC_0$ are represented by similar ``zigzag loop'' classes in the oriented graph complex $\GCor_1$, cf. Figure \ref{fig:loops}.

The non-trivial part of the Proposition is that the classes $T_{2k+1}$ are non-trivial (i.e., the cocycles are not exact).
To show this, the easiest way is to construct cycles $c_{2k+1}$ in the (pre-)dual complex which have a non-zero pairing with $T_{2k+1}$. The (pre-)dual complex in this case is the complex of linear combinations of ribbon graphs, with the differential given by the contraction of one edge, plus the removal of one edge as long as this does not alter the genus of the graph.
We will not construct $c_{2k+1}$, but rather construct the formal series $\sum_k c_{2k+1}$, with the understanding that $c_{2k+1}$ is the part with $4k+2$ edges.
Now fix numbers $a_0,a_1,\dots$ such that $a_0=1$ and recursively
\[
a_n = \sum_{j=0}^{n-1} a_j a_{n-j}.
\]
(Concretely $a_n =\frac 1 {n+1}{ 2n \choose n}$ are the Catalan numbers, but this will play no role here.)
We construct $c$ as the sum of all graphs of the following form
\[
c = \sum_{r\geq 1 \text{ odd}}\frac 1 r
\sum_{j_1,\dots,j_r\geq 0}
\sum_{k_1,\dots,k_r\geq 0}
\left( \prod_{i=1}^r a_{j_i} a_{k_i} \right)
  \begin{tikzpicture}[baseline=-.65ex, scale=1.5]
  \node[int] (v0) at (0:1) {};
\node[int] (v2) at (120:1) {};
\node (v4) at (240:1) {$\cdots$};
\draw (v0) edge[thick] node[sloped,anchor=south]{$\scriptstyle 2j_1+1$} (v2) edge[thick] node[sloped,anchor=south]{$\scriptstyle 2j_2+1$} (v4) (v2) edge[thick] node[sloped,anchor=south]{$\scriptstyle 2j_r+1$} (v4);
\draw (v0) to[thick, out=180, in=90, looseness=30] node[sloped,anchor=south]{$\scriptstyle 2k_2+1$} (v0);
\draw (v2) to[thick, out=-70, in=45, looseness=30] node[sloped,anchor=south]{$\scriptstyle 2k_1+1$} (v2);
 \end{tikzpicture}.
\]
Here a fat edge with number $m$ stands for a string of $m$ edges, while a loop with number $m$ stands for $m$ parallel loops.
To check that closedness, note that the differential contracts a string of $2j+1$ edges to a string with $2j$ edges (with coefficient 1), and makes $2k+1$ parallel loops into $2k$ parallel loops.
The $j=0$ piece in particular will result in two bunches of loops to be appended, while the $k=0$ piece results in two strings of edges to be appended. Note that in both cases a resulting bunch of $2K$ loops (respectively, string of $2J$ edges) may be produced in $K$ ways (respectively, $J$ ways), by juxtaposing an $\alpha$-bunch and a $2K-\alpha$-bunch (respectively, an $\alpha$-string and a $2J-\alpha$-string). The coefficients $a_n$ are chosen so that the terms corresponding to $j=0$ cancel those for $k>0$ and those for $j>0$ cancel those for $k=0$.

It is furthermore clear that our class $T_{2k+1}$ has non-zero pairing with $c$ and must hence be a non-trivial class.
\end{proof}

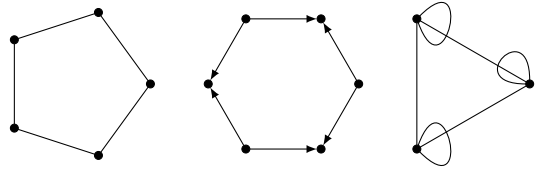
\begin{figure}
\[
 \begin{tikzpicture}[baseline=-.65ex]
  \node[int] (v0) at (0:1) {};
\node[int] (v1) at (72:1) {};
\node[int] (v2) at (144:1) {};
\node[int] (v3) at (216:1) {};
\node[int] (v4) at (288:1) {};
\draw (v0) edge (v1) edge (v4) (v2) edge (v3) edge (v1) (v3) edge (v4);
 \end{tikzpicture}
\quad\quad
 \begin{tikzpicture}[baseline=-.65ex]
  \node[int] (v0) at (0:1) {};
\node[int] (v1) at (60:1) {};
\node[int] (v2) at (120:1) {};
\node[int] (v3) at (180:1) {};
\node[int] (v4) at (240:1) {};
\node[int] (v5) at (300:1) {};
\draw[-latex] (v0) edge (v1) edge (v5) (v2) edge (v1) edge (v3) (v4) edge (v3) edge (v5);
 \end{tikzpicture}
 \quad\quad
  \begin{tikzpicture}[baseline=-.65ex]
  \node[int] (v0) at (0:1) {};
\node[int] (v2) at (120:1) {};
\node[int] (v4) at (240:1) {};
\draw (v0) edge (v2) edge (v4) (v2) edge (v4);
\draw (v0) to[ out=180, in=90, looseness=30] (v0);
\draw (v2) to[ out=-70, in=45, looseness=30] (v2);
\draw (v4) to[ out=70, in=-45, looseness=30] (v4);
 \end{tikzpicture}
 \]
  \caption{\label{fig:loops} A picture of the loop class $L_{5}$ in $\GC_0$ (left), together with the corresponding ``zigzag loop'' class in $\GCor_1$ (middle), and the corresponding ``torus class'' $T_{3}$ (right).}
 \end{figure}

\begin{proposition}
The image in $(\RGC_{odd},\delta+\Delta_1)$ of the loop class $L_{4k-1}$ in $H(\GC_1^2)$ is represented by an infinite sum of graphs  whose leading order term (i.e., the term with fewest edges) is given by the graph $\Theta_{k}$ in Figure \ref{fig:loops2}.
\end{proposition}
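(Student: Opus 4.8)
The plan is to compute the image of the loop class $L_{4k-1}\in H(\GC_1^2)$ under the connecting homomorphism $\rho_{odd}$ of Theorem~\ref{thm:main}, just as was done in the even case for $T_{2k+1}$. First I would invoke the description of the connecting homomorphism through the oriented graph complex: under the isomorphism $H(\GC_1^2)\cong H(\GCor_2)$ the loop class $L_{4k-1}$ corresponds to the ``zigzag loop'' class with one extra vertex (the odd analogue of Figure~\ref{fig:loops}, middle), and the connecting map is then realized as the canonical map
\[
\Def(\hoLieB_{0,1}\xrightarrow{\ \mathrm{id}\ }\hoLieB_{0,1})\to \Def(\hoLieB_{0,1}\to \RGra_1)
\]
induced by the properad morphism $s^\Theta:\hoLieB_{0,1}\to \RGra_1$ of Theorem~\ref{4: Theorem on LieB_0,1 --> Rgra_1}. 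Concretely, this means taking the zigzag representative in $\GCor_2$ and substituting, at each vertex, the values of $s^\Theta$ on the generating corollas of $\hoLieB_{0,1}$, then summing over all ways of gluing according to the properadic structure.

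**Identifying the leading-order term.**
Next I would analyze which ribbon graphs appear. Recall $s^\Theta$ sends the $(m,n)$-generator of $\hoLieB_{0,1}$ to zero unless $n=2$ and $m$ is odd, in which case the image is the symmetrized $m$-theta graph with $m$ edges. Since the zigzag representative of $L_{4k-1}$ consists of bivalent and trivalent vertices of source/sink type, each vertex contributes (after applying $s^\Theta$) a bundle of parallel edges of odd cardinality, and the gluing assembles these bundles into a single closed ribbon graph. The term with the fewest edges arises when every generator is evaluated on its minimal admissible theta graph, i.e.\ each image-generator contributes exactly one edge; counting vertices and edges along the zigzag loop then yields precisely the graph $\Theta_k$ of Figure~\ref{fig:loops2}. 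I would verify this Euler-characteristic/edge bookkeeping exactly as in the even case, using the three vertex types (two-output, one-input-two-output, two-input) and their respective contributions, to confirm both the genus and the edge count of the leading term.

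**Nontriviality and the main obstacle.**
The genuinely hard part, as in the even proposition, is showing that the resulting cohomology class is \emph{nontrivial}, rather than merely identifying its leading term; indeed the statement here only asserts the shape of the leading-order term, so strictly I must produce the full infinite series and check it is a genuine cocycle in $(\RGC_{odd},\delta+\Delta_3)$. I would construct the series by the same recursive device used for $T_{2k+1}$: introduce Catalan-type coefficients $a_n$ satisfying $a_0=1$, $a_n=\sum_{j=0}^{n-1}a_j a_{n-j}$, and assemble an explicit (pre-)dual cycle $c^{odd}$ as a sum over ribbon graphs with strings of edges and bundles of loops weighted by products of the $a_{j_i}a_{k_i}$, arranged so that the $j=0$ contributions cancel the $k>0$ ones and vice versa. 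Pairing $c^{odd}$ against the leading term $\Theta_k$ gives a nonzero value, proving nontriviality. The principal technical obstacle will be getting the signs right: in the odd case the orientation of a ribbon graph is a choice of edge-directions modulo reversal (rather than an edge-ordering), so I expect delicate sign bookkeeping both in verifying that $c^{odd}$ is closed under the dual differential (contraction plus genus-preserving edge removal) and in confirming the pairing with $\Theta_k$ does not vanish by an orientation-reversing automorphism. I would therefore carry out the sign analysis carefully at the level of the determinant lines $\det(V(\Ga))\otimes\det(B(\Ga))\otimes\bigotimes_e\det(H(e))$ before concluding.
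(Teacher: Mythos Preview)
Your first two parts---routing through the oriented graph complex, substituting the values of $s^\Theta$ at the vertices of the zigzag representative, and reading off the leading-order term---are exactly what is needed, and this is all the paper does (its entire proof reads ``Again a simple direct verification'').

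Your third section, however, is both unnecessary and confused. First, the proposition does \emph{not} assert nontriviality of the image; it only identifies the leading term of the representing series. So no dual-cycle argument is required here at all. Second, your claim that you ``must produce the full infinite series and check it is a genuine cocycle'' is mistaken: the image is produced by the connecting homomorphism, which is a well-defined map on cohomology, so closedness is automatic. Third, you are conflating two distinct constructions from the even case: the Catalan-weighted object $c$ there is a cycle in the \emph{pre-dual} complex used to detect nontriviality of $T_{2k+1}$; it is not the image series itself, and nothing analogous is called for by the present statement. Drop the third section entirely and you have the proof.
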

\begin{proof}
Again a simple direct verification.
\end{proof}

\begin{figure}
\[
 \begin{tikzpicture}[baseline=-.65ex]
  \node[int] (v0) at (0:1) {};
\node[int] (v1) at (120:1) {};
\node[int] (v2) at (-120:1) {};
\draw (v0) edge (v1) edge (v2) (v2) edge (v1);
 \end{tikzpicture}
\quad\quad
 \begin{tikzpicture}[baseline=-.65ex]
  \node[int] (v0) at (0:1) {};
\node[int] (v1) at (90:1) {};
\node[int] (v2) at (180:1) {};
\node[int] (v3) at (270:1) {};
\draw[-latex] (v0) edge (v1) edge (v3) (v2) edge (v1) edge (v3);
 \end{tikzpicture}
 \quad\quad
  \begin{tikzpicture}[baseline=-.65ex]
  \node[int] (v0) at (-45:1) {};
  \node[int] (v1) at (45:1) {};
\node[int] (v2) at (135:1) {};
\node[int] (v3) at (225:1) {};
\draw (v0) edge (v1) edge[bend left] (v1) edge[bend right] (v1);
\draw (v2) edge (v3) edge[bend left] (v3) edge[bend right] (v3);
\draw (v1) to[ out=180, in=-80, looseness=2] (v2);
\draw (v0) to[ out=100, in=0, looseness=2] (v3);
 \end{tikzpicture}
 \]
  \caption{\label{fig:loops2} A picture of the loop class $L_{3}$ in $\GC_1$ (left), together with the corresponding ``zigzag loop'' class in $\GCor_2$ (middle), and the corresponding ribbon graph $\Theta_{1}$ (right).}
 \end{figure}
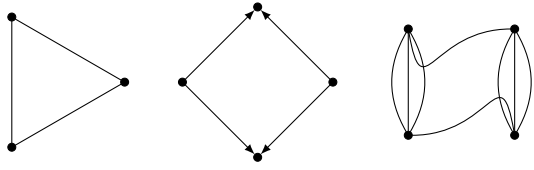

%
%

\bip
\appendix


\newcommand{\DLie}{\mathcal{D}\mathit{lie}} 
\newcommand{\hoDLie}{\mathcal{H}\mathit{o}\caD \mathit{lie}}  

\section{$d$-Quadratic BV operads}\label{sec:qBV}
In this Appendix we will study operads $\qBV_d$ generated by the operads $\e_d$ and one additional square zero operation
$\Delta\in \qBV_d(1)$ that is a derivation with respect to the product and bracket of $\e_d$.
In other words, $\Delta$ satisfies the following compatibility relations with respect to the product $m\in \qBV_d(2)$ and the bracket $\mu\in \qBV_d(2)$:
\begin{align*}
 \Delta^2 &=0
 &
 \Delta m(-,-) &= m(\Delta - , -) + m(-,\Delta -) \\
 \Delta \mu(-,-) &= \mu(\Delta - , -) + (-1)^{n-1}\mu(-,\Delta -) .
\end{align*}
In particular for $d=2$ the operad $\qBV_2$ is the homogeneous associated quadratic operad associated to the Batalin-Vilkovisky operad.
It is known that the operad $\qBV_d$ is Koszul \cite{GCTV},
and we define the minimal resolution
\[
 \hoqBV_d := \Omega(\qBV_d^\vee).
\]
Concretely, as a symmetric sequence $\qBV_d^\vee\cong \e_d^\vee[u]$, where $u$ is a formal variable of degree 0 which stands for the Koszul dual cooperation to $\Delta$. Accordingly, a $\hoqBV_d$-structure on some vector space is determined by operations
\[
 \mu^r_X,
\]
where $r=0,1,2,\dots$ and $X$ ranges over a basis of $\e_d^\vee$.
There is a natural inclusion $\e_d\to \qBV_d$ and hence a map $\hoe_d\to\hoqBV_d$, and a projection (sending $\Delta\mapsto 0$) $\qBV_d\to\e_d$ and hence a map $\hoqBV_d\to \to\hoqBV_d$.

The operad $\qBV_d$ has a suboperad $\DLie_d\subset \qBV_d$ generated by the bracket $\mu$ and the unary operation $\Delta$. This latter operad is also Koszul, the Koszul dual cooperad being identified (as symmetric sequence) with $\DLie_d^\vee\cong \Lie_d^\vee[u]$. Concretely, the minimal resolution
\[
 \hoDLie_d = \Omega(\DLie_d^\vee)
\]
is generated by ($k$-ary) operations $\mu_{k}^r$ for $r\geq 0$, $k\geq 1$ and $k+r\geq 2$.
Pictorially we may represent the operation $\mu_{k}^r$ by the corolla
\[
\mu_k^r =
\begin{tikzpicture}[baseline=-.65ex]
 \node[ext] (v) at (0,0) {$\scriptstyle r$};
 \node (w1) at (-.5,-.7) {$\scriptstyle 1$};
 \node (wk) at (.7,-.7) {$\scriptstyle  k$};
 \node at (0,-.5) {$\scriptstyle \cdots$};
 \draw (v) edge +(0,.5) edge (w1) edge (wk);
\end{tikzpicture}
\]
The differential of such an operation is then pictorially written as
\[
\delta:
\begin{tikzpicture}[baseline=-.65ex]
 \node[ext] (v) at (0,0) {$\scriptstyle r$};
 \node (w1) at (-.5,-.7) {$\scriptstyle 1$};
 \node (wk) at (.7,-.7) {$\scriptstyle  k$};
 \node at (0,-.5) {$\scriptstyle \cdots$};
 \draw (v) edge +(0,.5) edge (w1) edge (wk);
\end{tikzpicture}
\mapsto
\sum_{s+t=r}
\sum_{I_1\sqcup I_2 = [k]}
\pm
\begin{tikzpicture}[baseline=-.65ex]
 \node[ext] (v) at (0,.3) {$\scriptstyle s$};
 \node[ext] (w) at (.3,0) {$\scriptstyle t$};
 \node at (-.6,-.6) {$\scriptstyle I_1$};
 \node at (.3,-.8) {$\scriptstyle I_2$};
 \draw (v) edge +(0,.5) edge (w) edge +(-.8,-.7)edge +(-.6,-.7)edge +(-.4,-.7)
  (w) edge +(.3,-.5) edge +(0,-.5)edge +(-.3,-.5);
\end{tikzpicture}
\]

Next, we define the operad $\thoqBV_d$ to be generated by $\hoDLie_d$ and an additional commutative product operation $m$ of arity 2 and degree zero, such that all operations $\mu_k^r$ are multi-derivations with resect to $m$.
Alternatively, we may understand $\thoqBV_d$ as a quotient of $\hoqBV_d$: A $\thoqBV_d$-algebra is a $\hoqBV_d$-algebra
such that the operations $\mu^r_X$ vanish unless (i) $r=0$ and $X\in \e_d^\vee$ is the cobracket (this yields the product operation $m$) or (ii) $X\in \Lie_d^\vee\subset \e_d^\vee$ is a cocommutative coproduct (this yields the operations $\mu_k^r$).

We have a commutative diagram of maps of operads
\[
 \begin{tikzcd}
 \hoLie_d \ar{r} \ar{d} & \hoe_d \ar{r}{\sim} \ar{d} & \thoe_d \ar{d} \\
 \hoDLie_d \ar{r} & \hoqBV_d \ar{r}{\sim} & \thoqBV_d
 \end{tikzcd}.
\]

Finally, let us remark on the deformation complexes associated to the operad $\hoqBV_d$.
First, it is an easy exercise to check that
\[
 \Def(\hoLie_d\to \e_d)
\]
is acyclic. (It follows from symmetry that the ``connected part'' $\Def(\hoLie_d\to \e_d)_\conn$ is 2-dimensional, with non-zero differential.)
By a similar (but slightly more complicated) argument one shows the following.
\begin{lemma}\label{lem:DefLieqBV}
 \[
  H(\Def(\hoLie_d\to \qBV_d))
  \cong
  H(\Def(\hoLie_d\to \qBV_d)_\conn) \cong \K[-1]
 \]
\end{lemma}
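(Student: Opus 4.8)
The plan is to compute the cohomology of the convolution dg Lie algebra
\[
C := \Def(\hoLie_d\to \qBV_d) \cong \prod_n \Hom_{\bS_n}\!\left(\Lie_d^\vee(n),\qBV_d(n)\right),
\]
whose differential is the adjoint action $[\mu,-]$ of the Maurer--Cartan element $\mu\in\qBV_d(2)$, the image of the Lie bracket generator. First I would fix a normal form for elements of $\qBV_d$, using the PBW decomposition $\qBV_d\cong \Com\circ\DLie_d$ (available because $\qBV_d$ is Koszul, cf. \cite{GCTV}); concretely, the derivation relations let one push every occurrence of $\Delta$ onto the leaves, where $\Delta^2=0$ forces at most one $\Delta$ per leaf. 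This equips each $\qBV_d(n)$ with a \emph{weight} $w$, the number of $\Delta$'s, with $0\le w\le n$.

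The key observation is that the differential preserves the weight. Indeed $\mu\in\e_d\subset\qBV_d$ carries no $\Delta$, the internal differential of $\qBV_d$ vanishes, and both operadic composition and the structure relation ``$\Delta$ is a derivation of $\mu$'' preserve the number of $\Delta$'s (the rule $\Delta\circ\mu=\mu(\Delta-,-)\pm\mu(-,\Delta-)$ just moves a single $\Delta$ onto one of the two new leaves). Hence $C$ splits as a direct sum of subcomplexes $C=\bigoplus_{w\ge0}C^{(w)}$. The weight-zero part is $C^{(0)}=\Def(\hoLie_d\to\e_d)$, which is acyclic by the result recalled just above the lemma; its connected part is precisely the two-dimensional acyclic complex spanned by the arity-one identity and its image $\mu$. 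Therefore $H(C)=\bigoplus_{w\ge1}H(C^{(w)})$, and in particular the identification $H(C)\cong H(C_\conn)$ is automatic here, since an element of an operadic deformation complex is a single connected operation and the only disconnected-type contribution (the arity-one identity) sits in the acyclic weight-zero part.

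The remaining and genuinely new task is to show that $\bigoplus_{w\ge1}C^{(w)}$ has one-dimensional cohomology, concentrated in $C^{(1)}$ and represented by the arity-one operation $\Delta\in\qBV_d(1)$ itself. That $\Delta$ is a cocycle is exactly the compatibility relation $\Delta\mu(-,-)=\mu(\Delta-,-)\pm\mu(-,\Delta-)$, i.e.\ $[\mu,\Delta]=0$; and $\Delta$ cannot be a coboundary, since the differential raises arity while preserving weight and there is nothing of weight one and lower arity. To kill everything else I would put a second filtration on each $C^{(w)}$ — for instance by arity, or by the number of $\Delta$'s not located at the root — and identify the associated graded with a degree-shifted copy of $\Def(\hoLie_d\to\e_d)$ ``with coefficients'' in the local system recording how a $\Delta$-marking redistributes over the children of a vertex created by the splitting differential. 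Feeding in the acyclicity of $\Def(\hoLie_d\to\e_d)$ then collapses all of $C^{(\ge1)}$ except for the single surviving ``rotation'' class $\Delta$, sitting in cohomological degree one as required, whence $H(C)\cong\K[-1]$.

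The step I expect to be the main obstacle is precisely this positive-weight computation: realizing $C^{(\ge1)}$ as a twisted tensor product (or iterated mapping cone) built from the already-known acyclic complex $\Def(\hoLie_d\to\e_d)$, while keeping careful track of the signs and of the combinatorics by which leaf-$\Delta$'s spread under vertex splitting, so as to confirm that exactly the class of $\Delta$ is left over and nothing more. Everything else is either a direct-sum bookkeeping argument or an appeal to the $\e_d$-case already established.
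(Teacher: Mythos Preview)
The paper does not actually supply a proof of this lemma: it merely says ``By a similar (but slightly more complicated) argument one shows the following,'' pointing back to the acyclicity of $\Def(\hoLie_d\to\e_d)$, which itself was dispatched by the symmetry observation that its connected part is two-dimensional with nonzero differential. So there is no detailed argument in the paper to compare against; one can only compare your outline to the method the paper gestures at.

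Your weight grading by the number of $\Delta$'s is sound and in the spirit of that hint: the differential does preserve weight, the weight-zero part is $\Def(\hoLie_d\to\e_d)$ and hence acyclic, and you correctly identify $\Delta$ as a cocycle and as the expected sole survivor. That much is fine.

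There are, however, two genuine gaps. First, your justification of $H(C)\cong H(C_\conn)$ is based on a misconception. ``Connected'' here does not mean ``a single operadic element'' (every element of the deformation complex is that). Under the PBW splitting $\qBV_d\cong\Com\circ\DLie_d$, connected means landing in $\DLie_d$, i.e.\ involving no commutative product vertex. The arity-one identity \emph{is} connected; elements such as $m(\Delta(-),-)$ or $m(\mu(-,-),-)$ are not. The honest argument relating full to connected is to filter by the number of $\Com$-factors: the associated graded is then $\odot^{\ge1}$ of the connected complex, and once one knows $H(C_\conn)\cong\K[-1]$ the higher symmetric powers vanish for degree reasons, forcing $H(C)\cong\K[-1]$.

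Second, the positive-weight step is only a gesture. Saying the associated graded is ``a shifted copy of $\Def(\hoLie_d\to\e_d)$ with coefficients'' and then invoking acyclicity does not explain why exactly one class survives rather than none---if each weight piece were literally a shifted acyclic complex, nothing would remain. What is missing is an actual computation: identify $C_\conn^{(w)}$ in arity $n$ with $(\Lie_d(n)\otimes P_n^w\otimes\epsilon_n)^{\bS_n}$ where $P_n^w$ is the permutation module on $w$-subsets, apply Frobenius reciprocity and the well-known absence of the trivial and sign representations in $\Lie(n)$ for $n\ge3$ to pin down the handful of surviving arities, and then check the differential there by hand. This is precisely the ``symmetry'' method the paper alludes to; without carrying it out, your proposal is a reasonable plan but not yet a proof.
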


\bip

\section{Remark: Properadic twisting and Costello's properad}

Let $\hoLieB_{c,d}\to \cP$ be a properad map. Above, we constructed from this data an operad $\f\cP$, to which we applied the operadic twisting functor to obtain another operad $\cP \cG raphs_{c,d}$. Alternatively, one may also define a notion of {\em properadic}\, twisting.

\sip

To this end, we define a Maurer-Cartan element in a $\hoLieB_{c,d}$-algebra $V$ to be an element $m\in V$ of degree $d$, which satisfies the equations
\begin{equation}\label{equ:properadMC}
\sum_{k\geq 1}\frac 1 {k!} \mu_{l,k}(\underbrace{m,\dots,m}_{k\times}) = 0
\end{equation}
for every $l\geq 1$, where we use the notation $\mu_{l,k}$ to denote the generating $\hoLie_{c,d}$-operations. In particular $\mu_{1,1}$ is the differential in $V$.
Given such a Maurer-Cartan element we may twist the $\hoLieB_{c,d}$ structure to a different one (denoted $\mu_{l,k}^m$) with
\begin{equation}\label{equ:hoLieBtwisting}
\mu_{l,k}^m(v_1,\dots, v_k) = \sum_{k'\geq 1}\frac 1 {k'!} \mu_{l,k+k'}(\underbrace{m,\dots,m}_{k'\times},v_1,\dots,v_k)
\end{equation}
for $v_1,\dots,v_k\in V$. In particular, the differential on $V$ is altered (from $\mu_{1,1}$ to $\mu_{1,1}^m$).

\sip

Now given the properad map $\hoLieB_{c,d}\to \cP$ any $\cP$-algebra $V$ is naturally endowed with a $\hoLieB_{c,d}$ structure. We define the properad $\Tw'\cP$ to be the properad generated by $\cP$ together with an arity $(1,0)$ operation $m$ of degree $d$ satisfying the Maurer-Cartan equations \eqref{equ:properadMC}, and with differential being (formally) the commutator of operations with the arity $(1,1)$ operation $\mu_{1,1}^m$.
The definition is made such that for $V$ a $\cP$-algebra and $m\in V$ a Maurer-Cartan element, the prooperad $\Tw'\cP$ naturally acts on the dg vector space $V^m$, with the additional $(1,0)$ operation acting as the element $m\in V$.
There are natural maps
\[
 \hoLieB_{c,d}\to \Tw' \cP \to \cP,
\]
the former being given on algebras by \eqref{equ:hoLieBtwisting}, while the latter send the arity $(1,0)$ operation to 0.
Finally we define $\Tw\cP$ to be the completion of $\Tw'\cP$ with respect to the filtration by the number of copies of the $(1,0)$-ary operation $m$ occurring.

\sip

If we apply the above twisting construction to the properad map $\hoLieB_{c,d}\to \LieB_{c,d}\xrightarrow{s^*} \RGra_d$ (with the map $s^*$ as in section {\ref{sec:defRGCgeneral}}) we obtain a ribbon graphs properad $\Tw\RGra_d$ whose arity $(p,q)$ operations consist of possibly infinite linear combinations of ribbon graphs with $q$ numbered vertices, $p$ numbered boundary components and an arbitrary number of ``unidentifiable'' vertices, which correspond to copies of the $(1,0)$-ary operation inserted in the corresponding inputs of the underlying operation in $\RGra_d$.
If we consider the deformation complex
\[
\Def(\hoLieB_{c,d} \xrightarrow{s^*} \Tw\RGra_{d})
\]
we obtain a ribbon graph complex with two sorts of vertices. This recovers K. Costello's definition of the ribbon graph complex ``with black and white vertices'' in a purely algebraic manner.

\sip

{\em Example}. For any graded  vector space $W$ equipped with a degree $1-d$ scalar product
satisfying (\ref{2: skewsymmetry on scalar product}) the associated vector space $V= Cyc(W)$
spanned by cyclic words in $W$ carries a natural $\HoLoB_{d,d}$ structure (see \S {\ref{2: subsection on cyclic words}}) with only two non-trivial operations, the Lie bracket $\mu_{1,2}=[\ ,\ ]$
and Lie cobracket $\mu_{2,1}=\triangle$. Hence the equation (\ref{equ:properadMC})
defining Maurer-Cartan elements $m$ (of homological degree $d$) in the $\HoLoB_{d,d}$-algebra  $Cyc(W)$ can be written as
$$
[m,m]=0,\ \ \ \ \triangle m=0.
$$
Solutions of the first equation, $[m,m]=0$, are precisely cyclic $A_\infty$ structures in $W$,
while the second equation puts a constraint on cyclic $A_\infty$ structures. It was noticed in
\cite{Ha2} that cyclic $A_\infty$ structures in $W$ satisfying this extra constraint solve the equation (\ref{3: rescaled master eqn for inv Lie bial}) and hence extend to {\em quantum}\, $A_\infty$ algebra structures in $W$; the conclusion is that Maurer-Cartan elements $m$ in the $\HoLoB_{d,d}$-algebra  $Cyc(W)$ give us cohomology classes in the Kontsevich moduli space $\bar{\cM}^K_{g,n}$. Kontsevich
produced in \cite{Ko5} (see also \cite{Ba2,Ha2}) an infinite family of such Maurer-Cartan elements $m$
in the case $\dim W=1$. Indeed, choose any  integer $n\geq 1$ and equip the one-dimensional space $W:=\K[-1]$ with the scalar product of degree $-2n$ (i.e.\ in the notation of \S {\ref{2: subsection on cyclic words}}, $d=1+2n$)
$$
\Ba{rccc}
\Theta: & W\ot W & \lon &  \K[-2n]\\
        & (a,b)  & \lon & ab
\Ea
$$
Using explicit formulae given in Proposition {\ref{2: Proposition on inv LieBi in CycW}}, it is immediate to see that, for any $x\in W\setminus 0$, the cyclic word
$$
m=\underbrace{x\cdot x \cdot \ldots \cdot x}_{2n+1}
$$
has homological degree $d$ and  satisfies $[m,m]=0$, $\triangle m=0$. According to Kontsevich,
this infinite family of Maurer-Cartan elements generates non-trivial cohomology classes in $H(\cM_{g,n})$ and their linear span is precisely the space of all polynomials in Morita-Miller-Mumford
classes. It was noticed furthermore in \cite{Ba2,Ha2} that these cohomology classes in
$H(\cM_{g,n})$  admit an extension to classes in $H(\bar{\cM}^K_{g,n})$. Therefore the above notion
of a {\em Maurer-Cartan element of a $\HoLB_{c,d}$ algebra}\, admits many non-trivial and useful examples.

\bip

\def\cprime{$'$}

\bip


\end{document}